\newtheorem{theorem}{Theorem}[section]
\newtheorem{prop}[theorem]{Proposition}
\newtheorem{lemma}[theorem]{Lemma}
\newtheorem{cor}[theorem]{Corollary}
\newtheorem{defn}[theorem]{Definition}
\theoremstyle{definition}
\newtheorem{remark}[theorem]{Remark}
\newcounter{tenumerate}
\def \B {{\cal B}}
\def \E {{\mathbb{E}}}
\def \P {{\mathbb{P}}}
\def \R {{\mathbb{R}}}
\def \Var {{\rm Var}}
\def \cov{{\rm Cov}}
\newcommand{\pe}{\mathsf{p}}
\begin{document}

\title{{\bf Heat kernel for Liouville Brownian motion and Liouville graph distance }}

\author{ Jian Ding\thanks{Partially supported by an NSF grant DMS-1757479, an Alfred Sloan fellowship, and NSF of China 11628101.}  \\ University of Pennsylvania \and Ofer Zeitouni\thanks{Partially supported by the ERC advanced grant LogCorrelatedFields and by the
Herman P. Taubman professorial chair at the Weizmann Institute.} \\
Weizmann Institute \\ Courant Institute \and Fuxi Zhang\thanks{Partially supported by NSF of China 11771027.} \\ Peking University
}

\date{June 28, 2018}

\maketitle
\begin{abstract}
We show the existence of the scaling exponent $\chi\in (0,4[(1+\gamma^2/4)-
\sqrt{1+\gamma^4/16}]/\gamma^2]$ of the graph distance associated with subcritical 
two-dimensional Liouville quantum gravity of paramater $\gamma<2$ on $\mathbb V =[0,1]^2 $. 
We also show that the Liouville heat kernel satisfies, for any fixed $u,v\in \mathbb V^o$,
 the short time estimates 
$$ \lim_{t\to 0} \frac{\log |\log \pe_t^\gamma(u,v)| }{|\log t|}=\frac{\chi}{2-\chi}, 
\ \mbox{\rm a.s.}$$
  \end{abstract}

\section{Introduction}

Let $\mathbb V =[0,1]^2 \subseteq \mathbb R^2$ and let ${\mathbb V}^o$ denote its interior. Let  $h$ be an instance of the
Gaussian free field (GFF) on $\mathbb V$ with Dirichlet boundary condition. For an introduction
 to the theory of the GFF including various formal constructions, see, e.g., \cite{S07, berestycki16}.  Fix $\gamma\in(0,2)$ and let
 $M_\gamma$ denote the $\gamma$
 Liouville quantum gravity (LQG) 
 given by formally exponentiating the GFF $h$ 
 \cite{DS11}\footnote{ 
 Thus, in our terminology, the LQG is the Gaussian Multiplicative Chaos (GMC) 
 built from the Gaussian free field. 
 As pointed out to us by Remi Rhodes, in the physics literature the
 LQG is often meant to represent a modification of this measure, e.g.
 by normalizition with respect to the total mass of the GMC.
 In this paper we follow the terminology established in \cite{DS11},
 and only note that global, absolutely continuous modifications such
 as a normalization by the area would not change the value of the
 exponents in Theorem \ref{thm-main} below.}. One can then introduce
the positive continuous additive functional (PCAF) with respect to $M_\gamma$
as
\begin{equation}\label{eq-def-PCAF}
F (t) := \int_0^t e^{\gamma h(X_s) - \frac {\gamma^2}2 \E h (X_s)^2} d s, 
\end{equation}
where
$\{ X_t \}$
denotes a standard Brownian motion (SBM) on $\mathbb V$ killed upon exiting $\mathbb V$, independent of $h$.
The Liouville Brownian motion (LBM) is then defined formally as $Y_t:=X_{F^{-1}(t)}$, and
the Liouville heat kernel (LHK) $\pe_t^\gamma (x,y)$ is  the density of the Liouville semigroup with respect to $M_\gamma$, i.e.
\begin{equation}\label{eq-def-LHK}
E^x f(Y_t) = \int \pe_t^\gamma (x,y) f(y) M_\gamma (d y),
 \end{equation}
where the superscript $x$ is to recall that $Y_0=X_0=x$. We refer to  Section~\ref{sec:prelim} for pointers to the (non-trivial) precise construction and 
properties of these objects.

For $\delta>0$ and any two distinct points $u, v\in {\mathbb V}^o$, we define the Liouville graph distance $D_{\gamma, \delta}(u, v)$ to be the minimal number of Euclidean balls with rational centers\footnote{so that $D_{\gamma,\delta}(u,v)$ is a measurable random variable} 
and LQG measure at most $\delta^2$, whose union contains a path from $u$ to $v$.

\begin{theorem}\label{thm-main}
Fix $\gamma\in (0,2)$. There  exists $\chi=\chi(\gamma)\in \big(0,\frac{4[(1+\gamma^2/4) - \sqrt{1+\gamma^4/16}]}{\gamma^2}\big]$ such that the following holds. 
For any   $\iota>0$ and any fixed points  
 $u \neq v\in {\mathbb V}^o$,  there exists a random variable $C=C(\iota,u,v)$ measurable with respect to $h$ such that for all $\delta, t\in (0,1]$,
\begin{align}
C^{-1} \delta^{-\chi+\iota} &\leq D_{\gamma, \delta}(u, v) \leq C \delta^{-\chi - \iota}\,, \label{eq-main-thm-1}\\
C^{-1} \exp\Big\{-t^{-\frac{\chi}{2 - \chi} - \iota}\Big\} &\leq \pe_t^{\gamma}(u, v)   \leq C \exp\Big\{-t^{-\frac{\chi}{2 - \chi} + \iota}\Big\}\,. \label{eq-main-thm-2}
\end{align}
\end{theorem}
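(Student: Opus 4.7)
The plan is to split the argument into three stages: first, establish existence of a scaling exponent $\chi$ for the Liouville graph distance $D_{\gamma,\delta}$ together with polynomial concentration; second, prove the quantitative upper bound on $\chi$ via an explicit path construction exploiting the multifractal structure of the GMC; third, deduce the heat kernel bounds \eqref{eq-main-thm-2} from the graph distance bounds \eqref{eq-main-thm-1} by relating Liouville time to the number of $\delta^2$-mass balls a Brownian trajectory must traverse.

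For the existence of $\chi$, I would work with a ``left--right crossing'' graph distance $L_\delta$ across $\mathbb V$, which agrees with $D_{\gamma,\delta}(u,v)$ up to controllable boundary effects. The GFF on a subsquare $S\subset\mathbb V$ of side $r$ decomposes as an independent GFF on $S$ plus an independent harmonic field, and under the LQG rescaling this produces a log-shift of $-\gamma\log r$. Concatenating crossings of subsquares yields a near-subadditive inequality of the form $\E\log L_\delta \leq C_\iota + (2-\iota)\log(1/r) + \E\log L_{\delta r^{-(1+\gamma^2/2)}}$, valid for small $r$ after averaging out the harmonic piece using its Gaussian tails. A Fekete-type argument then gives convergence of $\log\E L_\delta/\log(1/\delta)$ to some $\chi\in(0,\infty)$. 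To turn this expectation bound into the a.s.\ two-sided estimate \eqref{eq-main-thm-1}, one needs quantitative concentration of $\log L_\delta$. Since $\log L_\delta$ is not globally Lipschitz in the GFF, I would first restrict to a high-probability regularity event controlling local maxima of $h$ at all dyadic scales, using standard log-correlated field estimates, and then apply Borell--TIS (or an Efron--Stein bootstrap) on the truncated functional.

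The upper bound on $\chi$ should come from an explicit first-moment construction. Call a Euclidean ball $B$ of radius $\rho$ ``$\lambda$-thick'' if $M_\gamma(B)\geq \rho^{2+\gamma^2/2 - \gamma\lambda}$; by the multifractal analysis of the GMC such events have probability $\asymp \rho^{\lambda^2/2}$, and the set of $\lambda$-thick points has Hausdorff dimension $2-\lambda^2/2$. Fix $\lambda$ and choose $\rho=\rho(\delta,\lambda)$ so that a $\lambda$-thick ball of radius $\rho$ has LQG mass exactly $\delta^2$, i.e.\ $\rho = \delta^{2/(2+\gamma^2/2-\gamma\lambda)}$. Construct a path from $u$ to $v$ using only $\lambda$-thick balls (FKG and percolation-type arguments at the appropriate $\lambda$ should suffice to show such a path exists with high probability), and optimize the trade-off between the number of balls along a percolating path and the value of $\rho$. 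The resulting quadratic optimization in $\lambda$ produces the root $4[(1+\gamma^2/4) - \sqrt{1+\gamma^4/16}]/\gamma^2$, giving the stated upper bound on $\chi$.

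For \eqref{eq-main-thm-2}, the crucial identity is $F(\tau_B)\asymp M_\gamma(B)$ for the exit time of the underlying SBM from a Euclidean ball $B$, which follows from the PCAF definition \eqref{eq-def-PCAF}. Hence traversing a chain of $N$ balls of LQG mass $\delta^2$ consumes Liouville time of order $N\delta^2$. Setting $N\delta^2\asymp t$ with $N\asymp\delta^{-\chi}$ gives $\delta\asymp t^{1/(2-\chi)}$ and $N\asymp t^{-\chi/(2-\chi)}$, which is exactly the stated exponent. The upper bound on $\pe_t^\gamma(u,v)$ then follows from a chaining argument: any LBM trajectory from $u$ to $v$ in Liouville time $t$ must cross at least $D_{\gamma,\delta}(u,v)$ balls, and a Gaussian-type tail bound on the sum of exit times yields total probability $\leq \exp(-cN)$. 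The matching lower bound comes symmetrically, by forcing the LBM into a tube around a near-geodesic and applying a Markov-chain lower bound on each segment. The hard part, I expect, will be the concentration in the first stage and the robust uniform exit-time control in the third: both require fine multifractal information on $M_\gamma$ and careful handling of the log-correlated fluctuations of $h$, so that the $\iota$ losses in \eqref{eq-main-thm-1} propagate controllably into the $\iota$ losses in \eqref{eq-main-thm-2}.
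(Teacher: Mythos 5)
Your proposal sketches the right broad strategy (scaling exponent via subadditivity and concentration, then translate to heat-kernel exponents), but it contains three substantive gaps, two of which touch the genuinely hard parts of the paper.

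\paragraph{Heat kernel.} The asserted identity $F(\tau_B)\asymp M_\gamma(B)$ is not a deterministic relationship: the Liouville time to exit a Euclidean ball is a random functional of the Brownian path with heavy upper tails and \emph{no} positive lower bound, so a chain of $N$ balls of LQG mass $\delta^2$ does not deterministically consume time $\asymp N\delta^2$. This is precisely the obstacle the paper's Section~\ref{sec-LHK} is built to overcome: for the lower bound one introduces ``fast boxes'' on which $P_z(F(\sigma_\Lambda)\le\delta^2 C_\delta)\ge p_{\mathrm{fast}}$ for most starting points, and shows via a percolation argument (Lemma~\ref{lem-fast-cell}, Lemma~\ref{lem-ij-pre-fast}) that fast boxes form a crossing; for the upper bound one introduces ``slow boxes'' and shows the increments $F(\sigma_i)-F(\sigma_{i-1})$ stochastically dominate i.i.d.\ Bernoulli variables. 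Neither step reduces to a Gaussian tail bound on sums of exit times, since exit times of an LBM are not Gaussian, and one must also handle the dependence between the random partition $\mathcal V_\delta$ and the fine field via Lemma~\ref{lem-partition-independence}. Your sketch skips all of this.

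\paragraph{Upper bound on $\chi$.} Your thick-ball construction points in the wrong direction. A $\lambda$-thick ball of radius $\rho$ satisfies $M_\gamma(B)\gtrsim\rho^{2+\gamma^2/2-\gamma\lambda}$, which for $\lambda>0$ is \emph{larger} than the typical mass $\rho^{2+\gamma^2/2}$; forcing $M_\gamma(B)\le\delta^2$ at thick points therefore forces a \emph{smaller} radius than typical, which increases rather than decreases the number of balls needed to cross, and so cannot give an upper bound on $\chi$. Moreover, thick points do not percolate, so the ``FKG and percolation'' step is not available. The paper's actual argument (Lemma~\ref{lem-obvious-bounds}) is simpler and uses no optimization over $\lambda$: it covers the deterministic line segment from $u$ to $v$ and reads off the number of $\delta^2$-mass balls needed from the KPZ relation of \cite{DS11}, which directly produces the exponent $\frac{4[(1+\gamma^2/4)-\sqrt{1+\gamma^4/16}]}{\gamma^2}$.

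\paragraph{Existence of $\chi$ and concentration.} Your subadditive inequality $\E\log L_\delta\le C_\iota+(2-\iota)\log(1/r)+\E\log L_{\delta r^{-(1+\gamma^2/2)}}$ is not justified as stated and the ``GFF on subsquare + independent harmonic field'' decomposition requires controlling the harmonic part uniformly, which costs more than you acknowledge. More importantly, the step ``restrict to a regularity event and apply Borell--TIS'' elides a real obstruction that the paper must work around: $D_{\gamma,\delta}$ is a functional of the GFF at arbitrarily fine scales, so the maximal individual variance of the Gaussian variables on which it depends is unbounded, and neither a Lipschitz bound for $D_{\gamma,\delta}$ nor a finite $\sigma^2$ is available. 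The paper's solution is to introduce the approximate graph distance $D'_{\gamma,\delta}$ defined through the random partition $\mathcal V_\delta$, which depends only on the field down to scale $\delta^{C_{\mathrm{mc}}}$, prove that it is a multiplicative $e^{(\log\delta^{-1})^{0.9}}$-proxy for $D_{\gamma,\delta}$ (Proposition~\ref{prop-approximate-LGD}), establish an effective Lipschitz bound for $\log D'_{\gamma,\delta}$ by comparing two realizations through the common intermediate $D_{\gamma,\delta'}$, and only then apply Gaussian concentration (Proposition~\ref{prop-concentration}). The subadditivity itself is then run on a translation-invariant version $\tilde D_{\gamma,\delta,\eta}$ built from the $\eta$-field, coupled to the approximate distance via $\hat h$ and Lemma~\ref{lem-scaling-coupling}, together with a separate point-to-boundary estimate (Lemma~\ref{lem-exponent-point-to-boundary}) needed for the lower bound in \eqref{eq-main-thm-1}. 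Your proposal names none of these ingredients, and the ``left--right crossing'' surrogate does not automatically supply either the translation invariance or the approximate independence across scales that the subadditive argument requires.
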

As we now discuss, Theorem~\ref{thm-main} is an amalgamation of several results, proved in different sections of the paper.
\begin{itemize}
\item The Liouville graph distance exponent $\chi$ is well defined (see Proposition~\ref{prop-existence-exponent}) and the (log of the) distance
concentrates around its mean (see Proposition~\ref{prop-concentration}).
\item The distance exponent $\chi$ does not depend on the particular choice of $u$ and $v$ as long as they are fixed and away from the boundary (see Proposition~\ref{prop-existence-exponent}). 
\item Both lower bounds and upper bounds on the Liouville heat kernel can be obtained from the distance exponent (see \eqref{eq-heat-kernel-lower-bound-fancy} and \eqref{eq-heat-kernel-upper-bound-fancy}): such bounds are sharp in terms of the power on $t$ in the exponential as in \eqref{eq-main-thm-2}. 
\item The lower bound that $\chi>0$ is a relatively obvious result (see Lemma~\ref{lem-obvious-bounds}); the upper bound on $\chi$ is a reading from the KPZ relation established in \cite{DS11}, which is applied to bound the minimal number of Euclidean balls of LQG 
  measure at most $\delta^2$  required in order to  cover the line segment joining $u$ and $v$. Evaluating
$\chi$ is a major open problem and is not the focus of the present article. 
We record the bounds here only to show that $\chi$ is nontrivial (i.e., $0<\chi<1$), and therefore the heat kernel in
\eqref{eq-main-thm-2} is not diffusive.
\item For $\gamma$ small, non-trivial upper bounds on
  $\chi$  appear in \cite{DG16}. In particular, combining 
  Theorem \ref{thm-main}, \cite[Theorem 1.2]{DG16} and \cite{MRVZ14},
one obtains that there exist constants $c^*,c'>0$ so that 
$\chi\in (1-c'\gamma,1-c^*\gamma^{4/3}/|\log \gamma|)$ for small $\gamma$.
In particular,
as discussed in \cite{DG16}, this is incompatible with
Watabiki's conjecture.
For some work toward bounding exponents for a 
related distance, see \cite{GHS16}. 
\item It is a consequence of \cite{DZ16} and \cite{DG16} 
  that the Liouville graph distance 
  is not universal across different log-correlated fields.
  Because of Theorem \ref{thm-main} and \cite{DZZ17}, the same holds
  for the Liouville heat kernel exponent.
\end{itemize}

\subsection{Background and related results}
Making a rigorous sense of the metric associated with the LQG is
a well known major open problem, see
\cite{RV16} for an up-to-date review. 
In a recent series of works of Miller and Sheffield, 
the special case $\gamma=\sqrt{8/3}$ is treated; one of their achievements is
to produce candidate scaling limits 
and to establish a deep connection to the Brownian map, see  \cite{MS15b, MS15, MS16, MS16b} and references therein.
In a recent work \cite{GHS16}, upper and lower bounds 
have been obtained for a distance associated with the
LQG (which is presumably related to Liouville graph distance considered in the present article), and for that distance
the existence of the scaling
exponent was established. 

From another perspective,
the LBM has also drawn much
interest recently, after
it was constructed in
\cite{GRV13, B14}. In particular, the LBM heat kernel 
was constructed in \cite{GRV14}, and on-diagonal bounds were derived 
in \cite{RV14b}, implying that the
spectral dimension of LBM equals $2$.  Estimates on the 
off-diagonal behavior are more challenging, and some (weak, but non-trivial) 
bounds were
established  in  \cite{MRVZ14} and \cite{AK}, with a significant gap 
in the exponent between the upper and
lower bounds.  Building on \cite{DZ15},  we have computed in \cite{DZZ17} the 
exponent for the Liouville heat kernel on a so-called 
coarse modified branching random walk, 
and showed that the exponent 
is not universal among log-correlated Gaussian fields.
The present article focuses on the GFF set-up and establishes that 
in the precision of the exponent, the off-diagonal LHK is closely
related to the Liouville graph distance.

Another  distance  that has been considered in
the literature is the Liouville first passage percolation (FPP), whose discrete version is the shortest distance metric where each vertex is given a weight of the
exponential of the GFF value there.
In  \cite{DD16}, it was shown that at high temperatures the appropriately normalized Liouville FPP converges subsequentially in the Gromov-Hausdorff sense to a random distance on the unit square, where all the (conjecturally unique) limiting metrics are homeomorphic to the Euclidean distance.  In  \cite{DZ16}, it was shown that the dimension of the geodesic for Liouville FPP is strictly larger than 1. 

Finally, we  mention two random walk models on the environment generated by GFF: in \cite{CGL17} a discrete analog of LBM was considered, where the holding times for the random walk at each vertex are exponential distributions with means given by the exponentials of the GFF --- some scaling limit results were obtained for this model; in \cite{BDG16} a random walk
 on the random network generated by discrete GFF was considered, where in the random network each edge $(u, v)$ is assigned a resistance exponential in the sum of the GFF values at $u$ and $v$ --- the return probability for this random walk was computed via a computation of the effective resistance of this random network.

\subsection{A word on proof strategy and organization of the paper}
Before describing our proof strategy, we discuss some of the basic 
objects that we work with. The first  object 
is the Gaussian free field. There are many approaches for its construction, which we quickly review in Section \ref{subsec-gff}. Of importance to us is its construction 
in terms of integral over space-time white noise, where the `time' coordinate denotes scale. This allows naturally the split of the GFF into an independent sum of 
a `coarse' field, consisting of contributions down to a cutoff scale, and a `fine' field,
consisting of the rest.

Next, the Gaussian multiplicative chaos built from the Gaussian free field, which we refer to as the Liouville quantum gravity, can be constructed as a martingale
limit of the exponential of the coarse field associated with
the GFF, see e.g. \eqref{eq-limit-LQG}.
In particular, it can be described as a product of a function depending
only on the coarse field of the GFF, by an independent 
 measure determined by the fine field; this yields a natural separation of scales, which however as we explain below is not quite sufficient for our analysis.
For this reason, we often work 
with appropriate approximations of the LQG, see for example 
\eqref{eq:WND_decomposition-approximation}. In this sketch, we only mention such details when they are crucial to the argument.

We can now begin to discuss our proof strategies, starting with the Liouville graph distance. As is often the case, the proof of a scaling statement as in 
\eqref{eq-main-thm-1} is based on sub-additivity, which in this case will be with respect to the scale parameter. However, the Liouville
graph distance
 from the introduction is not convenient to work with, because of the lack of scale-separation properties that are crucial for sub-additivity. Therefore, our first step is to relate the Liouville graph distance to an approximate
 Liouville graph distance, obtained through a specific partitioning procedure of the square according to the LQG content of dyadic squares, see Section \ref{subsec-appgraphdist} for details of the construction. Since the approximation involves a sequence of refinements, sub-additivity for the approximate Liouville graph distance is almost built in. However, we need to show that the approximate distance is indeed a good proxy for the distance. This is done in Proposition \ref{prop-approximate-LGD}.
Most of Section \ref{sec:LGD} is devoted to its proof, which employs appropriate 
approximations of the LQG and a-priori estimates of fluctuations of the coarse
field of the GFF. A particularly annoying fact is that the coarse field fluctuations, which typically are well behaved, cannot be well controlled uniformly, and
at places one needs to replace the actual minimizing sequence by 
a proxy, bypassing some bad regions of large fluctuations. This is done in
Lemmas \ref{lem-regularity} and  \ref{lem-partition-independence}, which employ 
percolation arguments.

The approximate graph distance thus constructed also has better
continuity  properties in terms of the underlying GFF, and is instrumental in proving 
that the (logarithm of the) graph distance concentrates around its mean, see 
Proposition \ref{prop-concentration}.
 
Once these preliminary tasks are complete, we turn in Section \ref{sec-LHK}
to the study of off-diagonal short time 
Liouville heat kernel estimates. (We study the 
LHK before showing the convergence of the distance exponent in order to emphasize that the study of the LHK is independent of the latter.) Recall that the 
Liouville Brownian motion is constructed from simple Brownian motion by a time
change that depends on the Liouville quantum gravity.
In Section \ref{subsec-LHKLB}, we prove a lower bound on the LHK, by a technique introduced in \cite{DZZ17}. We construct boxes according 
to the partition yielding the approximate Liouville graph distance. (In reality,
we construct smaller sub-boxes in order to handle differing sizes of blocks in the partition, and bypass some bad regions in the geodesic, using Lemmas
\ref{lem-regularity} and \ref{lem-partition-independence}). In order to control the behavior of the LBM, we introduce the notion of 
`fast boxes', which are boxes in which, from many starting points,
the LBM does not accumulate more time change than typical. Boxes are fast with high probability, and using a Peierls argument, we show that they percolate; the lower bound on the LHK is
obtained by forcing the LBM to follow such a path. For the upper bound, we introduce a parallel notion of `slow boxes', which are cells in which, for enough starting points, the LBM typically accumulates at least a small fraction of the typical time-change. Most cells in the partition determining the approximate Liouville graph distance are slow, and by tracking the accumulated time change, we obtain a lower
bound on the total accumulated time-change, which translates to a LHK upper bound.
We emphasize that the upper bound is obtained in terms of a liminf of the 
Liouville graph distance exponent, while the lower bound is obtained in terms of a limsup.

Finally, in Section \ref{sec:exponent}, we return to the Liouville graph distance.  Using concentration inequalities, it is enough to  prove convergence for the 
rescaled expectation of (the logarithm of) the approximate Liouville graph distance.
Separation of scales is built into the definition, however translation invariance is not (due 
to boundary effects). Further, even though the approximate Liouville 
graph distance 
uses refinements in its construction and thus separation of scales, it still 
suffers from lack of independence across scales. These two factors 
prevent the direct use of sub-additivity. To obtain the latter, we introduce yet another version of the Liouville graph distance, which does possess the required invariance property and, while at a given scale, does not depend on the fine field in slightly smaller scales. A coupling argument allows us to couple the two distances, and sub-additivity can then be employed to give a point-to-point convergence 
of the rescaled log-distance (see Lemma \ref{lem-existence-exponent}), for points 
near the center of the box. 
This is already enough to give an upper bound for arbitrary points. 
To give a lower bound, it is not enough to control point-to-point distances, and we 
need to control point to boundary distances for small enough sub-boxes. The latter
estimate involves the point-to-point estimate and a percolation argument, see
Lemma \ref{lem-exponent-point-to-boundary}. 

Various preliminaries are collected for the convenience of the reader
in Section \ref{sec:prelim}. We also include, in Section \ref{subsec-nonopt}, a derivation of rough
estimates on the distance exponent. These estimates are not expected to be sharp.

\subsection{Notation convention}\label{sec:notation}
We say that the events $E=E_\delta$ occur
with high probability (with respect to $\delta$)  if there exists a constant $c>0$, depending
on $\gamma, \{E_\delta\}$ only, so that 
$\P (E_\delta)\geq 1-\delta^c$ for all small $\delta>0$.
For $\alpha>0$, we say that  the events $E=E_\delta$ occur with 
$\alpha$-high probability, 
if $\P (E_\delta)
\geq 1- \delta^{\alpha}$ for all small $\delta>0$.

 For (nonnegative) functions $F(\cdot)$ and $G(\cdot)$ we write $F = O(G)$ (alternatively, $\Omega(G)$) if there exists an absolute constant $C > 0$ such that $F \leq C G$ (respectively $\geq C G$) everywhere in their domain. We write $F = \Theta(G)$ if $F$ is both $O(G)$ and  $\Omega(G)$. If the constant depends on variables $x_1, x_2, \ldots, x_n$, we change these notations to $O_{x_1,x_2, \ldots, x_n}(G)$ and $\Omega_{x_1,x_2, \ldots, x_n}(G)$ respectively.
We denote by $C, c, C',c_i$ etc positive universal  constants. For parameters
or variables $p_i$, we write $C = C(p_1,\ldots,p_k)$ if $C$ is a positive constant that depends only on $p_1,\ldots,p_k$. For example, $C(\gamma)$ is a positive constant that may depend on $\gamma$. 

For $v\in \mathbb R^2$ and $r>0$, we denote by 
$B_r(v)$ the (open) Euclidean ball centered at $v$ of radius $r$.
For $i\geq 1$ we denote by 
$\mathfrak C_i$ the collection of centers for all dyadic squares of side length  $2^{-i}$ contained in $\mathbb V$. That is,
with $o_{\mathrm{LB}}=(0,0)$,
\begin{equation}\label{eq-def-mathfrak-C}
\mathfrak C_i = \{o_{\mathrm{LB}} + (2^{-i-1}, 2^{-i-1}) + (j\cdot 2^{-i}, k \cdot 2^{-i}):  0\leq j, k\leq 2^i - 1\}.
\end{equation}
Note that $|\mathfrak C_i|=2^{2i}$.

A box $B$ is a square in $\mathbb R^2$. We denote by $s_B$ the side of $B$ and
 by $c_B$ its center. We say that a box $B$ is a dyadic box if, for some $i\in \mathbb N$,
 $s_B=2^{-i}$ and  $c_B\in \mathfrak C_i$.
We say that  a Euclidean ball $B$ is a dyadic ball  if, for 
some $i\in \mathbb N$, the radius of $B$ is $2^{-i}$ and the center of $B$ is in $\mathfrak C_i$.  Finally, we use $|\cdot|$ to denote the Euclidean distance
and $|\cdot|_\infty$ to denote the $\ell_\infty$ norm.

\section{Preliminaries} \label{sec:prelim}

\subsection{General Gaussian inequalities}
The next lemma is a consequence of the the Borell--Sudakov-Tsirelson Gaussian
isoperimetric inequality (\cite{Borell75,ST74}). 
\begin{lemma}\label{lem-Gaussian-concentration}
For any constant $c>0$ there exists $C>0$ such that the following holds.
Let $\mathbf X = (X_1, \ldots, X_n)$ be a centered Gaussian process with $\max_{1\leq i\leq n} \Var X_i= \sigma^2$. Let $B\subseteq \mathbb R^n$ such that $\P(\mathbf X\in B)\geq c$. Then for $\lambda \geq C \sigma$,
$$\P(\min_{\mathbf x\in B} |\mathbf X-\mathbf x|_\infty \geq \lambda) \leq  C e^{-\frac{(\lambda - C\sigma)^2}{2\sigma^2}}\,.$$
\end{lemma}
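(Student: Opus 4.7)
The plan is to apply the Gaussian isoperimetric inequality of Borell--Sudakov--Tsirelson directly to the set $B$, and convert the resulting Cameron--Martin enlargement into an $\ell_\infty$ enlargement using the elementary bound $\sqrt{\Var X_i}\leq \sigma$.

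Concretely, for a centered Gaussian vector $\mathbf X$ on $\R^n$ with covariance $\Sigma$, the isoperimetric inequality states that for every Borel set $A\subseteq \R^n$ with $\P(\mathbf X\in A)\geq c$ and every $t\geq 0$,
$$\P\bigl(\mathbf X\in A + t\,\mathcal{K}\bigr)\ \geq\ \Phi\bigl(\Phi^{-1}(c)+t\bigr),$$
where $\mathcal{K}:=\Sigma^{1/2}\overline{B}_2$ is the closed unit ball of the Cameron--Martin (RKHS) space of $\mathbf X$, $\overline{B}_2$ is the closed Euclidean unit ball, and $\Phi$ is the standard normal CDF. I would apply this with $A=B$ (or its closure, which carries the same probability).

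The decisive geometric observation is the inclusion $\mathcal{K}\subseteq \sigma\,\overline{B}_\infty$, where $\overline{B}_\infty$ is the closed $\ell_\infty$ unit ball. Indeed, for $\mathbf v$ with $|\mathbf v|_2\leq 1$ and $\mathbf k=\Sigma^{1/2}\mathbf v$, the $i$-th coordinate $k_i$ is the inner product of the $i$-th row of $\Sigma^{1/2}$ with $\mathbf v$, so by Cauchy--Schwarz
$$|k_i|\ \leq\ \Bigl(\textstyle\sum_j(\Sigma^{1/2})_{ij}^2\Bigr)^{1/2}\ =\ \sqrt{\Sigma_{ii}}\ =\ \sqrt{\Var X_i}\ \leq\ \sigma,$$
since $\Sigma^{1/2}(\Sigma^{1/2})^T=\Sigma$. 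Consequently $B+t\mathcal{K}\subseteq \{\mathbf x\in\R^n:\inf_{\mathbf y\in B}|\mathbf x-\mathbf y|_\infty\leq t\sigma\}$, and substituting $\lambda=t\sigma$ together with the classical tail bound $1-\Phi(x)\leq e^{-x^2/2}$ valid for $x\geq 0$ yields
$$\P\Bigl(\min_{\mathbf x\in B}|\mathbf X-\mathbf x|_\infty > \lambda\Bigr)\ \leq\ 1-\Phi\bigl(\Phi^{-1}(c)+\lambda/\sigma\bigr)\ \leq\ e^{-(\Phi^{-1}(c)+\lambda/\sigma)^2/2}.$$

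To convert this into the form stated in the lemma, I would set $C:=\max\bigl(1,\,|\Phi^{-1}(c)|\bigr)$, which depends only on $c$. For $\lambda\geq C\sigma$ we have $\Phi^{-1}(c)+\lambda/\sigma\geq \lambda/\sigma-C\geq 0$, hence $(\Phi^{-1}(c)+\lambda/\sigma)^2\geq (\lambda/\sigma-C)^2$, and the claimed bound $\leq C\,e^{-(\lambda-C\sigma)^2/(2\sigma^2)}$ follows (the leading multiplicative $C$ in the statement is actually redundant, but harmless). I do not foresee a genuine obstacle: the only mild subtlety is the $\min$-versus-$\inf$ distinction if $B$ is not closed, which is resolved by replacing $B$ by its closure before invoking isoperimetry. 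The proof is essentially a textbook consequence of Gaussian isoperimetry, the sole substantive input being the inclusion $\mathcal{K}\subseteq \sigma\,\overline{B}_\infty$.
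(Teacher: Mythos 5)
Your proof is correct and is essentially the same as the paper's. The paper writes $\mathbf X = A\mathbf Z$ with $\mathbf Z$ standard Gaussian, pulls $B$ back to $\tilde B=A^{-1}B$, uses Cauchy--Schwarz together with the row-$\ell_2$-norm bound $\|(A)_{i\cdot}\|_2\le\sigma$ to show that an $\ell_\infty$-distance $\ge\lambda$ from $B$ forces a Euclidean distance $\ge\lambda/\sigma$ from $\tilde B$, and then applies Borell--Sudakov--Tsirelson in the standard-Gaussian form; you stay in $X$-space, invoke BST directly in the Cameron--Martin-enlargement form, and prove the inclusion $\Sigma^{1/2}\overline B_2\subseteq\sigma\overline B_\infty$ by the identical Cauchy--Schwarz computation, so the two arguments are dual phrasings of the same idea with the same single substantive step.
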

\begin{proof}
Let $\mathbf X = A \mathbf Z$ where $\mathbf Z$ is a Gaussian vector whose components are i.i.d.\ standard Gaussian variables. Set 
$\tilde B = \{\tilde {\mathbf x}: A \tilde {\mathbf {x}} \in B\}$. By the Cauchy-Schwarz inequality and
the fact that the $\ell_2$-norm for any row vector in $A$ is at most $\sigma$, we 
obtain that
$$
|A z - B|_\infty \ge \lambda \mbox{ implies } |z - \tilde B| \ge \lambda / \sigma \mbox{ for all } z \in \R^n .
$$
Therefore, 
\begin{equation}
\label{eq-BST}
\P(\min_{\mathbf x\in B} |\mathbf X-\mathbf x|_\infty \geq \lambda) \leq \P(\min_{\tilde {\mathbf x}\in \tilde  B} |\mathbf Z- \tilde {\mathbf x}|\geq \lambda/\sigma).
\end{equation} On the other hand,
by  assumption,  $\P(\mathbf Z\in \tilde B) \geq c$. 
Combining this with \eqref{eq-BST} and 
the standard Borell--Sudakov-Tsirelson inequality \cite{ST74, Borell75}, see also \cite[(2.9)]{L01}, yields the lemma.
\end{proof}

The next lemma is a consequence of Lemma~\ref{lem-Gaussian-concentration}. See,  e.g., \cite[(7.4), (2.26)]{L01} as well as discussions in \cite[Page 61]{L01}.
 \begin{lemma} \label{Lem.concentration}
Let $\{ G_z : z \in B \}$ be a Gaussian field on a (countable) index set $B$. Set $\sigma^2 = \sup_{z \in B} \Var (G_z)$. Then, for all $a > 0$,
 $$
 \P (| \sup_{z \in B} G_z - \E \sup_{z \in B} G_z | \ge a) \le 2 e^{-\frac {a^2}{2 \sigma^2}}\,.
 $$
\end{lemma}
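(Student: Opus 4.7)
The plan is to reduce to the finite-dimensional setting and then apply Lemma~\ref{lem-Gaussian-concentration} to suitable half-spaces. Since $B$ is countable, pick an increasing exhaustion $B_1\subseteq B_2\subseteq \cdots$ with $\bigcup_n B_n = B$, and set $M_n=\sup_{z\in B_n}G_z$. Monotone convergence and the a.s.\ finiteness of $\sup_{z\in B} G_z$ (which follows from $\sigma^2<\infty$ combined with any finite bound on $\E M_n$, e.g.\ via Slepian/Sudakov comparison on a finite subset and dominated convergence to pass the resulting concentration bound to the limit) give $M_n\uparrow M$ a.s.\ and $\E M_n\to \E M$. Thus it suffices to prove the inequality uniformly in $n$, i.e.\ for finite index sets.

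Fix such a finite index set and write $\mathbf X=(G_z)_{z\in B_n}=A\mathbf Z$ with $\mathbf Z$ standard Gaussian. Let $m_n$ denote a median of $M_n$ and consider the (closed, convex) level sets
$$ S^-=\{\mathbf x:\max_i x_i\leq m_n\},\qquad S^+=\{\mathbf x:\max_i x_i\geq m_n\},$$
each of which satisfies $\P(\mathbf X\in S^\pm)\geq 1/2$. The elementary inequality $|\max_i x_i-\max_i y_i|\leq |\mathbf x-\mathbf y|_\infty$ shows that on the event $\{M_n\geq m_n+a\}$ one has $\min_{\mathbf y\in S^-}|\mathbf X-\mathbf y|_\infty\geq a$, and symmetrically for the lower tail. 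Applying Lemma~\ref{lem-Gaussian-concentration} to each of $S^\pm$ yields, for $a\geq C\sigma$,
$$ \P(|M_n-m_n|\geq a)\leq 2Ce^{-(a-C\sigma)^2/(2\sigma^2)}.$$

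The remaining step is to replace the median by the mean and to sharpen the constants to the claimed $2\,e^{-a^2/(2\sigma^2)}$. Integrating the tail bound above gives $|m_n-\E M_n|\leq C'\sigma$, and the classical argument (as in \cite[(7.4),(2.26)]{L01}) upgrades the weaker form obtained from Lemma~\ref{lem-Gaussian-concentration} to the sharp Borell--Sudakov-Tsirelson constant by applying the isoperimetric inequality directly to the half-space $\{\mathbf z:F(A\mathbf z)\leq \E F\}$ where $F(\mathbf x)=\max_i x_i$ and using that $F$ is $1$-Lipschitz after rescaling by the operator norm of $A$, which is bounded by $\sigma$. This is exactly the content of the standard proof of Gaussian concentration for the supremum, so the lemma follows.

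The main obstacle is the sharpness of the numerical constants: a direct invocation of Lemma~\ref{lem-Gaussian-concentration} only gives an exponent of the form $(a-C\sigma)^2/(2\sigma^2)$ with a multiplicative constant, and extracting the clean form $2e^{-a^2/(2\sigma^2)}$ requires the full Borell--Sudakov-Tsirelson isoperimetric statement rather than its qualitative consequence captured by Lemma~\ref{lem-Gaussian-concentration}; the paper handles this by citing the references in Ledoux's book where this sharpening is carried out explicitly.
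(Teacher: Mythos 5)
The paper gives no proof of this lemma at all --- it simply records it as ``a consequence of Lemma~\ref{lem-Gaussian-concentration}'' and cites Ledoux. Your proposal supplies an actual argument, and it is essentially correct. In particular you correctly diagnose the key point that a literal application of Lemma~\ref{lem-Gaussian-concentration} only yields the weaker form $Ce^{-(a-C\sigma)^2/(2\sigma^2)}$, and that the clean bound $2e^{-a^2/(2\sigma^2)}$ requires applying the Borell--Sudakov--Tsirelson inequality directly to the $1$-Lipschitz (after rescaling by $\sigma$) map $\mathbf z\mapsto \max_i (A\mathbf z)_i$, rather than to a neighbourhood of a positive-probability set as in Lemma~\ref{lem-Gaussian-concentration}. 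This is exactly the standard derivation in Ledoux that the paper points to, so your route agrees with the paper's intent. One small imprecision: the justification for $\E M_n\to\E M$ via ``Slepian/Sudakov comparison on a finite subset'' is unnecessary and doesn't quite do what you want; the cleaner (and standard) argument is that concentration around the \emph{median} of $M_n$ holds uniformly in $n$ with no integrability assumption, the a.s.\ finiteness of $M$ forces the medians $m_n$ to be bounded above, and together these give uniform integrability of $(M_n)_n$, hence $\E M_n\to\E M$ and $|m_n-\E M_n|\le C'\sigma$. With that adjustment the limiting step is rigorous, and the rest of your argument is sound.
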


We will often need to control the expectation of the
maximum of a Gaussian field in terms
of its covariance structure. This is achieved by Fernique's criterion
\cite{Fernique75}.
We quote a version suited to our needs, which follows straightforwardly
from the version in
\cite[Theorem 4.1]{A90}.
\begin{lemma}
  \label{lem-ferniquecriterion}
  There exists a universal constant $C_F>0$ with the following property.
  Let $B \subset \mathbb V$ denote a box of side length
  $b$ and assume $\{G_v\}_{v\in B}$
  is a mean zero Gaussian field satisfying
  $$\E(G_v-G_u)^2\leq |u-v|/b\,, \mbox{ for all } u, v \in B\,.$$
  Then there exists a version of $\{G_v\}$ which is spatially continuous such that
$\E\max_{v\in B} G_v \leq C_F$.
\end{lemma}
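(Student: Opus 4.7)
The plan is to reduce the statement to a standard Dudley--Fernique metric entropy bound by scaling. First I would rescale the domain: setting $\tilde G_{\tilde v} := G_{b\tilde v}$ for $\tilde v \in B/b$ (a unit box), the hypothesis $\E(G_u - G_v)^2 \leq |u-v|/b$ becomes
$$\E(\tilde G_{\tilde u} - \tilde G_{\tilde v})^2 \leq |\tilde u - \tilde v|,$$
which is completely dimensionless and free of $b$. Since $\E \max_{v\in B} G_v = \E \max_{\tilde v} \tilde G_{\tilde v}$, it suffices to prove the bound in the canonical setting of a centered Gaussian field on the unit square whose canonical pseudometric $d(\tilde u,\tilde v) := (\E(\tilde G_{\tilde u} - \tilde G_{\tilde v})^2)^{1/2}$ satisfies $d \leq \sqrt{|\cdot|}$, with a constant that depends on nothing.

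Next I would invoke the Fernique--Dudley metric entropy bound in the form given by \cite[Theorem 4.1]{A90}. The domination $d(\tilde u,\tilde v) \leq \sqrt{|\tilde u - \tilde v|}$ implies that $d$-covering numbers are controlled by Euclidean covering numbers at the squared scale: $N([0,1]^2, d, \epsilon) \leq C\epsilon^{-4}$, and the $d$-diameter is at most $2^{1/4}$. The entropy integral
$$\int_0^{2^{1/4}} \sqrt{\log N([0,1]^2, d, \epsilon)}\, d\epsilon \leq \int_0^{2^{1/4}} \sqrt{\log(C/\epsilon^4)}\, d\epsilon$$
is finite and bounded by an absolute constant. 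Feeding this into the entropy bound yields $\E \sup_{\tilde v \in Q} \tilde G_{\tilde v} \leq C_F$ over any countable dense subset $Q$ of $[0,1]^2$, for some universal $C_F$.

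Finally, for the spatially continuous version, the Gaussianity together with $\E(G_u-G_v)^2 \leq |u-v|/b$ gives $\E|G_u - G_v|^{2p} \leq C_p (|u-v|/b)^p$ for every $p \geq 1$, which in two dimensions is ample input for the Kolmogorov--Centsov continuity criterion. On this continuous version the supremum equals the supremum over any countable dense subset, so the bound from the previous paragraph transfers to $\E \max_{v\in B} G_v \leq C_F$. The only thing requiring care is confirming that the constant emerging from the chaining argument is genuinely universal (no hidden dependence on $b$ or on the law of $G$ beyond the stated hypothesis), which is automatic once the rescaling has been performed; there is no substantial obstacle.
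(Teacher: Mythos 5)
Your argument is correct and follows the same route the paper intends: rescale to eliminate $b$, then invoke the Fernique/Dudley entropy bound of \cite[Theorem 4.1]{A90} with the $\epsilon^{-4}$ covering-number estimate, finishing with Kolmogorov--Centsov for the continuous modification. The paper simply records that the lemma "follows straightforwardly" from that reference without spelling out these details, so your proposal is a correct fleshed-out version of the same argument.
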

\begin{remark}
When the condition of Lemma~\ref{lem-ferniquecriterion} holds, we always in the sequel consider the continuous version of the underlying Gaussian process.
This allows us to consider the maximum of the process over various
subsets, with the maximum being a bona fide random variable. We use
below this convention without further comment.
\end{remark}

\subsection{Gaussian free field}
\label{subsec-gff}
The GFF $h$ is not defined pointwise, however as a distribution
it is regular enough so that its circle averages are bona fide 
Gaussian variables. In particular, if $|v - \partial \mathbb V| > \delta$ let 
$h_\delta(v)$ denote the average of $h$ along
a circle of radius $\delta$ around $v$. 
Then, the \emph{circle average process} $\{h_\delta (v): v \in \mathbb V, |v - \partial\mathbb V| > \delta \}$  is a centered Gaussian process 
with covariance 
\begin{equation}
\label{eq:GFF_cov}
\cov(h_\delta(v), h_{\delta'}(v')) =  \pi\int_{\partial B_\delta(v) \times \partial B_{\delta'}(v')}G_{\mathbb V}(z, z')\mu_\delta^v(dz)\mu_{\delta'}^{v'}(dz')\,,
\end{equation}
where the normalization
factor of $\pi$ is chosen  to conform with the literature and ensure 
that the GFF is log-correlated.
Here  $\mu_r^v$ is the uniform probability measure on $\partial B_r(v)$, the boundary of $B_r(v)$,
and $G_{\mathbb V}(z, z')$ is the Green function for $\mathbb V$, which is defined by
\begin{equation}
\label{eq:Green_fxn}
G_{\mathbb V}(z, z') = \int_{(0, \infty)}p_{\mathbb V}(s; z, z')ds\,.
\end{equation}
Here and henceforth, for any $A\subset 
\mathbb R^2$,
 $p_{A}(s; z, z')$ is the transition probability 
density of Brownian motion killed upon exiting $A$.  More precisely, $p_{A}(s; z, \cdot)$ is the unique (up to sets of Lebesgue measure 0) nonnegative measurable function satisfying
\begin{equation}
\label{eq:heat_kernel}
\int_{B} p_{A}(s; z, z')dz' = P^z(B_s \in B, \tau_{A} > s)\,,
\end{equation}
for all Borel measurable subsets $B$ of $\R^2$ where $P^z(\cdot)$ is the law of the two-dimensional standard Brownian motion $\{B_t\}_{t \geq 0}$ starting from $z$ and $\tau_{A}$ is the exit time of $\{B_t\}_{t \geq 
0}$ from $A$. It was shown in \cite{DS11} that there exists a version of the circle average process which is jointly H\"{o}lder continuous in $v$ and $\delta$ of order $\vartheta < 1/2$ on all compact subsets of $\{(v, \delta): v \in \mathbb V, |v - \partial \mathbb V| > \delta\}$.
In particular, the LQG measure 
can be defined as the limit of 
\begin{equation}
M^\circ_{\gamma,\delta} (dv)=e^{\gamma h_\delta(v)-\frac{\gamma^2}{2} \log (1/\delta)} 
{\mathcal L}_2(dv), \end{equation}
where $\mathcal L_2$ denotes the two-dimensional Lebesgue measure 
(restricted to $\mathbb V$), and the superscript $\circ$ indicates a circle average approximation is taken. Similarly, the
 functional in \eqref{eq-def-PCAF} can be defined by replacing there $h$ with $h_\delta$ and then taking the limit as 
$\delta \to 0$ (see \eqref{eq-limit-LQG} below).

We will also use the white noise decomposition of the
GFF. A white noise $W$ distributed on $\R^2 \times \R^+$ refers to a centered Gaussian process $\{(W, f): f \in L^2(\R^2 \times \R^+)\}$ whose covariance kernel is given by $\E 
(W, f) (W, g) = \int_{\R^2 \times \R^+}fgdzds$. An alternative and suggestive notation for 
$(W, f)$, which we will use in the sequel,  is $\int_{\R^2 \times 
\R^+}f W(dz, ds)$. For any $B \in \B(\mathbb R^2)$ and $I \in \B(\mathbb R^+)$, we let $\int_{B \times I}f W(dz, ds)$ denote the variable $\int_{\R^2 \times \R^+}f_{B \times I} W(dz, ds)$, where $f_{B \times 
I}$ is the restriction of $f$ to $B \times I$. Now define the Gaussian process $\{\tilde h_\delta^{\tilde \delta}(v) : v \in  \mathbb V, \tilde \delta>\delta >0\}$ 
by
\begin{equation}
\label{eq:WND_decomposition}
\tilde h_\delta^{\tilde \delta}(v) = \sqrt{\pi}\int_{\mathbb V \times (\delta^2, \tilde \delta^2)}p_{\mathbb V}(s/2; v, w)W(dw, ds)
\end{equation}
(for notation convenience, we will drop the superscript $\tilde \delta$ when $\tilde \delta = \infty$). Then $\tilde h_{\delta}$ is another approximation of the GFF
 as $\delta \to 0$, known as the white noise decomposition. The LQG measure as well as the functional in \eqref{eq-def-PCAF} can also be approximated by taking a limit with the white noise decomposition, and it has been shown in \cite[Theorem 5.5]{RV14} and \cite{Shamov16} that the limiting law is the same as with the circle average approximation. For future reference we note that for $u,v\in \mathbb V$, the
Chapman-Kolmogorov equations give that
\begin{equation}
\label{eq-cov-tildeh}
\E(\tilde h_\delta^{\tilde \delta}(u)\tilde h_\delta^{\tilde \delta}(v))= \pi \int_{\delta^2}^{\tilde \delta^2} p_{\mathbb V}(t;u,v) dt.
\end{equation}

We will, in fact, consider an approximation of the white noise decomposition. To this end, we define for $0<\delta < \tilde \delta \leq \infty$
\begin{equation}\label{eq:WND_decomposition-approximation}
\eta_\delta^{\tilde \delta}(v) = \sqrt{\pi}\int_{\mathbb V \times (\delta^2, \tilde\delta^2)}p_{\mathbb V \cap B_{ 4^{-1} s^{1/2} |\log s^{-1}| \wedge 10^{-1}}(v)}(s/2; v, w)W(dw, ds)\,,
\end{equation}
where we recall that
$B_r(v)$ is the Euclidean ball of radius $r$ centered at $v$. Here we truncate the transition density upon exiting $B_{4^{-1} s^{1/2} |\log s^{-1}| \wedge 10^{-1}}(v)$ (or exiting $\mathbb V$) so that each scale in the hierarchical structure of the process $\eta_{\delta}^{\tilde \delta}$ (that is, the process $\{\eta_{\delta'}^{2\delta'}(v): v\in \mathbb V\}$ for some $\delta \leq \delta' \leq\tilde \delta/2$) only has local dependence --- the ``$\wedge 10^{-1}$'' in the definition is to ensure \eqref{eq-translation-invariant} in Section~\ref{sec:exponent} and is
otherwise not important. Again, for notation convenience, we will drop the superscript $\tilde \delta$ when $\tilde \delta = \infty$.

\begin{lemma}\label{lem-variance-continuity}
With notation as above, we have that
\begin{equation*}
\Var(\tilde h_\delta(u) - \tilde h_{\delta}(v)) + \Var(\eta_\delta(u) - \eta_{\delta}(v)) = O(\frac{|u-v|}{\delta}), \mbox{ uniformly in } \delta>0, u, v\in \mathbb V\,.
\end{equation*}
\end{lemma}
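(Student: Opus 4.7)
The plan is to express each variance as a time-space integral of a squared heat-kernel difference via the white-noise construction, and then bound the integrand by interpolating between a trivial on-diagonal estimate and a gradient-type estimate.

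For $\tilde h_\delta$, by \eqref{eq-cov-tildeh} and Chapman--Kolmogorov,
\[
\Var(\tilde h_\delta(u) - \tilde h_\delta(v)) = \pi\int_{\delta^2}^{\infty}\int_{\mathbb V} \bigl[p_{\mathbb V}(s/2;u,w) - p_{\mathbb V}(s/2;v,w)\bigr]^{2}\,dw\,ds.
\]
I bound the inner $w$-integral in two complementary ways. First, $(a-b)^{2}\le a^{2}+b^{2}$ together with the domination $p_{\mathbb V}(s;x,x)\le p_{\R^2}(s;x,x) = 1/(2\pi s)$ gives the trivial bound $1/(\pi s)$. Second, the fundamental theorem of calculus combined with Cauchy--Schwarz yields
\[
\int_{\mathbb V}\bigl[p_{\mathbb V}(s/2;u,w) - p_{\mathbb V}(s/2;v,w)\bigr]^{2} dw \le |u-v|^{2}\sup_{\xi\in[u,v]}\int_{\mathbb V}|\nabla_x p_{\mathbb V}(s/2;\xi,w)|^{2}\,dw \le C\,|u-v|^{2}/s^{2},
\]
where the $L^2$ gradient bound on the right follows directly from the sine-product Dirichlet eigenbasis $\{\phi_k\}$ of $[0,1]^2$ via the orthonormality identity $\int |\nabla_x p_{\mathbb V}(s/2;\xi,w)|^2 dw = \sum_k |\nabla \phi_k(\xi)|^2 e^{-\lambda_k s}$ together with $\|\nabla \phi_k\|_\infty^2 \le 4\lambda_k$ and Weyl's law $\sum_k \lambda_k e^{-\lambda_k s} = O(s^{-2})$. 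Taking the minimum of the two bounds and splitting the $s$-integral at the crossover $s \asymp |u-v|^2$: for $|u-v| < \delta$ only the gradient regime applies and contributes $O(|u-v|^2/\delta^2) \le O(|u-v|/\delta)$; for $|u-v| \ge \delta$ both pieces appear and give $O(\log(|u-v|/\delta)) + O(1) = O(|u-v|/\delta)$, completing this half.

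For $\eta_\delta$, with $A(s,x):=\mathbb V\cap B_{r(s)}(x)$ and $r(s):=4^{-1}s^{1/2}|\log s^{-1}|\wedge 10^{-1}$, I decompose
\[
\eta_\delta(u)-\eta_\delta(v) = T_1+T_2,
\]
where $T_2 := \sqrt{\pi}\int[p_{A(s,v)}(s/2;u,w) - p_{A(s,v)}(s/2;v,w)]W(dw,ds)$ and $T_1$ is the complementary term accounting for the change of truncation domain from $A(s,u)$ to $A(s,v)$. $\Var(T_2)$ is handled exactly as in the $\tilde h_\delta$ analysis, since domain monotonicity only improves both the on-diagonal bound and the $L^2$ gradient estimate. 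For $\Var(T_1)$, the key input is that $r(s)/\sqrt{s} \sim |\log s|/4 \to \infty$, so by the Gaussian tail for planar Brownian motion the probability that BM from $u$ exits $A(s,u)\cap A(s,v)$ within time $s/2$ is at most $\exp(-c(r(s)-|u-v|)_+^{2}/s)$. Via the strong Markov property at this exit time, this gives a super-polynomial (in $1/s$) pointwise bound on the $T_1$ integrand in the regime $|u-v|\le r(s)/2$. The complementary regime $|u-v| \ge r(s)/2$ arises only for $s \le s^{*} \asymp |u-v|^2/\log^2(1/|u-v|)$ and is handled by the trivial bound $1/(\pi s)$ integrated over $s \in (\delta^2, s^*)$, yielding $O(\log(s^*/\delta^2)) = O(|u-v|/\delta)$.

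The main technical obstacle I expect is the $L^2$ gradient estimate $\int |\nabla_x p_{\mathbb V}(s;x,w)|^2 dw \le C/s^2$ uniform in $x$, together with its analogue $\int |\nabla_x p_{A(s,v)}(s;x,w)|^2 dw \le C/s^2$ for the truncated domains; although classical, its cleanest verification on $\mathbb V$ uses the explicit Dirichlet eigenbasis as sketched above, while for the truncated balls $A(s,v)$ one would invoke a parabolic Caccioppoli-type estimate. The regime-split for $T_1$ is also delicate but becomes routine once the exponential exit-time bound is in place.
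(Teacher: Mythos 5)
Your $\tilde h_\delta$ bound is correct and genuinely different from the paper's: the paper factors $p_{\mathbb V}(t;u,v)=\tfrac{e^{-|u-v|^2/2t}}{2\pi t}q(t;u,v)$ and controls $q(t;u,u)-q(t;u,v)$ by a reflection-principle computation for the Brownian bridge, while you interpolate an on-diagonal $L^2$ bound against an $L^2$ gradient estimate obtained from the explicit Dirichlet eigenbasis of $[0,1]^2$. Both give $O(|u-v|/\delta)$, and yours is in fact sharper in the regime $|u-v|\gg\delta$.

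The $\eta_\delta$ half, however, has a genuine gap in the bound on $\Var(T_1)$. In the regime $|u-v|\le r(s)/2$ you bound the per-$s$ integrand of $\Var(T_1)$ by $O(s^{-1})\exp\!\big(-c(r(s)-|u-v|)_+^2/s\big)$, using the probability that BM exits $A(s,u)\cap A(s,v)$ by time $s/2$. But when $|u-v|\le\delta$ this regime is all of $s\ge\delta^2$, where $r(s)-|u-v|\asymp r(s)$, so the $s$-integral is of order $\int_{\delta^2}^\infty s^{-1}e^{-c'(\log s)^2}\,ds=\Theta(1)$ \emph{independently of} $|u-v|$; this is not $O(|u-v|/\delta)$ when $|u-v|\ll\delta$, and in particular the bound does not vanish as $u\to v$ at fixed $\delta$, whereas $\Var(T_1)$ does. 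The underlying reason is that ``BM exits $A(s,u)\cap A(s,v)$'' is \emph{not} an unlikely event as $u\to v$: the arc $\partial A(s,v)$ lying strictly inside $A(s,u)$ is roughly a half-circle, not a sliver of length $\asymp|u-v|$. The exit probability is thus not the mechanism that produces $|u-v|$-decay. What does produce it is that an exit point through $\partial A(s,v)\cap A(s,u)^{o}$ lies within distance $|u-v|$ of $\partial A(s,u)$, so the $A(s,u)$-killed kernel started from there is small; one must keep the cancellation between the two one-sided truncation errors (rather than bound $\int[p_{A(s,u)}-p_{A(s,v)}]^2$ by the sum of their individual $L^2$ norms) and carry out that thin-sliver survival estimate, which the proposal does not do. A secondary concern in $T_2$: domain monotonicity only compares $p_{A(s,v)}$ to $p_{\mathbb V}$, it does not control $\int|\nabla_x p_{A(s,v)}|^2\,dw$, and a Caccioppoli estimate is interior, so the needed uniform-up-to-boundary gradient bound for the irregular convex domains $A(s,v)$ is not actually established; that part looks patchable, but the $T_1$ step needs a genuinely different idea.
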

\begin{proof}
We will give a proof for the  bound on 
$\Var(\tilde h_\delta(u) - \tilde h_{\delta}(v))$.  The 
bound on $\Var(\eta_\delta(u) - \eta_{\delta}(v))$ follows 
from a similar argument. Our proof follows \cite[Appendix A]{RV14}, where a version of Lemma~\ref{lem-variance-continuity} is proved, with $|u- v|=O(\delta^2)$ and 
where both $u,v$
 are away from $\partial \mathbb V$. We will adapt their arguments and show that these restrictions are not needed.
Because of \eqref{eq-cov-tildeh}, estimates on $p_{\mathbb V}(t;u,v)$ will play an
important role.  Note that
$$p_{\mathbb V}(t; u, v) = \frac{e^{-\frac{|u-v|^2}{2t}}}{2\pi t} q(t; u, v) \mbox{ where } q(t; u, v) = P(B_s - \frac{s}{t}B_t + u + \frac{s}{t}(v-u)\in \mathbb V \mbox{ for all } s\leq t)\,.$$
Therefore, we get that
\begin{align*}
\pi \int_{\delta^2}^\infty (p_{\mathbb V}(t; u, u) - p_{\mathbb V}(t; u, v)) dt  \leq  \int_{\delta^2}^\infty \frac{1}{2t}( q(t; u, u) - q(t; u, v) )dt + \int_{\delta^2}^\infty \frac{1}{2t}q(t; u, v) (1-e^{-\frac{|u-v|^2}{2t}}) dt\,.
\end{align*}
Using the fact that $1 - e^{-x} \leq \sqrt{x}$ for $x>0$,  we get that 
\begin{equation}\label{eq-RV-1}
\int_{\delta^2}^\infty q(t; u, v) \frac{1}{2t}(1-e^{-\frac{|u-v|^2}{2t}}) dt \leq \int_{\delta^2}^\infty \frac{|u-v|}{t^{3/2}} dt \leq 2\frac{|u-v|}{\delta}\,.
\end{equation}
Let $\tau = \min\{s\leq t: B_s - \frac{s}{t}B_t + u \not\in \mathbb V\}$ and $\tau' = \min\{s\leq t: B_s - \frac{s}{t}B_t + u +\frac{s}{t} (v-u) \not
\in \mathbb V\}$ where we use the convention that $\min\emptyset = \infty$. Then we see that 
\begin{equation}\label{eq-tau's-decomposition}
|q(t; u, u) - q(t; u, v)| \leq P(\tau \leq t, \tau' >t) + P(\tau' \leq t, \tau >t).
\end{equation}
The two terms on the right hand side of \eqref{eq-tau's-decomposition} can be bounded in a similar way. As a result, we just bound 
$P(\tau \leq t, \tau' >t)$. To this end, we denote by $\mathbb L_1, \ldots, \mathbb L_4$ the four boundary segments of $\mathbb V$, and let $\tau_i = \min\{s\leq t: B_s - \frac{s}{t}B_t + u \in \mathbb L_i\}$ for $i=1, \ldots, 4$. It is clear that 
$$P(\tau \leq t, \tau' >t)  \leq \mbox{$\sum_{i=1}^4$} P(\tau_i \leq t, \tau' >t)\,.$$
Assume that $\mathbb L_1$ is the left boundary of $\mathbb V$. The event
$\tau_1\leq t$ implies that $\min_{s\in [0,t]} (B_s-\frac{s}{t} B_t)_1\leq -u_1$,
while the event $\tau'>t$ implies that  $\min_{s\in [0,t]} (B_s-\frac{s}{t} B_t)_1\geq -(1-\frac{s}{t}) u_1 - \frac{s}{t}v_1$ for some $0 < s \leq t$. Here we use the notation $w_1$ for the
$x$-coordinate of some $w\in \mathbb R^2$. Thus, the intersection is possible only if $v_1>u_1$, and in that case we obtain that
$$P(\tau_1 \leq t, \tau' >t) \leq P(\min_{s\in [0,t]} (B_s - \tfrac{s}{t}B_t)_1 \in [-v_1,-u_1])=
P(\max_{s\in [0,t]} (B_s - \tfrac{s}{t}B_t)_1 \in [u_1,v_1])\,.$$
By the reflection principle,
for $v_1>u_1$ we have that 
\begin{eqnarray*}
&&P(\max_{s\in [0,t]} (B_s - \frac{s}{t}B_t)_1 \in [u_1,v_1])=
\int_{u_1}^{v_1}- \frac{d}{dx}\left(\frac{p(t;0,2x)}{p(t;0,0)} \right)
dx\\
&&\quad
=e^{-2u_1^2/t}-e^{-2v_1^2/t}\leq C\frac{|u_1-v_1|}{\sqrt{t}}.
\end{eqnarray*}
Repeating this argument for $i=1,\ldots,4$, we conclude that
$$P(\tau \leq t, \tau' >t)\leq 4C\frac{|u-v|}{\sqrt{t}},$$
which gives, using 
\eqref{eq-tau's-decomposition}, that $q(t; u, u) - q(t; u, v) = O(|u-v|/\sqrt{t})$.
Therefore,  
$$\int_{\delta^2}^\infty \frac{1}{2t} [q(t; u, u) - q(t; u, v)] dt = O(\frac{|u-v|}{\delta}).$$
Combined with \eqref{eq-RV-1} we get that 
\begin{equation}
\label{eq-feb23}\pi \int_{\delta^2}^\infty [p_{\mathbb V}(t; u, u) - p_{\mathbb V}(t; u, v)] dt  = O(\frac{|u-v|}{\delta})\,.
\end{equation}
Interchanging the roles of $u$ and $v$, we obtain the same estimate for 
$\pi \int_{\delta^2}^\infty [p_{\mathbb V}(t; v, v) - p_{\mathbb V}(t; u, v)] dt $.
Recalling \eqref{eq-cov-tildeh}, we have 
$$\Var (\tilde h_\delta(u) - \tilde h_\delta(v)) = \pi \int_{\delta^2}^\infty [p_{\mathbb V}(t; u, u) - p_{\mathbb V}(t; u, v) ]dt + \pi \int_{\delta^2}^\infty[ p_{\mathbb V}(t; v, v) - p_{\mathbb V}(t; u, v) ]dt \,,$$
and substituting \eqref{eq-feb23},
we complete the proof of the lemma.
\end{proof}
\begin{lemma}\label{lem-continuity-h-eta}
Uniformly in $\delta>0$,  $a>0$ and $k\geq 1$, we have
\begin{align*}
&\sup_{u\in \mathbb V}\P\left(\max_{v: |v-u| \leq k \delta} |\eta_\delta(v) - \eta_\delta(u)| \geq a \log (k+1)\right)= O(1) e^{-\Omega(a^2)}\,.\\
&\E \max_{u, v\in \mathbb V, |u-v|\leq \delta} \left(|\tilde h_\delta(u) - \tilde h_\delta(v)| + |\eta_\delta(v) - \eta_\delta(u)|\right) = O(\sqrt{\log \delta^{-1}}).
\end{align*}
\end{lemma}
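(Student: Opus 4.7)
For the first bound, the plan is to exploit the independence across scales in the white noise representation \eqref{eq:WND_decomposition-approximation} by decomposing $\eta_\delta = \eta_\delta^{k\delta} + \eta_{k\delta}$ as a sum of two independent pieces, since the integrals run over the disjoint $s$-ranges $(\delta^2,(k\delta)^2)$ and $((k\delta)^2,\infty)$. For the coarse piece $\eta_{k\delta}$ on $B_{k\delta}(u)\cap\mathbb V$, Lemma~\ref{lem-variance-continuity} applied at scale $k\delta$ gives $\Var(\eta_{k\delta}(v)-\eta_{k\delta}(u))=O(1)$, and Fernique's criterion (Lemma~\ref{lem-ferniquecriterion}) applied to a box of side $2k\delta$ containing this set yields $\E\max_v|\eta_{k\delta}(v)-\eta_{k\delta}(u)|=O(1)$.

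For the fine piece $\eta_\delta^{k\delta}$, I would compute from \eqref{eq:WND_decomposition-approximation} and the semigroup identity (cf.\ \eqref{eq-cov-tildeh}) that $\Var(\eta_\delta^{k\delta}(v)-\eta_\delta^{k\delta}(u)) = O(\log(|u-v|/\delta \vee 1))=O(\log(k+1))$ for $v\in B_{k\delta}(u)$; this is the standard log-covariance structure for a log-correlated field on scales $[\delta,k\delta]$, with the ball truncation in \eqref{eq:WND_decomposition-approximation} and the Dirichlet boundary producing only $O(1)$ corrections. Dudley's entropy integral against the intrinsic metric $d(v,w)=\sqrt{\Var(\eta_\delta^{k\delta}(v)-\eta_\delta^{k\delta}(w))}$, whose $\epsilon$-covering number of $B_{k\delta}(u)$ is of order $k^2 e^{-\epsilon^2}$, then yields $\E\max_v|\eta_\delta^{k\delta}(v)-\eta_\delta^{k\delta}(u)|=O(\log(k+1))$, matching the standard bound for the maximum of a log-correlated Gaussian field on a box of $O(k^2)$ unit cells.

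Combining the two contributions, $G_v:=\eta_\delta(v)-\eta_\delta(u)$ on $B_{k\delta}(u)$ satisfies $\sup_v\Var(G_v)=O(\log(k+1))$ and $\E\max_v|G_v|=O(\log(k+1))$. Applying Borell-TIS (Lemma~\ref{Lem.concentration}) to the symmetrized family $\{\pm G_v\}$ yields $\P(\max_v|G_v|\geq C_1\log(k+1)+\lambda)\leq 2\exp(-c\lambda^2/\log(k+1))$ for some absolute $C_1>0$; substituting $\lambda=(a-C_1)\log(k+1)$ and using $\log(k+1)\geq\log 2$ gives $\P(\max_v|G_v|\geq a\log(k+1))\leq C'e^{-c'a^2}$ for $a\geq 2C_1$, while for $a\leq 2C_1$ the bound is trivially absorbed into the $O(1)$ prefactor.

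For the second bound, cover $\mathbb V$ by $O(\delta^{-2})$ sub-boxes of side $2\delta$ whose centers form a $\delta$-grid, so that every pair $(u,v)\in\mathbb V^2$ with $|u-v|\leq\delta$ lies in a common sub-box $B_i$. Within each $B_i$, Lemma~\ref{lem-variance-continuity} and Fernique's criterion give $\E\max_{u,v\in B_i}(|\tilde h_\delta(u)-\tilde h_\delta(v)|+|\eta_\delta(u)-\eta_\delta(v)|)=O(1)$, with sub-Gaussian concentration of variance $O(1)$ by Borell-TIS; a union bound over the $O(\delta^{-2})$ sub-boxes then produces the claimed $O(\sqrt{\log\delta^{-1}})$ bound. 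The main technical obstacle is the log-covariance estimate for $\eta_\delta^{k\delta}$: the truncated heat kernel in \eqref{eq:WND_decomposition-approximation} must be compared uniformly with the whole-plane one, adapting the Green-function estimates already used in the proof of Lemma~\ref{lem-variance-continuity}.
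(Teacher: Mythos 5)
Your proof of the second bound matches the paper's exactly: local Fernique at scale $\delta$ via Lemma~\ref{lem-variance-continuity}, sub-Gaussian concentration via Lemma~\ref{Lem.concentration}, and a union bound over a $\delta$-net of $\mathbb V$. For the first bound, however, you take a genuinely different route. The paper does not perform a coarse/fine decomposition of the field $\eta_\delta$. Instead, it writes
\[
\max_{v:|v-u|\leq k\delta}|\eta_\delta(v)-\eta_\delta(u)|
\;\leq\;
\max_{v}\max_{x:|x-v|\leq\delta}|\eta_\delta(v)-\eta_\delta(x)|
\;+\;
\max_{v}|\eta_\delta(v)-\eta_\delta(u)|
\]
where the outer max runs over the $O(k^2)$ grid points $v\in\mathfrak C_{\lceil\log_2\delta^{-1}\rceil+1}$ within distance $k\delta$ of $u$. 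The local term is handled by the paper's Fernique criterion (Lemma~\ref{lem-ferniquecriterion}) together with Lemma~\ref{Lem.concentration}; the grid term is handled by a direct Gaussian tail bound using the fact that $\Var(\eta_\delta(v)-\eta_\delta(u))=O(\log(k+1))$, followed by a union bound over the $O(k^2)$ points. Your route---splitting $\eta_\delta=\eta_\delta^{k\delta}+\eta_{k\delta}$ into independent fine and coarse pieces, then bounding the fine piece's maximum via a Dudley entropy integral against the log-correlated intrinsic metric---is sound, but it brings in machinery the paper deliberately avoids: the stated Lemma~\ref{lem-ferniquecriterion} only gives an $O(1)$ bound under the hypothesis $\E(G_v-G_u)^2\leq|u-v|/b$, which the fine piece $\eta_\delta^{k\delta}$ on a ball of radius $k\delta$ does \emph{not} satisfy (its incremental variance behaves like $\log(|u-v|/\delta\vee 1)$, which at small separations exceeds $|u-v|/(2k\delta)$). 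So to run your chaining argument you need the full Dudley-type entropy bound, which is not among the paper's stated tools, and you must verify the log-covariance estimate for the truncated process $\eta_\delta^{k\delta}$, which you flag as the main obstacle but do not carry out. The paper's grid-plus-local-Fernique argument avoids both issues and stays within its own toolkit; your argument, if the log-covariance and the Dudley step are filled in, is a more ``one-shot'' chaining bound. Both are correct; the trade-off is that yours is conceptually cleaner but leans on machinery the paper does not develop.
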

\begin{proof}
By Lemma~\ref{lem-variance-continuity}, we can apply Lemma~\ref{lem-ferniquecriterion} and deduce that for all $u\in \mathbb V$
$$\E \max_{v\in \mathbb V: |u-v|\leq \delta} \left(|\tilde h_\delta(u) - \tilde h_\delta(v)| + |\eta_\delta(v) - \eta_\delta(u)|\right) = O(1)\,.$$
Combined with Lemma~\ref{Lem.concentration}, this yields the second inequality by considering a union bound over $u\in \mathfrak C_{\lceil \log_2 \delta^{-1} \rceil +1}$ (recall the definition of  $\mathfrak C_i$ in \eqref{eq-def-mathfrak-C}). In addition, by a similar argument, we get that  uniformly in $a,k,\delta$,
\begin{equation}\label{eq-boring-1}
\sup_{u\in \mathbb V}\P\left(\max_{v: v\in \mathfrak C_{\lceil \log_2 \delta^{-1} \rceil +1}, |v-u| \leq k\delta} \max_{x: |x-v|\leq \delta}   |\eta_\delta(v) - \eta_\delta(x)| \geq a \log (k+1)/2\right) \leq e^{-\Omega(a^2)}\,.
\end{equation}
Since $\Var (\eta_\delta(v) - \eta_\delta(u)) = O(\log (k+1))$ for all $|v-u| \leq k\delta$, a union bound yields that uniformly in the same parameters,
$$\sup_{u\in \mathbb V}\P\left(\max_{v: v\in \mathfrak C_{\lceil \log_2 \delta^{-1} \rceil +1}, |v-u| \leq k\delta}  |\eta_\delta(v) - \eta_\delta(u)| \geq a \log (k+1)/2\right) \leq O(1) e^{-\Omega(a^2)}\,.$$
Combined with \eqref{eq-boring-1} and the fact that 
\begin{align*}
\max_{v: |v-u| \leq k \delta} |\eta_\delta(v) - \eta_\delta(u)| \leq &\max_{v: v\in \mathfrak C_{\lceil \log_2 \delta^{-1} \rceil +1}, |v-u| \leq k\delta} \max_{x: |x-v|\leq \delta}   |\eta_\delta(v) - \eta_\delta(x)| \\
&+ \max_{v: v\in \mathfrak C_{\lceil \log_2 \delta^{-1} \rceil +1}, |v-u| \leq k\delta}  |\eta_\delta(v) - \eta_\delta(u)|\,,
\end{align*}
this yields the first inequality of the lemma. 
\end{proof}

Recall the definition of $\mathfrak C_i$ in \eqref{eq-def-mathfrak-C}. By a simple union bound, we get that
$$\E \max_{v\in \mathfrak C_{\lfloor \log_2 \delta^{-1} \rfloor}} \tilde h_\delta(v)\leq 2 \log \delta^{-1} + O(1) \mbox{ for all } \delta>0\,.$$
Combined with Lemma~\ref{Lem.concentration} and Lemma~\ref{lem-continuity-h-eta}, we obtain that uniformly in $\lambda > 0$ and small $\delta>0$,
\begin{equation}\label{eq-max-white-noise-process}
\P(\max_{v\in \mathbb V} \tilde h_\delta(v) \geq 3 \log \delta^{-1} + \lambda) \leq O(1) e^{-\frac{\lambda^2}{2\log \delta^{-1}+O(1)}}\,.
\end{equation}

\begin{lemma}\label{lem-tilde-h-eta}
 We have $\P( \max_{v\in \mathbb V} \max_{j\geq 0} |\tilde h_{2^{-j}}(v) - \eta_{2^{-j}}(v)|\geq \lambda)  \leq O(1)e^{-\Omega(\lambda^2)}$.
\end{lemma}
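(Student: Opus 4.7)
The plan is to decompose the difference $\tilde h_{2^{-j}} - \eta_{2^{-j}}$ scale-by-scale into independent bands and bound each band using the super-exponential smallness of its pointwise variance. Define, for each $k\geq 0$,
$$G^{(k)}(v) := \tilde h^{2^{-k}}_{2^{-k-1}}(v) - \eta^{2^{-k}}_{2^{-k-1}}(v) = \sqrt{\pi}\int_{\mathbb V \times (2^{-2k-2},\, 2^{-2k})} \bigl[p_{\mathbb V}(s/2;v,w) - p_{\mathbb V \cap B_{r(s)}(v)}(s/2;v,w)\bigr]\, W(dw, ds),$$
where $r(s) = 4^{-1}s^{1/2}|\log s^{-1}|\wedge 10^{-1}$. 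Because the bands of white noise are disjoint, the $\{G^{(k)}\}_{k\geq 0}$ are mutually independent, and $\tilde h_{2^{-j}}(v)-\eta_{2^{-j}}(v) = \sum_{k\geq j}G^{(k)}(v)$. The crude bound $\max_{v\in\mathbb V,\, j\geq 0}|\tilde h_{2^{-j}}(v)-\eta_{2^{-j}}(v)| \leq \sum_{k\geq 0}\sup_{v\in \mathbb V}|G^{(k)}(v)|$ then reduces the task to giving a subgaussian-in-$\lambda^2$ tail for the right-hand side.

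For the pointwise variance $\sigma_k^2 := \sup_v\Var G^{(k)}(v)$, using $[p_{\mathbb V}-p_{\mathbb V\cap B_{r(s)}(v)}]^2 \leq p_{\mathbb V}\cdot [p_{\mathbb V}-p_{\mathbb V\cap B_{r(s)}(v)}]$, the heat kernel bound $\|p_{\mathbb V}(s/2;v,\cdot)\|_\infty\leq 1/(\pi s)$, and the Brownian exit estimate $P^v[\tau_{B_{r(s)}(v)}<s/2]\leq Ce^{-cr(s)^2/s}$, one gets $\|p_{\mathbb V}(s/2;v,\cdot)-p_{\mathbb V\cap B_{r(s)}(v)}(s/2;v,\cdot)\|_2^2 \leq Cs^{-1}e^{-ck^2}$ on the band $s\in(2^{-2k-2},2^{-2k})$ for $k$ large; integrating in $s$ gives $\sigma_k^2 = O(e^{-ck^2})$. (Small $k$ give only the trivial bound $\sigma_k^2 = O(1)$, which suffices; and the large-$s$ tail when $r(s)$ is cut to $10^{-1}$ is controlled by the spectral decay $\|p_{\mathbb V}(s;v,\cdot)\|_2^2\leq Ce^{-\lambda_1 s}$.)

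The key quantitative step is the bound $M_k := \E\sup_v|G^{(k)}(v)| = O(k\sigma_k)$. The argument of Lemma~\ref{lem-variance-continuity}, applied separately to $\tilde h^{2^{-k}}_{2^{-k-1}}$ and $\eta^{2^{-k}}_{2^{-k-1}}$ and combined by the $L^2$ triangle inequality, gives $\Var(G^{(k)}(u)-G^{(k)}(v))\leq C|u-v|\cdot 2^k$. A direct application of Fernique's criterion (Lemma~\ref{lem-ferniquecriterion}) on boxes of side $2^{-k}$ yields only $M_k = O(1)$, which is not summable in $k$. Instead, I would rescale by $\sigma_k$: apply Fernique to the unit-variance field $G^{(k)}/\sigma_k$ on boxes of the smaller side $b_k = \sigma_k^2/(C\cdot 2^k)$, obtaining $\E\sup_B|G^{(k)}|\leq C_F\sigma_k$ on each such box, and then union-bound over the $O(1/b_k^2)$ boxes covering $\mathbb V$ via Lemma~\ref{Lem.concentration}. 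Because $\log(1/b_k^2) = O(k^2)$, this gives $M_k = O(\sigma_k\cdot k) = O(ke^{-ck^2/2})$, so in particular $\sum_k M_k = O(1)$.

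To conclude, apply Lemma~\ref{Lem.concentration} once more: $\P[\sup_v|G^{(k)}(v)|\geq M_k + t]\leq 2e^{-t^2/(2\sigma_k^2)}$ for each $k$. Choosing $t_k = \sigma_k\sqrt{c_0\lambda^2+k}$ and summing over $k$,
$$\P\Bigl[\sum_k\sup_v|G^{(k)}(v)|\geq \sum_k(M_k+t_k)\Bigr] \leq \sum_k 2e^{-(c_0\lambda^2+k)/2} = O(e^{-c_0\lambda^2/2});$$
since $\sum_k(M_k+t_k) \leq C_1 + C_2\lambda$ (using $\sum_k\sigma_k\sqrt{k}<\infty$), adjusting $c_0$ and rescaling $\lambda$ finishes the proof. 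The step I expect to be most delicate is the $M_k = O(k\sigma_k)$ bound: a direct Fernique estimate is only $O(1)$ and thus useless for the sum over $k$, so the trick is to rescale the field to unit pointwise variance first, and only then apply Fernique on correspondingly shrunken boxes.
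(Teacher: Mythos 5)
Your proposal follows essentially the same route as the paper's proof: decompose into independent annular bands, exploit the superexponential decay $\sigma_k^2 = O(e^{-\Omega(k^2)})$ of the pointwise band variance, run Fernique's criterion on boxes shrunk to absorb that decay, union-bound and concentrate via Lemma~\ref{Lem.concentration}, and sum over scales. The paper's $\Delta_i$ are your $G^{(k)}$ (up to an index shift), and the paper likewise rescales before applying Fernique — it uses boxes of side $\asymp i^{-4}2^{-i}$ and budgets $\lambda(i+1)^{-2}$ per band, where you use side $\asymp \sigma_k^2 2^{-k}$ and budget $M_k + t_k$; both choices are summable and the mechanism is identical. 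One small slip to fix: since $G^{(k)}=\tilde h^{2^{-k}}_{2^{-k-1}}-\eta^{2^{-k}}_{2^{-k-1}}$ is the band $s\in(2^{-2k-2},2^{-2k})$, the decomposition is $\tilde h_{2^{-j}}-\eta_{2^{-j}}=(\tilde h_1-\eta_1)+\sum_{k=0}^{j-1}G^{(k)}$, not $\sum_{k\geq j}G^{(k)}$, and your crude bound must therefore also carry the coarse-scale summand $\tilde h_1-\eta_1$ (the paper's $\Delta_0$), a bounded-variance Gaussian field treated identically.
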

\begin{proof}
We may and will assume that $\lambda>C$ for some constant $C$ large enough. 
For $i \geq 1$, write $\Delta_i(v) = \tilde h_{2^{-i}}^{2^{-i+1}}(v) - \eta_{2^{-i}}^{2^{-i+1}}(v)$ and write $\Delta_0(v) = \tilde h_1(v) - \eta_1(v)$. 
Let $\tau_i=\min\{t>0: |B_t-t2^{2i}B_{2^{-2i}}|_\infty \geq i2^{-i}/8\}$ where $\{ B_t \}$ is a standard Brownian motion.
Uniformly in  $v\in \mathbb V$ and $i$ we have 
\begin{equation}\label{eq-variance-truncation}
\Var \Delta_i(v) = O(1) P(\tau_i\leq 2^{-2i})=
O(1) e^{-\Omega(i^2)}\,.
\end{equation}
By Lemma~\ref{lem-variance-continuity} and \eqref{eq-variance-truncation}, we get that uniformly in $u, v\in \mathbb V$
\begin{equation}
\label{eq-feb25}
\Var (\Delta_i(v) - \Delta_i(u)) \leq  O(1) \min \{ e^{-\Omega((i+1)^2)},  2^{i} |u-v| \} \,.
\end{equation}
Combined with Lemmas~\ref{Lem.concentration} and \ref{lem-ferniquecriterion}, this gives that
\begin{equation}\label{eq-boring-2}
\P(\max_{u\in \mathfrak C_{ i+ \lfloor 4\log_2 i\rfloor}} \max_{v: |v-u| \leq 4i^{-4} \cdot 2^{-i}} |\Delta_i(u) - \Delta_i(v)| \geq \lambda (i+1)^{-2}) \leq O(1)e^{-\Omega(1)\lambda^2 (i+1)^2} .
\end{equation}
In addition, by \eqref{eq-variance-truncation} and a  union bound, we get that
\begin{equation}\label{eq-boring-3}
\P(\max_{u\in \mathfrak C_{ i+ \lfloor 4\log_2 i \rfloor}} |\Delta_i(u)| \geq \lambda (i+1)^{-2}) \leq O(1)e^{-\Omega(1)\lambda^2 (i+1)^2}\,.
\end{equation}
Note that for any $j\geq 0$ one has 
$$ \max_{v\in \mathbb V} \max_{j\geq 0} |\tilde h_{2^{-j}}(v) - \eta_{2^{-j}}(v)| \leq \sum_{i\geq 0} ( \max_{u\in \mathfrak C_{ i+  \lfloor 4\log_2 i \rfloor}} \max_{v: |v-u| \leq 4i^{-4} \cdot 2^{-i}} |\Delta_i(u) - \Delta_i(v)| +\max_{u\in \mathfrak C_{ i+ \lfloor 4\log_2 i \rfloor}} |\Delta_i(u)|)\,.$$
Combined with \eqref{eq-boring-2} and \eqref{eq-boring-3}, this completes the proof of the lemma.
\end{proof}

Define \begin{equation}
\label{eq:WND_decomposition-stationary}
\hat h_\delta^{\tilde \delta}(v) = \sqrt{\pi}\int_{\mathbb R^2 \times (\delta^2, \tilde \delta^2)}p(s/2; v, w)W(dw, ds)\,.
\end{equation}
The 
process $\hat h_\delta^{\tilde \delta}$ has better invariance properties than the process
$\tilde h_\delta^{\tilde \delta}$ from 
\eqref{eq:WND_decomposition}.
By a direct computation we obtain 
that for all $\tilde \delta > \delta > 0$ and $v, w\in \mathbb V$,
\begin{equation}\label{eq-hat-h-continuity}
\Var (\hat h_\delta^{\tilde \delta}(v) - \hat h_\delta^{\tilde \delta}(w)) \leq
 \int_{\delta^2}^{\infty}\frac{1 - e^{-\frac{|v - w|^2}{2s}}}{s}ds \leq \int_{\delta^2}^{\infty}\frac{|v - w|^2}{2s^2}ds \leq  \frac{|v - w|^2}{\delta^2}\,. 
\end{equation}

For $\xi >0$, write $\mathbb V^\xi = \{v\in \mathbb V: |v-\partial \mathbb V| \geq \xi\}$.
\begin{lemma}\label{lem-hat-h-eta}
For any $\xi>0$, there exists 
a constant $C = C(\xi)>0$ so that for all $\lambda>0$
\begin{equation}\label{eq-coupling-hat-h-eta-1}
\P( \max_{v\in \mathbb V^\xi} \max_{j\geq 0}
|\hat h_{2^{-j}}^{1}
(v) - \eta_{2^{-j}}(v)|\geq \lambda)  \leq Ce^{-C^{-1}\lambda^2} \,.
\end{equation}
\end{lemma}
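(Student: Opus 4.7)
My plan is to reduce the claim to the already-proved Lemma~\ref{lem-tilde-h-eta} (which handles $\tilde h - \eta$) by separately controlling the discrepancy between the whole-plane and Dirichlet white noise approximations. Using that $\tilde h_{2^{-j}}^1 = \tilde h_{2^{-j}} - \tilde h_1^\infty$, I will decompose
\begin{equation*}
\hat h_{2^{-j}}^1(v) - \eta_{2^{-j}}(v) = \bigl[\hat h_{2^{-j}}^1(v) - \tilde h_{2^{-j}}^1(v)\bigr] - \tilde h_1^\infty(v) + \bigl[\tilde h_{2^{-j}}(v) - \eta_{2^{-j}}(v)\bigr],
\end{equation*}
and bound the three contributions separately. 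The third bracket is covered directly by Lemma~\ref{lem-tilde-h-eta}. The middle term does not depend on $j$; I will compute $\Var(\tilde h_1^\infty(v)) = \pi \int_1^\infty p_{\mathbb V}(s;v,v)\,ds$, observe that this is uniformly bounded on $\mathbb V^\xi$ by exponential decay of the Dirichlet heat kernel on a bounded domain, and use the Green's function manipulations behind Lemma~\ref{lem-variance-continuity} to obtain the needed spatial modulus of continuity. Fernique's criterion (Lemma~\ref{lem-ferniquecriterion}) combined with Lemma~\ref{Lem.concentration} then yields the subgaussian tail of $\max_{v\in \mathbb V^\xi}|\tilde h_1^\infty(v)|$.

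The substantive work is the first bracket $A_j(v) := \hat h_{2^{-j}}^1(v) - \tilde h_{2^{-j}}^1(v)$. I will use $A_0 \equiv 0$ (empty integration range) and telescope $A_j = \sum_{i=1}^j \Delta_i$, where each $\Delta_i$ is the white noise integral of $p(s/2;v,\cdot) - p_{\mathbb V}(s/2;v,\cdot)\mathbf{1}_{\mathbb V}$ over the annulus $s\in(2^{-2i},2^{-2i+2})$. Expanding the squared $L^2$ norm of the integrand via the three Chapman--Kolmogorov pairings (using reversibility of $p_{\mathbb V}$ and the identity $\int_{\mathbb V} p(s/2;v,w)p_{\mathbb V}(s/2;v,w)\,dw = p(s;v,v)\,\P^v(B^{\mathrm{br}}_{[0,s/2]}\subset \mathbb V)$ for the Brownian bridge $B^{\mathrm{br}}$ from $v$ to $v$ of duration $s$) will yield
\begin{equation*}
\Var(\Delta_i(v)) \le \pi \int_{2^{-2i}}^{2^{-2i+2}} p(s;v,v)\cdot 4\,\P^v\bigl(B^{\mathrm{br}}\text{ exits }\mathbb V \text{ during }[0,s/2]\bigr)\,ds.
\end{equation*}
For $v \in \mathbb V^\xi$ and $s\le 1$, a standard reflection/Gaussian tail estimate for the bridge bounds the exit probability by $Ce^{-c\xi^2/s}$, so $\Var(\Delta_i(v)) \le C(\xi)e^{-c\xi^2 4^i}$: the variance decays doubly exponentially in $i$. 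A parallel computation for $\Delta_i(v) - \Delta_i(u)$, combined with the kernel estimates already developed in the proof of Lemma~\ref{lem-variance-continuity}, will supply a spatial modulus of continuity with a similarly rapidly decaying prefactor.

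The main obstacle, though essentially technical, is upgrading these pointwise variance estimates to the claimed uniform tail on $\max_{v\in \mathbb V^\xi}\max_{j\ge 0}|A_j(v)|$. I plan to apply Fernique's criterion (Lemma~\ref{lem-ferniquecriterion}) to a suitable rescaling of each $\Delta_i$ to obtain $\E\max_{v\in \mathbb V^\xi}|\Delta_i(v)| = O(\sqrt{i}\,e^{-c'\xi^2 4^i/2})$, combine this with the Gaussian concentration bound of Lemma~\ref{Lem.concentration}, and invoke the union bound $\P(\sum_{i\ge 1}\max_v|\Delta_i| > \lambda) \le \sum_{i\ge 1}\P(\max_v|\Delta_i| > c\lambda/i^2)$. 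The doubly exponential decay of the variance in $i$ guarantees that each summand contributes at most $Ce^{-c''\lambda^2}$ and that the series in $i$ is summable, producing a subgaussian tail for $A_j$ uniformly in $j$ and $v$. Inserting this together with the bound on $\tilde h_1^\infty$ and Lemma~\ref{lem-tilde-h-eta} into the decomposition above will complete the proof.
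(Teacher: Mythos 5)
Your proof is correct, and it works; it also rests on essentially the same machinery as the paper's proof (scale-by-scale telescoping, pointwise and spatial variance bounds with rapid decay in the scale index, Fernique plus Gaussian concentration at each scale, then a union bound with thresholds proportional to $\lambda/i^2$). The genuine difference is the decomposition. The paper telescopes $\hat h_{2^{-j}}^{1}-\eta_{2^{-j}}$ directly as $\sum_{i\ge 1}\bigl(\hat h_{2^{-i}}^{2^{-i+1}}-\eta_{2^{-i}}^{2^{-i+1}}\bigr)-\eta_1$ and bounds $\Var\bigl(\Delta_i(v)-\Delta_i(u)\bigr)\le O(1)\min\bigl(e^{-\Omega(i^2)},\,2^i|u-v|\bigr)$ uniformly on $\mathbb V^\xi$, exactly as in \eqref{eq-feb25}, so that the whole-plane vs.\ $\mathbb V$-Dirichlet discrepancy is absorbed into the same $e^{-\Omega(i^2)}$ envelope that comes from the $\eta$-truncation ball of radius $\sim i2^{-i}$. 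You instead insert $\tilde h$ as an intermediary, treat $\tilde h_{2^{-j}}-\eta_{2^{-j}}$ by invoking Lemma~\ref{lem-tilde-h-eta} as a black box, and prove the stronger doubly-exponential decay $\Var(\Delta_i(v))\le C(\xi)e^{-c\xi^24^i}$ for the plane/Dirichlet piece by the Chapman--Kolmogorov expansion and the bridge exit estimate. Your route is more modular and explicitly separates the two sources of error (it also quantifies that the plane/Dirichlet mismatch is much smaller than the $\eta$-truncation error), at the cost of a bit of bookkeeping around the leftover $\tilde h_1$. The paper's version is shorter because it never needs the doubly-exponential estimate; the $e^{-\Omega(i^2)}$ decay dominates regardless. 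One small imprecision in your last paragraph: after the union bound $\P(\sum_i\max_v|\Delta_i|>\lambda)\le\sum_i\P(\max_v|\Delta_i|>c\lambda/i^2)$ it is not that each summand is at most $Ce^{-c''\lambda^2}$; rather the $i=1$ term is $O(e^{-c''\lambda^2})$ and the later terms are much smaller because $\sigma_i^2$ decays superexponentially in $i$, so it is the sum (not each term) that is $O(e^{-c'''\lambda^2})$. This is the same convergence argument used implicitly in Lemma~\ref{lem-tilde-h-eta} via \eqref{eq-boring-2} and \eqref{eq-boring-3}, so no new idea is required, but the wording should be tightened.
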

\begin{proof}
The proof is very similar to that  of Lemma \ref{lem-tilde-h-eta}. Define $\Delta_i(v) = \hat h_{2^{-i}}^{2^{-i+1}}(v) - \eta_{2^{-i}}^{2^{-i+1}}(v)$ for $i\geq 1$ and write $\Delta_0(v) = \eta_1(v)$. Similarly to \eqref{eq-feb25}, we
obtain that uniformly in $i,u,v\in \mathbb V^\xi$,
$$\Var (\Delta_i(v)-\Delta_i(u))\leq O(1) \min(e^{-\Omega(i^2)}, 2^{i}|u-v|),$$
where the $O(1)$ and the $\Omega$ terms depend on $\xi$ only.
Thus, following the derivation as in  Lemma \ref{lem-tilde-h-eta}, we obtain  an analogue of \eqref{eq-feb25} and \eqref{eq-boring-2} in our setting, and then  conclude the proof of the current lemma.
\end{proof}

\begin{lemma}\label{lem-scaling-coupling}
For $0<\xi, \kappa_2\leq \kappa_1<1$, let $\mathbb V_1, \mathbb V_2 \subseteq 
\mathbb V^\xi$ be two boxes with side lengths $\kappa_1$ and $\kappa_2$ respectively. Let $\theta:\mathbb V_1\to \mathbb V_2$ be such that
$\theta v = av+b$ for $a=\kappa_2/\kappa_1$ and some $b \in \mathbb R$ so that $\theta$ maps $\mathbb V_1$ onto $\mathbb V_2$.
Then, there exists a coupling of $\zeta^{(1)} = \{\zeta_{\delta}^{(1)}(v): v\in \mathbb V_1, 0 < \delta \le 1 \}$ and $\zeta^{(2)} = \{\zeta_{a\delta}^{(2)}(v): v\in \mathbb V_2, 0 < \delta \le 1 \}$
such that the following hold.

\noindent (1) The marginal laws of $\zeta^{(1)}$ and $\zeta^{(2)}$ are respectively the same as $\{\eta_{\delta}(v): v\in \mathbb V_1, 0 < \delta \le 1 \}$ and $\{\eta_{a \delta}(v): v\in \mathbb V_2, 0 < \delta \le 1 \}$.

\noindent (2) There exists $C = C(\xi, \kappa_1, \kappa_2)>0$ such that
$$\P(\max_{v\in \mathbb V_1} \max_{j\geq 0} |\zeta_{2^{-j}}^{(1)}(v) - 
\zeta_{a 2^{-j}}^{(2)}(\theta v)| \geq \lambda) \leq C e^{-C^{-1}\lambda^2}\,.$$
\end{lemma}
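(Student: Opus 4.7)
The plan is to couple via the stationary process $\hat h$ on $\mathbb R^2$: since $\hat h$ is both translation- and scale-invariant in law, one can build a joint realization of the two processes by applying a deterministic scaling to the underlying white noise, and then transfer the coupling back to $\eta$ using Lemma~\ref{lem-hat-h-eta}.

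Concretely, I would start with a white noise $\tilde W^{(1)}$ on $\mathbb R^2 \times \mathbb R^+$ and let $\tilde W^{(2)}$ be its deterministic pullback under the space-time dilation $(z,t)\mapsto(\theta z,a^2t)$, where $a=\kappa_2/\kappa_1$. A direct change-of-variables computation using Brownian scaling shows $\tilde W^{(2)}$ is again a white noise on $\mathbb R^2\times\mathbb R^+$, and the stationary processes $\hat h^{(i)}$ built from $\tilde W^{(i)}$ satisfy the identity
\[
\hat h^{(1),\,a\tilde\delta}_{a\delta}(\theta v) \;=\; \hat h^{(2),\,\tilde\delta}_\delta(v) \qquad\text{for all }v\in\mathbb V_1,\ 0<\delta<\tilde\delta.
\]
I then define $\zeta^{(1)}_\delta(v):=\eta_\delta(v)$ built from the restriction of $\tilde W^{(2)}$ to $\mathbb V\times\mathbb R^+$, and $\zeta^{(2)}_{a\delta}(w):=\eta_{a\delta}(w)$ built from the restriction of $\tilde W^{(1)}$ to $\mathbb V\times\mathbb R^+$; the marginal laws required in (1) are then automatic.

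Next, Lemma~\ref{lem-hat-h-eta} applied to $\tilde W^{(1)}$ and $\tilde W^{(2)}$ separately, together with $\mathbb V_1,\mathbb V_2\subseteq\mathbb V^\xi$, gives that $\zeta^{(1)}_{2^{-j}}(v)$ is within $\lambda/3$ of $\hat h^{(2),1}_{2^{-j}}(v)$ and $\zeta^{(2)}_{a\cdot 2^{-j}}(\theta v)$ is within $\lambda/3$ of $\hat h^{(1),1}_{a\cdot 2^{-j}}(\theta v)$, uniformly in $j\ge 0$, each with probability at least $1-Ce^{-\lambda^2/C}$. Specializing the scaling identity above to $\tilde\delta=1/a$ yields
\[
\hat h^{(1),1}_{a\cdot 2^{-j}}(\theta v)-\hat h^{(2),1}_{2^{-j}}(v) \;=\; \hat h^{(2),1/a}_1(v),
\]
a residual quantity that is independent of $j$. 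By the triangle inequality it then suffices to control $\sup_{v\in\mathbb V_1}\bigl|\hat h^{(2),1/a}_1(v)\bigr|$ with Gaussian tails.

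This last supremum is handled by standard Gaussian concentration: $\hat h^{(2),1/a}_1$ is a centered Gaussian field on the compact box $\mathbb V_1$ with pointwise variance $\log(\kappa_1/\kappa_2)$, and by \eqref{eq-hat-h-continuity} its increment variance is bounded by $|u-v|^2$, so (after a trivial rescaling) Fernique's criterion (Lemma~\ref{lem-ferniquecriterion}) bounds $\mathbb E\sup_{\mathbb V_1}\bigl|\hat h^{(2),1/a}_1\bigr|$ by a constant depending only on $\kappa_1,\kappa_2$, and the Borell--Sudakov--Tsirelson inequality (Lemma~\ref{Lem.concentration}) upgrades this to the desired tail $\mathbb P(\sup>\lambda/3)\le Ce^{-\lambda^2/C}$. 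Combining the three tail estimates delivers the lemma with $C=C(\xi,\kappa_1,\kappa_2)$. The only real bookkeeping hurdle is verifying the white-noise scaling algebra and identifying the difference of the two stationary processes as a scale-$1$ to scale-$1/a$ integral of $\hat h^{(2)}$; everything else is routine Gaussian concentration.
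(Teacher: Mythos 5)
Your proposal is correct and follows essentially the same route as the paper. Both proofs transfer $\eta$ to the stationary process $\hat h$ via Lemma~\ref{lem-hat-h-eta}, exploit the translation/scaling invariance of $\hat h$ to couple the two boxes, and control the resulting scale-mismatch remainder by Fernique's criterion plus Borell--Sudakov--Tsirelson. The only (cosmetic) differences: you make the coupling concrete by pulling back the white noise under the space-time dilation, so the identity $\hat h^{(1),a\tilde\delta}_{a\delta}(\theta v)=\hat h^{(2),\tilde\delta}_\delta(v)$ holds for all scales simultaneously, whereas the paper imposes equality only at the scales $2^{-j}$ and then invokes a Gaussian coupling for the rest; and your residual is $\hat h^{1/a}_1(v)$ rather than the paper's $\hat h^1_a(\theta v)$, but these have the same Gaussian law and are handled identically.
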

\begin{proof}
By \eqref{eq-hat-h-continuity} we see that $\Var(\hat h_{a}^1(u) - \hat h_a^1(v)) = O(|u-v|)$  for all $u, v\in \mathbb V^\xi$ where the $O(1)$ depends only on $(\xi, a)$.  In addition, by a straightforward computation we get that $\Var (\hat h_a^1 (u)) = O(1)$.
Therefore, Lemmas~\ref{Lem.concentration} and \ref{lem-ferniquecriterion} imply that 
$$\P(\max_{u\in \mathbb V^\xi} |\hat h_a^1(u)| \geq \lambda) \leq Ce^{-C^{-1}\lambda^2}\,,$$
where again $C$ is a positive constant depending on $(\xi, \kappa_1, \kappa_2)$.
Combined with \eqref{eq-coupling-hat-h-eta-1}, this gives that
\begin{equation}\label{eq-coupling-hat-h-eta-2}
\P( \max_{v\in \mathbb V_1} \max_{j\geq 0} 
|\hat h_{ a 2^{-j}}^{a}(\theta v) - \eta_{a2^{-j}}(\theta v)|\geq \lambda)  \leq Ce^{-C^{-1}\lambda^2}\,.
\end{equation}
By the translation invariance and scaling invariance property of the
$\hat h$-process we see that 
$$\{\hat h_{2^{-j}}^1(v): v\in \mathbb V_1, j\geq 0\}\mbox{ has the same law as } \{\hat h_{ a 2^{-j}}^a (\theta v): v\in \mathbb V_1, j\geq 0\}\,.$$
Therefore, we can construct a coupling of $((\hat h^{(1)}, \zeta^{(1)}), (\hat h^{(2)}, \zeta^{(2)}))$ such that 
\begin{itemize}
\item $(\hat h^{(1)})_{2^{-j}}^{1}(v) =  (\hat h^{(2)})_{ a 2^{-j}}^a (\theta v)$ for all $v\in \mathbb V_1, j\geq 0$;
\item  for $i \in\{1, 2\}$ the pair $(\hat h^{(i)}, \zeta^{(i)})$ is identically distributed as the pair $(\hat h, \eta)$.
\end{itemize}
Combined with \eqref{eq-coupling-hat-h-eta-1} (noting that
$\mathbb V_1 \subseteq \mathbb V^\xi$) and \eqref{eq-coupling-hat-h-eta-2}, this completes the proof of the lemma.
\end{proof}

\subsection{Liouville quantum gravity} \label{sec:LQG}

For any $\gamma < 2$, $M_\gamma$ is defined in  \cite{DS11} as the almost sure weak limit of the sequence of measures $M^\circ_{\gamma, n}$ given by
\begin{equation}\label{eq-limit-LQG}
M^\circ_{\gamma, n} = e^{\gamma h_{2^{-n}}(z)}2^{-n\gamma^2/2}\mathcal L_2(dz)\,,
\end{equation}
where $\mathcal L_2$ is the Lebesgue measure on $\mathbb R^2$. 
The LQG measure is by now well understood (see e.g., \cite{Kahane85, DS11, RV11, RV14, Shamov16, Berestycki17}), and in particular one has 
the existence of the limit in \eqref{eq-limit-LQG}, the uniqueness in law for the limiting measure via different approximation schemes, as well as a KPZ correspondence
through a uniformization of the random lattice seen as a Riemann surface.
In particular, it follows from martingale convergence that the sequence
\begin{equation}
\label{eq-03172018-a}
e^{\gamma \tilde h_{2^{-n}}(z)}2^{-n\gamma^2/2}\mathcal L_2(dz)
\end{equation}
almost surely weakly 
converges  to a Gaussian Multiplicative Chaos, 
and then it follows e.g. from \cite{DS11,Shamov16} that
the limit is precisely 
$M_\gamma$. 
This approximation of the LQG measure via the 
white noise decomposition will be particularly useful to us.

 Of particular relevance to the present article is  the following boundedness result on the positive and negative moments of the 
LQG measure, proved in \cite{Kahane85, RV10} 
(see also \cite[Theorems 2.11, 2.12]{RV14}). 
\begin{lemma}\label{lem-LQG-moment}
For any $0<p<4/\gamma^2$, we have $\E (M_\gamma(\mathbb V))^p<\infty$. For any non-empty Euclidean ball $A\subseteq \mathbb V$, we have $\E (M_\gamma(A))^p < \infty$ for all $p<0$.
\end{lemma}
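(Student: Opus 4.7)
The plan is to recognize this lemma as a restatement of classical moment bounds for Gaussian multiplicative chaos, due to Kahane~\cite{Kahane85} and quantified in Rhodes-Vargas~\cite{RV10, RV14}, so formally the proof of the paper will consist of citing those references. Let me nonetheless outline the strategy in each case, since the underlying ideas (scaling, multifractal moments, Kahane's convexity inequality) permeate the rest of the paper.

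For the positive-moment bound, $p \in (0, 4/\gamma^2)$: one works with the circle-average approximation $M_{\gamma,n}^\circ$ from~\eqref{eq-limit-LQG}, establishes boundedness of $\sup_n \E M_{\gamma,n}^\circ(\mathbb V)^p$, and passes to the limit by uniform integrability (obtained by controlling a slightly larger exponent $p' \in (p, 4/\gamma^2)$). For $p \leq 1$ the bound is immediate from Jensen, since $M_{\gamma,n}^\circ(\mathbb V)$ is a nonnegative martingale in $n$ with constant expectation $\mathcal L_2(\mathbb V)$. For $p \in (1, 4/\gamma^2)$, one partitions $\mathbb V$ into its four dyadic sub-squares, uses the approximate independence of coarse- and fine-scale contributions to write each sub-square mass as a log-normal factor times an independent copy of $M_{\gamma,n-1}^\circ(\mathbb V)$, and applies Minkowski's inequality to derive a geometric recursion of the form $\|M_{\gamma,n}^\circ(\mathbb V)\|_p \leq c(p,\gamma)\,\|M_{\gamma,n-1}^\circ(\mathbb V)\|_p$. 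A direct computation of the log-normal moment shows $c(p,\gamma) < 1$ precisely when $p < 4/\gamma^2$, which is the sharp threshold.

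For the negative-moment bound, $\E M_\gamma(A)^p < \infty$ for all $p < 0$: it suffices to show that $\P(M_\gamma(A) \leq e^{-\lambda})$ decays super-polynomially in $e^\lambda$. Pick a nested sequence of dyadic balls $A \supset B_0 \supset B_1 \supset \cdots$ with radii $2^{-k} r_0$; by monotonicity $M_\gamma(A) \geq M_\gamma(B_k)$, and the (approximate) scaling identity conditional on the coarse field gives at each step $\log M_\gamma(B_{k+1}) \geq \log M_\gamma(B_k) - c + \gamma Y_k$ on an event of positive conditional probability, where $Y_k$ is a centered Gaussian increment of bounded variance. Iterating yields $\log M_\gamma(B_k) \geq -c k + \gamma S_k$ on an event whose probability can be controlled via a concentration / Peierls-type estimate for a Gaussian random walk $S_k$; optimizing over $k$ gives a tail bound of the form $\P(M_\gamma(A) \leq e^{-\lambda}) \leq C\exp(-c\lambda^2/\log \lambda)$, which is far more than needed.

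The principal technical obstacle in both parts is the lack of exact scale invariance of the Dirichlet GFF: the scaling identity used in the recursions above is only approximate, and boundary effects from $\partial \mathbb V$ pollute the fine covariance structure. Kahane's convexity inequality is the standard tool that resolves both issues simultaneously: it allows one to dominate $L^p$-norms of the GFF chaos (for $p > 1$ and, by comparison with the reciprocal, $p < 0$) by the corresponding norms of an exactly star-scale-invariant log-correlated chaos, for which the scaling recursion closes cleanly. Once this comparison is in place, the recursive arguments above become rigorous and the lemma follows.
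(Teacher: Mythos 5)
The paper's proof of this lemma is precisely a citation to Kahane \cite{Kahane85} and Robert--Vargas/Rhodes--Vargas \cite{RV10,RV14}, as you correctly anticipate, so your approach matches the paper's exactly. Your supplementary sketch of the classical arguments — Jensen for $p\le 1$, the dyadic $L^p$ recursion with the $p<4/\gamma^2$ threshold coming from the log-normal factor, the nested-ball iteration yielding a super-polynomial small-ball tail for negative moments, and Kahane's convexity inequality to transfer all bounds to an exactly scale-invariant reference chaos — accurately reflects how those references establish the stated moment bounds.
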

We will need a slightly stronger version of Lemma~\ref{lem-LQG-moment}. Let $B\subseteq \mathbb V$ be a square or a Euclidean ball of diameter $\xi>0$, and  define
\begin{equation}\label{eq-def-tilde-M}
\begin{split}
\tilde M_{\gamma, \delta}(B) = \lim_{n\to \infty} \int_B e^{\gamma \tilde h_{2^{-n}}^{\delta}(z)}e^{-\frac{\gamma^2}{2}\Var (\tilde h_{2^{-n}}^{\delta}(z)) }\mathcal L_2(dz)\,,\\
\tilde M_{\gamma, \delta, \eta}(B) = \lim_{n\to \infty} \int_B e^{\gamma \tilde \eta_{2^{-n}}^{\delta}(z)}e^{-\frac{\gamma^2}{2}\Var (\tilde \eta_{2^{-n}}^{\delta}(z)) }\mathcal L_2(dz)\,,
\end{split}
\end{equation}
where the existence of the almost sure limit follows from the fact that  $\tilde M_{\gamma, \delta}(B)$ (respectively $\tilde M_{\gamma, \delta, \eta}(B )$) forms a sequence of martingales (c.f. \cite{RV14}). By a straightforward adaption of
the proof of Lemma~\ref{lem-LQG-moment}, we obtain that 
\begin{align}
\E (\xi^{-2}\tilde M_{\gamma, \delta}(B))^p \leq C_{\gamma, p} \mbox{ for all } 0<p<4/\gamma^2 \mbox{ and } \delta \leq \xi\,, \label{eq-LQG-positive-moment}\\
\E (\xi^{-2}\tilde M_{\gamma, \delta}(B))^p \leq C_{\gamma, p} \mbox{ for all } p<0 \mbox{ and } \delta \leq \xi\,,\label{eq-LQG-negative-moment}
\end{align}
where $C_{\gamma, p}$ is a positive constant depending only on $(\gamma, p)$.
(Tail estimates for $\tilde M_{\gamma,\delta,\eta}$ will be provided in the course
of the proof of Proposition \ref{prop-approximate-LGD} below.)

\subsection{Liouville Brownian motion}

To precisely define the 
Liouville Brownian Motion, we revisit \eqref{eq-def-PCAF}. We define the positive continuous additive functional (PCAF) with respect to $M_\gamma$
as
\begin{equation}\label{eq-def-PCAF-precise}
F (t) := \lim_{n\to \infty} \int_0^t e^{\gamma \tilde h_{2^{-n}}(X_s) - \frac {\gamma^2}2 \Var (\tilde h_{2^{-n}} (X_s))} d s, 
\end{equation}
where the limit exists almost surely due to \cite{GRV13, B14}. It is not hard
to check, using the a.s. convergence discussed in Section~\ref{sec:LQG},
that the limit in \eqref{eq-def-PCAF-precise} 
does not depend on whether circle averages or white noise approximations are used.
With $F(t)$ well-defined, the LBM is defined as $Y_t:=X_{F^{-1}(t)}$, and
the LHK  $\pe_t^\gamma (x,y)$ is then constructed in \cite{GRV14} as the density of the Liouville semigroup with respect to $M_\gamma$ as in \eqref{eq-def-LHK}. The LBM
and its heat kernel capture geometric information encoded in $M_\gamma$; for example,
the KPZ formula was derived from the Liouville heat kernel in 
\cite{FM09, BGRV14}. 

We will need the following lemma, which is essentially proved in \cite{MRVZ14}. We remark that in \cite{MRVZ14} the authors work with GFF on a torus but their proofs adapt to our case with minimal change and we omit further details on such adaption.
See also \cite{AK} for related estimates.
\begin{lemma}\label{lem-Liouville-hitting-probability}
For any constants $\alpha_1, \alpha_2>0$ there exists a constant
$\alpha_3=\alpha_3(\alpha_1,\alpha_2,\gamma)>0$ 
and random variables $c_1, c_2,c_3>0$ measurable with respect to the GFF, so that for all $t>0$,
$$\pe^\gamma_{t}(u, v) \leq c_3 (t^{-2\alpha_3} + 1) P_u(|Y_{t - t^{\alpha_3}} - v| < t^{\alpha_1})  + \frac{c_1}{t^{2\alpha_3 + 2}} e^{-c_2 t^{\alpha_2}} \mbox{ for all } |u-v|\leq t^{-\alpha_1}\,.$$
\end{lemma}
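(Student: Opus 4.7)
The plan is to use the Markov property of the LBM at the intermediate time $t-t^{\alpha_3}$ and split the resulting expectation into near and far contributions relative to the target $v$. Writing $s := t^{\alpha_3}$, the semigroup property gives
\begin{equation*}
\pe_t^\gamma(u,v) \;=\; E_u\!\left[\pe_s^\gamma(Y_{t-s}, v)\right] \;=\; E_u\!\left[\pe_s^\gamma(Y_{t-s}, v);\, |Y_{t-s}-v|<t^{\alpha_1}\right] + E_u\!\left[\pe_s^\gamma(Y_{t-s}, v);\, |Y_{t-s}-v|\geq t^{\alpha_1}\right].
\end{equation*}

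For the near term, I would replace $\pe_s^\gamma(Y_{t-s},v)$ by its supremum over $w\in B(v,t^{\alpha_1})$, and bound this supremum using the on-diagonal heat kernel estimates of \cite{MRVZ14} (which build on the spectral dimension $2$ result of \cite{RV14b}) together with spatial continuity of $\pe_s^\gamma(\cdot,v)$. The bound obtained is of the form $c_3(t^{-2\alpha_3}+1)$, where $c_3$ is a GFF-measurable random constant absorbing lower-order factors. Pulling it outside the indicator yields $c_3(t^{-2\alpha_3}+1)P_u(|Y_{t-s}-v|<t^{\alpha_1})$, which is exactly the main term in the claim.

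For the far term, I would invoke the off-diagonal upper bounds on $\pe_s^\gamma(w,v)$ proved in \cite{MRVZ14}, which in the regime $|w-v|\geq t^{\alpha_1}$ and $s=t^{\alpha_3}$ deliver super-polynomial (in $1/t$) decay. Combining this with a crude polynomial-in-$1/t$ worst-case pointwise bound on $\pe_s^\gamma$ (used when $Y_{t-s}$ happens to lie near $\partial\mathbb V$ or in an otherwise atypical region), the contribution of the far event is dominated by a quantity of the form $c_1 t^{-2\alpha_3-2}e^{-c_2 t^{\alpha_2}}$ — provided $\alpha_3$ is chosen sufficiently small relative to $\alpha_1$ and $\alpha_2$ so that the off-diagonal Gaussian-type exponent $|w-v|^{\theta_1}/s^{\theta_2}$ is at least $t^{\alpha_2}$ in order of magnitude on the far event.

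The principal obstacle is transferring the heat kernel estimates of \cite{MRVZ14}, which are stated for the GFF on a torus, to the Dirichlet GFF on $\mathbb V$. This requires a localization argument confining LBM trajectories to $\mathbb V^\xi$ for some small $\xi>0$ on the relevant time scales — the complementary event carrying a probability small enough to be absorbed harmlessly into the error term — together with minor adaptations of the Gaussian fluctuation estimates used in \cite{MRVZ14}. A secondary, routine bookkeeping step is to calibrate the \cite{MRVZ14} exponents against the free parameters $\alpha_1,\alpha_2,\alpha_3$ so that both the near and far bounds achieve simultaneously the precise prefactors appearing in the statement.
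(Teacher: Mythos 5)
Your proposal is correct and follows essentially the same route as the paper: Chapman–Kolmogorov at the intermediate time $t-t^{\alpha_3}$, a split into the near event $\{|Y_{t-t^{\alpha_3}}-v|<t^{\alpha_1}\}$ (where the worst-case on-diagonal bound $\sup_{x,y}\pe_s^\gamma(x,y)\le c_3(s^{-2}+1)$ from \cite[Lemma~4.3]{MRVZ14} is used) and the far event (where the off-diagonal bound of \cite[Theorem~4.2]{MRVZ14} gives the $c_1 t^{-2\alpha_3-2}e^{-c_2 t^{\alpha_2}}$ term after integrating out the $\pe_{t-t^{\alpha_3}}^\gamma(u,\cdot)$ factor). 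The only minor inefficiency is that the ``crude polynomial-in-$1/t$ pointwise bound'' you mention is already packaged into \cite[Theorem~4.2]{MRVZ14} as the prefactor, so it does not need to be supplied separately, and the torus-to-square transfer is handled in the paper by remarking that the arguments of \cite{MRVZ14} adapt with only minimal change.
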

\begin{proof}
With quantifiers as in the statement of the lemma, we have from
\cite[Theorem 4.2]{MRVZ14} that
\begin{equation}\label{eq-MRVZ-thm4.2}
\pe^\gamma_{t^{\alpha_3}}(x, y) \leq \frac{c_1}{t^{2\alpha_3 + 2}} e^{-c_2 t^{\alpha_2}} \mbox{ for all } |x-y|\geq t^{\alpha_1}\,.
\end{equation}
In addition, by \cite[Lemma 4.3]{MRVZ14}, 
$$\sup_{x,y\in \mathbb V} \pe^\gamma_{t}(x, y) \leq c_3 (t^{-2} + 1)\,. $$
The lemma follows from the last two displays and the decomposition
\begin{equation*}\pe^\gamma_{t}(u, v) = \int_{B(v, t^{\alpha_1})} \pe^\gamma_{t- t^{\alpha_3} }(u, x) \pe^\gamma_{t^{\alpha_3}}(x, v) M_\gamma(dx)  +\int_{\mathbb V \setminus B(v, t^{\alpha_1})} \pe^\gamma_{t-t^{\alpha_3}}(u, x) \pe^\gamma_{t^{\alpha_3}}(x, v) M_\gamma(dx)\,. \qedhere
\end{equation*}
\end{proof}

\subsection{Non-optimal bounds on the Liouville graph distance}
\label{subsec-nonopt}
The following are
non-optimal bounds on the Liouville graph distance. 
Our main goal in recording the following lemma is to illustrate that the distance exponent is non-trivial (i.e., strictly between 0 and 2).
\begin{lemma}\label{lem-obvious-bounds}
For $0<\gamma<2$ there exists $c>0$ depending only on $\gamma$ such that for all fixed $u, v\in \mathbb V$ we have   $c - o(1)< \frac{\E \log D_{\gamma, \delta}(u, v)}{\log \delta^{-1}} \leq \frac{4[(1+\gamma^2/4) - \sqrt{1+\gamma^4/16}]}{\gamma^2} +o(1)$ where the $o(1)$ term tends to 0 as $\delta$. In addition, $D_{\gamma, \delta}(u, v) \geq \delta^{-c}$ with high probability.
\end{lemma}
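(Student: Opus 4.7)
The plan is to prove the two directions in parallel, using negative moments of $M_\gamma$ for the lower bound and a two-sided moment computation on a stopped dyadic partition (a direct reading of the KPZ relation from \cite{DS11}) for the upper bound.

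For the lower bound, first I would upgrade to the stronger high-probability statement $D_{\gamma,\delta}(u,v)\ge \delta^{-c}$; the expected bound then follows since $\log D_{\gamma,\delta}\ge 0$ always.  Writing $K:=2+\gamma^2/2$, a scaling argument together with Lemma~\ref{lem-LQG-moment} gives $\E M_\gamma(B_r(v))^{-q}\le C_q\, r^{-Kq-\gamma^2 q^2/2}$ for any $q>0$, whence Markov yields $\P(M_\gamma(B_r(v))\le \delta^2)\le C_q\, \delta^{2q}r^{-Kq-\gamma^2 q^2/2}$.  A union bound over the $O(r^{-2})$ rational-centered balls of radius $r=\delta^{c'}$ (for example with $q=1$ and any $c'<2/(4+\gamma^2)$) then shows that with high probability no Euclidean ball in $\mathbb V$ of radius $\ge\delta^{c'}$ has LQG measure $\le \delta^2$.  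On this event every ball in an admissible cover has Euclidean diameter $<2\delta^{c'}$, and since any path from $u$ to $v$ has Euclidean length $\ge|u-v|$, a cover needs at least $|u-v|\delta^{-c'}/2$ balls; any $c\in(0,c')$ then works for $\delta$ small.

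For the upper bound, I would bound $D_{\gamma,\delta}(u,v)$ by the number $N$ of leaves of a stopped dyadic partition that meet the straight segment $L=\overline{uv}$: recursively split each dyadic square $Q$ until $M_\gamma(Q)\le c_0\delta^2$, choosing $c_0>0$ small enough that the enclosing ball (centered at the dyadic center, a rational point) has LQG measure $\le\delta^2$.  Setting $\zeta(p):=Kp-\gamma^2p^2/2$ and applying Markov to positive and negative $p$-th moments of $M_\gamma(Q_\tau)$ and of $M_\gamma(\mathrm{parent}(Q_\tau))$ (Lemma~\ref{lem-LQG-moment} scaled), I would obtain at dyadic scale $2^{-\tau}$ the double bound
\[ \E N_\tau\le C\min\!\bigl(\delta^{2p}2^{\tau(1+Kp+\gamma^2p^2/2)},\;\delta^{-2p}2^{\tau(1-\zeta(p))}\bigr). \]
The first expression grows in $\tau$, the second decreases whenever $\zeta(p)>1$; they cross at $\tau_\ast=(2/K)\log_2\delta^{-1}$, at which each equals $\delta^{-(2+\gamma^2p^2)/K}$.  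Summing the two geometric tails---valid exactly for $p\in(p_-,p_+)$ with $p_\pm=(K\pm\sqrt{K^2-2\gamma^2})/\gamma^2$---gives $\E N\le C(p)\delta^{-(2+\gamma^2p^2)/K}$.  The exponent is increasing in $p$ on this interval, so the infimum is attained as $p\downarrow p_-$; using the identity $(K-s)(K+s)=2\gamma^2$ with $s=\sqrt{K^2-2\gamma^2}$, the limiting value simplifies to $4/(K+s)$, which an elementary rationalization identifies with $\chi^\ast:=4[(1+\gamma^2/4)-\sqrt{1+\gamma^4/16}]/\gamma^2$.  Hence $\E D_{\gamma,\delta}(u,v)\le C(\iota)\delta^{-\chi^\ast-\iota}$ for every $\iota>0$, and Jensen's inequality $\E\log D\le \log\E D$ delivers the upper bound on $\E\log D_{\gamma,\delta}(u,v)/\log\delta^{-1}$.

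The hard part will be the constrained optimization in the upper bound: the geometric sum over dyadic scales converges only for $\zeta(p)>1$, so the naive choice $p\to 0^+$ (which would formally give the smaller but unreachable exponent $2/K=4/(4+\gamma^2)$) is inadmissible, and the genuine KPZ exponent $\chi^\ast$ arises precisely at the boundary value $\zeta(p_-)=1$.  The remaining ingredients---passing from dyadic boxes to rational-centered Euclidean balls (handled by choosing $c_0$ small so that neighbouring leaves contribute little extra mass), and checking that $p_-\in(0,4/\gamma^2)$ so that Lemma~\ref{lem-LQG-moment} guarantees the invoked positive moments---are routine bookkeeping.
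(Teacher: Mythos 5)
Your proposal arrives at the same statement but by a genuinely different route than the paper, and is essentially correct modulo one boundary issue worth flagging. For the upper bound, the paper simply cites the KPZ relation \cite[Proposition 1.6]{DS11} applied to the segment $\overline{uv}$; you instead rederive that estimate from scratch via positive and negative moments of $M_\gamma$ on a stopped dyadic partition, balancing the two Markov bounds at $\tau_\ast=(2/K)\log_2\delta^{-1}$ and optimizing over $p\in(p_-,p_+)$. Your algebra is right: $\gamma^2 p_-^2=2Kp_--2$ gives the optimal exponent $2p_-=2(K-s)/\gamma^2=4/(K+s)=\chi^\ast$, and $p_-<1/2<4/\gamma^2$ so the required positive moments exist; your observation that the naive $p\to 0^+$ exponent $2/K$ is unreachable because the geometric series forces $\zeta(p)>1$ is exactly the right diagnosis of why the KPZ exponent emerges. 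This buys a more self-contained and informative derivation at the cost of a longer computation.

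For the lower bound, the underlying idea matches the paper's (rule out any Euclidean ball of radius $\ge\delta^{c'}$ having LQG mass $\le\delta^2$), but your implementation uses negative moments of $M_\gamma$ directly, whereas the paper splits into a uniform sup-bound on the coarse field $\tilde h_{2^{-k_\delta}}$ from \eqref{eq-max-white-noise-process} plus the \emph{uniform} negative-moment bound \eqref{eq-LQG-negative-moment} on $\tilde M_{\gamma,\delta}$, which is constructed precisely so that its moments do not depend on position. Your claimed bound $\E M_\gamma(B_r(v))^{-q}\le C_q r^{-Kq-\gamma^2q^2/2}$ does \emph{not} hold uniformly over $v\in\mathbb V$: the conformal-radius factor in the Dirichlet Green function makes $\E M_\gamma(B_r(v))$ degenerate as $v\to\partial\mathbb V$, so the negative moments blow up near the boundary, and since $D_{\gamma,\delta}$ is defined using balls that may sit anywhere in $\mathbb V$ you cannot simply ignore those. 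The gap is fixable — any ball of radius $r$ centered in $\mathbb V$ contains a sub-ball of radius $r/4$ whose center is at distance $\ge r/4$ from $\partial\mathbb V$, where the scaling bound is valid — but as written the claim is stated without this caveat. Relatedly, the phrase ``union bound over the $O(r^{-2})$ rational-centered balls of radius $r$'' should be a union bound over a finite $r$-net (as the paper does with $\mathfrak C_{k_\delta}$), combined with the observation that every ball of radius $\ge r$ contains one of these reference sub-balls.
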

\begin{proof}
The upper bound on $D_{\gamma, \delta}(u, v)$ follows from
the KPZ relation derived in \cite[Proposition 1.6]{DS11}, which is used to bound the number of Euclidean balls of LQG measure at most $\delta^2$  required in order to  cover the line segment joining $u$ and $v$ (that is, set $X$ as the line segment joining $u$ and $v$ in \cite[Equation (5)]{DS11}, and adjust $\delta$ to $\delta^2$). 

To prove the lower bound,
it suffices to show that, for some constant $c=c(\gamma)>0$,  $D_{\gamma, \delta}(u, v) \geq \delta^{-c}$ with high probability.  To this end, fix $c=c(\gamma)$.  Let $k_\delta$ be the smallest integer so that $2^{-k_\delta} \leq \delta^{c}$ and let $\mathfrak C_{k_\delta}$ be defined as in \eqref{eq-def-mathfrak-C}. By \eqref{eq-max-white-noise-process}, we have that with high probability,  
\begin{equation}\label{eq-assump-max-white-noise}
\max_{v\in \mathbb V} \tilde h_{2^{-k_\delta}}(v) \leq 3 k_\delta\,.
\end{equation}
From \eqref{eq-LQG-negative-moment} and a union bound
we have that with high probability,
$$\tilde M_{\gamma, 2^{-k_\delta}}(B(v, 2^{-k_\delta})) \geq 2^{-2.5 k_\delta},
\mbox{for all $v\in \mathfrak C_{k_\delta}$},$$
where $\tilde M_{\gamma, 2^{-k_\delta}}$ is as in \eqref{eq-def-tilde-M}.
Combined with \eqref{eq-assump-max-white-noise}, we see that if we choose $c$ small enough we have that $M_{\gamma}(B(v, 2^{-k_\delta})) \geq \delta^2$ for all $v\in \mathfrak C_{k_\delta}$. This implies that any Euclidean ball with LQG measure at most $\delta^2$ has radius at most $2^{-k_\delta+2}$. This implies the claimed
lower bound on the Liouville graph distance.
\end{proof}

\section{Liouville graph distance: approximation and concentration}
\label{sec:LGD}
In this section, we introduce an approximation for the Liouville graph distance,
which will play a key role throughout the paper. The key technical advantage of the approximate Liouville graph distance is on a version of ``separation of randomness'', as codified in Lemma~\ref{lem-partition-independence}.

\subsection{Liouville graph distance via approximate Liouville Quantum Gravity}
\label{subsec-appgraphdist}
For each box $B$ of side length $s_B=\epsilon>0$ and center $c_B=v$, we define the approximate LQG to be 
\begin{equation}\label{eq-def-approximate-LQG}
M_{\gamma, \epsilon}(B) = \epsilon^2 e^{\gamma \eta_{\epsilon}(v) - \frac{\gamma^2}{2} \Var (\eta_{\epsilon}(v))}\,,
\end{equation}
compare with \eqref{eq-limit-LQG} and \eqref{eq-03172018-a}; the main point in 
\eqref{eq-def-approximate-LQG} is that one only considers the 
value of $\eta_\epsilon$ at the center of $B$.  Note also that 
$M_{\gamma,\epsilon}$ does not define a measure, due to the
lack of additivity. Fixing $\delta>0$,
we introduce a random $\delta$-partition of $\mathbb V$ as in the following
iterative procedure.
Call a  box (which may be closed, open, or neither closed or open)
that has not been partitioned yet a cell. 
Whenever $M_{\gamma, s_B}(B) \geq \delta^2$ for a cell $B$,
diadically partition $B$ into four sub-boxes. The iterative procedure halts
when all  cells $B$ satisfy $M_{\gamma, s_B}(B) < \delta^2$. We denote by
 $\mathcal V_\delta$ the final collection of cells obtained in this procedure.
Note that closures of cells may intersect only along their boundary.
We view $\mathcal V_\delta$ as a graph, with vertices consisting of the cells 
in $\mathcal V_\delta$ and edges between
cells such that their closures have intersection with non-empty relative
interior (i.e., a nontrivial line segment).
For each $v\in \mathbb V$, we denote by $\mathsf{C}_{v, \delta}$ the unique cell in $\mathcal V_\delta$ which contains $v$. For two distinct $u, v\in \mathbb V$ define the approximate Liouville graph distance $D'_{\gamma, \delta}$ to be the graph distance between $\mathsf C_{v, \delta}$ and $\mathsf C_{u, \delta}$ in $\mathcal V_\delta$. In addition, we denote by $s_{v, \delta}$ the side length of $\mathsf C_{v, \delta}$. Finally, recall the definitions of events of high probability and of $\iota$-high probability, see Section \ref{sec:notation}. The following proposition justifies our terminology of approximate LGD. For a fixed $\xi>0$, denote $\mathbb V^\xi = \{v\in \mathbb V: |v-\partial \mathbb V| \geq \xi\}$. We say 
that $(A_\delta, B_\delta) \subseteq \mathbb V^\xi \times \mathbb V^\xi$ is a sequence of $\xi$-admissible pairs if 
\begin{itemize}
\item $A_\delta$ (respectively $B_\delta$) is a single point, or a connected set of diameter at least $\delta^\xi$.
\item The distance between $A_\delta$ and $B_\delta$ is at least 
$\xi$ for all $\delta$.
\end{itemize}
The following lemma, whose proof is postponed, gives an a-priori, coarse bound on
the cells in $\mathcal V_\delta$.
\begin{lemma}\label{lem-partition-minimal-cell}
For any $\gamma\in (0, 2)$, there exist constants $C_{\mathrm{mc}}, C_{\mathrm{Mc}}>0$ (depending only on $\gamma$) such that with high probability, each cell $\mathsf C_{v,\delta}\in
\mathcal V_\delta$ has side length $ \delta^{C_{\mathrm{mc}}} \leq s_{v,\delta}\leq \delta^{C_{\mathrm{Mc}}}$.  
\end{lemma}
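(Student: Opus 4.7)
The plan is to derive both bounds via Gaussian tail estimates on $\eta_s(c_B)$ for dyadic boxes $B$, combined with union bounds over scales and boxes. The basic input is the uniform upper bound $\Var(\eta_s(v))\leq \log(1/s)+O(1)$ for $v\in\mathbb V$ and $s\in(0,1]$, which follows from the covariance formula (e.g.\ as in \eqref{eq-cov-tildeh}) after upper-bounding the killed heat kernel inside $\mathbb V\cap B_{r(t)}(v)$ by the free kernel on $\mathbb R^2$, the truncation in \eqref{eq:WND_decomposition-approximation} making only an $O(1)$ correction. With this, the condition $M_{\gamma,s}(B)\geq \delta^2$ from \eqref{eq-def-approximate-LQG} translates, upon writing $s=\delta^\alpha$, to
$$
\eta_s(c_B) \;\geq\; f(\alpha)\log(1/\delta) + O_\gamma(1), \qquad f(\alpha):=\alpha\!\left(\tfrac{2}{\gamma}+\tfrac{\gamma}{2}\right)-\tfrac{2}{\gamma}.
$$
Since $\eta_s(c_B)$ is centered Gaussian with variance at most $\alpha\log(1/\delta)+O(1)$, the Gaussian tail at this threshold gives per-box probability at most $C(\gamma)\delta^{f(\alpha)^2/(2\alpha)}$, while the number of dyadic boxes at scale $\delta^\alpha$ is $\delta^{-2\alpha}$; everything thus comes down to comparing $f(\alpha)^2/(2\alpha)$ with $2\alpha$.

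For the upper bound $s_{v,\delta}\leq \delta^{C_{\mathrm{Mc}}}$, the plan is to show that with high probability every dyadic box $B$ at every scale $s_B\in[\delta^{C_{\mathrm{Mc}}},1]$ satisfies $M_{\gamma,s_B}(B)\geq \delta^2$; this forces the iteration to keep subdividing until scale $<\delta^{C_{\mathrm{Mc}}}$ and hence rules out any larger terminal cell. For $\alpha\in[0,C_{\mathrm{Mc}}]$ with $C_{\mathrm{Mc}}$ small, $f(\alpha)$ is uniformly close to $-2/\gamma<0$ (left tail), and the inequality $f(\alpha)^2/(2\alpha)>2\alpha$ simplifies to $C_{\mathrm{Mc}}<4/(\gamma+2)^2$. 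Taking $C_{\mathrm{Mc}}$ slightly smaller produces a positive exponent $c(\gamma)$ uniformly in $\alpha\in[0,C_{\mathrm{Mc}}]$, and summing the per-scale bound over the $O(\log 1/\delta)$ dyadic scales concludes.

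For the lower bound $s_{v,\delta}\geq \delta^{C_{\mathrm{mc}}}$, it is enough to rule out, with high probability, any dyadic box $B$ with $s_B\leq 2\delta^{C_{\mathrm{mc}}}$ having $M_{\gamma,s_B}(B)\geq \delta^2$, since any too-small terminal cell would have such a box as its parent. For $\alpha\geq C_{\mathrm{mc}}$ the threshold $f(\alpha)$ is large positive (right tail), and $f(\alpha)^2/(2\alpha)=\alpha(2/\gamma+\gamma/2)^2/2+O_\gamma(1)$; the sharp inequality $(2/\gamma+\gamma/2)^2>4$ holds precisely because $\gamma<2$, so for $C_{\mathrm{mc}}$ large enough (depending on $\gamma$) the exponent exceeds $2\alpha$ by at least $c(\gamma)\,\alpha$ for every $\alpha\geq C_{\mathrm{mc}}$, and the resulting $\delta^{c\alpha}$ sums geometrically over scales. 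The main technical care will be the $O(1)$ uniformity of $\Var(\eta_s(v))$ near $\partial\mathbb V$, which is ensured by the Dirichlet boundary already present in the kernel defining $\eta_s$ in \eqref{eq:WND_decomposition-approximation}; once that is in hand, the proof reduces to the Gaussian-tail plus union-bound computations above, and the constraint $\gamma<2$ enters sharply through $(2/\gamma+\gamma/2)^2>4$.
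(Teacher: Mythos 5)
Your plan is the same as the paper's: a Gaussian tail estimate for $\eta_s(c_B)$ at the threshold dictated by $M_{\gamma,s}(B)\gtrless\delta^2$, followed by a union bound over dyadic boxes (and, for the maximal-cell bound, over scales). Your explicit function $f(\alpha)$ and the thresholds $C_{\mathrm{Mc}}<4/(\gamma+2)^2$, $C_{\mathrm{mc}}>4/(2-\gamma)^2$ (the two roots of $f(\alpha)^2=4\alpha^2$) are the correct ones, and the identification of $(2/\gamma+\gamma/2)^2>4$ as the precise place where $\gamma<2$ enters is also right. If anything you are more explicit than the paper, whose one-paragraph proof (choosing $\beta\in(\gamma,1+\gamma^2/4)$) only writes out the minimal-cell bound at the single scale $\epsilon=\delta^{C_{\mathrm{mc}}}$ and relegates the maximal cell to ``a similar (simple) computation.''

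One statement, however, is wrong as written and needs to be repaired. You claim that ``the $O(1)$ uniformity of $\Var(\eta_s(v))$ near $\partial\mathbb V$ is ensured by the Dirichlet boundary,'' i.e.\ that $\Var(\eta_s(v))=\log(1/s)+O(1)$ holds uniformly. In fact the Dirichlet boundary \emph{suppresses} the field: for a dyadic box center at distance of order $s$ from $\partial\mathbb V$, $\Var(\eta_s(v))=O(1)$, a constant-factor (not additive-$O(1)$) deficit relative to $\log(1/s)$; only the one-sided upper bound $\Var(\eta_s(v))\le\log(1/s)+O(1)$ is uniform. Your maximal-cell direction is unaffected, since the inclusion $\{\eta_s\ge f(\alpha)\log\delta^{-1}+O(1)\}\subseteq\{M_{\gamma,s}(B)\ge\delta^2\}$ uses only that upper bound. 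But the minimal-cell direction as written relies on the reverse containment $\{M_{\gamma,s}(B)\ge\delta^2\}\subseteq\{\eta_s\ge f(\alpha)\log\delta^{-1}-O(1)\}$, which needs a matching \emph{lower} bound on $\Var(\eta_s(c_B))$ and therefore fails for boxes near $\partial\mathbb V$. The repair is a short extra observation: setting $\sigma^2:=\Var(\eta_s(c_B))$, the exact threshold is $T(\sigma^2)=\tfrac{2(\alpha-1)}{\gamma}\log\delta^{-1}+\tfrac{\gamma}{2}\sigma^2$, and $T(\sigma^2)^2/(2\sigma^2)$ is minimized over $\sigma^2\in(0,\alpha\log\delta^{-1}+O(1)]$ at the upper endpoint whenever $\alpha>4/(4-\gamma^2)$ (which holds for all $\alpha\ge C_{\mathrm{mc}}$, since $4/(4-\gamma^2)<4/(2-\gamma)^2$). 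Hence the per-box bound $\delta^{f(\alpha)^2/(2\alpha)+o(1)}$ survives uniformly in the box location, and the rest of your argument goes through.
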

The subscript $\mathrm{mc}$ in $C_{\mathrm{mc}}$ stands  for ``minimal cell'',  and $\mathrm{Mc}$ stands for ``maximal cell''. The values of $C_{\mathrm{mc}}$ and $C_{\mathrm{Mc}}$ are kept fixed  throughout the paper.
A first approximation step for the LGD is contained in the next proposition.
\begin{prop}\label{prop-approximate-LGD}
Fix $0<\xi< C_{\mathrm{Mc}}/3$. Then, there
exists a constant $c=c(\gamma, \xi)$ so that for any sequence of $\xi$-admissible pairs  $(A_\delta, B_\delta )$, we have with $c$-high probability
$$ \min_{x\in A_\delta, y\in B_\delta}D'_{\gamma, \delta}(x, y) \cdot e^{-(\log \delta^{-1})^{0.9}}\leq\min_{x\in A_\delta, y\in B_\delta}D_{\gamma, \delta}(x, y) \leq  \min_{x\in A_\delta, y\in B_\delta}D'_{\gamma, \delta}(x, y) \cdot e^{(\log \delta^{-1})^{0.9}}\,.$$
\end{prop}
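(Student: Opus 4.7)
The overall plan is to reduce the comparison between $D_{\gamma,\delta}$ and $D'_{\gamma,\delta}$ to two ingredients: a uniform comparison between the true LQG measure $M_\gamma(\mathsf C)$ of a cell and the one-point approximation $M_{\gamma, s_\mathsf C}(\mathsf C)$ used to define the partition, and a geometric size-matching between Euclidean balls with $M_\gamma$-measure at most $\delta^2$ and the cells of $\mathcal V_\delta$ they intersect. Once both are in hand, an optimal ball cover for $D_{\gamma,\delta}$ can be converted into a short cell path, and vice versa, with the sub-polynomial error $e^{(\log\delta^{-1})^{0.9}}$ absorbing both the measure discrepancy and the cells-per-ball multiplicity.

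The heart of the argument is the uniform two-sided comparison
$$e^{-(\log\delta^{-1})^{0.85}}\, M_{\gamma, s_\mathsf C}(\mathsf C) \;\leq\; M_\gamma(\mathsf C) \;\leq\; e^{(\log\delta^{-1})^{0.85}}\, M_{\gamma, s_\mathsf C}(\mathsf C) \quad\text{for every } \mathsf C\in\mathcal V_\delta,$$
valid with $c$-high probability. To prove this I would split the white noise decomposition into scales larger than $s_\mathsf C$ (the ``coarse'' part) and smaller than $s_\mathsf C$ (the ``fine'' part). Using Lemma \ref{lem-tilde-h-eta} to pass between $\eta$ and $\tilde h$, this realizes $M_\gamma(\mathsf C)$ as the integral over $\mathsf C$ of $e^{\gamma\eta_{s_\mathsf C}(v)-(\gamma^2/2)\Var\eta_{s_\mathsf C}(v)}$ against an independent fine-field Gaussian multiplicative chaos. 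The coarse-field fluctuations $\max_{v\in\mathsf C}|\eta_{s_\mathsf C}(v)-\eta_{s_\mathsf C}(c_\mathsf C)|$ are at most $(\log\delta^{-1})^{0.7}$ with very high probability by Lemma \ref{lem-continuity-h-eta}. The fine-field chaos applied to $\mathsf C$ has positive and negative moments controlled by \eqref{eq-LQG-positive-moment}--\eqref{eq-LQG-negative-moment}, so by Markov's inequality its ratio to $s_\mathsf C^2$ lies in $[\delta^{o(1)},\delta^{-o(1)}]$ with overwhelming probability. Union-bounding over polynomially many dyadic positions and the logarithmically many scales $s_\mathsf C\in[\delta^{C_{\mathrm{Mc}}},\delta^{C_{\mathrm{mc}}}]$ (from Lemma \ref{lem-partition-minimal-cell}) yields the claim.

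The next step is a size-matching estimate. Via the negative moments on $M_\gamma$ of balls from Lemma \ref{lem-LQG-moment} together with continuity of the coarse field, a union bound over dyadic balls shows that with high probability every Euclidean ball $B\subseteq\mathbb V^{\xi/2}$ with $M_\gamma(B)\leq\delta^2$ has radius $r_B\in[\delta^{C_1},\delta^{c_0}]$ for fixed $0<c_0<C_1$. More importantly, whenever such a $B$ meets a cell $\mathsf C\in\mathcal V_\delta$, the ratio $r_B/s_\mathsf C$ is trapped in $[\delta^{o(1)},\delta^{-o(1)}]$: if $r_B$ were much larger than $s_\mathsf C$, then a slight Euclidean enlargement of $B$ would contain the parent box of $\mathsf C$, whose one-point approximate measure is $\geq\delta^2$ by the maximality property of the partition, and the comparison of the previous step then forces $M_\gamma(B)\geq\delta^{2+o(1)}$, contradicting the hypothesis; the reverse direction is the same argument with the roles swapped. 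Consequently each such $B$ overlaps at most $(r_B/s_\mathsf C+1)^2\leq e^{(\log\delta^{-1})^{0.9}/2}$ cells, and symmetrically each cell can be subdivided dyadically into at most this many sub-squares each of $M_\gamma$-measure $\leq\delta^2$.

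With these two inputs the conversion is routine. For $D'_{\gamma,\delta}\leq D_{\gamma,\delta}\cdot e^{(\log\delta^{-1})^{0.9}}$, take balls $B_1,\ldots,B_N$ realizing $D_{\gamma,\delta}(x,y)$ for some $x\in A_\delta, y\in B_\delta$; the cells of $\mathcal V_\delta$ meeting $\bigcup_j B_j$ number at most $N\cdot e^{(\log\delta^{-1})^{0.9}/2}$ and form a connected subgraph containing $\mathsf C_{x,\delta}$ and $\mathsf C_{y,\delta}$, yielding the required cell path. For the reverse inequality, take an optimal cell path for $D'_{\gamma,\delta}$, subdivide each cell on it into the aforementioned sub-squares of $M_\gamma$-measure $\leq\delta^2$, and cover each sub-square by a bounded number of Euclidean balls with rational centers drawn from a sufficiently fine net (absolute continuity of $M_\gamma$ with respect to slight perturbations of the ball suffices to pass to rational centers while keeping the measure $\leq\delta^2$). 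The main obstacle is the size-matching step: absent the maximality built into the partition, a low-measure ball sitting in a region of high GFF could in principle straddle a large cluster of tiny cells, and ruling this out is precisely where the combination of the cell-measure comparison with the parent-maximality of $\mathsf C$ is needed.
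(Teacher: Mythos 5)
Your proposal hinges on the uniform two-sided comparison
\[
e^{-(\log\delta^{-1})^{0.85}} M_{\gamma,s_{\mathsf C}}(\mathsf C)\le M_\gamma(\mathsf C)\le e^{(\log\delta^{-1})^{0.85}} M_{\gamma,s_{\mathsf C}}(\mathsf C)
\quad\text{for every }\mathsf C\in\mathcal V_\delta,
\]
but the upper bound in this display cannot be established, and is in fact false with high probability. The upper tail of the fine-field chaos normalized by $s_{\mathsf C}^2$ is only polynomial: from \eqref{eq-LQG-positive-moment} with $p<4/\gamma^2$ one gets $\P(\tilde M_{\gamma,s_{\mathsf C}}(\mathsf C)/s_{\mathsf C}^2>\lambda)\le C_{\gamma,p}\lambda^{-p}$, so with $\lambda=e^{(\log\delta^{-1})^{0.85}}$ the failure probability per box is only $e^{-p(\log\delta^{-1})^{0.85}}$. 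Since the partition $\mathcal V_\delta$ can reach scale $\delta^{C_{\mathrm{mc}}}$, a union bound would need to account for on the order of $\delta^{-2C_{\mathrm{mc}}}$ dyadic boxes, and $\delta^{-2C_{\mathrm{mc}}}\cdot e^{-p(\log\delta^{-1})^{0.85}}\to\infty$ as $\delta\to0$. In other words, the claimed $\delta^{-o(1)}$ uniform upper bound on the fine-field chaos has ``overwhelming'' failure probability per box in your sense, but it is not super-polynomial, which is exactly what the union bound over a polynomial number of cells would require. One can check that, with high probability, there do exist cells whose fine-field chaos mass is a positive power of $\delta^{-1}$ larger than its mean, so the uniform comparison genuinely fails.

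This is the precise obstruction the paper flags (in the discussion just after the statement of the proposition: ``\ldots this, roughly speaking, is due to the fact that LQG measure only has finite positive moment up to a fixed, $\gamma$ dependent, order''). Because of it, the paper does not attempt a global comparison of $M_\gamma$ and $M_{\gamma,s_{\mathsf C}}$ over all cells. Instead it proves, via a Peierls/percolation argument (Lemma~\ref{lem-percolation-Phi} and the event $\mathcal E'_{\delta,B,\epsilon,\lambda}$ of Definition~\ref{def-E-delta-B-prime}), that with high probability every cell is surrounded by a chain of ``open'' boxes in which the relevant chaos mass is small; since only a bounded density of boxes needs to be open, the argument tolerates a polynomial failure rate. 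The geodesic for $D'_{\gamma,\delta}$ is then rerouted along the union of these chains, and the boundaries of the chains are covered by few Euclidean balls of small $M_\gamma$-measure. Your size-matching step (bounding $r_B/s_{\mathsf C}$ by $\delta^{\pm o(1)}$ uniformly when a ball $B$ meets a cell $\mathsf C$) inherits the same difficulty: it is derived from the uniform comparison and would require a similarly delicate percolation-type construction to make rigorous — which is essentially what the paper's lower-bound argument, via \eqref{eq-Euclidean-Ball-covering} and \eqref{Eq.LD-lowerbound-approx-LGD}, carries out using independence of fine fields in disjoint sub-squares rather than a uniform estimate.
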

The proof of  Proposition~\ref{prop-approximate-LGD} follows roughly  the following
outline.
\begin{enumerate}
\item In order to get an upper bound on the LGD, we  take the geodesic in $D'_{\gamma, \delta}$ and construct an efficient covering of this geodesic by Euclidean balls with bounded LQG measure. 
\item In order to get a lower bound on  LGD, we  show that any  path achieving the
 LGD will have to place at least one Euclidean ball in each cell of a path which is candidate for  $D'_{\gamma, \delta}$.
\end{enumerate}
Item 2 is easier to achieve, since we can apply a more or less straightforward union bound (essentially due to the fact that all negative moments exist for LQG measure). In order to prove (the more challenging) Item 1 (as well as later
showing the lower bound on the Liouville heat kernel), it would be ideal if in each cell of $\mathcal V_\delta$, the ``fine field'' within that cell (roughly speaking the integration over white noise within that cell) were almost independent of $\mathcal V_\delta$. While this property holds for a typical cell, it unfortunately cannot hold uniformly for all cells, for the reason that occasionally some cell will be neighboring to cells that are of much smaller side lengths (this, roughly speaking, is due to the fact that LQG measure only has finite positive moment up to a fixed, $\gamma$ dependent, order). In order to address this issue, we employ a technique influenced by percolation theory.

Some remark is in order concerning
 the definition of $\xi$-admissible pairs. The somewhat strange condition there is that if $A_\delta$ (or $B_\delta$) is not a single vertex, then it has to be a connected set  that is moderately large. This assumption is related to the regularity of the random partition $\mathcal V_\delta$ --- it is possible (though typically the case) that in some places, the random partition is highly irregular but yet these locations serve as endpoints for the geodesic between $A_\delta$ and $B_\delta$ in $D'_{\gamma, \delta}$. The high irregularity will prevent us
 from building efficient path in $D_{\gamma, \delta}$. Under our admissibility assumption, it becomes tractable (via a percolation-type  argument) since 
\begin{itemize}
\item If $A_\delta$ is a single vertex, then with high probability it has to be somewhat regular around $A_\delta$;
\item If $A_\delta$ is a connected set of moderately large diameter, then when it is irregular around $u\in A_\delta$, there exists a regular $u'\in A_\delta$ which is close to $u$.
\end{itemize}

Before providing the proof of Proposition  \ref{prop-approximate-LGD}, we prove a few preparatory lemmas. 
We begin with the proof of Lemma \ref{lem-partition-minimal-cell}.
\begin{proof}[Proof of Lemma \ref{lem-partition-minimal-cell}]
For $\epsilon>0$ with $\log_2 \epsilon^{-1} \in \mathbb Z$,  we have $|\mathfrak C_{\log_2 \epsilon^{-1}}| = \epsilon^{-2}$ (recall \eqref{eq-def-mathfrak-C}). 
Fix $\beta\in (\gamma,1+\gamma^2/4)$, noting that the last interval is non empty if $\gamma\in (0,2)$. A straightforward union bound implies that 
\begin{equation}
\label{eq-010518a}
\P(\max_{v\in \mathfrak C_{\log_2 \epsilon^{-1}}} \eta_\epsilon(v) \geq 
\tfrac {\beta}{\gamma} \log  \epsilon^{-2} ) \leq C \epsilon^{\frac{2\beta^2}{\gamma^2}-2}\leq \epsilon^c\,,
 \end{equation}
for some $c=c(\beta)>0$. 
On the complement of the event in \eqref{eq-010518a}, we have, using Lemma \ref{lem-continuity-h-eta}, that, with high probability,
 for any box $B$ with side $\epsilon$ centered at $\mathfrak C_{\log_2 \epsilon^{-1}}$,
 we have that
$M_{\gamma,\epsilon}(B)\leq \epsilon^{2(1+\gamma^2/4-\beta)}\leq \epsilon^{c'}$ for some $c'=c'(\beta)>0$. The bound on the side length for the maximal cell follows from a similar (simple) computation, and we omit further details.
\end{proof}
We note that an argument similar to that employed in  the proof of
Lemma~\ref{lem-partition-minimal-cell} shows that the tail of the distribution of
$\log (S_\delta)/\log \delta$ decays at least exponentially, where
 $S_\delta$ is the side length  of the minimal cell in $\mathcal V_\delta$. This implies that for any $u,v\in\mathbb V$,
\begin{equation}\label{eq-very-crude-prime}
\E  (\frac{\log D'_{\gamma, \delta}(u, v)}{\log \delta^{-1}} )^2 = O_\gamma(1)\,.
\end{equation} In addition, a simple adaption of the argument in \cite[Proposition 1.6]{DS11} (see also \cite[Proposition 6.2]{DG16}) gives that 
\begin{equation}
\E  \Big(\frac{\log D_{\gamma, \delta}(u, v)}{\log \delta^{-1}} \Big)^2 = O_\gamma(1) \label{eq-very-crude}
\end{equation}
(we remark that these are extremely crude bounds). Thus, combined with (the yet unproven) Proposition~\ref{prop-approximate-LGD}, we obtain the following corollary.
\begin{cor}\label{cor-approximate-LGD-expectation}
 For any $u, v\in \mathbb V$, we have that $\left| \E \frac{\log D_{\gamma, \delta}(u, v)}{\log \delta^{-1}}  - \E \frac{\log D'_{\gamma, \delta}(u, v)}{\log \delta^{-1}} \right| \le e^{-(\log \delta^{-1})^{0.9}}$.
\end{cor}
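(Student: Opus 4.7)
The plan is to derive this directly from Proposition~\ref{prop-approximate-LGD} together with the crude $L^2$ bounds \eqref{eq-very-crude-prime} and \eqref{eq-very-crude}, by splitting the expectation according to the high-probability event on which the two distances are comparable.

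More precisely, I would first observe that, since the statement is intended for points with $u\neq v$ in $\mathbb V^o$, one can choose $\xi=\xi(u,v)>0$ so that $\{u\},\{v\}\subset \mathbb V^\xi$ and $|u-v|\geq \xi$; then the singleton pair $(A_\delta,B_\delta)=(\{u\},\{v\})$ is $\xi$-admissible in the sense defined just before Lemma~\ref{lem-partition-minimal-cell}. Let $E_\delta$ denote the event on which the conclusion of Proposition~\ref{prop-approximate-LGD} holds for this pair, so $\P(E_\delta^c)\leq \delta^c$ for some $c=c(\gamma,\xi)>0$. Taking logarithms in the proposition's estimate yields, on $E_\delta$,
\[
\bigl|\log D_{\gamma,\delta}(u,v)-\log D'_{\gamma,\delta}(u,v)\bigr|\leq (\log\delta^{-1})^{0.9},
\]
so after dividing by $\log\delta^{-1}$ the integrand is controlled deterministically by a quantity that is negligible compared to $1$ as $\delta\to 0$.

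To handle the complementary bad event, I would use the Cauchy--Schwarz inequality together with the second-moment bounds \eqref{eq-very-crude-prime} and \eqref{eq-very-crude}. Setting $R_\delta:=(\log D_{\gamma,\delta}(u,v)-\log D'_{\gamma,\delta}(u,v))/\log\delta^{-1}$ and using $(a+b)^2\leq 2a^2+2b^2$, one gets $\E R_\delta^2\leq O_\gamma(1)$, so
\[
\E\bigl[|R_\delta|\mathbf 1_{E_\delta^c}\bigr]\leq \sqrt{\E R_\delta^2}\,\sqrt{\P(E_\delta^c)}=O(1)\cdot \delta^{c/2}.
\]
Combining the contributions from $E_\delta$ and $E_\delta^c$ then yields an upper bound of the form $(\log\delta^{-1})^{-0.1}+O(\delta^{c/2})$, which for all sufficiently small $\delta$ is dominated by the right-hand side of the corollary.

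There is essentially no technical obstacle here, since Proposition~\ref{prop-approximate-LGD} has already done the hard work (proving the two inequalities with $c$-high probability) and \eqref{eq-very-crude-prime}--\eqref{eq-very-crude} provide the uniform second moment control needed to kill the bad-event contribution. The only minor point to check is the admissibility of the singleton pair $(\{u\},\{v\})$, which follows immediately from the hypothesis that $u$ and $v$ are fixed and distinct in $\mathbb V^o$.
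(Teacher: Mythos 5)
Your decomposition (work on the high-probability event from Proposition~\ref{prop-approximate-LGD}, kill the bad event by Cauchy--Schwarz with \eqref{eq-very-crude-prime}--\eqref{eq-very-crude}) is exactly the argument the paper intends — the text preceding the corollary gives essentially the same one-line sketch. The admissibility check for $(\{u\},\{v\})$ is also fine.

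However, your final sentence is wrong, and it points to a real discrepancy. You correctly derive the bound
\[
\Big|\E\tfrac{\log D_{\gamma,\delta}(u,v)}{\log\delta^{-1}}-\E\tfrac{\log D'_{\gamma,\delta}(u,v)}{\log\delta^{-1}}\Big|\le (\log\delta^{-1})^{-0.1}+O_\gamma(\delta^{c/2}),
\]
but you then assert this is ``dominated by'' $e^{-(\log\delta^{-1})^{0.9}}$ for small $\delta$. That is false in the relevant limit: as $\delta\to 0$, $(\log\delta^{-1})^{-0.1}$ decays only polynomially in $\log\delta^{-1}$, while $e^{-(\log\delta^{-1})^{0.9}}$ decays stretched-exponentially, so $(\log\delta^{-1})^{-0.1}\gg e^{-(\log\delta^{-1})^{0.9}}$ — the inequality you need goes the wrong way. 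No argument built on Proposition~\ref{prop-approximate-LGD} can yield a bound smaller than roughly $(\log\delta^{-1})^{-0.1}$, since the proposition itself allows a systematic multiplicative discrepancy of size $e^{(\log\delta^{-1})^{0.9}}$ even on the good event.

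The honest conclusion is that your argument proves $(\log\delta^{-1})^{-0.1}+O_\gamma(\delta^{c/2})$, not the stated right-hand side, and that the right-hand side as printed is almost certainly a typo. Compare with the parallel estimate \eqref{eq-oferberlin1} in Lemma~\ref{lem-LGD-two-fields}, which correctly records the analogous bound as $O((\log\delta^{-1})^{0.9})$ for the unnormalized log-distances (equivalently $O((\log\delta^{-1})^{-0.1})$ after dividing by $\log\delta^{-1}$); the corollary's bound should have the same form, and this weaker form suffices for every subsequent use of the corollary in the paper (e.g., in the proof of Proposition~\ref{prop-concentration} and in Section~\ref{sec:exponent}). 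So the substance of your proof is right, but you should replace the spurious closing comparison with the correct target $(\log\delta^{-1})^{-0.1}$.
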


For $\alpha>0$, we define 
 \begin{eqnarray} \label{eq-def-E-delta-alpha}
&&\mathcal E_{\delta, \alpha} 
 : =\\
&&
\!\!\!\!\!\{ \delta^{C_{\mathrm{mc}}} \leq s_{\mathsf C} \leq \delta^{C_{\mathrm{Mc}}}
\mbox{\rm \ for all cells in $\mathcal V_\delta$}\}
\cap \cap_{m,j;x,y} \{ |\eta_{2^{-m}}(x) - \eta_{2^{-m-j}}(y)| \leq \alpha \sqrt{\log \delta^{-1}} \log \log \delta^{-1} \},\nonumber
\end{eqnarray}
where the last intersection is taken over $m,j,x,y$ such that $1 \le 2^m \le \delta^{-  C_{\mathrm{mc}} }$, $1 \leq  2^j  \leq (\alpha \log \delta^{-1})^2$, and $|x - y| \le 2^{-m+3}$.

\begin{lemma}\label{lem-neighboring-cell}
There exists $\alpha_0>0$ such that for all $\alpha > \alpha_0$, $\mathcal E_{\delta, \alpha}$ occurs
 with high probability.
\end{lemma}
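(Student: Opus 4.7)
The event $\mathcal{E}_{\delta,\alpha}$ is the intersection of the cell-size constraint and a uniform cross-scale regularity estimate on the field $\eta$. The first conjunct holds with high probability by Lemma~\ref{lem-partition-minimal-cell}, so the plan is to control, uniformly in the specified range of $(m,j,x,y)$, the modulus $|\eta_{2^{-m}}(x) - \eta_{2^{-m-j}}(y)|$. I would split the increment as
$$\eta_{2^{-m}}(x) - \eta_{2^{-m-j}}(y) = \bigl[\eta_{2^{-m}}(x) - \eta_{2^{-m}}(y)\bigr] - \eta_{2^{-m-j}}^{2^{-m}}(y),$$
using $\eta_{2^{-m-j}}(y) = \eta_{2^{-m}}(y) + \eta_{2^{-m-j}}^{2^{-m}}(y)$ from \eqref{eq:WND_decomposition-approximation}, and control the pure spatial increment at the coarse scale and the pure scale increment at a fixed base point separately, combining Gaussian tail bounds with a union bound over a polynomially-sized net of parameters.

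For the spatial piece, the first inequality of Lemma~\ref{lem-continuity-h-eta} applied with $\delta = 2^{-m}$ and $k = 8$ gives, for each fixed $y$,
$$\P\Bigl(\max_{|x-y|\le 2^{-m+3}}|\eta_{2^{-m}}(x) - \eta_{2^{-m}}(y)|\ge \tfrac{\alpha}{2}\sqrt{\log\delta^{-1}}\Bigr)\le O(1)\,e^{-\Omega(\alpha^2\log\delta^{-1})}.$$
Union-bounding over $y$ on a lattice of mesh $2^{-m}$ in $\mathbb V$ (at most $\delta^{-2C_{\mathrm{mc}}}$ points) and over $m\le C_{\mathrm{mc}}\log_2\delta^{-1}$ keeps this below $\delta^{c_1}$ once $\alpha$ is large. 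For the scale piece, a direct $L^2$ computation yields
$$\Var\bigl(\eta_{2^{-m-j}}^{2^{-m}}(y)\bigr) \le \pi\int_{2^{-2(m+j)}}^{2^{-2m}}\tfrac{ds}{2\pi s} = j\log 2 \le C\log\log\delta^{-1},$$
where the last bound uses $2^j\le(\alpha\log\delta^{-1})^2$. The standard Gaussian tail gives
$$\P\bigl(|\eta_{2^{-m-j}}^{2^{-m}}(y)|\ge \tfrac{\alpha}{2}\sqrt{\log\delta^{-1}}\log\log\delta^{-1}\bigr)\le 2\exp\!\bigl(-\Omega(\alpha^2\log\delta^{-1}\cdot\log\log\delta^{-1})\bigr),$$
which is super-polynomially small in $\delta^{-1}$. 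Union-bounding over a net of $y$ of mesh $2^{-m-j-K}$ (with $K$ a large absolute constant) and over the $O((\log\delta^{-1})^2)$ pairs $(m,j)$ contributes only polynomially many terms, so this event fails at net points with probability at most $\delta^{c_2}$; the in-mesh oscillation of $\eta_{2^{-m-j}}^{2^{-m}}$ is controlled by the variance estimate $\Var(\eta_{2^{-m-j}}^{2^{-m}}(y)-\eta_{2^{-m-j}}^{2^{-m}}(y'))=O(|y-y'|/2^{-m-j})$ (analogous to Lemma~\ref{lem-variance-continuity}), Lemma~\ref{lem-ferniquecriterion}, and Gaussian concentration, yielding an oscillation of $O(\sqrt{\log\delta^{-1}})$ with super-polynomial probability, which is dominated by the target $\alpha\sqrt{\log\delta^{-1}}\log\log\delta^{-1}$. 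Combining with the cell-size bound from Lemma~\ref{lem-partition-minimal-cell} completes the proof for $\alpha>\alpha_0(\gamma)$ sufficiently large.

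The principal technical nuisance is handling continuity in $y$ for the scale term: the mesh must resolve the fine scale $2^{-m-j}$ but the lattice size must stay polynomial in $\delta^{-1}$ so that the super-polynomial Gaussian decay survives the union bound. This interplay is exactly what dictates the $\sqrt{\log\delta^{-1}}\log\log\delta^{-1}$ factor in the statement: the standard deviation of the scale increment grows like $\sqrt{\log\log\delta^{-1}}$, while the union bound must absorb a polynomial number of configurations.
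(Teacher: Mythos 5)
Your proof is correct and follows essentially the same approach as the paper: control the spatial increment at scale $m$ via Lemma~\ref{lem-continuity-h-eta} and a polynomial-size union bound, control the scale increment via the variance bound $\mathrm{Var}(\eta_{2^{-m-j}}^{2^{-m}}(y)) = O(j) = O(\log\log\delta^{-1})$ plus a Gaussian tail with super-polynomial decay, and finish with Lemma~\ref{lem-partition-minimal-cell}. The only cosmetic difference is that your two-term decomposition $\eta_{2^{-m}}(x)-\eta_{2^{-m-j}}(y) = [\eta_{2^{-m}}(x)-\eta_{2^{-m}}(y)] - \eta_{2^{-m-j}}^{2^{-m}}(y)$ folds the fine-scale oscillation into the scale term (forcing you to verify the continuity estimate for the sliced process $\eta_\delta^{\tilde\delta}$), whereas the paper routes through dyadic centers $\tilde x,\tilde y$ in a four-term split so that the scale increment is only ever evaluated at lattice points and all spatial pieces are handled by Lemma~\ref{lem-continuity-h-eta} directly; both versions go through.
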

\begin{proof}
Denote by $m_0 =  \lfloor C_{\mathrm{mc}} \log_2 \delta^{-1} \rfloor$, $j_0 = \lfloor 2 \log_2 (\alpha \log \delta^{-1}) \rfloor$. Denote by $\tilde x$ the center of the dyadic box of side length $2^{-m}$ containing $x$, and $\tilde y$  the center of the dyadic box of side length $2^{-m-j}$ containing $y$. By the triangle inequality,
 \begin{eqnarray*}
|\eta_{2^{-m}}(x) - \eta_{2^{-m-j}}(y)|  
 & \le &
|\eta_{2^{-m}}(x)  -  \eta_{2^{-m}}(\tilde x)| +  |\eta_{2^{-m}}(\tilde x)  - \eta_{2^{-m}}(\tilde y)| 
 \\ & &
+  |\eta_{2^{-m-j}}(\tilde y) - \eta_{2^{-m-j}}(y)| +  |\eta_{2^{-m}}(\tilde y) - \eta_{2^{-m-j}}(\tilde y)|.
 \end{eqnarray*}
Next, we will bound the four terms on the right hand side above.

For the first three terms, by Lemma~\ref{lem-continuity-h-eta} and a union bound, there exists $\alpha>0$ such that  with high probability
 $$ 
\cap_{i=1}^{2 m_0} \cap_{x\in \mathfrak C_i} \{ \max_{y:|x-y|\leq 11 \times 2^{-i} } |\eta_{2^{-i}}(x) - \eta_{2^{-i}}(y)| \leq \frac{\alpha}{10} \sqrt{\log \delta^{-1}} \}\,,
 $$ 
where $i$ is set as $m$ for the first two terms (note $|\tilde x - \tilde y| \le 11 \times 2^{-m}$ if $|x-y| \le 2^{-m+3}$) and is set as $m+j$ for the third term (note $m+j \le 2m_0$). 

For the fourth term, adjusting the value of $\alpha$ if needed, we obtain from a union bound 
over the choice of $j$ and $y \in \mathfrak C_{m + j}$ that with high probability
 $$
\cap_{m=1}^{m_0}  \cap_{j=0}^{j_0}  \cap_{y \in \mathfrak C_{m+j}}  \{ |\eta_{ 2^{-m-j }}(y) - \eta_{2^{-m}}(y)| \leq \frac{\alpha}{10} \sqrt{\log \delta^{-1} }\log\log \delta^{-1}\}\,,
 $$

Collecting the above results, we conclude that \eqref{eq-def-E-delta-alpha} holds with high probability. This, combined with Lemma~\ref{lem-partition-minimal-cell}, completes the proof.
 \end{proof}

The next lemma, whose proof is deferred,
compares the approximate LGD with two different parameters. 
\begin{lemma}\label{lem-approximate-LGD-two-deltas}
Fix $0<\xi< C_{\mathrm{Mc}}/3$ where $C_{\mathrm{Mc}}$ is specified in Lemma~\ref{lem-partition-minimal-cell}. For 
any sequence of $\xi$-admissible pairs  $(A_\delta, B_\delta )$  and any function $\delta' = \delta' (\delta) < \delta$,  it holds 
with high probability that
\begin{equation}
\label{eq-280318}
 \min_{u\in A_\delta, v\in B_\delta}D'_{\gamma, \delta'}(u, v) \leq   \min_{u\in A_\delta, v\in B_\delta} D'_{\gamma, \delta}(u, v) (\delta/\delta')^3 e^{(\log \delta^{-1})^{0.8}}\;.
\end{equation}
\end{lemma}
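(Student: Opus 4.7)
The plan is to lift a $D'_{\gamma,\delta}$-geodesic to a path in $\mathcal V_{\delta'}$ by traversing each $\delta$-cell via the $\delta'$-cells lying inside it, so that the refinement overhead per cell is at most $(\delta/\delta')^2$ times a sub-polynomial correction. The total length will then be at most $\min_{u,v} D'_{\gamma,\delta}(u,v)\cdot(\delta/\delta')^3 e^{(\log\delta^{-1})^{0.8}}$, with the cube absorbing minor boundary and endpoint corrections.

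First I would set up a high-probability event $\Omega^\ast$ on which the following hold simultaneously: (i) Lemma \ref{lem-partition-minimal-cell} applied to both $\delta$ and $\delta'$, so that every cell of $\mathcal V_\delta$ (resp.\ $\mathcal V_{\delta'}$) has side in $[\delta^{C_{\mathrm{mc}}},\delta^{C_{\mathrm{Mc}}}]$ (resp.\ $[(\delta')^{C_{\mathrm{mc}}},(\delta')^{C_{\mathrm{Mc}}}]$); (ii) the field regularity of Lemma \ref{lem-neighboring-cell}, yielding cross-scale fluctuations $|\eta_s(x)-\eta_{s'}(y)|\le (\log\delta^{-1})^{0.7}$ uniformly over the $\lesssim(\delta')^{-O(1)}$ relevant scale pairs and nearby dyadic centers; (iii) a uniform multiplicative comparison $\tilde M_{\gamma,\delta'}(B)\asymp M_{\gamma,s}(B)$ within a factor of $e^{O((\log\delta^{-1})^{0.7})}$ for every dyadic box $B$ of side $s\in[(\delta')^{C_{\mathrm{mc}}},1]$, obtained by pairing the $\tilde h$, $\hat h$, and $\eta$ couplings of Lemmas \ref{lem-variance-continuity}, \ref{lem-continuity-h-eta} and \ref{lem-tilde-h-eta} with the moment bounds \eqref{eq-LQG-positive-moment}--\eqref{eq-LQG-negative-moment} and a union bound over the polynomially many relevant dyadic boxes. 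On $\Omega^\ast$, I then pick $(u^\ast,v^\ast)\in A_\delta\times B_\delta$ realizing the right-hand minimum of \eqref{eq-280318} and fix a $\delta$-geodesic $\mathsf C_0,\ldots,\mathsf C_K$ in $\mathcal V_\delta$ from $\mathsf C_{u^\ast,\delta}$ to $\mathsf C_{v^\ast,\delta}$, with $K=\min_{u,v} D'_{\gamma,\delta}(u,v)$.

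The main step is the cell-by-cell count. For each $\mathsf C_i$ of side $s_i$, let $N_i$ be the number of $\delta'$-cells intersecting $\mathsf C_i$. Each such $\mathsf C'\subseteq \mathsf C_i$ has a dyadic parent $P$ that was subdivided, so $M_{\gamma,2s_{\mathsf C'}}(P)\ge (\delta')^2$; items (ii)--(iii) then give $\tilde M_{\gamma,\delta'}(\mathsf C')\ge (\delta')^2 e^{-O((\log\delta^{-1})^{0.7})}$ after controlling the ratio of sibling masses via the bounded $\eta$-fluctuation. Since $\tilde M_{\gamma,\delta'}$ is an additive measure and, by (iii) combined with the defining property of $\mathcal V_\delta$,
$$\tilde M_{\gamma,\delta'}(\mathsf C_i)\le M_{\gamma,s_i}(\mathsf C_i)\cdot e^{O((\log\delta^{-1})^{0.7})}<\delta^2 e^{O((\log\delta^{-1})^{0.7})},$$
summing over the $\delta'$-cells in $\mathsf C_i$ yields $N_i\le (\delta/\delta')^2 e^{O((\log\delta^{-1})^{0.7})}$. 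A simple path in the $\mathcal V_{\delta'}$-graph restricted to $\mathsf C_i$ from its ``entry'' boundary piece (shared with $\mathsf C_{i-1}$) to its ``exit'' boundary piece (shared with $\mathsf C_{i+1}$) visits at most $O(N_i)$ cells, and Lemma \ref{lem-partition-minimal-cell} applied to $\mathcal V_{\delta'}$ bounds any interface/endpoint corrections by $\delta^{-O(1)}$. Concatenating over $i=0,\ldots,K$ produces a path in $\mathcal V_{\delta'}$ joining $\mathsf C_{u^\ast,\delta'}$ to $\mathsf C_{v^\ast,\delta'}$ of length at most $K\cdot(\delta/\delta')^2 e^{O((\log\delta^{-1})^{0.7})}\le K\cdot(\delta/\delta')^3 e^{(\log\delta^{-1})^{0.8}}$ for $\delta$ sufficiently small, which is the desired bound.

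The main obstacle is item (iii): obtaining the claimed uniform comparison between $\tilde M_{\gamma,\delta'}$ and the one-point approximation $M_{\gamma,s}$ on every relevant dyadic box with only $e^{O((\log\delta^{-1})^{0.7})}$ multiplicative loss. The two quantities use different approximations of the GFF (integrated white noise versus one-point evaluation at the center), and ruling out atypical cells where they disagree by more than this factor requires simultaneously using the GFF couplings of Lemmas \ref{lem-variance-continuity}--\ref{lem-tilde-h-eta} and the positive/negative moment tail bounds \eqref{eq-LQG-positive-moment}--\eqref{eq-LQG-negative-moment}, together with a careful union bound across $\lesssim(\delta')^{-O(1)}$ boxes and $O(\log\delta^{-1})$ scales. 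Once (iii) is established the rest of the argument is essentially bookkeeping.
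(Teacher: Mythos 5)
Your plan hinges on item~(iii): a uniform two-sided comparison $\tilde M_{\gamma,\delta'}(B)\asymp M_{\gamma,s}(B)$ within a multiplicative error $e^{O((\log\delta^{-1})^{0.7})}$ over \emph{every} dyadic box of side $s\in[(\delta')^{C_{\mathrm{mc}}},1]$, to be obtained by union bound over the $\delta^{-O(1)}$ relevant boxes. The lower-bound half of (iii) is indeed accessible this way, since the left tail of a GMC normalized by its mean decays super-polynomially (negative moments of $\tilde M_{\gamma,\cdot}$ are finite for all orders, \eqref{eq-LQG-negative-moment}). But the upper-bound half, which you invoke to write $\tilde M_{\gamma,\delta'}(\mathsf C_i)\le M_{\gamma,s_i}(\mathsf C_i)\,e^{O((\log\delta^{-1})^{0.7})}$, cannot be established uniformly. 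The ratio $\tilde M_{\gamma,\delta'}(B)/M_{\gamma,s}(B)$ is, up to a coarse-field factor controlled by (ii), a normalized GMC of the fine field inside $B$; its positive moments exist only up to order $4/\gamma^2$ (\eqref{eq-LQG-positive-moment}), so $\P\big(\tilde M_{\gamma,\delta'}(B)/M_{\gamma,s}(B)>e^{t}\big)$ decays only like $e^{-ct}$. For $t=(\log\delta^{-1})^{0.7}$ this probability is of order $e^{-c(\log\delta^{-1})^{0.7}}$, which is far too large to survive a union bound over $\delta^{-O(1)}$ boxes. In fact, with high probability there \emph{are} cells $\mathsf C_i$ along the geodesic whose fine-field GMC is polynomially large relative to the one-point proxy, so your bound $N_i\le(\delta/\delta')^2e^{O((\log\delta^{-1})^{0.7})}$ genuinely fails for those cells; a worst-case direct count gives only $N_i\lesssim(\delta/\delta')^2\delta^{-\varepsilon}$ with $\varepsilon=\varepsilon(\gamma)>0$, which ruins the claimed estimate when $\delta/\delta'$ is small. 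You flag (iii) as ``the main obstacle,'' but it is not merely a technical gap to be filled by a more careful union bound---it is false in the needed form, and this is exactly the failure mode the paper calls out when explaining why a percolation argument is required.

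The paper's proof of Lemma~\ref{lem-approximate-LGD-two-deltas} avoids crossing each $\mathsf C_i$ directly. Lemma~\ref{lem-percolation-Phi} builds, for each cell $\mathsf C_i$, an enclosing loop of small boxes living in $\mathsf C_{i,\mathrm{large}}\setminus\mathsf C_i$, each of which contributes at most $\lambda=(\delta/\delta')^3e^{(\log\delta^{-1})^{0.7}}$ cells of $\mathcal V_{\delta'}$; a Peierls/duality argument shows the probability of failing to find such a loop is super-polynomially small, because the loop is free to \emph{route around} the rare boxes with atypically large fine field. Stringing together these enclosures (plus the endpoint estimate \eqref{eq-B-good-Phi} for the $\xi$-admissible endpoints) gives the bound. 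So the percolation detour is essential, not a convenience: it replaces a worst-case per-cell count---which has polynomial tails---by a typical-case count that is allowed to avoid the heavy boxes.
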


We remark that from the definition, we have the following converse to \eqref{eq-280318}:
\begin{equation}
\label{eq-280318b}
D'_{\gamma, \delta'}(u, v)  \geq D'_{\gamma, \delta}(u, v).
\end{equation}
In the next definition we formulate ingredients that will be useful in the proofs of Lemma~\ref{lem-approximate-LGD-two-deltas} and Proposition~\ref{prop-approximate-LGD}.
Recall that $s_B$ denotes the side length of a box $B$, see Section \ref{sec:notation}.
 \begin{defn} \label{def-E-delta-B-prime}  
Let $B$ be a box with side length $s_B$. Let $B_{\mathrm{large}}$ be a  box concentric with  $B$ and with  side length $2s_B$.

For a dyadic $\epsilon > 0$, denote by $\mathcal B(B, \epsilon)$ (respectively,  $\mathcal B_{\partial} (B, \epsilon)$) the collection of dyadic boxes 
in $\mathbb V$ with side lengths $\epsilon s_B$, which lie in $B_{\mathrm{large}}$ (respectively, whose closures intersect $\partial B$).

For $\delta>0$, let $\Psi_{B, \delta}$ be the number of cells  in $\mathcal V_\delta$ that are contained in $B$ and touch the boundary of $B$ (if $B$ is contained in a cell then we set $\Psi_{B, \delta} = 1$).  Let $\Phi_{B, \delta}$ be the minimal number of Euclidean balls with LQG measure at most $\delta^2$ that covers $\partial B$.

For $\lambda>0$, define the event $\mathcal E_{\delta, B, \epsilon, \lambda}$ (respectively, $\mathcal E'_{\delta, B, \epsilon, \lambda}$) to be  the following: there exists a sequence of neighboring boxes $B'_1, \ldots B'_d \subseteq B_{\mathrm{large}} \setminus B$ which encloses $B$ such that
 \begin{itemize}
\item $B'_i \in\mathcal B(B, \epsilon)$ for each $1\leq i\leq d$. 
\item $\Psi_{B'_i, \delta} \leq \lambda$ for each $1\leq i\leq d$ (respectively $\Phi_{B'_i, \delta} \leq \lambda$ for each $1\leq i\leq d$).
\end{itemize}
\end{defn}
\noindent
(In Definition \ref{def-E-delta-B-prime}, by 
 two boxes neighboring each other we mean  that the intersection of their
 closures contains a non-trivial line segment.
By a sequence enclosing  $B$ we mean that it separates $B$ from $\mathbb V \cap \partial B_{\mathrm{large}}$ in $\mathbb V$.)

As we have announced earlier, the proofs for Lemma~\ref{lem-approximate-LGD-two-deltas} and Proposition~\ref{prop-approximate-LGD} employ percolation-type  arguments. More precisely, for a dyadic box $B$, we consider $B' \in \mathcal B(B, \epsilon)$ and $\tilde B\in \mathcal B_{\partial} (B', t/\epsilon)$. If the LQG measures (or respectively approximate LQG) of all $\tilde B$'s are less than some value $\mu$, we call $B'$ an open (in the percolation sense) box. When $B$ is a cell, we will show that  each $B' \in \mathcal B(B, \epsilon)$ is open with large probability  by setting $\epsilon$ and $t$ appropriately, and that the openness of all $B' \in \mathcal B(B, \epsilon)$ are \emph{essentially} independent events. Therefore,
by standard arguments in percolation theory (in our case a straightforward union bound suffices), one can find an open path enclosing  $B$. The union of these enclosures along all cells  in the geodesic of $D'_{\gamma, \delta}$ then gives an approximately minimizing path. To compare $D'_{\gamma, \delta}$ with $D'_{\gamma, \delta'}$ and $D_{\gamma, \delta}$, we respectively set $\mu$ to be $(\delta')^2$ and $\delta^2 / 4$ (see Lemma~\ref{lem-approximate-LGD-two-deltas} 
and Proposition~\ref{prop-approximate-LGD}).  The key technical step 
for these arguments appears in Lemma~\ref{lem-percolation-Phi}. We remark that the proof of Lemma~\ref{lem-regularity} below follows the same type of analysis but is substantially more involved as we will need to keep track of the ratios between side lengths of neighboring boxes along the path we construct.

Recall the constants $\alpha_0$ and $C_{\mathrm{mc}}$, see Lemma~\ref{lem-neighboring-cell} and  Lemma~\ref{lem-partition-minimal-cell}.
\begin{lemma}\label{lem-percolation-Phi}
Let $\alpha > \max\{ \alpha_0, 4 C_{\mathrm{mc}} \}$.
For $0<\delta' = \delta' (\delta) \leq \delta$, 
let 
$$\epsilon = \min \{ 2^{-n} : 2^n \le 4 C_{\mathrm{mc}} \log \delta^{-1} \} \quad \mbox{\rm
 and} \ \lambda = (\delta/\delta')^3 e^{(\log \delta^{-1})^{0.7}}.$$ For each dyadic box $B$ with  side length $s=s_B = 2^{-m}$, $1\leq m \leq C_{\mathrm{mc}}\log_2 \delta^{-1}$, we have
\begin{equation}\label{eq-B-percolation-Phi}
\P(\{M_{\gamma, s} (B) \leq \delta^2\} \cap \mathcal E_{\delta, \alpha} \cap \mathcal E^c_{\delta', B, \epsilon, \lambda}) \leq \delta^{10 C_{\mathrm{mc}} + 10} \,.
\end{equation}
Furthermore,  for any fixed $x\in B_{\mathrm{large}}$ and any fixed $\iota>0$
\begin{equation}\label{eq-B-good-Phi}
\P(\{M_{\gamma, s} (B) \leq \delta^2\} \cap \mathcal E_{\delta, \alpha} \cap \{D'_{\gamma, \delta'}(x, \partial B_{\mathrm{large}}) > \delta^{-\iota}  (\delta/\delta')^3  \} ) \leq  \delta^{\iota/10}\,.
\end{equation}
\end{lemma}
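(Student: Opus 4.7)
My plan is to implement the Peierls-type percolation argument described right before the lemma statement; the key analytical input is a per-box tail estimate on $\Psi_{B',\delta'}$ for each $B'\in\mathcal B(B,\epsilon)$, strong enough that a union bound over dual paths yields the required polynomial-in-$\delta$ decay. As a preliminary step I convert the conditions $\{M_{\gamma,s}(B)\le\delta^2\}\cap\mathcal E_{\delta,\alpha}$ into uniform upper bounds on $\eta_{2^{-m'}}(y)$ for every $y$ near $B_{\mathrm{large}}$ and every dyadic scale $2^{-m'}$ between $s$ and $\delta^{C_{\mathrm{mc}}}$: the hypothesis $M_{\gamma,s}(B)\le\delta^2$ gives $\gamma\eta_s(c_B)\le 2\log(\delta^2/s^2)+\tfrac{\gamma^2}{2}\Var\eta_s(c_B)=O(\log\delta^{-1})$, and the chained continuity estimate built into $\mathcal E_{\delta,\alpha}$ propagates this to $\eta_{2^{-m'}}(y)\le C(\alpha)\log\delta^{-1}\cdot\log\log\delta^{-1}$ throughout the required range.

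For the per-box estimate I bound $\Psi_{B',\delta'}$ above by a constant times the number of dyadic sub-boxes $D\subseteq B'$ with $M_{\gamma,s_D}(D)\ge(\delta')^2$. Using the moment bound \eqref{eq-LQG-positive-moment} at exponent $p$ just below $4/\gamma^2$, together with the pointwise $\eta$ bounds from the preliminary step, one computes the conditional $p$-th moment of $M_{\gamma,s_D}(D)$ given the coarse field as a controllable function of $s_D$. Summing over dyadic $D$ at all scales (summability requires $p\in(1,4/\gamma^2)$, which is non-empty when $\gamma<2$) controls the first moment of $\Psi_{B',\delta'}$; a similar but more elaborate computation using joint $p$-th moments handles the $q$-th moment of $\Psi_{B',\delta'}$ for any fixed $q$. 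A Markov step at this high moment $q$ then gives $\P(\Psi_{B',\delta'}>\lambda\mid\text{coarse})\le\delta^{K}$ for any preassigned $K$, by choosing $p$ close enough to $4/\gamma^2$ and $q$ large enough.

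The Peierls step proceeds as in standard planar percolation: on $\mathcal E^c_{\delta',B,\epsilon,\lambda}$ the bad boxes ($\Psi>\lambda$) in $\mathcal B(B,\epsilon)$ contain a dual crossing of the annulus $B_{\mathrm{large}}\setminus B$, hence a dual path of length at least $\Omega(1/\epsilon)=\Omega(\log\delta^{-1})$. There are at most $C^{1/\epsilon}$ such paths, and thanks to the truncation in \eqref{eq:WND_decomposition-approximation} the bad events for well-separated $B'$'s depend on disjoint pieces of the white noise and hence are independent. Multiplying the per-box bounds along a dual path and summing over paths gives the required $\delta^{10C_{\mathrm{mc}}+10}$. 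The main technical obstacle I foresee is controlling the joint $q$-th moment of $\Psi_{B',\delta'}$ together with the approximate independence of bad events across nearby boxes; the choice $\epsilon\sim 1/\log\delta^{-1}$ is calibrated so that the truncation radius in \eqref{eq:WND_decomposition-approximation} is manageably smaller than $s_{B'}$ at the dominant scales, enabling a bounded-colouring argument on the dual path so that within each colour class the per-box bad events are genuinely independent.

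For the second estimate \eqref{eq-B-good-Phi}, I take the $\mathcal V_\delta$-geodesic $\mathsf C_0=\mathsf C_{x,\delta},\mathsf C_1,\ldots,\mathsf C_L$ from $\mathsf C_{x,\delta}$ to a cell of $\mathcal V_\delta$ touching $\partial B_{\mathrm{large}}$. By \eqref{eq-very-crude-prime} and Markov's inequality, $L\le\delta^{-\iota/3}$ except on an event of probability at most $\delta^{\iota/3}$. Since each $\mathsf C_i$ is a cell of $\mathcal V_\delta$, it automatically satisfies the hypothesis $M_{\gamma,s_{\mathsf C_i}}(\mathsf C_i)\le\delta^2$ required by \eqref{eq-B-percolation-Phi}; a union bound over the at most $\delta^{-2C_{\mathrm{mc}}}$ cells of $\mathcal V_\delta$ inside $B_{\mathrm{large}}$ then yields that every $\mathsf C_i$ admits a good enclosing ring of at most $O(\lambda/\epsilon)$ cells of $\mathcal V_{\delta'}$ except on an event of probability $\ll\delta^{\iota/10}$. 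Stitching these rings along the $\mathcal V_\delta$-chain produces a $\mathcal V_{\delta'}$-path from $x$ to $\partial B_{\mathrm{large}}$ of length at most $L\cdot O(\lambda/\epsilon)\le\delta^{-\iota/3}(\delta/\delta')^3 e^{(\log\delta^{-1})^{0.7}}\log\delta^{-1}\le\delta^{-\iota}(\delta/\delta')^3$, completing the proof.
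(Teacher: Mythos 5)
Your proposed argument for \eqref{eq-B-percolation-Phi} has roughly the right shape but overstates the per-box estimate in a way that matters, and your proposed argument for \eqref{eq-B-good-Phi} has a genuine gap.

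\textbf{On \eqref{eq-B-percolation-Phi}.} The claim that a moment computation yields $\P(\Psi_{B',\delta'}>\lambda\mid\text{coarse})\le\delta^K$ for arbitrary $K$ is not achievable with the given $\lambda=(\delta/\delta')^3 e^{(\log\delta^{-1})^{0.7}}$. Take $\delta'=\delta$, so $\lambda=e^{(\log\delta^{-1})^{0.7}}$: the event $\{\Psi_{B',\delta'}>\lambda\}$ is driven by a dyadic sub-box of side $\sim\epsilon s/\lambda$ near $\partial B'$ whose $\eta$-field fluctuation is abnormally large, which has probability only stretched-exponentially small in $\log\delta^{-1}$ (of order $\exp(-c(\log\delta^{-1})^{0.7})$), not polynomially small. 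Moreover, \eqref{eq-LQG-positive-moment} concerns $\tilde M_{\gamma,\delta}(\cdot)$, a GMC-like measure, whereas the approximate LQG $M_{\gamma,s_D}(D)=s_D^2 e^{\gamma\eta_{s_D}(c_D)-\cdots}$ is a single lognormal; the multifractal moment machinery does not apply to it directly, and bounding $\E\Psi^q$ via the number of subdivision events requires a multi-scale analysis you have not carried out. The paper instead defines a one-scale \emph{open} event --- boundedness of $\eta_{ts}^{\epsilon^2 s}(c_{\tilde B})$ for the $\sim 1/t$ boxes $\tilde B$ of side $ts$ touching $\partial B'_i$, with $t=\epsilon 2^{-\lceil 0.9\log_2\lambda\rceil}$ --- whose failure probability is $O(t^{0.1})$, and uses the Peierls sum over dual paths of length $\ge 1/\epsilon\sim\log\delta^{-1}$ to boost this to $\delta^{10C_{\mathrm{mc}}+10}$. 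Your Peierls step is fine in spirit (and would in fact rescue a much weaker per-box bound than you claim), but as written the per-box estimate does not hold and the moment chain is unsubstantiated.

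\textbf{On \eqref{eq-B-good-Phi}.} This is the more serious issue, and in fact the proposed argument is circular. Two problems. First, \eqref{eq-very-crude-prime} is a bound on the second moment of $\log D'/\log\delta^{-1}$; Markov then gives $\P(\log L>T\log\delta^{-1})=O(T^{-2})$, so $\P(L>\delta^{-\iota/3})=O(\iota^{-2})$, a constant, not $\delta^{\iota/3}$. (Even the stronger exponential-tail version only gives $\P(L>\delta^{-\iota/3})\le e^{-c\iota}$, still a constant.) With high probability one has only the crude bound $L\le\delta^{-2C_{\mathrm{mc}}}$, useless for small $\iota$. Second, and more fundamentally: the stitching argument cannot connect the interior point $x$ to the first ring provided by \eqref{eq-B-percolation-Phi}, because that ring lies in $(\mathsf C_0)_{\mathrm{large}}\setminus\mathsf C_0$, \emph{outside} the cell containing $x$. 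The missing step --- bounding $D'_{\gamma,\delta'}(x,\partial(\mathsf C_0)_{\mathrm{large}})$ --- is precisely the statement \eqref{eq-B-good-Phi} you are trying to prove; this is exactly why the lemma pairs \eqref{eq-B-percolation-Phi} with \eqref{eq-B-good-Phi}, and the proof of Lemma~\ref{lem-approximate-LGD-two-deltas} subsequently uses \emph{both}. The paper's actual proof of \eqref{eq-B-good-Phi} is a direct one-shot argument: take $t\approx\delta^{0.9\iota}(\delta'/\delta)^3$, look at the $4/t+O(1)$ boxes of side $ts$ along a horizontal line from $x$ to $\partial B_{\mathrm{large}}$, and show via a union bound over those $O(1/t)$ boxes (each failing with probability $\le t^{1.12}$) that with probability at least $1-\delta^{\iota/10}$ all of them have $M_{\gamma,ts}\le(\delta')^2$, hence $D'_{\gamma,\delta'}(x,\partial B_{\mathrm{large}})\le 4/t\le\delta^{-\iota}(\delta/\delta')^3$. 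No geodesic in $\mathcal V_\delta$ is invoked.
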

\begin{proof}
Let $t$ be a dyadic such that $\log t^{-1} \ge (\log \delta^{-1})^{0.6}$, to be determined below. Write $K = 1/\epsilon$.

Suppose $\mathcal B (B, \epsilon) = \{ B'_i \}$. Write $\mathcal B'_i = \mathcal B_{\partial} (B'_i, t/\epsilon)$, then each box in $\mathcal B'_i$ has side length $t s$.  By 
\eqref{eq-def-E-delta-alpha}, we see that  on the event $\{M_{\gamma, s} (B) \leq \delta^2\} \cap \mathcal E_{\delta, \alpha}$, for all $\tilde B \in \mathcal B(B, t) \cup \mathcal B_{\partial} (B_{\mathrm{large}}, t)$ we have 
  $$
M_{\gamma,  t s}(\tilde B) \leq \delta^2 e^{2\gamma \alpha \sqrt{\log \delta^{-1}} \log \log \delta^{-1}} t^2 e^{\gamma \eta_{ts}^{\epsilon^2 s}(c_{\tilde B}) - \frac{\gamma^2}{2} \Var( \eta_{t s}^{\epsilon^2 s}(c_{\tilde B}))}\,.
 $$
(Recall that $c_{\tilde B}$ denotes the center of $\tilde B$.) 
By a union bound  and the fact that
 $\mathcal B'_i \subset \mathcal B(B, t) 
\cup \mathcal B_{\partial} (B_{\mathrm{large}}, t)$, we have that
 \begin{equation}
\label{eq-berlin1}
\P(\max_{\tilde B\in \mathcal B'_i}\eta_{t s}^{\epsilon^2 s}(c_{\tilde B}) \leq 1.5 \log t^{-1} ) \geq 1 -  t^{0.1}\,,
 \end{equation}
where we have used that $|\mathcal B'_i| \le 8 \epsilon / t \le 1/ t$. On the event in
\eqref{eq-berlin1},  we have
 \begin{equation}\label{eq-LQG-tilde-B-Phi}
M_{\gamma, t s}(\tilde B) \leq \delta^2 t^{0.8} .
 \end{equation}

To prove \eqref{eq-B-percolation-Phi}, we take $t = \epsilon 2^{-\lceil 0.9 \log_2 \lambda \rceil}$  (this
implies that $t^{-0.4} \ge  \lambda^{0.36} \ge \delta/\delta'$ and therefore
 $\delta^2 t^{0.8} \le \delta'^2$). Fix 
$p = t^{0.1}$ and  $\kappa = 2$. Combined  with \eqref{eq-LQG-tilde-B-Phi},
we see that there exist events $\mathcal E_{B'_i, \mathrm{open}}$ measurable with respect to $\{ \eta_{t s}^{\epsilon^2 s}(c_{\tilde B}): \tilde B\in \mathcal B'_i\}$ such that 
 \begin{equation}\label{eq-B'-i-open}
\left\{ \begin{array}{l}
\P(\mathcal E_{B'_i, \mathrm{open}}) \geq 1 - p,
 \\
\mbox{$\{ \mathcal E_{B'_i, \mathrm{open}}, i \in I \}$ and $\{ \mathcal E_{B'_{i'}, \mathrm{open}}, i' \in I'\}$ are independent if $|B'_i - B'_{i'}| \ge \kappa \epsilon s$ for all $i,i'$,}
 \end{array} \right.
\end{equation}
and 
 $$
( \{ M_{\gamma, s} (B) \leq \delta^2\} \cap \mathcal E_{\delta, \alpha}  \cap \mathcal E_{B'_i, \mathrm{open} })  \subseteq  \{ \Psi_{B_i, \delta'} \le \lambda\}.
 $$
(Note that the parameter $4 C_{\mathrm{mc}}$ in the choice of $\epsilon$
ensures that  $\epsilon^2 s \log \frac 1 {\epsilon^2 s} \le \epsilon s$, 
and that
$4 / t \le \lambda$.)
We are now ready to complete the proof of \eqref{eq-B-percolation-Phi} by finding an open enclosure of $B$, i.e., a sequence of neighboring boxes in $\mathcal B(B, \epsilon)$ enclosing  $B$ such  that $\mathcal E_{B', \mathrm{open}}$ occurs for each $B'$ in the sequence. To this end, we  employ a standard percolation argument. Suppose that such an enclosing path does not exist.
Then, by duality, there exists a sequence of boxes $B_{i_1}', \ldots, B_{i_\ell}'$  joining $\partial B$ and $\partial B_{\mathrm{large}}$ such that the closures of  consecutive boxes $B_{i_r}'$ and $B_{i_{r+1}}'$ intersect (possibly at a single point) for all $r$, and  none of $\mathcal E_{B'_{i_r}, \mathrm{open}}$'s occurs. Since $\ell \ge K$, there are at most $4(2K+1)\times 8^\ell$ such sequences for a fixed $\ell$. For each such sequence, one can find at least $\ell / (2 \kappa + 1)^2$ boxes $B_{i_r}'$'s with pairwise distance
 at least $\kappa \epsilon s$. Consequently, for each fixed such sequence, 
the events $\mathcal E_{B_{i_r}'}$'s are mutually independent. It follows that
 \begin{equation} \label{Eq.percolation-argument}
\P(\mbox{no open enclosure}) \leq \sum_{\ell = K}^\infty 4(2K+1)8^\ell p^{\ell / (2 \kappa + 1)} \le 9K (8 p^{\frac 1 {2 \kappa + 1}})^K ,
 \end{equation}
provided that $\kappa$ is fixed and $p = o(1)$. Substituting $p = t^{0.1}$ and $\kappa = 2$, we see that $9K (8 p^{\frac 1 {2 \kappa + 1}})^K \leq \delta^{10 C_{\mathrm{mc}} + 10}$. This completes the proof of \eqref{eq-B-percolation-Phi}, noting there is no open enclosure on the event $\{ M_{\gamma, s} (B) \leq \delta^2\} \cap \mathcal E_{\delta, \alpha}  \cap \mathcal E^c_{\delta', B, \epsilon, \lambda}$.

In order to prove \eqref{eq-B-good-Phi}, we take 
$t = \max \{ 2^{-r} : 2^{-r} \le \delta^{0.9 \cdot \iota} (\delta' / \delta)^3 \} $.
Denote by $\tilde B_i$, $0<i\leq  4 / t $ all the
(closed) boxes in $\mathcal B (B, t)$ 
which intersect the horizontal line passing through $x$. 
By the definition of $t$, 
one has $\delta^2 t^{0.8} \le (\delta')^2$.
We argue next in a similar way to the derivation of 
\eqref{eq-LQG-tilde-B-Phi}: on the event $\{M_{\gamma, s} (B) \leq \delta^2\} 
\cap \mathcal E_{\delta, \alpha}$, we have
 \begin{align*}
 \eta^{\epsilon^2 s}_{ts} (c_{{\tilde B}_i}) \leq 1.5 \log t^{-1}\mbox{ for all } 1\leq i \leq 4/t &\Rightarrow M_{\gamma, t s}(\tilde B_i) \leq \delta^2 t^{0.8} \mbox{ for all } 1\leq i \leq 4/t  \\
 &\Rightarrow D'_{\gamma, \delta'} (x, \partial B_{\mathrm{large}}) \leq \frac4t\leq
\delta^{-\iota} (\delta / \delta')^3.
 \end{align*} 
Since  $\eta^{\epsilon^2 s}_{ts} (v_{{\tilde B}_i})$ 
is a centered Gaussian variable with variance $\log (\epsilon^2/t)\leq |\log t|$,
we have  by a union bound that
$\P(\eta^{\epsilon^2 s}_{ts} (v_{{\tilde B}_i}) \leq 1.5 \log t^{-1}\mbox{ for all } i) \geq 1 - \frac  4 t t^{1.12}  \geq 1 - \delta^{\iota/10}$.
This completes the proof of  \eqref{eq-B-good-Phi}.
\end{proof}

\begin{proof}[Proof of Lemma~\ref{lem-approximate-LGD-two-deltas}]
Let $u \in A_\delta$, $v \in B_\delta$ be such that $\min_{x \in A_\delta, y\in B_\delta}D'_{\gamma, \delta}(x, y) = D'_{\gamma, \delta}(u, v) = : d$, and suppose $\mathsf C_1, \cdots, \mathsf C_d$ is a sequence of neighboring cells in $\mathcal V_\delta$ joining $u$ to $v$, with $u \in \mathsf C_1$.

In case  $A_\delta = \{u\}$,  let $S_u = \{\mathsf C\in \mathcal V_\delta: u\in \mathsf C_{\mathrm{large}}\}$, where we recall that
 $\mathsf C_{\mathrm{large}}$ is a box concentric with $\mathsf C$ 
of side length $2 s_{\mathsf C}$. We work on the event $\mathcal E_{\delta, \alpha}$,
which 
by Lemma~\ref{lem-neighboring-cell} is possible.
Choose $\iota = C_{\mathrm{Mc}}/3$. Applying \eqref{eq-B-good-Phi} of Lemma~\ref{lem-percolation-Phi} to all dyadic boxes containing $u$ with side length at least $\delta^{C_{\mathrm{mc}}}$ (so in total we apply \eqref{eq-B-good-Phi} $O(\log \delta^{-1})$ times), we see that with high probability we have 
 $$
 D'_{\gamma, \delta'}(u , \partial \mathsf C_{\mathrm{large}}) \leq \delta^{-\iota} (\delta/\delta')^3 \mbox{ for all } \mathsf C\in S_u\,.
 $$
Let $\mathbb C_{\mathrm{start}}$ be the collection of all cells in geodesics of $D'_{\gamma, \delta'}(u , \partial \mathsf C_{\mathrm{large}})$ for all $\mathsf C\in S_u$. In the case $A_\delta$ is a connected set of diameter at least $\delta^\xi$, let $\mathbb C_{\mathrm{start}} = \emptyset$. Similarly, we define $\mathbb C_{\mathrm{end}}$.

By \eqref{eq-B-percolation-Phi} of Lemma~\ref{lem-percolation-Phi} and a union bound, we see that with high probability $\mathcal E_{\delta', \mathsf C, \epsilon, \lambda}$ holds for each $\mathsf C \in \mathcal V_\delta$, where $\epsilon, \lambda$ are specified as in Lemma~\ref{lem-percolation-Phi}. In particular, in what follows we can assume that $\mathcal E_{\delta', \mathsf C_i, \epsilon, \lambda}$ holds for all $i$. 
Then, for each $i$, there exists a sequence, denoted
 $\mathbb C_i$, of neighboring cells in $\mathcal V_{\delta'}$ such that $|\mathbb C_i | \le \lambda/\epsilon^2$, $\mathbb C_i$ encloses $\mathsf C_i$, and each cell in $\mathbb C_i$ intersects with $\mathsf C_{i,\mathrm{large}} \setminus \mathsf C_i$.
 We claim
 that $(\cup_{i=1}^d \mathbb C_i) \cup \mathbb C_{\mathrm{start}} \cup \mathbb C_{\mathrm{end}}$ contains a crossing between $A_\delta$ and  $B_\delta$. This is
 justified as follows: let $i_1 = \max \{ i : \mathbb C_i \mbox{ encloses } u \}$, and define recursively $i_r = \max \{ i > i_{r-1} : \mathbb C_i \mbox{ intersects } \mathbb C_{i_{r-1}} \}$ till $r_0$ such that one can not define $i_{r_0 + 1}$, then $A_\delta$ is connected to $\mathbb C_{\mathrm{start}} \cup \mathbb C_{i_1}$ and respectively $B_\delta$ to $\mathbb C_{\mathrm{end}} \cup \mathbb C_{i_{r_0}}$. It follows that
 \begin{equation} \label{Eq.boundDprime}
\min_{u \in A_\delta, v\in B_\delta}D'_{\gamma, \delta'}(u, v) \le d \lambda / \epsilon^2 +  2 \delta^{-\iota} (\delta / \delta')^3 .
 \end{equation}
This completes the proof, noting that on $\mathcal E_{\delta, \alpha} $
 \begin{equation} \label{Eq.lowerboundforDprime}
\min_{x \in A_\delta, y\in B_\delta}D'_{\gamma, \delta}(x, y) \geq \xi / (2 \delta^{C_{\mathrm{Mc}}}) \geq \delta^{- 2 \iota}\,. \qedhere
 \end{equation}
\end{proof}

\begin{proof}[Proof of Proposition~\ref{prop-approximate-LGD}]
We begin with the upper bound. The proof resembles that of Lemma~\ref{lem-approximate-LGD-two-deltas}, and the key technical ingredient is an analogue of Lemma~\ref{lem-percolation-Phi}. Let $\epsilon = \max\{2^{-n}: 2^n \leq 4 C_{\mathrm{mc}} \log \delta^{-1}\}$ be as  in Lemma~\ref{lem-percolation-Phi}, and 
let $\lambda =e^{(\log \delta^{-1})^{0.7}}$. We will show that there exists an event $\tilde {\mathcal E}_{\delta, \alpha}$ which occurs with high probability  such that for each dyadic box $B$ with side length $s = 2^{-m}$, $1\leq m \leq C_{\mathrm{mc}}\log_2 \delta^{-1}$, 
\begin{equation}\label{eq-B-percolation-Psi}
\P(\{M_{\gamma, s} (B) \leq \delta^2\} \cap \tilde {\mathcal E}_{\delta, \alpha} \cap (\mathcal E'_{\delta, B, \epsilon, \lambda})^c)\leq \delta^{10 C_{\mathrm{mc}} + 10}\,,
\end{equation}
where $\mathcal E'_{\delta, B, \epsilon, \lambda}$ is as in Definition~\ref{def-E-delta-B-prime}. 
Furthermore,  we will show that for any fixed $x\in B_{\mathrm{large}}$ and any fixed $\iota>0$
\begin{equation}\label{eq-B-good-Psi}
\P(\{M_{\gamma, s} (B) \leq \delta^2\} \cap \tilde {\mathcal E}_{\delta, \alpha} \cap \{D_{\gamma, \delta}(x, \partial B_{\mathrm{large}}) > \lambda \delta^{-\iota} \} ) \leq  \delta^{\iota/10}\,.
\end{equation}
Provided with \eqref{eq-B-percolation-Psi} and \eqref{eq-B-good-Psi}, we can complete the proof for the upper bound following the same argument as in the 
proof of Lemma~\ref{lem-approximate-LGD-two-deltas}. Note that in the case here, 
 $$
\min_{u \in A_\delta, v\in B_\delta} D_{\gamma, \delta}(u, v) \le d \lambda / \epsilon^2  + 2 \delta^{-\iota} \lambda \le d e^{(\log \delta^{-1})^{0.8}} + \delta^{- 2 \iota} e^{(\log \delta^{-1})^{0.8}} \le d  e^{(\log \delta^{-1})^{0.9}},
 $$
where $d =  \min_{u \in A_\delta, v\in B_\delta} D'_{\gamma, \delta}(u, v)$ (compare with \eqref{eq-B-percolation-Phi}, \eqref{eq-B-good-Phi}, \eqref{Eq.boundDprime} and \eqref{Eq.lowerboundforDprime}). 
Thus, it remains to prove \eqref{eq-B-percolation-Psi} and \eqref{eq-B-good-Psi} (the proof resembles that of \eqref{eq-B-percolation-Phi} and \eqref{eq-B-good-Phi}). 

Let $t = \epsilon 2^{- \lceil \frac1 {\log 2} (\log \delta^{-1})^{0.6} \rceil} $, and with $\mathcal B (B, \epsilon)=\{ B'_i \}$, set
$\mathcal B'_i = \mathcal B_{\partial} (B'_i, t/\epsilon)$. Write $K = 1/ \epsilon$.
By Lemma~\ref{lem-tilde-h-eta}, we have that 
 \begin{equation} \label{eq-tilde-h-eta-assump}
\max_{j\geq 1}\max_{v\in \mathbb V} |\tilde h_{2^{-j}}(y) - \eta_{2^{-j}}(y)|  = O(\sqrt{\log \delta^{-1}})\,, \quad \mbox{\rm with high probability}\,.
\end{equation}
Let
 \begin{equation} \label{Eq.definition-tilde-E}
\tilde {\mathcal E}_{\delta, \alpha} : = \mbox{the intersection of $\mathcal E_{\delta, \alpha}$ from \eqref{eq-def-E-delta-alpha} and the event described in \eqref{eq-tilde-h-eta-assump}.}
 \end{equation}
Then, on $\{M_{\gamma, s} (B) \leq \delta^2 \} \cap \tilde {\mathcal E}_{\delta, \alpha}$ one has
 \begin{equation} \label{eq-M-tilde-B-bound}
M_\gamma (\tilde B) \le e^{2 \alpha \gamma \sqrt{\log \delta^{-1}} \log \log \delta^{-1}} \times 
\delta^2 s^{-2} \times \tilde M_{\gamma, \epsilon^2 s, \eta} (\tilde B)
  \end{equation}
for any $\tilde B \in \mathcal B (B, t) \cup \mathcal B_{\partial} (B_{\mathrm{large}}, t)$.
 By Fubini's Theorem, we have that $\E \tilde M_{\gamma, \epsilon^2 s, \eta}(\tilde B)  = (ts)^2$. Thus,
 \begin{equation} \label{Eq.LQG-tildeM}
\P (M_{\gamma, \epsilon^2 s, \eta} (\tilde B) > 4^{-1} s^2 e^{- 2 \alpha \gamma \sqrt{\log \delta^{-1}} \log \log \delta^{-1}}  ) \le \frac{ \E (\tilde M_{\gamma, \epsilon^2 s, \eta}(\tilde B)) }{4^{-1} s^2 e^{- 2 \alpha \gamma \sqrt{\log \delta^{-1}} \log \log \delta^{-1}} } \le t^{1.9}.
 \end{equation}
Consequently, with $\mathcal E_{B'_i, \mathrm{open}} $  defined by
 $$
\mathcal E_{B'_i, \mathrm{open}} : =  \{ M_{\gamma, \epsilon^2 s, \eta} (\tilde B) \le 4^{-1} s^2 e^{- 2 \alpha \gamma \sqrt{\log \delta^{-1}} \log \log \delta^{-1}} \mbox{ for all } \tilde B\in \mathcal B'_i \},
 $$
 and using that $|\mathcal B'_i|  \le t^{-1}$, we have that
 $$
\P( \mathcal E_{B'_i, \mathrm{open}}^c) \leq t^{1.9} |\mathcal B'_i| \le t^{0.9}  \,.
 $$
On the one hand,
 $$
\left\{ \begin{array}{l} \P(\mathcal E_{B'_i, \mathrm{open}}) \geq 1 -  t^{0.9}, \\ \mbox{$\{ \mathcal E_{B'_i, 
\mathrm{open}}, i \in I \}$ and $\{ \mathcal E_{B'_{i'}, i' \in I'} \}$ are independent if $|B'_i - B'_{i'}| \ge 2 \epsilon s$ for all $i$, $i'$}. \end{array} \right. 
 $$
On the other hand, consider the balls of radius radius $ts$
centered at the corners of boxes in $\mathcal B'_i$ that are on $\partial B'_i$. The collection of these $4 \epsilon / t$  balls covers $\partial B'_i$. Note that each such ball can be covered by at most $4$ boxes in $\mathcal B'_i$. Thus, each one has LQG measure at most $\delta^2$ if $\{M_{\gamma, s} (B) \leq \delta^2\} \cap \tilde {\mathcal E}_{\delta, \alpha}  \cap \mathcal E_{B'_i, \mathrm{open}}$ occurs, by the definition of $\mathcal E_{B'_i, \mathrm{open}}$ together with \eqref{eq-M-tilde-B-bound}. Therefore, we have that
 $$
(\{M_{\gamma, s} (B) \leq \delta^2\} \cap \tilde {\mathcal E}_{\delta, \alpha}  \cap \mathcal E_{B'_i, \mathrm{open}}) \subseteq \{ \Phi_{B'_i, \delta} \le \lambda  \}, 
 $$
where we use that $4 \epsilon/ t \le e^{(\log \delta^{-1})^{0.7}} = \lambda$. We can now apply the percolation argument as in the proof of Lemma~\ref{lem-percolation-Phi}, with parameters in \eqref{eq-B'-i-open} and \eqref{Eq.percolation-argument} being set as $p = t^{0.9}$ and $\kappa = 2$ here. Then, we obtain that 
 $$
\P (\{M_{\gamma, s} (B) \leq \delta^2\} \cap \tilde {\mathcal E}_{\delta, \alpha}  \cap (\mathcal E'_{\delta, B, \epsilon, \lambda})^c ) \le 9 K (8 p^{\frac 1 {2 \kappa + 1}})^K \leq  \delta^{10 C_{\mathrm{mc}} + 10},
 $$
completing the proof of \eqref{eq-B-percolation-Psi}.

To prove \eqref{eq-B-good-Psi}, we take $t = 2^{- \lceil \log_2 (\delta^{-\iota / 2} \lambda) \rceil} $. Denote by $\tilde B_i$, $0<i\leq 2 / t$ the (closed) boxes in $\mathcal B (B, t)$ that  intersect the horizontal line passing through $x$. 
Denote by $\{ \hat B_j,  j =  1, \ldots, \ell \}$ the collection of $\tilde B_i$'s together with their neighboring boxes in $\mathcal B (B, t) \cup \mathcal B_{\partial} (B_{\mathrm{large}}, t)$, where $\ell \le 6/t + 6$. Consider the balls centered at corners of some $\tilde B_i$ with radius $ts$. The collection of these $4/t+2$ balls covers a line segment from $x$ to $\partial B_{\mathrm{large}}$, and each ball is covered by at most 4 boxes in $\{ \hat B_j \}$. Consequently, on the event $\{M_{\gamma, s} (B) \leq \delta^2\} \cap \mathcal E_{\delta, \alpha}$, we have that $D_{\gamma, \delta} (x, \partial B_{\mathrm{large}}) > \delta^{-\iota} \lambda$ (note 
that $\delta^{-\iota} \lambda \ge (4/t + 2)$) implies that $M_{\gamma, \epsilon^2 s, \eta} (\hat B_j) > 4^{-1} s^2 e^{- 2 \alpha \gamma \sqrt{\log \delta^{-1}} \log \log \delta^{-1}}$ for some $j$, recalling \eqref{eq-M-tilde-B-bound}. This occurs with probability at most $(6/t + 6) t^{1.9}\leq\delta^{\iota / 10}$,
see \eqref{Eq.LQG-tildeM}. This completes the proof of  \eqref{eq-B-good-Psi}.

\medskip

Next, we turn to  the lower bound in Proposition \ref{prop-approximate-LGD}. 
Let $\delta' = \delta  e^{(\log \delta^{-1})^{0.8}}$. With this choice, events of high probability with respect to $\delta$ are also of high probability with respect to $\tilde \delta$, and vice versa. Therefore, we do not distinguish between those notions.
The key  to the proof is the claim that
with high probability, 
\begin{equation}\label{eq-Euclidean-Ball-covering}
\mbox{ every Euclidean ball with LQG-measure } \leq \delta^2 \mbox{ can be covered by 4 cells in } \mathcal V_{\delta'}\,.
\end{equation}
Provided with \eqref{eq-Euclidean-Ball-covering}, it is clear that with high probability we have that 
$$D'_{\gamma, \delta'}(u, v) \leq 4 D_{\gamma, \delta}(u, v) \mbox{ for all } u, v\in \mathbb V\,.$$
Combined with Lemma~\ref{lem-approximate-LGD-two-deltas}, it then yields the desired lower bound in the proposition.

It remains to prove \eqref{eq-Euclidean-Ball-covering}.
 Note that any Euclidean ball $R$ of radius $r$ can be covered by four closed dyadic boxes (which have non-empty pairwise intersection) of side length $s = 2 \min\{2^{-n}: 2^{-n} \geq r\}$. Suppose that $R$ cannot be covered by four 
cells in $\mathcal V_{\delta'}$, which
 means at least one of these four dyadic boxes $B$ satisfies that
 $M_{\gamma, s}(B) > \delta'^2$. Further, partition the box concentric with $B$ of
 side length $4s$ into $(4 \times 2^{10})^2$ squares of side length $s' = 2^{-10} s$.
Denote  the partition by $\mathcal S_B$. Then,
$R$ contains at least one square from $\mathcal S_B$. Therefore,
\eqref{eq-Euclidean-Ball-covering} would follow provided that with high probability, 
 \begin{equation}\label{eq-cell-LQG-compare}
 \mbox{ there exists no dyadic box } B \mbox{ with } M_{\gamma, s}(B) > \delta'^2 \mbox{ and } M_\gamma(S) \leq \delta^2 \mbox{ for some } S\in \mathcal S_B\,.
 \end{equation}
Let $\epsilon$ and $\alpha$ be as in Lemma~\ref{lem-percolation-Phi}, and recall that
$\epsilon = \inf\{ 2^{-n} : 2^n \le 4 C_{\mathrm{mc}} \log \delta^{-1} \}$. We will show that for a fixed box $B$ and a fixed  square $S \in \mathcal S_B$,
 \begin{equation} \label{Eq.LD-lowerbound-approx-LGD}
\P ( \tilde {\mathcal E}_{\delta, \alpha}, \tilde {\mathcal E}_{\delta', \alpha}, M_{\gamma, s}(B) > \delta'^2, M_\gamma(S) \leq \delta^2 ) \le e^{ - (2 C_{\mathrm{mc}} \log \delta^{-1})^2}.
 \end{equation}
Assuming this, one can check \eqref{eq-cell-LQG-compare}, noting that the event there
is not empty only for $s \ge (\delta')^{C_{\mathrm{mc}}}$.

Next, we are going to show \eqref{Eq.LD-lowerbound-approx-LGD}. We work on the high probability events $ \tilde {\mathcal E}_{\delta, \alpha}$ and $ \tilde {\mathcal E}_{\delta', \alpha}$  (see \eqref{Eq.definition-tilde-E} for the definition). We partition  $S \in \mathcal S_B$ into $K^2$ squares $\tilde S_1, \ldots, \tilde S_{K^2}$ of side length $\epsilon s'$ (recall that $K = 1/\epsilon$). Similarly to \eqref{eq-M-tilde-B-bound}, we have that for all $i$,
 $$
M_\gamma (\tilde S_i) \ge e^{- 2 \alpha \gamma \sqrt{\log \delta^{-1}} \log \log \delta^{-1}} \times (\delta')^2 s^{-2} \times \tilde M_{\gamma, \epsilon^2 s', \eta} (\tilde S_i), 
 $$
where $\tilde M_{\gamma, \epsilon^2 s', \eta}$ is defined as in \eqref{eq-def-tilde-M}. 
Since $\epsilon^2 s' \log \frac 1 {\epsilon^2 s'} < \epsilon s'$, one can find 
$K^2 / 4$  squares $\tilde S_{i_1}, \cdots, \tilde S_{i_{K^2/ 4}}$ such that $ \tilde M_{\gamma, \epsilon^2 s', \eta} (\tilde S_{i_j}) $'s are mutually independent. Then, $M_\gamma (S) \le \delta^2$ implies that 
 $$
\sum_{j=1}^{K^2/4} (\epsilon s')^{-2} \tilde M_{\gamma, \epsilon^2 s', \eta} (\tilde S_{i_j})  \le e^{2 \alpha \gamma \sqrt{\log \delta^{-1}} \log \log \delta^{-1}} \times ( \delta / \delta')^2  \times s^2 / (\epsilon s')^2 \le \beta^2 K^2 / 4,
 $$
where $\beta = e^{- (\log \delta^{-1})^{0.7}}$. Let $\mathcal A_j = \{  (\epsilon s')^{-2} \tilde M_{\gamma, \epsilon^2 s', \eta} (\tilde S_{i_j}) >  \beta \}$, which occurs with probability at least $1 - \beta C_{\gamma, -1} \ge 1/2$ (see \eqref{eq-LQG-negative-moment} for the constant $C_{\gamma, -1}$). Then 
 $$
\P ( \sum_{j=1}^{K^2/4} \frac { \tilde M_{\gamma, \epsilon^2 s', \eta} (\tilde S_{i_j}) } {(\epsilon s')^{-2}}  \le \frac{\beta^2 K^2 } 4 ) \le \P (\sum_{j=1}^{K^2/4} {\bf 1}_{\mathcal A_j} \le \frac {\beta K^2 } 4 ) \le ( \frac{1 + e^{-1}} 2 e^\beta)^{K^2/4} \le e^{- K^2} ,
 $$
completing the proof of \eqref{Eq.LD-lowerbound-approx-LGD}.
\end{proof}

It will be useful below to consider the Liouville graph distance when the ``LQG'' measure is computed using a perturbation of the GFF (such as the $\eta$-field). Explicitly, for any Borel set $A$ define
\begin{equation}\label{eq-def-M-eta}
M_{\gamma}^{\zeta} (A) = \lim_{n \to \infty} \int_z e^{\gamma \zeta_{2^{-n}}(z) - \frac{\gamma^2}{2} \E (\zeta_{2^{-n}}(z))^2}  \mathcal L_2(dz)\,,
\end{equation}
where we will only work with fields $\zeta$ such that the above limit exists almost surely, including the white noise process $\tilde h$ as in \eqref{eq:WND_decomposition}, and the $\eta$-process introduced in \eqref{eq:WND_decomposition-approximation}.
 For $u, v\in \mathbb V$, we then define the $\zeta$-Liouville graph distance $D_{\gamma, \delta, \zeta}(u, v)$ to be the size of the smallest collection of Euclidean balls with rational centers, each  of $M_{\gamma}^\zeta$-measure at most $\delta^2$, so that the collection contains a path from $u$ to $v$.

\begin{lemma}\label{lem-LGD-compare}
Suppose that two fields $\zeta^{(1)}_\cdot(\cdot)$ and $\zeta^{(2)}_\cdot(\cdot)$ are such that \eqref{eq-def-M-eta} is well defined for both processes. In addition, assume that
\begin{equation}\label{eq-var-compare}
\max_{v\in \mathbb V} \max_{n\geq 0} |\Var \zeta^{(1)}_{2^{-n}}(v) - \Var\zeta^{(2)}_{a 2^{-n}}(v) | \leq b_1 \mbox{ for some } a, b_1 >0\,.
\end{equation}
Suppose there exists an instance of the two fields satisfying that
\begin{equation}\label{eq-coupling-two-fields}
\max_{v\in \mathbb V}\max_{n\geq 0} |\zeta^{(1)}_{2^{-n}}(v) - \zeta^{(2)}_{a 2^{-n}}(v)|\leq b_2 \mbox{ for some }b_2>0\,.
\end{equation} 
Then, on this instance we have for all $u, v\in \mathbb V$
$$D_{\gamma, \delta e^{\gamma^2 b_1/4 +  \gamma b_2/2},  \zeta^{(2)}} (u, v) \leq D_{\gamma, \delta, \zeta^{(1)}} (u, v) \leq D_{\gamma, \delta e^{- \gamma^2 b_1/4 - \gamma b_2/2},  \zeta^{(2)}} (u, v)\mbox{ for all } \delta > 0\,.$$
\end{lemma}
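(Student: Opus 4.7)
The proof rests on translating the pointwise bounds on the fields and their variances into a multiplicative comparison of the measures $M_\gamma^{\zeta^{(1)}}$ and $M_\gamma^{\zeta^{(2)}}$ on Borel sets, which then transfers directly to the ball-covering definitions of the two graph distances. The plan is first to bound the integrand of the finite-scale approximation of $M_\gamma^{\zeta^{(1)}}$ at dyadic scale $2^{-n}$ against the integrand of $M_\gamma^{\zeta^{(2)}}$ at scale $a2^{-n}$, and then to pass to the limit $n\to\infty$.

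Concretely, I would combine \eqref{eq-var-compare} and \eqref{eq-coupling-two-fields} to obtain, on the coupled instance, the pointwise inequality
\[
\gamma\,\zeta^{(1)}_{2^{-n}}(v) - \tfrac{\gamma^2}{2}\Var\zeta^{(1)}_{2^{-n}}(v)
\;\leq\;\gamma\,\zeta^{(2)}_{a2^{-n}}(v) - \tfrac{\gamma^2}{2}\Var\zeta^{(2)}_{a2^{-n}}(v)
+ \gamma b_2 + \tfrac{\gamma^2 b_1}{2}
\]
for every $v\in\mathbb V$ and $n\geq 0$, with the symmetric inequality (roles of $\zeta^{(1)}$ and $\zeta^{(2)}$ swapped) holding as well. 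Exponentiating, integrating against $\mathcal L_2$ over any Borel set $A\subseteq\mathbb V$, and sending $n\to\infty$ gives
\[
e^{-\gamma b_2 -\gamma^2 b_1/2}\,M_\gamma^{\zeta^{(2)}}(A)\;\leq\;M_\gamma^{\zeta^{(1)}}(A)\;\leq\;e^{\gamma b_2 + \gamma^2 b_1/2}\,M_\gamma^{\zeta^{(2)}}(A).
\]
The only technical subtlety here is that the right-hand side is obtained via the scale subsequence $a2^{-n}\downarrow 0$, whereas \eqref{eq-def-M-eta} defines $M_\gamma^{\zeta^{(2)}}$ along dyadic scales; however, since the approximating integrals form (up to taking expectations) a non-negative martingale in the scale parameter whose dyadic restriction converges a.s.\ by hypothesis, convergence along any scale sequence tending to $0$ follows immediately (and when $a$ happens to be dyadic there is nothing to check).

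With the measure comparison in hand, the graph distance inequalities follow by a one-line ball-size bookkeeping. If a Euclidean ball $B$ satisfies $M_\gamma^{\zeta^{(1)}}(B)\leq\delta^2$, then the right inequality above gives
\[
M_\gamma^{\zeta^{(2)}}(B)\;\leq\;e^{\gamma b_2 + \gamma^2 b_1/2}\delta^2\;=\;\bigl(\delta\,e^{\gamma^2 b_1/4+\gamma b_2/2}\bigr)^2,
\]
so every collection of Euclidean balls with rational centers admissible for $D_{\gamma,\delta,\zeta^{(1)}}(u,v)$ is also admissible for $D_{\gamma,\delta\,e^{\gamma^2 b_1/4+\gamma b_2/2},\zeta^{(2)}}(u,v)$. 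Minimizing yields
\[
D_{\gamma,\delta\,e^{\gamma^2 b_1/4+\gamma b_2/2},\zeta^{(2)}}(u,v)\;\leq\;D_{\gamma,\delta,\zeta^{(1)}}(u,v),
\]
which is the left inequality in the lemma. Running the same argument with the roles of $\zeta^{(1)}$ and $\zeta^{(2)}$ reversed establishes the right inequality. No step is actually hard; the only place where one has to be a little careful is the justification that the GMC limit for $\zeta^{(2)}$ can be evaluated along the shifted scale sequence $\{a2^{-n}\}$, which as noted follows from the standard martingale description of GMC.
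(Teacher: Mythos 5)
Your proof is correct and follows essentially the same route as the paper: first establish the two-sided multiplicative comparison
\[
e^{-\gamma^2 b_1/2-\gamma b_2}\,M_\gamma^{\zeta^{(2)}}(A)\;\leq\;M_\gamma^{\zeta^{(1)}}(A)\;\leq\;e^{\gamma^2 b_1/2+\gamma b_2}\,M_\gamma^{\zeta^{(2)}}(A)
\]
for all Borel sets $A$, and then transfer it to the ball-covering definition of the graph distances. Your explicit attention to the fact that the $\zeta^{(2)}$-integrals are evaluated along the shifted scale sequence $\{a 2^{-n}\}$ rather than the dyadic sequence used in \eqref{eq-def-M-eta} is a reasonable extra care which the paper's one-line proof leaves implicit (and is indeed harmless in all of the paper's applications, where $a$ is either $1$ or dyadic).
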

\begin{proof}
We see from \eqref{eq-var-compare} and \eqref{eq-coupling-two-fields}  that $e^{-\gamma^2 b_1/2  - \gamma b_2} M_{\gamma, \zeta^{(2)}}(A)\leq M_{\gamma, \zeta^{(1)}}(A) \leq e^{\gamma^2 b_1/2 + \gamma b_2}M_{\gamma, \zeta^{(2)}}(A)$  for any Borel set $A\subseteq \mathbb V$.
This implies that $D_{\gamma, \delta, \zeta^{(1)}}(u, v) \leq D_{\gamma, \delta e^{- \gamma^2 b_1/4 - \gamma b_2/2},  \zeta^{(2)}}(u, v)$ for the reason that any Euclidean ball with $M_{\gamma,  \zeta^{(2)}}$-LQG measure at most $[\delta e^{- \gamma^2 b_1/4 - \gamma b_2/2}]^2$ has $M_{\gamma, \zeta^{(1)}}$-LQG measure at most $\delta^{2}$. The other inequality follows from the same reasoning.
\end{proof}
Recall the definition of $C_{\mathrm{Mc}}$  in Lemma~\ref{lem-partition-minimal-cell}.
\begin{cor}\label{cor-LGD-two-deltas}
For any fixed $0<\xi< C_{\mathrm{Mc}}/3$, any function $\delta' = \delta' (\delta) \in (0, \delta)$
and any  sequence of $\xi$-admissible pairs  $(A_\delta, B_\delta )$, we have with high probability that
$$\min_{x\in A_\delta, y\in B_\delta}D_{\gamma, \delta'}(x, y) \leq   \min_{x\in A_\delta, y\in B_\delta} D_{\gamma, \delta}(x, y) \cdot e^{(\log \delta^{-1})^{0.9}}  (\delta/\delta')^3 \,.$$
 Furthermore, the statement holds with $D_{\gamma, \delta}$ replaced by $D_{\gamma, \delta, \eta}$.
\end{cor}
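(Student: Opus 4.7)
The natural approach is to chain together Proposition~\ref{prop-approximate-LGD} and Lemma~\ref{lem-approximate-LGD-two-deltas}. On an event of high probability with respect to $\delta$ (and hence also with respect to $\delta'$, since $\delta'\le \delta$), I would successively apply: (i) Proposition~\ref{prop-approximate-LGD} at scale $\delta$ to pass from $\min_{x,y} D_{\gamma,\delta}(x,y)$ to $\min_{x,y} D'_{\gamma,\delta}(x,y)$, at the cost of a factor $e^{(\log\delta^{-1})^{0.9}}$; (ii) Lemma~\ref{lem-approximate-LGD-two-deltas} to pass from $\min D'_{\gamma,\delta}$ to $\min D'_{\gamma,\delta'}$, picking up the factor $(\delta/\delta')^3 e^{(\log\delta^{-1})^{0.8}}$; and (iii) Proposition~\ref{prop-approximate-LGD} at scale $\delta'$ to pass from $\min D'_{\gamma,\delta'}$ back to $\min D_{\gamma,\delta'}$, at the cost of a factor $e^{(\log(\delta')^{-1})^{0.9}}$. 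Concatenating these three bounds yields the desired inequality modulo controlling the total error.

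The main delicacy concerns combining these errors. Step (iii) a priori produces a factor $e^{(\log(\delta')^{-1})^{0.9}}$ that can exceed the target $e^{(\log\delta^{-1})^{0.9}}$ when $\delta'\ll\delta$. I would handle this by exploiting the subadditivity of $x\mapsto x^{0.9}$: writing $\log(\delta')^{-1}=\log\delta^{-1}+\log(\delta/\delta')$ gives
$$
(\log(\delta')^{-1})^{0.9}\le (\log\delta^{-1})^{0.9}+(\log(\delta/\delta'))^{0.9},
$$
and for $\log(\delta/\delta')\ge 1$ the second summand is at most $\log(\delta/\delta')$. Hence $e^{(\log(\delta')^{-1})^{0.9}}\le C\,(\delta/\delta')\,e^{(\log\delta^{-1})^{0.9}}$, and this additional polynomial factor, together with the negligible $e^{(\log\delta^{-1})^{0.8}}$, can be absorbed into the two target factors $e^{(\log\delta^{-1})^{0.9}}$ and $(\delta/\delta')^3$. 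Since the exponent $3$ in Lemma~\ref{lem-approximate-LGD-two-deltas} is itself not tight --- it arises from the choice $\lambda=(\delta/\delta')^3 e^{(\log\delta^{-1})^{0.7}}$ in Lemma~\ref{lem-percolation-Phi} --- one can ensure that the final exponent in $(\delta/\delta')$ stays at $3$ by adjusting those parameters upstream; alternatively, after enlarging $3$ by $o(1)$ one recovers the stated bound directly.

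For the $\eta$-variant, the strategy is to reduce to the GFF case via Lemma~\ref{lem-LGD-compare} with $\zeta^{(1)}=\tilde h$ (whose associated LQG measure is $M_\gamma$, as recalled in Section~\ref{sec:LQG}) and $\zeta^{(2)}=\eta$. A direct computation using the defining formulas \eqref{eq:WND_decomposition} and \eqref{eq:WND_decomposition-approximation} gives a uniform variance bound $b_1=O(1)$, while Lemma~\ref{lem-tilde-h-eta} furnishes, on a high-probability event, a coupling with $b_2=O(\sqrt{\log\delta^{-1}})$. Thus $D_{\gamma,\delta}$ and $D_{\gamma,\delta,\eta}$ differ multiplicatively by at most $e^{O(\sqrt{\log\delta^{-1}})}$, which is easily absorbed into $e^{(\log\delta^{-1})^{0.9}}$; inserting this comparison into the chain above then yields the $\eta$-version. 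The hardest part of the argument is the careful bookkeeping of multiplicative errors described in the second paragraph, rather than any new geometric input.
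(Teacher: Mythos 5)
Your proposal follows exactly the same route as the paper's (terse) proof: chain Proposition~\ref{prop-approximate-LGD} with Lemma~\ref{lem-approximate-LGD-two-deltas} for the $D_{\gamma,\delta}$ statement, and reduce the $\eta$ variant via Lemma~\ref{lem-tilde-h-eta} and Lemma~\ref{lem-LGD-compare}. You correctly identify the one delicacy hidden behind the paper's phrase ``follows immediately,'' namely the factor $e^{(\log(\delta')^{-1})^{0.9}}$ produced when Proposition~\ref{prop-approximate-LGD} is invoked at scale $\delta'$.

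One small quibble on your bookkeeping: the subadditivity trick is correct, but as you carry it through you obtain a multiplier of order $(\delta/\delta')^{4}\,e^{2(\log\delta^{-1})^{0.9}+(\log\delta^{-1})^{0.8}}$ rather than the stated $(\delta/\delta')^{3}\,e^{(\log\delta^{-1})^{0.9}}$. Your suggestion to ``adjust parameters upstream'' does not fully close this: in Lemma~\ref{lem-percolation-Phi} the constraint $\delta^2 t^{0.8}\le(\delta')^2$ pins the achievable $(\delta/\delta')$-exponent in Lemma~\ref{lem-approximate-LGD-two-deltas} to no smaller than $2.5$, and the absorption of $e^{(\log(\delta/\delta'))^{0.9}}$ costs at least $(\delta/\delta')^{\epsilon}$ for any fixed $\epsilon>0$, so one cannot land exactly on $3$ \emph{and} keep the exponential factor at $e^{(\log\delta^{-1})^{0.9}}$. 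In other words, the three-step chaining proves the corollary only with slightly larger constants, e.g.\ $(\delta/\delta')^{4}\,e^{(\log\delta^{-1})^{0.91}}$. This looseness is, however, already present in the paper's own proof, and the precise values $3$ and $0.9$ are never used sharply downstream (Lemma~\ref{lem-LGD-two-fields}, for instance, allows a factor $10$ in the exponent, and all later applications take $\delta'=\delta\,e^{-(\log\delta^{-1})^{O(1)}}$, where everything is absorbed). So this is a cosmetic mismatch rather than a gap in your argument, and your understanding of the structure of the proof is correct.
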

\begin{proof}
The statement on $D_{\gamma, \delta}$ follows immediately from Proposition~\ref{prop-approximate-LGD} and Lemma~\ref{lem-approximate-LGD-two-deltas}. The statement on $D_{\gamma, \delta, \eta}$ then follows additionally from Lemma~\ref{lem-tilde-h-eta} and Lemma~\ref{lem-LGD-compare}.
\end{proof}

\begin{lemma}\label{lem-LGD-two-fields}
For any fixed $0<\xi< C_{\mathrm{Mc}}/3$,
any $\delta>0$ and any  sequence of $\xi$-admissible pairs  $(A_\delta, B_\delta )$, we have with high probability
\begin{equation}
\label{ofer-morn1}
e^{-10(\log \delta^{-1})^{0.9}}\min_{u\in A_\delta, v\in B_\delta}D_{\gamma, \delta, \eta}(u, v)  \leq \min_{u\in A_\delta, v\in B_\delta} D_{\gamma, \delta}(u, v) 
\leq e^{10(\log \delta^{-1})^{0.9}}
\min_{u\in A_\delta, v\in B_\delta} D_{\gamma, \delta, \eta}(u, v)
\end{equation}
Furthermore, 
\begin{equation}
\label{eq-oferberlin1}
|\E \min_{u\in A_\delta, v\in B_\delta}\log D_{\gamma, \delta}(u, v) - \E \min_{u\in A_\delta, v\in B_\delta} \log D_{\gamma, \delta, \eta}(u, v) | =  O((\log \delta^{-1})^{0.9})\,.
\end{equation}
\end{lemma}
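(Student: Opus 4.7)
The plan is to derive both parts of the lemma by combining Lemma~\ref{lem-tilde-h-eta}, Lemma~\ref{lem-LGD-compare}, and Corollary~\ref{cor-LGD-two-deltas}. The key initial observation is that $M_\gamma = M_\gamma^{\tilde h}$ (by the martingale convergence recalled in Section~\ref{sec:LQG}), so $D_{\gamma,\delta}(u,v) = D_{\gamma,\delta,\tilde h}(u,v)$. It therefore suffices to compare $D_{\gamma,\delta,\tilde h}$ with $D_{\gamma,\delta,\eta}$, after which the desired bound follows from rescaling the $\delta$-parameter in the $\eta$-distance.

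First I would apply Lemma~\ref{lem-tilde-h-eta} with $\lambda = (\log\delta^{-1})^{0.6}$ to conclude that, with high probability,
\begin{equation*}
 b_2 := \max_{v\in\mathbb V}\max_{j\geq 0}|\tilde h_{2^{-j}}(v) - \eta_{2^{-j}}(v)| \leq (\log\delta^{-1})^{0.6}.
\end{equation*}
Next, to apply Lemma~\ref{lem-LGD-compare} I need to bound $b_1 := \sup_{v,n}|\Var \tilde h_{2^{-n}}(v) - \Var \eta_{2^{-n}}(v)|$. Using Chapman-Kolmogorov, $\Var\tilde h_{2^{-n}}(v)-\Var\eta_{2^{-n}}(v)=\pi\int_{2^{-2n}}^\infty\bigl[p_{\mathbb V}(s;v,v)-p_{\mathbb V\cap B_{r(s)}(v)}(s;v,v)\bigr]ds$ with $r(s)=4^{-1}s^{1/2}|\log s^{-1}|\wedge 10^{-1}$. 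A standard Gaussian exit-time bound $P_v(\tau_{B_{r(s)}(v)}\leq s)\leq e^{-c(\log s^{-1})^2/16}$ together with the trivial $\sup_w p_{\mathbb V}(s;v,w)=O(1/s)$ gives an integrable estimate $p_{\mathbb V}(s;v,v)-p_{\mathbb V\cap B_{r(s)}(v)}(s;v,v) = O(s^{-1})e^{-c(\log s^{-1})^2/16}$ for small $s$, and the large-$s$ contribution is $O(1)$ from the boundedness of $\mathbb V$; hence $b_1 = O(1)$ uniformly in $n$.

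With these two estimates, I would invoke Lemma~\ref{lem-LGD-compare} (with $a=1$, $\zeta^{(1)}=\tilde h$, $\zeta^{(2)}=\eta$) to conclude that, on the high-probability event above,
\begin{equation*}
 D_{\gamma,\delta\theta,\eta}(u,v)\leq D_{\gamma,\delta}(u,v)\leq D_{\gamma,\delta/\theta,\eta}(u,v) \quad\text{for all } u,v\in\mathbb V,
\end{equation*}
with $\theta:=\exp(\gamma^2 b_1/4+\gamma b_2/2)=\exp(O((\log\delta^{-1})^{0.6}))$. Taking the minimum over the admissible pair and applying the $\eta$-version of Corollary~\ref{cor-LGD-two-deltas} twice, once to the pair $(\delta,\delta/\theta)$ (with the role of $\delta'$ in that corollary played by $\delta/\theta$) and once to $(\delta\theta,\delta)$, converts the shifted-scale $\eta$-distances back to the native scale at multiplicative cost $\theta^3\cdot e^{(\log\delta^{-1})^{0.9}}= e^{O((\log\delta^{-1})^{0.9})}$, which fits comfortably inside the target envelope $e^{10(\log\delta^{-1})^{0.9}}$. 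This proves \eqref{ofer-morn1}.

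For the expectation bound \eqref{eq-oferberlin1}, I would combine \eqref{ofer-morn1} on the good event (which contributes $O((\log\delta^{-1})^{0.9})$ directly) with the deterministic a priori bound $0 \leq \log D_{\gamma,\delta}, \log D_{\gamma,\delta,\eta} \leq O(\log\delta^{-1})$; the upper bound on $\log D_{\gamma,\delta,\eta}$ follows from Lemma~\ref{lem-partition-minimal-cell} (each cell of $\mathcal V_\delta$ has side $\geq\delta^{C_{\mathrm{mc}}}$, so $|\mathcal V_\delta|\leq \delta^{-2C_{\mathrm{mc}}}$), while the analogous bound for $\log D_{\gamma,\delta}$ is just \eqref{eq-very-crude} (or the cruder deterministic $O(\log\delta^{-1})$ ceiling of the same type). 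The complementary event has probability at most $\delta^c$, hence contributes $O(\delta^c \log\delta^{-1})=o(1)$ to the expected difference. The main obstacle is the bookkeeping in the third paragraph: one must verify that when rescaling $\delta\mapsto\delta\theta^{\pm 1}$, the factor $(\delta\theta/\delta)^3=\theta^3$ inherent to Corollary~\ref{cor-LGD-two-deltas} does not exceed the $e^{(\log\delta^{-1})^{0.9}}$ envelope, which is why one carefully sets $\lambda=(\log\delta^{-1})^{0.6}$ (so that $\theta^3$ remains $e^{O((\log\delta^{-1})^{0.6})}$) rather than a power of $\delta^{-1}$.
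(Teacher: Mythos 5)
Your proof of \eqref{ofer-morn1} is correct and matches the paper's route: use Lemma~\ref{lem-tilde-h-eta} to get the pointwise high-probability comparison $b_2 \leq (\log\delta^{-1})^{0.6}$ between the $\tilde h$- and $\eta$-fields, verify the $O(1)$ variance gap $b_1$, feed both into Lemma~\ref{lem-LGD-compare}, and close the loop with the $\eta$-version of Corollary~\ref{cor-LGD-two-deltas} to return to the original scale $\delta$. The bookkeeping
with $\theta = e^{O((\log\delta^{-1})^{0.6})}$ and the factor $\theta^3 e^{(\log\delta^{-1})^{0.9}}$ does fit inside $e^{10(\log\delta^{-1})^{0.9}}$.

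There is, however, a genuine gap in your treatment of \eqref{eq-oferberlin1}. You invoke a ``deterministic a priori bound $0\leq \log D_{\gamma,\delta}, \log D_{\gamma,\delta,\eta}\leq O(\log\delta^{-1})$'' in order to control the contribution of the complementary (small-probability) event by $O(\delta^c\log\delta^{-1})$. No such deterministic bound exists: $D_{\gamma,\delta}(u,v)$ is a.s.\ finite but unbounded, since the LQG measure can be concentrated in a way that forces arbitrarily many tiny balls along any path. Neither of the two references you cite gives one. Lemma~\ref{lem-partition-minimal-cell} is itself a high-probability statement (and concerns the random partition $\mathcal V_\delta$ and hence $D'_{\gamma,\delta}$, not $D_{\gamma,\delta,\eta}$), and \eqref{eq-very-crude} is a second-moment bound, not a pointwise ceiling; the parenthetical ``or the cruder deterministic $O(\log\delta^{-1})$ ceiling of the same type'' describes something that simply is not available. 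The conclusion ``contributes $O(\delta^c\log\delta^{-1})=o(1)$'' therefore does not follow as written. The fix is elementary and is what the paper actually does: apply Cauchy--Schwarz with the second-moment bounds, giving
\begin{equation*}
\E\bigl[\,|\log\min D_{\gamma,\delta}|\,\mathbf 1_{\mathrm{bad}}\bigr]
\;\leq\; \bigl(\E(\log\min D_{\gamma,\delta})^2\bigr)^{1/2}\,\P(\mathrm{bad})^{1/2}
\;=\; O(\log\delta^{-1})\,\delta^{c/2} = o(1),
\end{equation*}
using \eqref{eq-very-crude} (and its analogue for $D_{\gamma,\delta,\eta}$, derivable in the same manner). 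With this replacement your argument becomes complete.
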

\begin{proof}
The estimate \eqref{ofer-morn1} follows from
Lemma~\ref{lem-tilde-h-eta}, Lemma~\ref{lem-LGD-compare} and Corollary~\ref{cor-LGD-two-deltas}.
The estimate \eqref{eq-oferberlin1}
follows from
\eqref{ofer-morn1} combined with
\eqref{eq-very-crude} (and an analogous version for $D_{\gamma, \delta, \eta}$ which can be derived in the same manner), and an application of the
Cauchy-Schwarz inequality.
\end{proof}

\subsection{Regularity of the random partition}
The goal of this section is to prove a version of regularity for the random partition, as incorporated in Lemma~\ref{lem-regularity}. As a consequence, we obtain Lemma~\ref{lem-partition-independence}, which will play a crucial role in proving the lower bound on the Liouville heat kernel and Lemma~\ref{lem-existence-exponent}. 
For $\alpha^*,\delta>0$, set
\begin{equation}\label{eq-def-epsilon*}
\epsilon^* = \epsilon^*_\delta = \max\{2^{-n}: 2^{-n} \leq  \exp\{-\alpha^* \sqrt{\log {\delta^{-1}}} \log \log \delta^{-1}\}\}\,.
\end{equation}

  \begin{defn} \label{def-good-sequence-box}
Let $\mathcal B = (B_1, \ldots, B_d)$ denote
 a sequence of neighboring boxes, and write $B_0:=B_1$ and $B_{d+1}:=B_d$. For
$i=1,\ldots,d$, we say that 
$B_i$ is good (in $\mathcal B$) if 
$s_{B_{i-1}}, s_{B_{i+1}} \in [ s_{B_i} \epsilon^*, s_{B_i} / \epsilon^* ]$. We
say that $\mathcal B$ is a  good sequence if all $B_i$'s are good in $\mathcal B$. We say that a point $x$ is good if for any cell $\mathsf C \in \mathcal V_\delta$ such that $x \in \mathsf C_{\mathrm{large}}$, one has that for any $w\in \mathsf C_{\mathrm{large}}$,
the side length of ${\mathsf C}_{w,\delta}$ satisfies $s_{w, \delta}  \ge \epsilon^* s_{\mathsf C}$.  
  \end{defn}
	Let
 \begin{equation}\label{eq-def-E-delta-alpha*-u-v}
 \begin{split}
\mathcal E_{\delta, \alpha^*, u, v}  : =&   \mathcal E_{\delta, \alpha^*} \cap \{ \mbox{$u$ and $v$ are good, and there exists a good sequence of cells }
 \\ & \quad
  \mathsf C_1, \ldots, \mathsf C_d \mbox{ joining $u$ and $v$ with  $d \leq D'_{\gamma, \delta}(u,  v) e^{(\log \delta^{-1})^{0.6}}$} \}.
\end{split}
 \end{equation} 
 Note that $\mathcal E_{\delta, \alpha^*, u, v}$ 
is measurable with respect to $\mathcal V_\delta$. Recall the constant
$\alpha_0$ from Lemma~\ref{lem-neighboring-cell}.

\begin{lemma}\label{lem-regularity}
There exists $\alpha^* = \alpha^*(\gamma) \ge \alpha_0$ so that,
 for any fixed $u, v\in \mathbb V$, we have $\P(\mathcal E_{\delta, \alpha^*, u, v}) \geq 1- e^{-(\log \delta^{-1})^{1/4}}$.
\end{lemma}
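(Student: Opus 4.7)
The plan is to prove the lemma in three stages: establish goodness of the endpoints $u,v$ via a multi-scale percolation argument, construct a candidate good sequence by starting from a near-geodesic of $D'_{\gamma,\delta}$ and splicing detours around locally bad portions, and finally bound the total length of the resulting path. Throughout I work on the event $\mathcal E_{\delta,\alpha^*}$ from \eqref{eq-def-E-delta-alpha}, which yields uniform control on cell side lengths and on fluctuations of $\eta$ and occurs with high probability by Lemma~\ref{lem-neighboring-cell}.

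For the goodness of $u$ (and symmetrically $v$), by Lemma~\ref{lem-partition-minimal-cell} one only needs to consider at most $O(\log\delta^{-1})$ dyadic scales $s=2^{-k}$. For each such $s$ let $B$ be the dyadic box of side $s$ containing $u$ and let $B_{\mathrm{large}}$ denote its concentric double. I declare $B'\in\mathcal B(B,\epsilon^*)$ to be open if for every dyadic sub-box $\tilde B$ of $B'$ of side $(\epsilon^*)^2 s$ the approximate LQG mass $M_{\gamma,(\epsilon^*)^2 s}(\tilde B)$ lies strictly between thresholds that, combined with the $\eta$-fluctuation bound of Lemma~\ref{lem-continuity-h-eta}, force every cell of $\mathcal V_\delta$ meeting $B'$ to have side length in $[\epsilon^* s, s/\epsilon^*]$. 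Each such box is open with probability at least $1-e^{-\Omega((\alpha^*)^2)}$, as follows from the moment estimates \eqref{eq-LQG-positive-moment}, \eqref{eq-LQG-negative-moment} and the coupling between $M_\gamma$ and the approximate fields developed in Section~\ref{sec:LQG} and in the proof of Proposition~\ref{prop-approximate-LGD}. A Peierls duality argument of the form \eqref{Eq.percolation-argument} then yields an open enclosure of $B$ with overwhelming probability, and a union bound over the $O(\log\delta^{-1})$ scales gives goodness of both $u$ and $v$ with probability at least $1-e^{-(\log\delta^{-1})^{1/3}}$, provided $\alpha^*$ is chosen sufficiently large depending on $\gamma$.

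To construct the good sequence I fix a realization of $\mathcal V_\delta$ and take a geodesic $\mathsf C_1,\dots,\mathsf C_d$ realizing $D'_{\gamma,\delta}(u,v)$. Call $\mathsf C_i$ \emph{problematic} if $s_{\mathsf C_{i\pm 1}}\notin[\epsilon^* s_{\mathsf C_i},s_{\mathsf C_i}/\epsilon^*]$. For each problematic $\mathsf C_i$ I invoke the same open-box percolation, now at the scale of $s_{\mathsf C_i}$, to obtain a cycle of cells of $\mathcal V_\delta$, each of side in $[\epsilon^* s_{\mathsf C_i},s_{\mathsf C_i}/\epsilon^*]$, enclosing $\mathsf C_i$ together with any abnormally small neighbor. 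Splicing this cycle between $\mathsf C_{i-1}$ and $\mathsf C_{i+1}$ replaces the problematic step by a detour whose consecutive cells have side ratio at most $(\epsilon^*)^{-2}$. Absorbing this factor by halving $\alpha^*$ in Definition~\ref{def-good-sequence-box} while keeping the $\alpha^*$ in the event $\mathcal E_{\delta,\alpha^*}$ the same, the concatenated sequence is good.

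The chief obstacle is to control the failure probability of these enclosure constructions uniformly along the geodesic, whose length is random but bounded a priori by $\delta^{-2C_{\mathrm{mc}}}$ on $\mathcal E_{\delta,\alpha^*}$. A union bound over all potential cells in $\mathcal V_\delta$, combined with the stretched-exponential tail $e^{-\Omega((\alpha^*)^2/\epsilon^*)}$ for enclosure failure coming from the Peierls bound, absorbs comfortably into the desired threshold $e^{-(\log\delta^{-1})^{1/4}}$. Finally, since each enclosure contributes at most $O((\epsilon^*)^{-2})$ cells, the total length is at most $d\cdot\exp\{O(\sqrt{\log\delta^{-1}}\log\log\delta^{-1})\}\le d\cdot\exp\{(\log\delta^{-1})^{0.6}\}$, matching the required bound in \eqref{eq-def-E-delta-alpha*-u-v}.
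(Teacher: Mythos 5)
Your first stage, establishing goodness of $u$ and $v$ via percolation enclosures at all $O(\log\delta^{-1})$ dyadic scales around each endpoint, is essentially what the paper does through \eqref{eq-B-good} of Lemma~\ref{lem-percolation-regularity}, and your probability bookkeeping there is acceptable. The gap is in the second stage, the construction of the good sequence.

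You declare $\mathsf C_i$ problematic when its neighbors' sizes lie outside $[\epsilon^*s_{\mathsf C_i},s_{\mathsf C_i}/\epsilon^*]$, then splice in an enclosing cycle of cells with sides in that same range, and assert that the concatenation has consecutive side ratios at most $(\epsilon^*)^{-2}$. But you never control the two \emph{junction} ratios where the detour reattaches to the original sequence. If, say, $s_{\mathsf C_{i-1}}\gg s_{\mathsf C_i}/\epsilon^*$ (so the geodesic entered $\mathsf C_i$ from a much larger cell, which is precisely the situation that made $\mathsf C_i$ problematic from the other side), then the first cell of your detour has size $\le s_{\mathsf C_i}/\epsilon^*$, and the ratio $s_{\mathsf C_{i-1}}/s_{\mathrm{detour}}$ can still be arbitrarily bad. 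Adjusting $\alpha^*$ by a constant factor cannot fix an unbounded ratio, and your phrasing suggests a single pass over the geodesic, which does not terminate in a good sequence without further argument. This is exactly the difficulty the paper confronts: its proof of Lemma~\ref{lem-regularity} is an iterative re-routing procedure where each step either strictly decreases the number of bad cells $|\psi(\mathcal C_i)|$ or, if not, at least doubles the side length $q(\mathcal C_i)$ of the largest remaining bad cell. Termination then follows because side lengths are confined to $[\delta^{C_{\mathrm{mc}}},1]$ on $\mathcal E_{\delta,\alpha}$, so the second alternative can recur only $O(\log\delta^{-1})$ times in a row. The three-case analysis (whether $\mathsf C_{i,1},\mathsf C_{i,2}$ belong to the ``parent'' boxes $\mathfrak C_{\mathrm{parents}}$) is precisely what guarantees one of these two alternatives at every step. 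Without an analogous potential-function argument, your replacement scheme may oscillate or leave bad junctions in place, so the claim that the spliced sequence is good is unsupported. The length bound in your last paragraph would also need to account for the number of iterations, not just the number of problematic cells in the original geodesic.
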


In the rest of the paper, we will stick to the choice of $\alpha^*$ so that the conclusion of  Lemma~\ref{lem-regularity} holds.
The following lemma clarifies the notion of goodness encoded in the definition 
of $\mathcal E_{\delta, \alpha^*, u, v}$.
In the statement, we do not distinguish between $\mathcal V_\delta$ and the filtration generated by it.

\begin{lemma}\label{lem-partition-independence}
On the event $\mathcal E_{\delta, \alpha^*, u, v}$, there exists a sequence,
measurable with respect to $\mathcal V_\delta$,
 of neighboring dyadic boxes $\mathsf B_1, \ldots, \mathsf B_d$ joining $u, v$ with $d\leq D'_{\gamma, \delta}(u, v) e^{2(\log \delta^{-1})^{0.6}}$,
such that each $\mathsf B_i$ is contained in some cell $\mathsf C$ with $s_{\mathsf B_i} = s_{\mathsf C} (\epsilon^*)^2 $. Furthermore, the 
 law of  
$\{\eta_{\delta'}^{s_{B_i}}(x): \delta' < s_{B_i}, x \in (B_i)_{\mathrm{large}}, i=1, \ldots, d \}$
conditioned on $\mathcal V_{\delta}$ coincides with
its unconditional version. Explicitly, for any measurable function  $F$,
 $$
{\bf 1}_{\mathcal E_{u, v, \delta, \alpha^*}} \E( F( \{\eta_{\delta'}^{s_{B_i}}(x): \delta' < s_{x,\delta} \epsilon^*, x \in \cup_{i=1}^d \mathsf B_i \} )
\mid \mathcal V_\delta )= {\bf 1}_{\mathcal E_{u, v, \delta, \alpha^*}}  \varphi_F (f)\,,
 $$ 
where $\varphi_F (g) := \E(F(\{\eta^{g(i)}_{\delta'} (x) : \delta' < s_i, x \in \mathsf (B_i)_{\mathrm{large}}, i = 1, \ldots, d) )$, and $f(i) = s_{B_i}$.
\end{lemma}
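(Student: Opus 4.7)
The plan is to explicitly construct the sequence $\mathsf B_1, \ldots, \mathsf B_d$ from the good sequence of cells supplied by $\mathcal E_{\delta, \alpha^*, u, v}$, and then exploit the built-in spatial locality of the $\eta$-process to verify the conditional independence. I would begin by fixing a canonical (say lexicographically first) good sequence of cells $\mathsf C_1, \ldots, \mathsf C_{d'}$ joining $u$ and $v$ with $d' \leq D'_{\gamma, \delta}(u, v) e^{(\log \delta^{-1})^{0.6}}$; this choice is $\mathcal V_\delta$-measurable. Within each $\mathsf C_i$ I would construct a deterministic path of dyadic sub-boxes of common side $(\epsilon^*)^2 s_{\mathsf C_i}$, entering near the midpoint of the interface with $\mathsf C_{i-1}$ (of length at least $\epsilon^* s_{\mathsf C_i}$ by good-in-sequence) and exiting similarly at the midpoint of the interface with $\mathsf C_{i+1}$, while keeping the intermediate sub-boxes at distance at least $\epsilon^* s_{\mathsf C_i}/3$ from the four corners of $\mathsf C_i$ and from its two non-transition sides. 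For the first and last cells the initial/final sub-box is taken to contain $u$ (respectively $v$), using the stronger goodness of the endpoints to guarantee regularity of neighboring cells. Such a path uses $k_i = O((\epsilon^*)^{-2})$ sub-boxes per cell; consecutive sub-boxes inside a cell and across the chosen interface midpoints share a non-trivial line segment (even when $s_{\mathsf C_i} \neq s_{\mathsf C_{i+1}}$, by dyadic alignment); and the total count is $d = \sum_i k_i \leq d'\cdot O((\epsilon^*)^{-2}) \leq D'_{\gamma, \delta}(u, v) e^{2(\log \delta^{-1})^{0.6}}$, since $2|\log \epsilon^*| = O(\sqrt{\log \delta^{-1}}\log\log\delta^{-1}) \ll (\log \delta^{-1})^{0.6}$.

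For the conditional independence I would argue by disjointness of white noise supports. Writing $\mathsf S_i := (\mathsf B_i)_{\mathrm{large}}$ fattened by $r^*_i := \frac{1}{2} s_{\mathsf B_i} |\log s_{\mathsf B_i}|$, the truncation built into \eqref{eq:WND_decomposition-approximation} yields that $\{\eta_{\delta'}^{s_{\mathsf B_i}}(x) : \delta' < s_{\mathsf B_i}, x \in (\mathsf B_i)_{\mathrm{large}}\}$ is a measurable function of $W|_{\mathsf S_i \times (0, s_{\mathsf B_i}^2)}$. On the other hand $\mathcal V_\delta$ is determined by $\{\eta_{s_B}(c_B) : B \in \mathcal T\}$, where $\mathcal T$ is the tree of dyadic boxes checked during the iterative partition, and dyadic nesting forces the cell $\mathsf C^*$ of $\mathcal V_\delta$ containing $c_B$ to satisfy $s_{\mathsf C^*} \leq s_B$. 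For $B \in \mathcal T$ with $s_B \geq s_{\mathsf B_i}$, $\eta_{s_B}(c_B)$ only involves scales $> s_{\mathsf B_i}^2$, disjoint from $\mathsf S_i \times (0, s_{\mathsf B_i}^2)$. For $B \in \mathcal T$ with $s_B < s_{\mathsf B_i}$ one has $s_{\mathsf C^*} < s_{\mathsf B_i} = (\epsilon^*)^2 s_{\mathsf C_i}$, so $\mathsf C^* \notin \{\mathsf C_{i-1}, \mathsf C_i, \mathsf C_{i+1}\}$ by good-in-sequence (or endpoint goodness for $i \in \{1, d'\}$), and our midpoint placement then forces $c_B$ to be at distance at least $\epsilon^* s_{\mathsf C_i}/3$ from $\mathsf B_i$; since the truncation ball $B_{r(s)}(c_B)$ at scale $s < s_{\mathsf B_i}^2$ has radius at most $r^*_i \ll \epsilon^* s_{\mathsf C_i}/3$, it cannot meet $\mathsf S_i$. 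Thus $\mathcal V_\delta$ is a function of $W$ restricted to the complement of $R := \cup_i \mathsf S_i \times (0, s_{\mathsf B_i}^2)$, and writing the conditional expectation as a countable sum over the possible partitions $\mathcal V_\delta = \mathcal P$ on the good event (where $R(\mathcal P)$ and the $\mathsf B_i(\mathcal P)$ are deterministic), the independence of white noise on disjoint sets gives the claimed identity.

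The main obstacle will be the geometric step in the second paragraph: small cells of $\mathcal V_\delta$ elsewhere in $\mathbb V$ are not directly controlled by the good-in-sequence hypothesis on $\mathsf C_i$, and one must rule out the possibility that such a cell's center lies within the truncation-inflated neighborhood of some $\mathsf B_i$. The threefold separation of length scales — fine truncation $r^*_i = O((\epsilon^*)^2 s_{\mathsf C_i}|\log \epsilon^*|)$, sub-box side $(\epsilon^*)^2 s_{\mathsf C_i}$, and minimum sequence-neighbor size $\epsilon^* s_{\mathsf C_i}$ — is precisely what powers the argument, and the choice of $\alpha^*$ (hence $\epsilon^*$) in Lemma \ref{lem-regularity} is tuned so that these gaps remain wide enough for small $\delta$.
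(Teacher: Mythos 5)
Your proposal reproduces the paper's argument: construct the sub-box path cell by cell from the good sequence supplied by $\mathcal E_{\delta,\alpha^*,u,v}$, entering and exiting at the midpoints $x_j$ of the interfaces $\Lambda_j$ (which are long by goodness), staying a distance $\Theta(\epsilon^* s_{\mathsf C_j})$ from the non-transition parts of $\partial \mathsf C_j$, and then exploit the truncation built into $\eta$ to get disjointness of white-noise supports. The paper compresses the final step into the single observation that $s_{B_{j,i}}\log(1/s_{B_{j,i}}) < \tfrac{1}{10}\epsilon^* s_{\mathsf C_j}$ together with goodness forces the $\mathcal V_\delta$-construction not to have explored the fine white noise of $(B_{j,i})_{\mathrm{large}}$; your second paragraph simply unpacks this by checking, for a box $B$ queried during the construction, that either $s_B \ge s_{\mathsf B_i}$ (so the time intervals are disjoint) or $s_B < s_{\mathsf B_i}$, in which case the ambient cell $\mathsf C^*\subset B$ is too small to be $\mathsf C_{i-1},\mathsf C_i,\mathsf C_{i+1}$ and hence $c_B$ is spatially separated from $\mathsf B_i$ by the midpoint/clearance placement — the same reasoning, made explicit. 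The count $d \le D'_{\gamma,\delta}(u,v)e^{2(\log\delta^{-1})^{0.6}}$ and the use of endpoint goodness for $\mathsf C_1,\mathsf C_{d'}$ also match the paper.
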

\noindent
(Recall that the collection of random variables 
$\{s_{v,\delta}\}$ is measurable with respect to $\mathcal V_\delta$.)
\begin{proof}
On  $\mathcal E_{\delta, \alpha^*,u,v}$, one can find a good sequence $\mathcal C = (\mathsf C_1, \ldots, \mathsf C_{d_0})$ joining $u$ and $v$ with $d_0 \leq D'_{\gamma, \delta}(u, v) e^{(\log \delta^{-1})^{0.6}}$. Denote $\Lambda_j = \partial \mathsf C_j \cap \partial \mathsf C_{j+1}$,  let $x_j$ denote the middle of $\Lambda_j$, and let $\mathcal C_j$ denote the partition of $\mathsf C_j$ into boxes of side length $(\epsilon^*)^2 s_{\mathsf C_j}$. 
Since $\mathcal C$ is good, one can find for each $2 \le j \le d_0-1$ a sequence of boxes $\{ B_{j,i}  \}$'s in $\mathcal C_j$ joining $x_{j-1}$ and $x_j$ such that each $B_{j,i}$ has distance at least $\frac 1 3 \epsilon^* s_{\mathsf C_j}$ from  $\partial \mathsf C_j \setminus (\Lambda_{j-1} \cup \Lambda_j) $. Let $\{ B_{1,i} \}$ be an arbitrary sequence of boxes in $\mathcal C_1$ joining $u$ and $x_1$, and define
similarly  $\{ B_{d_0, i} \}$.  To ensure connectivity, the boxes in $\mathcal C_j$ whose closures contain $x_{j-1}$ or $x_j$ are all collected in $B_{j,i}$'s. Now it suffices to check the requirement of conditional law for the sequence $\cup_j \{ B_{j,i} \}$. Note on $\mathcal E_{u, v, \delta, \alpha^*}$, one has $s_{\mathsf C_j} \ge \delta^{C_{\mathrm{mc}}}$ thus $ s_{B_{j,i}} \log \frac 1 {s_{B_{j,i}}} = (\epsilon^*)^2 s_{\mathsf C_j} \log \frac 1 {(\epsilon^*)^2 s_{\mathsf C_j}} < \frac 1 {10} \epsilon^* s_{\mathsf C_j} $. Combined with the fact that $\mathcal C$ is good, this implies that
 \begin{align} \label{Eq.fine-field-independent}
&\mbox{the construction of $\mathcal V_\delta$ does not explore the white noise}
 \\ & 
\mbox{appearing in $\{ \eta_{\delta'}^{s_{B_{j, i}}} (x): \delta' < s_{B_{j,i}}, x \in (B_{j,i})_{\mathrm{large}} \}$, }\nonumber
  \end{align}
completing the proof.
\end{proof}

The main task for the rest of the section is to prove Lemma~\ref{lem-regularity}. We will employ  a percolation-type  analysis of the same flavor as 
 in the proof of Lemma~\ref{lem-approximate-LGD-two-deltas} and Proposition~\ref{prop-approximate-LGD}. However, the percolation argument employed here is substantially more involved as we are required to control the ratios for the sizes of neighboring cells in the short path we find (by deforming the geodesic). 

\begin{defn}[$\mathcal E_{\delta, B}$]  \label{def-E-delta-B}
Let $B$ denote a dyadic box and fix $\delta>0$.
We define the event $\mathcal E_{\delta, B}$ to be  the following: there exists a sequence of neighboring boxes   $B_1, \ldots B_d \subseteq B_{\mathrm{large}}  \setminus B$ enclosing $B$
such that
 \begin{itemize}
\item $B_i \in\mathcal B(B, \epsilon^*)$ for each $1\leq i\leq d$, where 
$\mathcal B (B, \epsilon^*)$ is as in Definition~\ref{def-E-delta-B-prime}.  
\item $M_{\gamma, \epsilon^* s_{B}}(B_i) \leq \delta^2$ for each $1\leq i\leq d$.
\end{itemize}
\end{defn}
\begin{remark}\label{remark-box-cell}
We note that the sequence $B_1, \ldots, B_d$ does not
necessarily consists of cells in $\mathcal V_\delta$. However, each of the $B_i$s
 must be contained in a  (possibly larger) cell, which intersects $B_{\mathrm{large}} \setminus B$.
\end{remark}
\begin{lemma}\label{lem-percolation-regularity}
The following holds for large enough $0<\alpha<\alpha^* = \alpha^*(\gamma)$:
 for each dyadic box $B$ with 
side length $s =s_B= 2^{-m}$, $1\leq m \leq C_{\mathrm{mc}}\log_2 \delta^{-1}$, we have
\begin{equation}\label{eq-B-percolation}
\P(\{M_{\gamma, s} (B) \leq \delta^2\} \cap \mathcal E_{\delta, \alpha} \cap \mathcal E^c_{\delta, B}) \leq \delta^{10 C_{\mathrm{mc}} + 10}\,.
\end{equation}
Furthermore, 
\begin{equation}\label{eq-B-good}
\P(\{M_{\gamma, s} (B) \leq \delta^2\} \cap \mathcal E_{\delta, \alpha} \cap \{M_{\gamma, \epsilon^* s}(B') > \delta^2 \mbox{ for some } B'\in \mathcal B (B, \epsilon^*) \} ) \leq e^{-\sqrt{\log \delta^{-1}}}\,.
\end{equation}
\end{lemma}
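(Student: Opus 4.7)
The plan is to mimic Lemma~\ref{lem-percolation-Phi} at the exponentially fine ratio $\epsilon^*$, treating the two bounds separately. For \eqref{eq-B-good} a direct Gaussian union bound suffices, while \eqref{eq-B-percolation} requires a multi-scale iteration, since at scale $\epsilon^* s$ the $\eta$-truncation radius is of order $\epsilon^* s \log \delta^{-1}$, far exceeding the grid spacing $\epsilon^* s$, which precludes a single-scale argument with constant independence range.

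For \eqref{eq-B-good}, write $\eta_{\epsilon^* s}(c_{B'}) = \eta_s(c_{B'}) + Z_{B'}$ via the white-noise decomposition, where $Z_{B'} := \eta_{\epsilon^* s}^s(c_{B'})$ is centered Gaussian independent of $\eta_s$ with variance $\log(1/\epsilon^*) + O(1) \sim \alpha^* \sqrt{\log \delta^{-1}} \log \log \delta^{-1}$. On $\{M_{\gamma, s}(B) \leq \delta^2\} \cap \mathcal{E}_{\delta, \alpha}$, the fluctuation bound in $\mathcal{E}_{\delta, \alpha}$ applied at scale $s$ with $|c_B - c_{B'}| \leq \sqrt 2 s$, combined with the bound on $\eta_s(c_B)$ read off from $M_{\gamma, s}(B) \leq \delta^2$ (and with the $O(1)$ variance discrepancy handled by Lemma~\ref{lem-variance-continuity}), reduces ``$B'$ is bad'' to $Z_{B'} > c \sqrt{\log \delta^{-1}} \log \log \delta^{-1}$ with $c = (\gamma/2 + 2/\gamma) \alpha^* - \alpha + O(1)$. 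A Gaussian tail gives per-box probability $\leq \exp(-c^2 \sqrt{\log \delta^{-1}} \log \log \delta^{-1}/(2\alpha^*))$. Since $\gamma \in (0,2)$ forces $\gamma/2 + 2/\gamma > 2$, for $\alpha^*$ large enough (depending on $\gamma$) we have $c > 2\alpha^*$, so the Gaussian tail beats the union-bound count $|\mathcal{B}(B, \epsilon^*)| \leq O(e^{2\alpha^* \sqrt{\log \delta^{-1}} \log \log \delta^{-1}})$, yielding a total probability $\leq e^{-\Omega(\sqrt{\log \delta^{-1}} \log \log \delta^{-1})} \ll e^{-\sqrt{\log \delta^{-1}}}$.

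For \eqref{eq-B-percolation}, set $\epsilon_0 := 1/(4 C_{\mathrm{mc}} \log \delta^{-1})$ and intermediate scales $s_i := \epsilon_0^i s$ for $0 \leq i \leq K$, with $K$ the smallest integer such that $s_K \leq \epsilon^* s$ (so $K = O(\sqrt{\log \delta^{-1}}/\log \log \delta^{-1})$). At each consecutive pair of scales the truncation-radius condition underlying Lemma~\ref{lem-percolation-Phi} holds, giving constant independence range. Build the enclosure recursively: given a level-$(i-1)$ enclosure by $s_{i-1}$-boxes $\tilde B$, apply a crossing version of the Lemma~\ref{lem-percolation-Phi}-style percolation argument inside each $\tilde B$ to find a short open path of $s_i$-sub-boxes joining the two sides of $\tilde B$ shared with its ring-neighbors; concatenate these crossings to obtain the level-$i$ enclosure. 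Each sub-application has failure probability $\leq \delta^{10 C_{\mathrm{mc}} + 12}$, and the total number of sub-applications over all $K$ levels is $\leq \delta^{-O(1)}$, so a union bound gives \eqref{eq-B-percolation}.

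The main obstacle will be maintaining the hypothesis ``$M_{\gamma, s_i}(\cdot) \leq \delta^2$'' across the $K$ iterations: the per-level slack inherent to Lemma~\ref{lem-percolation-Phi}-style arguments (of shape $e^{(\log \delta^{-1})^{0.7}}$) compounds, but can be absorbed by choosing the single-level slack parameter sparingly so that the cumulative slack $\delta^{-o(1)}$ remains harmless when it is fed into the next level.
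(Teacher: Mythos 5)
Your treatment of \eqref{eq-B-good} is essentially correct and is a small variant of the paper's: you decompose at scale $s$ (via $\eta_{\epsilon^*s}=\eta_s+\eta_{\epsilon^*s}^s$) and union bound, whereas the paper decomposes through the intermediate scale $\epsilon s/\log s^{-1}$; both exploit the same fact $\gamma/2 + 2/\gamma > 2$ (equivalently, $1+1/\gamma+\gamma/4>2$ in the paper's normalization) to make a threshold strictly larger than $2\log(1/\epsilon^*)$ beat the $(\epsilon^*)^{-2}$-fold union bound. This part is fine.

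Your treatment of \eqref{eq-B-percolation}, however, rests on a misreading of where the percolation happens, and the multi-scale iteration you build to compensate has a real gap. You correctly note that the $\eta$-truncation radius at scale $\epsilon^*s$ is of order $\epsilon^*s\log\delta^{-1}\gg\epsilon^*s$, so one cannot run a percolation argument directly on the $\epsilon^*s$-grid with $O(1)$ independence range. But the paper never attempts that. The percolation runs on the \emph{coarse} grid of $\epsilon s$-boxes $B'_i$ with $\epsilon=\Theta(1/\log\delta^{-1})$; the enclosure of $\epsilon^*s$-boxes required by Definition~\ref{def-E-delta-B} is then obtained as the union of the boundary sub-boxes $\tilde B\in\mathcal B_\partial(B'_i,\epsilon^*/\epsilon)$ of an open coarse enclosure. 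The event $\mathcal E_{B'_i,\mathrm{open}}$ is the \emph{single} event that $\eta_{\epsilon^*s}^{\epsilon s/\log s^{-1}}(c_{\tilde B})\leq 1.5\log(1/\epsilon^*)$ for all such $\tilde B$; it is measurable with respect to the white noise in the time band $((\epsilon^*s)^2,(\epsilon s/\log s^{-1})^2)$, whose spatial truncation radius at the coarse end of the band is $\approx\epsilon s/2$, so $\mathcal E_{B'_i,\mathrm{open}}$ and $\mathcal E_{B'_{i'},\mathrm{open}}$ are independent once $|B'_i-B'_{i'}|\geq 2\epsilon s$. The individual $\tilde B$'s at scale $\epsilon^*s$ are never required to be mutually independent. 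With $K=1/\epsilon\sim\log\delta^{-1}$ and $p=(\epsilon^*)^{0.1}$, the Peierls bound \eqref{Eq.percolation-argument} gives a failure probability of order $\exp(-\Omega((\log\delta^{-1})^{3/2}\log\log\delta^{-1}))\ll\delta^{10C_{\mathrm{mc}}+10}$ in one shot. No iteration over scales is needed.

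The iteration you propose instead has a cumulative-slack problem you acknowledge but do not resolve. First a small arithmetic point: $K$ is the smallest integer with $\epsilon_0^K\leq\epsilon^*$, so $K\approx\log(1/\epsilon^*)/\log(1/\epsilon_0)=\alpha^*\sqrt{\log\delta^{-1}}$, not $O(\sqrt{\log\delta^{-1}}/\log\log\delta^{-1})$. More seriously, each level of a Lemma~\ref{lem-percolation-Phi}-style refinement pays a spatial-fluctuation cost at the coarser scale controlled via $\mathcal E_{\delta,\alpha}$, namely a factor $e^{O(\alpha\sqrt{\log\delta^{-1}}\log\log\delta^{-1})}$ in the mass comparison (the $e^{2\gamma\alpha\sqrt{\log\delta^{-1}}\log\log\delta^{-1}}$ term in the first display of the proof of Lemma~\ref{lem-percolation-Phi}). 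Over $K\approx\alpha^*\sqrt{\log\delta^{-1}}$ levels this accumulates to $e^{O(\log\delta^{-1}\cdot\log\log\delta^{-1})}$, vastly exceeding both the target $\delta^2$ and the total budget $e^{(2/\gamma+\gamma/2)\log(1/\epsilon^*)}$; it is certainly not $\delta^{-o(1)}$. Your proposal of ``choosing the single-level slack parameter sparingly'' does not explain how to replace the $\mathcal E_{\delta,\alpha}$-based fluctuation control by something that scales like $\sqrt{\log\log\delta^{-1}}$ per level, which is what would be needed for the recursion to close. The correct resolution is the one in the paper: keep the percolation grid at scale $\epsilon s$ and jump to scale $\epsilon^*s$ in a single step inside each coarse cell.
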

(Note that $\alpha^*$ enters in the statement of Lemma \ref{lem-percolation-regularity} 
through the definition of $\epsilon^*$, see \eqref{eq-def-epsilon*}.)
\begin{proof}
The proof resembles that for Lemma~\ref{lem-percolation-Phi}. Let 
 $\epsilon = (\alpha_1\log \delta^{-1})^{-1}$ be dyadic with $\alpha_1\in [1,2]$, and
assume in what follows that $\alpha>\max(2,\alpha_0,4C_{\mbox{\rm mc}})$, see
Lemma~\ref{lem-percolation-Phi}. 
Let $t = \epsilon^*$. 
Recall that $\mathcal B(B, \epsilon)$ denotes the partition of $B_{\mathrm{large}}$ into small boxes $B'_i$'s of side length $\epsilon s$, and that $\mathcal B'_i = \mathcal B_{\partial} (B_i, t/\epsilon)$ denotes the
 boxes of side length $\epsilon^* s$ whose closures intersect $\partial B_i$.

Replacing $\eta_{ts}^{\epsilon^2 s}$ in the proof of Lemma~\ref{lem-percolation-Phi} with $\eta_{ts}^{\epsilon s / \log s^{-1}}$,  we obtain
 \begin{equation}\label{eq-LQG-tilde-B}
M_{\gamma, \epsilon^* s}(\tilde B) \leq \delta^2 e^{2\gamma \alpha \sqrt{\log \delta^{-1}} \log \log \delta^{-1}} (\epsilon^*)^2 e^{\gamma \eta_{\epsilon^* s}^{\epsilon s / (\log s^{-1})}(c_{\tilde B}) - \frac{\gamma^2}{2} \Var( \eta_{\epsilon^* s}^{ \epsilon s / (\log s^{-1})}(c_{\tilde B}))}\,,
 \end{equation}
where  $c_{\tilde B}$ denotes the center of $\tilde B \in \mathcal B (B, \epsilon^*) \
\cup \mathcal B_{\partial} (B_{\mathrm{large}}, \epsilon^*)$. 
Analogously, for appropriate choices of $\alpha, \alpha^*$,  we have that
$\mathcal E_{B'_i, \mathrm{open}} : = \{ \max_{\tilde B \in \mathcal B'_i } \eta_{ts}^{\epsilon s / (\log s^{-1}) } (c_{\tilde B}) \le 1.5 \log t^{-1} \}$ satisfies \eqref{eq-B'-i-open} with $p = t^{0.1}$ and $\kappa = 2$, and 
 $$
( \{M_{\gamma, s} (B) \leq \delta^2\} \cap \mathcal E_{\delta, \alpha}  \cap \mathcal E_{B'_i, \mathrm{open}} ) \subseteq \{M_{\gamma, s \epsilon^*} (\tilde B) \leq \delta ^2: \tilde B\in \mathcal B'_i\}\,,
 $$
(recall \eqref{eq-LQG-tilde-B-Phi}). Then, the percolation argument in  Lemma~\ref{lem-percolation-Phi} yields \eqref{eq-B-percolation}. 

It remains to prove \eqref{eq-B-good}. By a union bound, we see that
\begin{equation}\label{eq-for-B'-good}
\P(\max_{\tilde B\in \mathcal B (B, \epsilon^*)}\eta_{\epsilon^* s}^{ \epsilon s / ( \log s^{-1})}(c_{\tilde B}) \leq (1 + \frac{1}{\gamma} + \frac{\gamma}{4}) \log (1/\epsilon^*)) \geq 1 -  (\epsilon^*)^{\Omega(\gamma)}\,,
\end{equation}
where the choice of $1 + \frac{1}{\gamma} + \frac{\gamma}{4}$ is so that $1 + \frac{1}{\gamma} + \frac{\gamma}{4} > 2$ and $\gamma (1 + \frac{1}{\gamma} + \frac{\gamma}{4}) < 2 + \frac{\gamma^2}{2}$. Combining this with \eqref{eq-LQG-tilde-B} completes the proof of \eqref{eq-B-good}.
\end{proof}

\begin{proof}[Proof of Lemma~\ref{lem-regularity}]
We work on the event $\mathcal E_{\delta, \alpha}$, since by Lemma~\ref{lem-neighboring-cell} it occurs with high probability.
Applying \eqref{eq-B-good} to all dyadic boxes $B$ with $B_{\mathrm{large}}$ containing $u$ or $v$, and with side length $s_B \ge \delta^{C_{\mathrm{mc}}}$ (so in total we apply \eqref{eq-B-good} $O(\log \delta^{-1})$ times), we see that $u$ and $v$ are good with probability at least $1 - O(\log \delta^{-1}) e^{-\sqrt{\log \delta^{-1}}}$. Also, by \eqref{eq-B-percolation} and a union bound, we see that with high probability $\mathcal E_{\delta, \mathsf C}$ holds for each $\mathsf C \in \mathcal V_\delta$. Recalling Remark~\ref{remark-box-cell}, we then get that with high probability 
\begin{equation}\label{eq-percolation-good-surrounding}
\begin{split}
&\mbox{ there exists a sequence of neighboring cells with side length at least } \epsilon^* s_{\mathsf C} \\
&\mbox{which encloses $\mathsf C$ and which 
has all cells  intersecting with } \mathsf C_{\mathrm{large}}\setminus \mathsf C \mbox{  for each }\mathsf C \in \mathcal V_\delta\,. 
\end{split}
\end{equation} 
Let $\mathcal C_0 = (\mathsf C_1, \ldots, \mathsf C_{d_0})$ be the geodesics in $D'_{\gamma, \delta}$ joining $u$ and $v$. We will show that 
\eqref{eq-percolation-good-surrounding} and the assumption that $u$ and $v$ are good imply that  $\mathcal E_{\delta, \alpha^*, u,v}$ holds, that is that
 \begin{equation} \label{Eq.sequence-good-cells}
\mbox{one can find a good sequence of cells joining $u$ and $v$ with length at most $d_0 e^{(\log \delta^{-1})^{0.6}}$.}
 \end{equation}
Since $\mathcal E_{\delta, \alpha^*, u, v}$ is increasing in $\alpha^*$, 
one can adjust $\alpha^*$ such that it is larger than $\alpha_0$,  completing the proof
of the lemma. 

It remains to prove \eqref{Eq.sequence-good-cells}, assuming that
$\mathcal E_{\delta, \alpha}$ holds, $u$ and $v$ are
 good, and \eqref{eq-percolation-good-surrounding}. 
For a sequence of neighboring cells $\mathcal C$, we let $\psi(\mathcal C)$ be the collection of cells $\mathsf C\in \mathcal C$ which have
 a neighboring cell in $\mathcal C$ with side length less than $\epsilon^* s_{\mathsf C}$ (that is to say, $\mathsf C$ is a not a good cell in $\mathcal C$ as in Definition~\ref{def-good-sequence-box}, which we refer to as a bad cell). Let $q(\mathcal C)$ be the side length of the largest cell in $\psi(\mathcal C)$. For ${\mathsf C},  {\mathsf C}' \in \mathcal C$, we denote by $[ {\mathsf C},  {\mathsf C}' ]_{\mathcal C}$ the path
 in $\mathcal C$ connecting ${\mathsf C}$ and ${\mathsf C}'$, and 
by $( {\mathsf C},  {\mathsf C}' )_{\mathcal C}$ the interior of $[{\mathsf C}, {\mathsf C}' ]_{\mathcal C}$ (i.e., excluding $ {\mathsf C}$ and $ {\mathsf C}'$). Similarly, we have $[\mathsf C, \mathsf C')_{\mathcal C}$ and $(\mathsf C, \mathsf C']_{\mathcal C}$.  
 
For $i\geq 0$, we will employ the following iterative construction,
constructing $\mathcal C_{i+1}$ from $\mathcal C_{i}$. If $\mathcal C_i$ is not good, we pick the largest $\mathsf C\in \psi(\mathcal C_i)$.  Since $u$ and $v$ are good, we see that $u, v\notin \mathsf C_{\mathrm{large}}$ and thus $\mathcal C_i$ will have to enter from outside and also exit from $\partial \mathsf C_{\mathrm{large}}$ --- here naturally $\mathsf C$ should implicitly depend on $i$, but we have suppressed it in the notation for simplicity. Let $\mathsf C_{\mathrm{enter}}$ be the last cell in $\mathcal C_i$ before $\mathsf C$ which intersects $\partial \mathsf C_{\mathrm{\mathrm{large}}}$ and let $\mathsf C_{\mathrm{exit}}$ be the next cell in $\mathcal C_i$ after $\mathsf C$ that intersects $\partial \mathsf C_{\mathrm{large}}$. 

We claim that there always exists a sequence of neighboring cells $\mathcal C_{i, \mathrm{replace}}$ (which is a segment of \eqref{eq-percolation-good-surrounding}) joining ${\mathsf C}_{i, 1} \in [\mathsf C_{\mathrm{enter}}, \mathsf C)$ and ${\mathsf C}_{i, 2} \in (\mathsf C, \mathsf C_{\mathrm{exit}}]$ such that if we construct $\mathcal C_{i+1}$ by replacing $[{\mathsf C}_{i, 1}, {\mathsf C}_{i, 2}]_{\mathcal C_i}$ in $\mathcal C_i$ with $\mathcal C_{i, \mathrm{replace}}$, then either of the following occurs:
\begin{enumerate}[(i)]
\item $|\psi(\mathcal C_{i+1})| \leq |\psi(\mathcal C_i) |-1$;
\item $|\psi(\mathcal C_{i+1})| \leq |\psi(\mathcal C_i) |$ and $q(\mathcal C_{i+1}) \geq 2 q(\mathcal C_{i})$.
\end{enumerate}
Provided with this claim, we can then construct iteratively
$\mathcal C_{i+1}$, and we see that in every $C_{\mathrm{mc}} \log_2 \delta^{-1}$ 
steps the number of bad cells has to decrease by at least 1 (this is because the second scenario cannot occur continuously for more than $C_{\mathrm{mc}} \log_2 \delta^{-1}$ steps due to the fact that all cells have size 
between $\delta^{C_{\mathrm{mc}}}$ and 1). Thus, the iterative procedure will stop after at most $d_0 \times C_{\mathrm{mc}} \log_2 \delta^{-1}$ steps and  end up with a good sequence. Also, in every step, the number of cells increases by at most $4(\epsilon^*)^{-2}$. Therefore, in the end, we obtain a good sequence of neighboring cells with length at most $d_0 \times (\epsilon^{*})^{-2} C_{\mathrm{mc}} \log_2 \delta^{-1} \le d_0 e^{(\log \delta^{-1})^{0.6}}$, as required. That is, \eqref{Eq.sequence-good-cells} holds.

 \begin{figure}[h]
  \includegraphics[width=17cm]{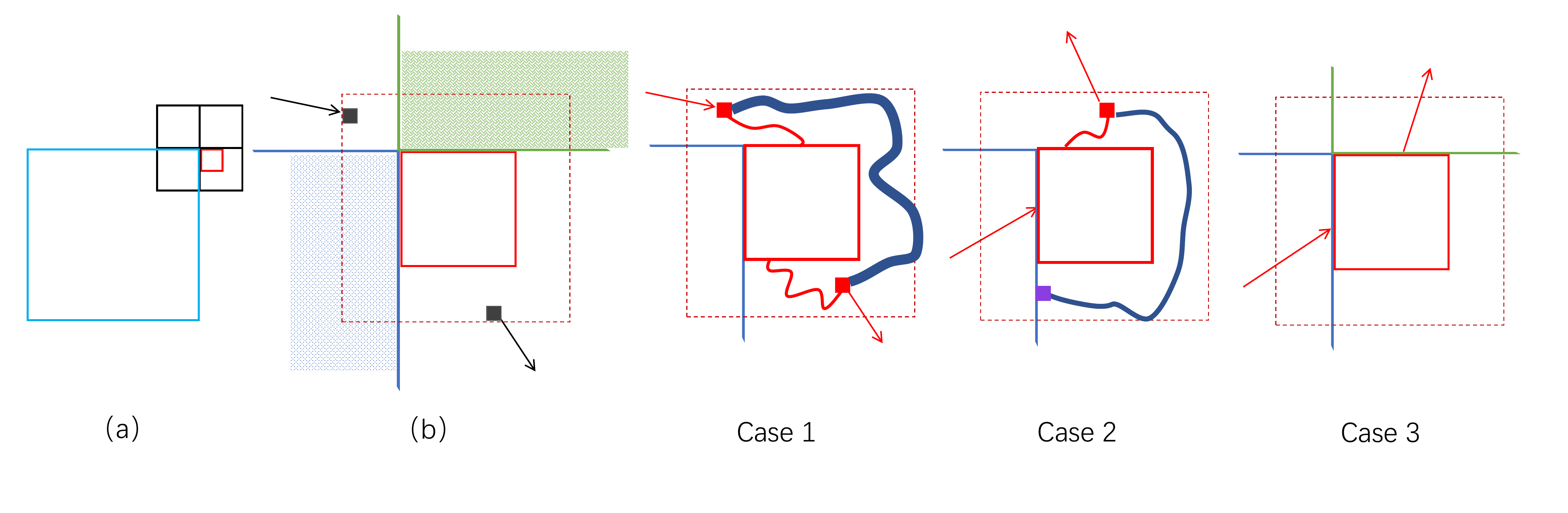}
\\  \vspace{-1.5cm} \caption{(a) The smallest (red) box is $\mathsf C$, the intermediate
(black) boxes are $\mathsf B_{\mathrm{lt}}$, etc, and the largest (blue)
 box is $\mathsf C_{\mathrm{lb}}$. (b) lack of connectivity by $\mathsf C_{\mathrm{lb}}$ and $\mathsf C_{\mathrm{rt}}$, where the small (black) solid boxes are $\mathsf C_{\mathrm{enter}}$ and $\mathsf C_{\mathrm{exit}}$. In Case 1, the small
(red) solid boxes are $\mathsf C_{i,1}$ and $\mathsf C_{i,2}$, the thin (red) curve stands for $[\mathsf C_{i,1}, \mathsf C_{i,2} ]$ and the thick (blue) curve stands for $\mathcal C_{i, \mathrm{replace}}$. In Case 2,  the small bottom (purple) solid box
 stands for the neighbor of $\mathsf C_{i,1} = \mathsf C_{\mathrm{lb}}$ in 
$\mathcal C_{i, \mathrm{replace}}$, which may have side length less than $\epsilon^* s_{\mathsf C_{i,1}}$.}
 \label{fig-Lemma32} 
 \end{figure}

It remains to justify the above claim.  We first prove it in the harder case when $\mathsf C_{\mathrm{large}} \subseteq \mathbb V^o$.  As shown in (a) of Figure~\ref{fig-Lemma32}, let $\mathsf B_{\mathrm{lt}},  \mathsf B_{\mathrm{rt}}, \mathsf B_{\mathrm{lb}}, \mathsf B_{\mathrm{rb}}$ be the four dyadic boxes with side length $2s_{\mathsf C}$ whose closures  have non-empty intersection with the closure of $\mathsf C$; here,
the subscript $\mathrm{lt}$ means ``left-top' and $\mathrm{rb}$ means ``right-bottom'', etc. (Note that $\mathsf B_{\mathrm{lt}}, \mathsf B_{\mathrm{rt}}, \mathsf B_{\mathrm{lb}}, \mathsf B_{\mathrm{rb}}$ are not necessarily cells in $\mathcal V_\delta$.)  We suppose without loss of generality that $\mathsf C\subset \mathsf B_{\mathrm rb}$ (so that 
all cells in $\mathsf B_{\mathrm{rb}}$ have side length at most $s_{\mathsf C}$).  If $\mathsf B_{\mathrm{lt}}$ is not partitioned, then denote by  $\mathsf C_{\mathrm{lt}}$ the cell containing $\mathsf B_{\mathrm{lt}}$ (otherwise we define $\mathsf C_{\mathrm{lt}} = \emptyset$) --- similarly for $\mathrm{lb}, \mathrm{rt}, \mathrm{rb}$. Let  $\mathfrak C_{\mathrm{parents}} = \{ \mathsf C_{\mathrm{lt}}, \mathsf C_{\mathrm{rt}}, \mathsf C_{\mathrm{lb}} \}$, noting $\mathsf C_{\mathrm{rb}} = \emptyset$. Note that it is possible that 
$\mathfrak C_{\mathrm{parents}} = \{\emptyset\}$.  By \eqref{eq-percolation-good-surrounding} there exists a sequence $\mathcal C_{i, \mathrm{cross}}$
of neighboring cells  with side length at least $\epsilon^* s_{\mathsf C}$, which encloses $\mathsf C$ and has all cells intersecting with $\mathsf C_{\mathrm{large}}\setminus \mathsf C$. Suppose that $\mathcal C_{i, \mathrm{cross}}$ intersects $[\mathsf C_{\mathrm{enter}}, \mathsf C_{\mathrm{exit}}]_{\mathcal C_i}$ at $ {\mathsf C}_{i, 1}$ and $ {\mathsf C}_{i, 2}$. Then, $\mathcal C_{i, \mathrm{cross}}$ can be
split two segments,
with respective ending cells
 $ {\mathsf C}_{i, 1}$ and $ {\mathsf C}_{i, 2}$.

We first show that the interior of one of the segments does not intersect  $ \mathfrak C_{\mathrm{parents}}$. Suppose this does not hold. If $\mathsf C_{\mathrm{lt}}$ lies in 
the interior of a segment, neither  $\mathsf C_{\mathrm{lb}}$ nor
 $\mathsf C_{\mathrm{rt}}$ lie in the  interior of the other segment, because
they are neighbors of $\mathsf C_{\mathrm{lt}}$. Then, 
 $\mathsf C_{\mathrm{lb}}$ and $\mathsf C_{\mathrm{rt}}$ respectively lie in the interior of different segments, as shown in (b) of Figure~\ref{fig-Lemma32}. 
By connectivity, this implies that one of them is contained in $(\mathsf C_{i,1}, \mathsf C_{i,2})_{\mathcal C_i} \subseteq (\mathsf C_{\mathrm{enter}}, \mathsf C_{\mathrm{exit}})_{\mathcal C_i}$, arriving at a contradiction to the definitions of $\mathsf C_{\mathrm{enter}}$ and $\mathsf C_{\mathrm{exit}}$.

Next, we prove our claim in the following separate cases, as shown in Figure~\ref{fig-Lemma32}.

\noindent {\bf Case 1: $ {\mathsf C}_{i, 1}, {\mathsf C}_{i, 2} \notin \mathfrak C_{\mathrm{parents}}$.}  In this case we can just let $\mathcal C_{i, \mathrm{replace}}$ be the segment which does not contain any cell in $ \mathfrak C_{\mathrm{parents}}$. By our assumption, we see that all cells in $\mathcal C_{i, \mathrm{replace}}$ have side lengths in $[\epsilon^* s, s]$. Therefore, $\psi(\mathcal C_{i, \mathrm{replace}}) = \emptyset$. In addition,  $\mathsf C\notin \mathcal C_{i, \mathrm{replace}}$. Thus, we have justified (i) of the claim.

\noindent {\bf Case 2: $|\{ {\mathsf C}_{i, 1}, {\mathsf C}_{i, 2} \} \cap \mathfrak C_{\mathrm{parents}}| = 1$.} In this case, we  repeat the procedure as in Case 1. 
However, (supposing ${\mathsf C}_{i, 1} \in \mathfrak C_{\mathrm{parents}}$) it is now possible that $\psi(\mathcal C_{i, \mathrm{replace}}) = \emptyset$ or $\psi(\mathcal C_{i, \mathrm{replace}}) = \{ {\mathsf C}_{i, 1}\}$. The former case shows (i);
in the latter case, we have (ii), where $q(\mathcal C_{i+1}) = s_{ {\mathsf C}_{i, 1} } \geq 2s_{\mathsf C} = 2 q(\mathcal C_i)$.

\noindent {\bf Case 3: $\{ {\mathsf C}_{i, 1}, {\mathsf C}_{i, 2} \} \subset \mathfrak C_{\mathrm{parents}}$}. In this case, we also have ${\mathsf C}_{i, 1} = \mathsf C_{\mathrm{enter}}$ and ${\mathsf C}_{i, 2} =  \mathsf C_{\mathrm{exit}}$ (or with the ordering switched), and thus both ${\mathsf C}_{i, 1}$ and ${\mathsf C}_{i, 2}$ are neighboring to (in the sequence $\mathcal C_i$) cells of side length at most $s_{\mathsf C}$. By maximality of $\mathsf C$ in $\psi(\mathcal C_i)$, we see that $ {\mathsf C}_{i, 1}$ and $ {\mathsf C}_{i, 2}$ have side lengths at most $s_{\mathsf C}/\epsilon^*$ (and at least $2s_{\mathsf C}$ since they are in $\mathfrak C_{\mathrm{parents}}$). If $ {\mathsf C}_{i, 1}$  and ${\mathsf C}_{i, 2}$ are diagonal to each other (then they must be both neighboring $\mathsf C$), we let $\mathcal C_{i, \mathrm{replace}}$ be the sequence $ {\mathsf C}_{i, 1}, \mathsf C,  {\mathsf C}_{i, 2}$; if $ {\mathsf C}_{i, 1}$  and $ {\mathsf C}_{i, 2}$ are neighboring to each other, then we let $\mathcal C_{i, \mathrm{replace}}$ be the sequence $ {\mathsf C}_{i, 1}, {\mathsf C}_{i, 2}$. In both cases, we have $\psi(\mathcal C_{i, \mathrm{replace}}) = \emptyset$, justifying (i).

We next consider the easier case that $\mathsf C$ intersects $\partial \mathbb V$. In this case, $\mathfrak C_{\mathrm{parents}}$ contains at most one cell in $\mathbb V$ and we are either in Case 1  or Case 2. Following similar (and slightly simpler) analysis to the one above then yields the proof of the claim in this case.
Altogether, this completes the verification of the claim, and thus completes the proof of the lemma.
\end{proof}

\subsection{Concentration of the distances}
In this section we show the following concentration result on the Liouville graph distance. Recall the constant $C_{\mathrm{Mc}}$ specified in
 Lemma~\ref{lem-partition-minimal-cell}. 
\begin{prop}\label{prop-concentration}
For any fixed $0<\xi< C_{\mathrm{Mc}}/3$ there exists a constant $c=c(\gamma, \xi)$ so that for any  sequence of $\xi$-admissible pairs  $(A_\delta, B_\delta )$
we have that for any $\iota\in (0,1)$, 
\begin{equation}\label{eq-concentration-1}
|\log \min_{x\in A_\delta, y\in B_\delta} D_{\gamma, \delta}(x, y) - \E \log \min_{x\in A_\delta, y\in B_\delta}D_{\gamma, \delta}(x, y) | \leq \iota \log \delta^{-1}
\ \mbox{\rm with $c\cdot \iota^2$-high probability}\,.
\end{equation}
In addition, with probability at least $1- e^{-(\log \delta^{-1})^{0.7}}$, we have that
\begin{equation}\label{eq-concentration-2}
|\log \min_{x\in A_\delta, y\in B_\delta} D_{\gamma, \delta}(x, y) - \E \log \min_{x\in A_\delta, y\in B_\delta}D_{\gamma, \delta}(x, y) | \leq (\log \delta^{-1})^{0.95}\,.
\end{equation}
Furthermore, \eqref{eq-concentration-1} and \eqref{eq-concentration-2} hold with $D_{\gamma, \delta}$ replaced with $D_{\gamma, \delta, \eta}$.
\end{prop}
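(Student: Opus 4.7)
The strategy is Gaussian concentration applied to $F(X) := \log\min_{x\in A_\delta, y\in B_\delta} D'_{\gamma,\delta}(x,y)$ seen as a function of the Gaussian vector $X := (\eta_{2^{-n}}(v))_{0\le n\le N,\,v\in\mathfrak C_n}$ with $N=\lceil C_{\mathrm{mc}}\log_2\delta^{-1}\rceil$. By Lemma~\ref{lem-partition-minimal-cell}, with high probability all cells of $\mathcal V_\delta$ have side lengths in $[\delta^{C_{\mathrm{mc}}}, \delta^{C_{\mathrm{Mc}}}]$, so $F$ essentially depends only on this truncated vector. Each coordinate of $X$ has variance at most $\log\delta^{-1}+O(1)$, so Lemma~\ref{lem-Gaussian-concentration} will be applied with $\sigma=O(\sqrt{\log\delta^{-1}})$. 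Once concentration for $F$ is established, the statements for $D_{\gamma,\delta}$ and $D_{\gamma,\delta,\eta}$ follow by combining with Proposition~\ref{prop-approximate-LGD} and Lemma~\ref{lem-LGD-two-fields}, whose approximation errors are only of order $(\log\delta^{-1})^{0.9}$, negligible at both scales $\iota\log\delta^{-1}$ and $(\log\delta^{-1})^{0.95}$.

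The key ingredient is the following Lipschitz-type bound: for any $b_0>0$, there is an event $\mathcal G=\mathcal G(\delta,b_0)$ of high probability (measurable with respect to $X$) such that
\begin{equation*}
X\in\mathcal G \text{ and } |X-X'|_\infty\le b_0 \;\Longrightarrow\; |F(X)-F(X')|\le 3\gamma b_0 + (\log\delta^{-1})^{0.9}.
\end{equation*}
To prove this, note that if two field realizations $\eta^{(1)},\eta^{(2)}$ satisfy $|\eta^{(1)}_{2^{-n}}(v)-\eta^{(2)}_{2^{-n}}(v)|\le b_0$ at all relevant $(n,v)$, then each approximate mass $M_{\gamma,s}(B)$ from \eqref{eq-def-approximate-LQG} differs between the two fields by at most a factor $e^{\gamma b_0}$. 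Consequently, the partition $\mathcal V_\delta^{(2)}$ obtained under $\eta^{(2)}$ lies in the refinement order between the partitions $\mathcal V_{\delta e^{\gamma b_0/2}}^{(1)}$ (coarser) and $\mathcal V_{\delta e^{-\gamma b_0/2}}^{(1)}$ (finer) produced under $\eta^{(1)}$, yielding the sandwich
\begin{equation*}
D'_{\gamma,\delta e^{\gamma b_0/2},\eta^{(1)}}(x,y)\le D'_{\gamma,\delta,\eta^{(2)}}(x,y)\le D'_{\gamma,\delta e^{-\gamma b_0/2},\eta^{(1)}}(x,y).
\end{equation*}
Lemma~\ref{lem-approximate-LGD-two-deltas} applied with $\delta' = \delta e^{-\gamma b_0/2} < \delta e^{\gamma b_0/2}$ then bounds the ratio of the two extremes by $e^{3\gamma b_0+(\log\delta^{-1})^{0.8}}$ on a high-probability event $\mathcal G$ depending only on $\eta^{(1)}$, yielding the claim after minimizing over $(x,y)\in A_\delta\times B_\delta$.

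Given this bound, let $m$ be the median of $F$ and $B_\star:=\{F\le m\}$, so $\P(B_\star)\ge 1/2$. For any $X\in\mathcal G$ with $F(X)>m+\lambda$, the Lipschitz bound forces $|X-B_\star|_\infty\ge b_0$ whenever $\lambda\ge 3\gamma b_0+(\log\delta^{-1})^{0.9}$. Lemma~\ref{lem-Gaussian-concentration} then gives
\begin{equation*}
\P(F>m+\lambda)\le C\exp\!\Bigl(-\tfrac{(b_0-C\sigma)^2}{2\sigma^2}\Bigr)+\P(\mathcal G^c).
\end{equation*}
Choosing $b_0=\lambda/(6\gamma)$ and $\lambda=\iota\log\delta^{-1}$ yields an upper tail of order $\delta^{c\iota^2}$, which gives \eqref{eq-concentration-1}, while $\lambda=(\log\delta^{-1})^{0.95}$ yields a tail of order $e^{-c(\log\delta^{-1})^{0.9}}$, stronger than the bound in \eqref{eq-concentration-2}. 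The lower tail is symmetric, and the median can be replaced by the mean using \eqref{eq-very-crude-prime}, \eqref{eq-very-crude} and Cauchy--Schwarz to bound $|m-\E F|=O(\sqrt{\log\delta^{-1}})$. The main obstacle is that the Lipschitz bound is not deterministic but only holds on the high-probability event $\mathcal G$; this is handled by splitting $\P(F>m+\lambda)\le \P(\{F>m+\lambda\}\cap\mathcal G)+\P(\mathcal G^c)$ and noting that $\P(\mathcal G^c)$ is much smaller than the Gaussian tail we target.
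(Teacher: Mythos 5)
Your proposal is correct and follows the same high-level blueprint as the paper — Gaussian concentration (Lemma~\ref{lem-Gaussian-concentration}) applied to $\log D'_{\gamma,\delta}$ viewed as a function of the coarse field (variance $O(\log\delta^{-1})$), combined with Proposition~\ref{prop-approximate-LGD} and Lemma~\ref{lem-LGD-two-fields} to transfer the concentration to $D_{\gamma,\delta}$ and $D_{\gamma,\delta,\eta}$ — but the derivation of the Lipschitz bound is genuinely different. The paper routes through the \emph{true} Liouville graph distance $D_{\gamma,\delta}$: it uses Lemma~\ref{lem-LGD-compare} (uniform shifts of the field multiply the LQG measure, hence sandwich $D$ between shifted thresholds), then the event $\mathcal E^*_{\delta,\iota,\alpha}$, built from Proposition~\ref{prop-approximate-LGD} and Lemma~\ref{lem-approximate-LGD-two-deltas}, to transfer this back to $D'$ for \emph{both} coarse realizations $\mathbf x_\delta,\mathbf x'_\delta$; this is why the paper needs $\mathcal E^*_{\mathbf x_\delta}\cap\mathcal E^*_{\mathbf x'_\delta}$ to be non-empty (hence the careful definition of $\mathcal A$). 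You instead observe that a bounded $\ell_\infty$ perturbation of the coarse coordinates nests the \emph{partitions themselves}: $\mathcal V^{(1)}_{\delta e^{\gamma b_0/2}}\preceq\mathcal V^{(2)}_\delta\preceq\mathcal V^{(1)}_{\delta e^{-\gamma b_0/2}}$ in refinement order, which sandwiches $D'_{\gamma,\delta,\eta^{(2)}}$ directly. This makes the comparison one-sided: only the event of Lemma~\ref{lem-approximate-LGD-two-deltas} for the \emph{single} realization $\eta^{(1)}$ is needed (since $D'$ under the perturbed coordinates stays within the $\eta^{(1)}$-sandwich), which is a cleaner bookkeeping than the paper's two-sided $\mathcal E^*$ machinery.

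Two small points to tighten. First, the claim $|m-\E F|=O(\sqrt{\log\delta^{-1}})$ does not follow from Cauchy--Schwarz together with \eqref{eq-very-crude-prime}; those give only $|m-\E F|=O(\log\delta^{-1})$ a priori. What one actually needs (and what the paper proves) is that after the concentration bound, $|m-\E F|\le 2\iota\log\delta^{-1}$, obtained by combining the tail estimate with uniform square integrability; this weaker bound suffices after adjusting $\iota$. Second, the set $B_\star=\{F\le m\}$ must be taken inside $\mathcal A_\delta$ (the set of coarse configurations for which all cells have side $\ge\delta^{C_{\mathrm{mc}}}$, so that $F$ is a genuine function of the truncated coordinates), and $\mathcal G$ is most naturally defined as the coarse-field event $\{\mathbf x:\P(\text{Lemma~\ref{lem-approximate-LGD-two-deltas} holds}\mid\mathcal X_\delta=\mathbf x)\ge 0.9\}$; this makes your assertion that $\mathcal G$ is measurable with respect to $X$ precise, in the same spirit as the paper's set $\mathcal A$. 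Neither issue affects the validity of the argument.
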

\begin{proof}
We first give a detailed proof of \eqref{eq-concentration-1} and then sketch the necessary minor adaptations needed in order to obtain \eqref{eq-concentration-2}. For  both \eqref{eq-concentration-1} and \eqref{eq-concentration-2}, we will only provide a proof in the case of 
$A = \{u\}$ and $B=\{v\}$, as the  general case follows by the same proof with minimal change --- the assumption of admissible pairs is required only in order to be able to apply Proposition~\ref{prop-approximate-LGD} and Lemma~\ref{lem-approximate-LGD-two-deltas}. Also, provided with \eqref{eq-concentration-1} and \eqref{eq-concentration-2}, the fact that \eqref{eq-concentration-1} and \eqref{eq-concentration-2} hold with $D_{\gamma, \delta}$ replaced with $D_{\gamma, \delta, \eta}$ follows from Lemma~\ref{lem-tilde-h-eta}, Corollary~\ref{cor-LGD-two-deltas} and Lemma~\ref{lem-LGD-compare}.

\medskip

\noindent {\bf Proof of \eqref{eq-concentration-1}}.
It is obvious from Proposition~\ref{prop-approximate-LGD} and Corollary \ref{cor-approximate-LGD-expectation} that  \eqref{eq-concentration-1} is equivalent to the statement that  with 
$c \cdot \iota^2$-high probability
\begin{equation}\label{eq-concentration-approximate}
|\frac{\log D'_{\gamma, \delta}(u, v)}{\log \delta^{-1}} - \E \frac{\log D'_{\gamma, \delta}(u, v)}{\log \delta^{-1}} | \leq \iota\,.
\end{equation}
Thus, it suffices to prove the concentration for either of the two distances. The natural attempt to prove Proposition~\ref{prop-concentration} is to verify the Lipschitz condition for the Liouville graph distance (viewed as a function on a Gaussian process) and then apply a Gaussian concentration inequality. However, while the Lipschitz condition for the Liouville graph distance can be verified, the maximal individual variance for the Gaussian variables involved in the definition of the Liouville graph distance is infinite. On the other hand, while the maximal individual variance for the Gaussian variables involved in the definition of the approximate Liouville graph distance can be controlled, the Lipschitz condition does not hold in an obvious way. In order to see the failing of the Lipschitz condition, note that one  can perturb the Gaussian process such that in constructing $\mathcal V_\delta$,  a cell that was not further 
partitioned in the original environment would now be further partitioned. 
Once this extra partitioning occurs, it is possible (but unlikely) that these sub-cells would be further partitioned into arbitrarily small Euclidean squares. 
(Indeed,  the decision concerning further partitioning   depends on random variables which are independent  from those determining the original partition.)
 In order to address this issue, we will employ the Lipschitz condition for the Liouville graph distance and the control on the maximal individual variance for the Gaussian variables involved in the approximate Liouville graph distance, and use Proposition~\ref{prop-approximate-LGD} to make a connection between these two distances.

We consider the Gaussian space generated by the collection 
$\{(\eta_\delta(v),\tilde h_\delta(v))\}_{v\in \mathbb V, \delta>0}$,
see \eqref{eq:WND_decomposition} and \eqref{eq:WND_decomposition-approximation}. For $\delta>0$, let
$\mathbf X_\delta$ denote the subspace spanned by
$\{(\eta_{\epsilon}(v), \tilde h_\epsilon (v)): v\in \mathbb V, \epsilon \geq \delta^{C_{\mathrm{mc}}}\}$. Let $\mathbf Y_\delta$ denote the subspace orthogonal to $\mathbf X_\delta$, and note that it is generated by the white noise $W(dw,ds)$ for $s<\delta^{2 C_{\mathrm{mc}}}$).
For $\delta'<\delta^{C_{\mathrm{mc}}}$ we  write the orthogonal decomposition 
\[(\eta_{\delta'}(\cdot),\tilde h_{\delta'}(\cdot))=
(\eta_{\delta^{C_{\mathrm{mc}}}}(\cdot ),  \tilde h_{\delta^{C_{\mathrm{mc}}}}(\cdot))
+
(\eta^\perp_{\delta,\delta'}(\cdot),\tilde h^\perp_{\delta,\delta'}(\cdot))
=: {\cal X}_\delta+{\cal Y}_{\delta,\delta'} , \]
where ${\cal Y}_{\delta,\delta'}(\cdot)$ is measurable
on $\mathbf Y_\delta$. (Possible configurations of $\cal X$ and $\cal Y$ will be  denoted by $\bf x$ and $\bf y$. We use   ${\bf x}_\delta$ and ${\bf y}_{\delta,\delta'}$  as
convenient  shorthand notation, and we further use ${\bf y}_\delta$ to denote the collection 
${\bf y}_{\delta,\delta'}$ for $\delta'<\delta^{C_{\mathrm{mc}}}$.)
 Denote by $M_{\gamma, \mathbf x_\delta}$ the LQG measure of the GFF on the realization $(\mathbf x_\delta, {\cal Y}_\delta)$.
We apply a similar convention for $M_{\gamma, \mathbf x'_\delta}$, $D_{\gamma, \delta, \mathbf x_\delta}(u, v)$, etc. We note that, by definition,  $D'_{\gamma, \delta} (u,v) = D'_{\gamma, \delta, \mathcal X_\delta} (u,v)$. Furthermore, $D'_{\gamma, \delta, \mathbf x_\delta} (u,v)$ is a real number if each cell has side length larger than $\delta^{C_{\mathrm{mc}}}$, since then $D'_{\gamma, \delta}$ does not depend on $\mathcal Y_\delta$. Next, we are going to show that $\log D'_{\gamma, \delta, \mathbf x_\delta} (u,v) - \log D'_{\gamma, \delta, \mathbf x'_\delta} (u,v)$ is bounded by $O(1) \| \mathbf x_\delta - \mathbf x'_\delta \|_\infty$, see \eqref{eq-distance-Lip} below.

Let $\mathcal A_\delta$ be such that $\{ \mathcal X_\delta \in \mathcal A_\delta \} = \{ \mbox {each cell in }\mathcal V_\delta \mbox{ has side length at least }\delta^{C_{\mathrm{mc}}}  \}$. Let $\iota$ be an arbitrarily small positive number and $\alpha>0$, and let $\mathcal E^*_{\delta, \iota, \alpha}  =  \{  ({\cal X}_\delta, {\cal Y}_\delta) \in \tilde{\mathcal E}^*_{\delta, \iota, \alpha} \} $ be the event such that
$$
 |\frac{\log D_{\gamma, \delta'}(u, v)}{\log \delta^{-1}} - \frac{\log D'_{\gamma, \delta''}(u, v)}{\log \delta^{-1}} | \leq \iota/4 \mbox{ for all } \delta^{1 + \iota / \alpha} \leq \delta', \delta'' \leq \delta^{1-\iota/\alpha} .
 $$
It will be convenient in what follows to write 
\[\mathcal E^*_{\delta,\iota,\alpha,\mathbf x_\delta}=
\{\bf y_\delta: (\bf x_\delta,\bf y_\delta)\in \tilde{\mathcal E}^*_{\delta,\iota,\alpha}\}.  \]

By Proposition~\ref{prop-approximate-LGD} 
and Lemmas~\ref{lem-partition-minimal-cell} 
and \ref{lem-approximate-LGD-two-deltas}, we can choose an $\alpha>0$ depending only on $\gamma$, such that for any arbitrarily small $\iota>0$,
$\mathcal E^*_{\delta, \iota, \alpha}$ occurs with $c \cdot \iota$-high probability.
As a result, we see that there exists a set $\mathcal A \subseteq \mathcal A_\delta$  such that 
 $$\P(  \mathcal E^*_{\delta, \iota, \alpha} \mid \mathbf X_\delta) \geq 0.9   \mbox{ on the event } {\cal X}_\delta \in \mathcal A,  \mbox{which occurs with  $c \cdot \iota$-high probability} \,.$$
In particular, for ${\bf x}_\delta, {\bf x}_\delta'\in \mathcal A$,
$\mathcal E^*_{\delta,\iota,\alpha,\mathbf x_\delta}\cap
\mathcal E^*_{\delta,\iota,\alpha,\mathbf x_\delta'}$ is non-empty.

Let $\ell = \|\mathbf x_\delta - \mathbf x'_\delta\|_\infty$. We see from Lemma~\ref{lem-LGD-compare} that as long as $\ell \leq \ell_\delta =  \frac{\iota \log \delta^{-1}}{2\gamma \alpha}$ we have 
$$
D_{\gamma, \delta^{1-\iota/\alpha} ,\mathbf x_\delta}(u, v)\leq D_{\gamma, \delta, \mathbf x'_\delta}(u, v) \leq D_{\gamma, \delta^{1+\iota/\alpha} ,\mathbf x_\delta}(u, v)\,.
 $$
(Note that the above is an inequality between random variables that depend on ${\cal Y}_\delta$, 
which holds for almost all configurations $\mathbf y_\delta$.) 
Consequently, on the event ${\cal Y}_\delta \in \mathcal E^*_{\delta,\iota,\alpha,\mathbf x_\delta}\cap
\mathcal E^*_{\delta,\iota,\alpha,\mathbf x_\delta'}$ we have 
 $|\log D_{\gamma, \delta, \mathbf x'_\delta}(u, v) - \log D_{\gamma, \delta ,\mathbf x_\delta}(u, v)| \leq 
\frac 1 2 \iota \log \delta^{-1}$ 
and thus, 
\begin{equation}\label{eq-distance-Lip}
|\log D'_{\gamma, \delta, \mathbf x'_\delta}(u, v) - \log D'_{\gamma, \delta ,\mathbf x_\delta}(u, v)| \leq \iota \log \delta^{-1}\,.
\end{equation}
Recall that,  for all $\mathbf x_\delta$, $D'_{\gamma,\delta,\bf x_\delta}$ does not depend on ${\cal Y}_\delta$.  Then, we have deduced that \eqref{eq-distance-Lip}
holds for all $\bf x_\delta, \bf x'_\delta\in \mathcal A$ satisfying  $\ell \leq \ell_\delta$.
 
At this point, we are ready to deduce our concentration result. 
Let $d'_{u,v}$ be the minimal number such that 
$$\P({\cal X}_\delta \in \mathcal A') \geq 1/2\,, \mbox{ where } \mathcal A' = \{ \mathbf x_\delta \in \mathcal A: D'_{\gamma, \delta, \mathbf x_\delta}(u, v) \leq d'_{u, v}\}\,.$$
Note that the  above is well defined since when $\mathbf x_\delta \in \mathcal A$, we have that $ D'_{\gamma, \delta, \mathbf x_\delta}(u, v)$ is a measurable function 
of $\mathbf x_\delta$. Recalling 
\eqref{eq-distance-Lip}, we see that for $c=c(\gamma)>0$
\begin{equation}\label{eq-upper-tail-deviation}
\P(\log D'_{\gamma, \delta}(u, v) \geq \log d'_{u, v} +  \iota \log \delta^{-1}) \leq \P(\mathcal X_\delta \not \in \mathcal A) +
 \P(\min_{\mathbf x_\delta' \in \mathcal A'} \|{\mathcal X_\delta} - \mathbf x'_\delta\|_\infty \geq \ell_\delta) \leq \delta^{c \iota^2 } \,,
 \end{equation}
where in the last step we have used Lemma~\ref{lem-Gaussian-concentration}, as well as the fact that maximal individual variance of the random variables in
 ${\mathcal X}_\delta$ is $O_{C_{\mathrm{mc}}}(\log \delta^{-1})$. 
 
 By a similar reasoning, we can also get that
\begin{equation}\label{eq-lower-tail-deviation}
\P(\log D'_{\gamma, \delta}(u, v) \leq \log d'_{u, v} -  \iota \log \delta^{-1}) \leq \delta^{c \iota^2 } \,.
\end{equation}

   Due to the uniform square integrability of
$\log D'_{\gamma, \delta}(u, v)/\log(1/\delta)$, which follows from $|D'_{\gamma, \delta}| \le |\mathcal V_\delta|$ and the reasoning in  Lemma~\ref{lem-partition-minimal-cell},
we conclude from \eqref{eq-upper-tail-deviation} and \eqref{eq-lower-tail-deviation} that $| \E \log D'_{\gamma,\delta}(u,v)  - \log d'_{u, v}| \leq 2\iota \log \delta^{-1}$.
Combined with \eqref{eq-upper-tail-deviation} and \eqref{eq-lower-tail-deviation}, this completes the proof of
\eqref{eq-concentration-approximate} (we adjust the value of $\iota$ appropriately).

\medskip

\noindent {\bf Proof of \eqref{eq-concentration-2}}. We now sketch the necessary modifications in order to prove \eqref{eq-concentration-2}. For simplicity of exposition, in what follows we will repeatedly use higher powers of $\log \delta^{-1}$ to absorb error terms with lower powers of $\log \delta^{-1}$.
It is obvious from Proposition~\ref{prop-approximate-LGD} and Corollary \ref{cor-approximate-LGD-expectation} that \eqref{eq-concentration-2} can be deduced from the statement that  with probability at least $1- e^{(\log \delta^{-1})^{0.8}}$,
\begin{equation}\label{eq-concentration-approximate-2}
|\log D'_{\gamma, \delta}(u, v) - \E \log D'_{\gamma, \delta}(u, v) | \leq (\log \delta^{-1})^{0.94}\,.
\end{equation}
 To prove \eqref{eq-concentration-approximate-2}, we follow the proof of \eqref{eq-concentration-1}, but in place of  $\mathcal E^*_{\delta, \iota, \alpha}$  we define $\mathcal E^*_{\delta, \alpha}$ to be the event that
 $$
|\frac{\log D_{\gamma, \delta'}(u, v)}{\log \delta^{-1}} - \frac{\log D'_{\gamma, \delta''}(u, v)}{\log \delta^{-1}} | \leq (\log \delta^{-1})^{-0.09} \mbox{ for all } \delta e^{-\alpha^{-1}(\log \delta^{-1})^{0.9}} \leq \delta', \delta'' \leq \delta e^{\alpha^{-1}(\log \delta^{-1})^{0.9}} .
    $$
By Proposition~\ref{prop-approximate-LGD} 
and Lemmas~\ref{lem-partition-minimal-cell} 
and \ref{lem-approximate-LGD-two-deltas}, we can choose an $\alpha>0$ depending only on $\gamma$, such that 
$\P( \mathcal E^*_{\delta, \alpha}) \geq 1-  \delta^{1/\alpha}$. 
As a result, we see that there exists a set $\mathcal A \subseteq \mathcal A_\delta$  such that 
$$\P(  \mathcal E^*_{\delta, \alpha} \mid \mathbf X_\delta) \geq 0.9 \mbox{ on the event } {\cal \mathcal X_\delta} \in \mathcal A, \mbox{ and  }\P( \mathcal X_\delta \in \mathcal A\mbox)\geq 1- \delta^{\frac{1}{2\alpha}}\,.$$
At this point, we can repeat the analysis as for \eqref{eq-concentration-1} and deduce that for $\ell_\delta = (\log \delta^{-1})^{0.9}$
$$\P(\log D'_{\gamma, \delta}(u, v) \geq \log d'_{u, v} +  (\log \delta^{-1})^{0.95}) \leq \P( \mathcal X_\delta \not \in \mathcal A) + \P(\min_{\mathbf x_\delta' \in \mathcal A'} \|
\mathcal X_\delta - \mathbf x'_\delta\|_\infty \geq \ell_\delta) \leq e^{-\Omega((\log \delta^{-1})^{0.8})}\,,$$
where in the last step we again have used Lemma~\ref{lem-Gaussian-concentration}, as well as the fact that the maximal individual variance of the random variables in $\mathcal X_\delta$  is $O_{C_{\mathrm{mc}}}(\log \delta^{-1})$. The proof of the lower
deviation in \eqref{eq-concentration-approximate-2} 
is similar, leading to 
\eqref{eq-concentration-approximate-2} and thus completing the proof of \eqref{eq-concentration-2}. 
\end{proof}

\section{Liouville heat kernel}
\label{sec-LHK}
In this section, we relate the Liouville heat kernel to the Liouville graph distance.
\subsection{Lower bound}
\label{subsec-LHKLB}
In this section, we  provide a lower bound on the Liouville heat kernel in terms of the Liouville graph distance. For $u, v\in \mathbb V$, we denote 
$$\chi^+_{u, v} = \limsup_{\delta \to 0} \frac{\E \log D_{\gamma, \delta}(u, v)}{\log \delta^{-1}}\,.$$
Recalling Lemma~\ref{lem-obvious-bounds}, we see that  $0<\chi^+_{u, v} <1$.
We will show that there exists a finite random variable $C>0$ (measurable with respect to the GFF, and depending on $u,v$) such that for all  $t\in (0,1]$,
\begin{equation}\label{eq-heat-kernel-lower-bound-fancy}
\pe_t^{\gamma}(u, v) \geq C \exp\Big\{-t^{-\frac{\chi^+_{u, v}}{2 - \chi^+_{u, v}} +o(1)}\Big\}\,.
\end{equation}
In order to prove \eqref{eq-heat-kernel-lower-bound-fancy}, it suffices to show that there exists a $t_0>0$ (deterministic) so that
for any arbitrarily small and fixed  $\iota>0$,
there exists a  small positive random variable $c = c_{\gamma, u, v,\iota} >0$, measurable on the GFF,
such that
for all $t\in (0,t_0]$, the following 
holds: with probability at least $1-  e^{-(\log t^{-1})^{0.2}} $,
\begin{equation}\label{eq-heat-kernel-lower-bound}
\pe_s^{\gamma}(u, v) \geq  c\exp\Big\{-t^{-\frac{\chi^+_{u, v}}{2 - \chi^+_{u, v}} -\iota}\Big\} \mbox{ for all } t\leq s\leq 2t\,.
\end{equation}
Indeed, \eqref{eq-heat-kernel-lower-bound} yields \eqref{eq-heat-kernel-lower-bound-fancy} for $t\leq t_0$ by an  application of the Borel-Cantelli lemma for times $t_i=2^{-i}$. On the other hand, \eqref{eq-heat-kernel-lower-bound-fancy} holds for $t>t_0$  by the Markov property and multiple applications of \cite[Corollary 5.20]{MRVZ14}.

To show \eqref{eq-heat-kernel-lower-bound},  fix an arbitrarily small $\iota>0$ and let $\delta = t^{{1}/(2 - \chi^+_{u, v}) + \iota}$.  Also,
throughout the section, 
we use $\hat {\mathsf C}$ to denote a cell in $\mathcal V_\delta$, 
while $\mathsf C$ will stand for the boxes $\{ B_i \}$ in Lemma~\ref{lem-partition-independence}.

A natural approach to proving  \eqref{eq-heat-kernel-lower-bound}
is to show that with not too small probability,
 the Liouville Brownian motion can cross each cell in $\mathcal V_\delta$ without accumulating too much ``Liouville time'' (i.e., the PCAF as defined in \eqref{eq-def-PCAF}), provided with which one can then force the SBM to travel along the geodesic between $u$ and $v$ in $\mathcal V_\delta$.  However, there is a substantial obstacle due to the the possibility that two neighboring cells along the geodesic may have side lengths differing by a factor as large as a power in $\delta$. This is further complicated by
a technical challenge: for a cell $\hat {\mathsf C} \in \mathcal V_\delta$, the Liouville time accumulated during traveling through $\hat {\mathsf C}$ depends on the starting and ending points, and we do not expect uniform bounds on that.

We now discuss how to address these challenges; a crucial role is played by
Lemma~\ref{lem-partition-independence}.
We work on the event $\mathcal E_1$ defined as
\begin{equation}
\label{eq-E1} \mathcal E_1 = 
\mathcal E_{\delta, \alpha^*} \cap \mathcal E_{\delta, \alpha^*, u, v} 
\cap \{  \mbox{\rm \eqref{eq-tilde-h-eta-assump} holds}   \} \,,
\end{equation}
where $\mathcal E_{\delta, \alpha^*}$ and $\mathcal E_{\delta, \alpha^*, u, v}$
are  defined 
in \eqref{eq-def-E-delta-alpha} and 
\eqref{eq-def-E-delta-alpha*-u-v}, respectively.  Note that $\P(\mathcal E_1)\geq
1-e^{-(\log \delta^{-1})^{0.24}}$, by Lemmas~\ref{lem-neighboring-cell}, \ref{lem-regularity} and the discussion above \eqref{eq-tilde-h-eta-assump}. We next will extract a sequence of neighboring boxes using Lemma~\ref{lem-partition-independence}. To ensure more desirable properties of this sequence of boxes, we will work on a more restricted event than $\mathcal E_1$.
By Propositions~\ref{prop-approximate-LGD} and \ref{prop-concentration}, we see that with $c \cdot \iota^2$-high probability, $D'_{\delta, \gamma}(u, v) \leq \delta^{-\chi^+_{u, v} - \frac 1 4 (2 - \chi^+_{u,v})^2 \cdot \iota}$. 
Setting \begin{equation} \label{eq-E2}
\mathcal E_2=\mathcal E_1\cap \{D'_{\delta, \gamma}(u, v) \leq \delta^{-\chi^+_{u, v} - \frac 1 4 (2 - \chi^+_{u,v})^2 \cdot \iota}\} \cap \{ \mbox{the event in \eqref{Eq.error-coarse-fine}} \} ,
\end{equation}
we deduce, using arguments similar to those employed 
 in the  proof of Lemma~\ref{lem-neighboring-cell}, that with high probability we have 
\begin{equation} \label{Eq.error-coarse-fine}
\max_{\hat{\mathsf C} \in \mathcal V_\delta} \max_{x\in \mathbb V \cap \hat{\mathsf C}_{\mathrm{large}}} |\eta_{(\epsilon^*)^2 s_{\hat{\mathsf C}}}^{s_{\hat{\mathsf C}}}(x)| \leq (\log \delta^{-1})^{0.8}.
 \end{equation}
and therefore,
$\P(\mathcal E_2) \geq 1-e^{-(\log \delta^{-1})^{0.23}}$. 

We work on $\mathcal E_2$ in what follows. Denote the sequence $\{ B_i \}$ provided
in Lemma~\ref{lem-partition-independence} by 
$\mathcal C =  ( \mathsf C_1, \ldots, \mathsf C_d )$; recall that this sequence is measurable with respect to  $\mathcal V_\delta$, and joins $u$ to  $v$. 
Then, $d\leq \delta^{-\chi^+_{u, v} - \frac 1 2(2 - \chi^+_{u,v})^2 \cdot \iota}$, and each $\mathsf C_i$ satisfies $M_{\gamma, s_{\mathsf C_i}} (\mathsf C_i) \leq \delta^2 e^{O( (\log \delta^{-1})^{0.8})}$ (recall \eqref{Eq.error-coarse-fine} and that
$s_{\mathsf C_i} = (\epsilon^*)^2 s_{\hat {\mathsf C_i}}$, where $\hat {\mathsf C_i}$ is the cell containing $\mathsf C_i$, see Lemma~\ref{lem-partition-independence}).  Furthermore, the 
 law of  $\{\eta_{\delta'}^{s_{\mathsf C_i}}(x): \delta' < s_{\mathsf C_i}, x \in (\mathsf C_i)_{\mathrm{large}} \mbox{ for some } \mathsf C_i \in \mathcal C \}$
conditioned on $\mathcal V_{\delta}$ coincides with
its unconditional version. (Here, we  abuse notation by using
 $\mathsf C_i$ to denote a dyadic box which is not necessarily a cell. The 
abuse of notation is justified by the fact that 
$M_{\gamma, s_{\mathsf C_i}} (\mathsf C_i) \leq \delta^2 e^{O( (\log \delta^{-1})^{0.8})}$ and thus the $\mathsf C_i$'s will essentially play the role of cells.)
For $i=1, \ldots, d-1$, denote for brevity $s_i:=s_{\mathsf C_i}$ and write
 $\Lambda_i = \partial \mathsf C_i \cap \partial \mathsf C_{i+1}$.  We emphasize that the $\Lambda_i$'s are measurable with respect to $\mathcal V_\delta$. 
  As discussed above, we will  force the SBM to travel 
	through $\mathsf C_1, \ldots, \mathsf C_d$ sequentially, and will show that this occurs with high enough probability.
	To this end, we will crucially use the fact  $\mathcal C$ is a good sequence, and thus
 \begin{equation}\label{eq-Lambda-i-not-small}
 \mathcal L_1 (\Lambda_i), \mathcal L_1(\Lambda_{(i-1)\vee 1})\geq 
\epsilon^* s_i \mbox{ for } 1\leq i < d\,.
 \end{equation}
Here $\mathcal L_1$ is the 1-dimensional Lebesgue measure, and $\epsilon^*$ is defined in \eqref{eq-def-epsilon*}.

Consider $2\leq i\leq d-1$. For  $\mathsf C_i \in \mathcal C$ and $z\in \mathsf C_i$, we say that $z$ is a fast point (with respect to
$\mathsf C_i$)
if for any $\Lambda \subseteq \Lambda_i$ such that $\mathcal L_1(\Lambda) \geq  0.1 \mathcal L_1(\Lambda_i)$ one has
\begin{equation}\label{eq-def-fast}
P_z(F(\sigma_{\Lambda}) \leq \delta^2 C_\delta) \geq \exp\{-\exp\{(\log \delta^{-1})^{2/3}\}\}=:p_{\mathrm{fast}}(=\exp\{-(1/\delta)^{o(1)}\})\,,
\end{equation}
where $\sigma_\Lambda$ is the first time when the SBM hits $\Lambda$ and
\begin{equation}\label{eq-def-err-3}
C_\delta = \exp\{(\log \delta^{-1})^{0.95}\} (= (1/\delta)^{o(1)})\,.
\end{equation} 
Note that we allow $z\in \partial \mathsf C_i$, however the fact that being fast involves considering all possible $\Lambda$ makes the notion non-trivial even for a point $z\in \partial \mathsf C_i$, since  we need to consider sets $\Lambda$ with $z\not\in \Lambda$.
We say that  $\mathsf C_i$ is fast if
\begin{equation}\label{eq-def-cell-fast}
\mathcal L_1(\Lambda_{i-1, \mathrm{fast}}) \geq 0.1 \mathcal L_1(\Lambda_{i-1}) \mbox{ where } \Lambda_{i-1, \mathrm{fast}} = \{z\in \mathsf \Lambda_{i-1}: z \mbox{ is fast\ with respect to $\mathsf C_i$}\}\,.
\end{equation}
A crucial ingredient for the proof of \eqref{eq-heat-kernel-lower-bound} is the proof that with high probability  all the  $\mathsf C_i$'s are fast simultaneously. To this end, we now estimate the probability that a particular  $\mathsf C_i$ is fast. (We will later apply a union bound.)  

\begin{lemma}\label{lem-fast-cell}
There exists a $\delta_0>0$ such that 
for all $\delta<\delta_0$ there exists an event  $\mathcal E_3$ of
high probability such that
the following holds. For each $2\leq i\leq d-1$ there exists an event 
$\mathcal E_{\mathsf C_i}$ such that 
 $$
\P(\mathcal E_{\mathsf C_i} \mid \mathcal V_\delta) \geq 1 - \exp \{ - 2^{\sqrt{\log \delta^{-1}}} \} \mbox{ and } ( \mathcal E_{\mathsf C_i} \cap \mathcal E_3) \subset \{ \mathsf C_i \mbox{ is fast} \} . 
 $$
 \end{lemma}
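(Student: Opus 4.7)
I will exploit the conditional independence from Lemma~\ref{lem-partition-independence}: given $\mathcal V_\delta$, the fine field $\{\eta^{s_i}_{\delta'}(x):\delta'<s_i,\, x\in(\mathsf C_i)_{\mathrm{large}}\}$ keeps its unconditional law, while on $\mathcal E_2$ (see \eqref{eq-E2}) one has $M_{\gamma,s_i}(\mathsf C_i)\le\delta^2 e^{O((\log\delta^{-1})^{0.8})}$ by \eqref{Eq.error-coarse-fine}. Accordingly take $\mathcal E_3 := \mathcal E_2\cap\{\mbox{the event in \eqref{eq-tilde-h-eta-assump}}\}$, which is of high probability by Lemmas~\ref{lem-neighboring-cell}, \ref{lem-regularity} and \ref{lem-tilde-h-eta}.

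Fix $2\le i\le d-1$ and work conditional on $\mathcal V_\delta$. For each pair $(z,\Lambda)$ with $z\in\Lambda_{i-1}$ and $\Lambda\subseteq\Lambda_i$ of length at least $0.1\,\mathcal L_1(\Lambda_i)$, pick $q\in\Lambda$ and construct a Euclidean tube $T=T(z,q)\subset\mathsf C_i$ of width $w=(\epsilon^*)^K s_i$ (for a large constant $K=K(\gamma)$) joining small neighborhoods of $z$ and $q$. A standard harmonic-measure estimate for Brownian motion in a rectangle gives $P_z(\mbox{SBM traverses }T\mbox{ before leaving})\ge c(\epsilon^*)^K$, which, since $(\epsilon^*)^K=\exp(-K\alpha^*\sqrt{\log\delta^{-1}}\log\log\delta^{-1})$, exceeds $p_{\mathrm{fast}}=\exp(-\exp((\log\delta^{-1})^{2/3}))$ by a wide margin. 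By Fubini,
\[
\mathbb E_z[F(\tau_T)]=\int_T G_T(z,y)\,M_\gamma(dy),
\]
where $\tau_T$ is the exit time of $T$; taking the further conditional expectation over the fine field given $\mathcal V_\delta$ (using $\mathbb E[M_\gamma(dy)\mid\mathcal V_\delta]\le (\delta^2/s_i^2)\,e^{O((\log\delta^{-1})^{0.8})}\,dy$ on $\mathcal E_3$) yields $\mathbb E[\mathbb E_z[F(\tau_T)]\mid\mathcal V_\delta]\le\delta^2(\epsilon^*)^{2K}\,e^{O((\log\delta^{-1})^{0.8})}\log(1/\epsilon^*)$. Define $\mathcal E_{\mathsf C_i}$ as the intersection, over a $\delta^{O(1)}$-fine discretization of $(z,\Lambda)$, of the events $\{\mathbb E_z[F(\tau_T)]\le\tfrac12 \delta^2 C_\delta(\epsilon^*)^K\}$. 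On $\mathcal E_{\mathsf C_i}\cap\mathcal E_3$, a further Markov inequality under $P_z$ gives
\[
P_z\bigl(F(\sigma_\Lambda)\le\delta^2 C_\delta,\,\mbox{SBM traverses }T\bigr)\ge c(\epsilon^*)^K-\tfrac12 c(\epsilon^*)^K\ge p_{\mathrm{fast}},
\]
so $z$ is fast with respect to $\Lambda$. Fastness of $\mathsf C_i$ (that ten percent of $z\in\Lambda_{i-1}$ are fast) then follows from Markov applied to the event of non-fastness of a uniformly random $z$, together with a union bound over the discretization of $\Lambda$.

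The main obstacle is establishing the tight conditional probability bound $\P(\mathcal E_{\mathsf C_i}\mid\mathcal V_\delta)\ge 1-\exp(-2^{\sqrt{\log\delta^{-1}}})$. A first-moment Markov bound only yields $e^{-\Omega((\log\delta^{-1})^{0.95})}$ for the failure probability, which is weaker than required. Closing the gap will require the higher-moment LQG estimates from \eqref{eq-LQG-positive-moment} (finite $p$-th moment of $\tilde M_\gamma$ for any $p<4/\gamma^2$) applied to the fine-field content of $T$, together with Gaussian concentration (Lemma~\ref{Lem.concentration}) for the coarse-to-intermediate fluctuations $\eta^{s_i}_{\delta'}$ over a fine grid at scales $\delta'\in[s_i(\epsilon^*)^K, s_i]$ whose variance is $O(\sqrt{\log\delta^{-1}}\log\log\delta^{-1})$; tuning the exponent $K$ in the tube width should produce the required super-polynomial probability bound, after which the desired conclusion follows.
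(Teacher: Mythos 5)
You correctly identify that the heart of the lemma is the super-polynomial bound $1-\exp\{-2^{\sqrt{\log\delta^{-1}}}\}$ on $\P(\mathcal E_{\mathsf C_i}\mid\mathcal V_\delta)$, and you correctly observe that your single-tube argument only delivers a much weaker polynomial (or at best stretched-exponential in $\log\delta^{-1}$) failure bound. However, the fix you sketch does not close the gap, and for a structural reason: the failure of $\mathcal E_{\mathsf C_i}$ in your construction is governed by the upper tail of the LQG mass of a single region (the tube $T$), and by \eqref{eq-LQG-positive-moment} the LQG measure has finite moments only up to $p<4/\gamma^2$. Consequently its upper tail is only \emph{polynomial} in the threshold, and no choice of the tube width $(\epsilon^*)^K$ — nor Gaussian concentration applied to the $\eta$-increments, since the quantity being controlled is the non-Gaussian random measure and not a Gaussian maximum — can push the failure probability for a single tube down to $e^{-2^{\sqrt{\log\delta^{-1}}}}$. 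You would need the conditional failure probability to be smaller than $1/(\text{number of cells})\approx\delta^{-\chi^+}$, and a fortiori doubly-exponentially small in $\sqrt{\log\delta^{-1}}$; a one-shot moment bound for a single region cannot give this.

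The paper's proof uses a genuinely different mechanism to achieve the super-polynomial bound: instead of one tube, $\mathsf C_i$ is partitioned into $K^2$ boxes of side $s_i/K$ with $K=2^{\lfloor(\log\delta^{-1})^{0.51}\rfloor}$, and the notion of "pre-fast" is formulated with a spatially truncated field $\tilde\eta^{\mathsf B}$ (see \eqref{eq-def-tilde-eta}) so that pre-fastness has only local dependence. Lemma~\ref{lem-ij-pre-fast} then gives that each box is pre-fast with conditional probability $1-O(K^{-2})$, with geometrically separated boxes essentially independent. The super-polynomial bound then comes from a Peierls/planar-duality argument for the percolation of pre-fast boxes: a failure of the connectivity event $\mathcal A$ requires a blocking dual path of $\Omega(K\epsilon^*)$ non-pre-fast boxes, and since $K\epsilon^*\gg2^{\sqrt{\log\delta^{-1}}}$, the bound follows. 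The crucial gain is that the many independent "small-probability-to-fail" boxes compound multiplicatively, converting a per-box $O(K^{-2})$ into a $2^{-\Omega(K\epsilon^*)}$ failure rate — something a single-tube moment estimate can never do. Finally, fastness is assembled by chaining pre-fast crossings via the strong Markov property, which is where your tube heuristic does align with the paper's final step.
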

\noindent
(The event $\mathcal E_3$ is defined in \eqref{eq-E3} below.)

We begin our preparation for the proof of Lemma \ref{lem-fast-cell}.
Since our goal is to show that with very high probability $\mathsf C_i\in \mathcal C$ is fast, 
a first (or second) moment computation will not be enough. Instead, we will use a simple multi-scale analysis and employ a percolation  argument. We first introduce some definitions.
Set $k = \lfloor (\log \delta^{-1})^{0.51}\rfloor$ and $K = 2^k$.
Take $\mathsf C\in \mathcal C$ and partition it into $K^2$ many dyadic squares with side length $s_{\mathsf C} /K$. Denote the collection of these boxes by $\mathcal B_{\mathsf C}$, 
and denote by $\mathsf B_{\mathrm{large}}$ (respectively, 
 $\mathsf B_{\mathrm{Large}}$)  the boxes concentric with  $\mathsf B$ but with double 
(respectively, triple) side length. For a fixed 
$\mathsf B\in \mathcal B_{\mathsf C}$ and $z\in \mathsf B$, we say that
$z$ is a pre-fast point with respect to the box $\mathsf B$ if for any $\Lambda \subseteq \partial  \mathsf B$ with $\mathcal L_1(\Lambda) \geq 10^{-5}s_{\mathsf C}/K$ one has
\begin{equation}\label{eq-def-pre-fast}
P_z(F(\sigma_{\Lambda}) \leq \delta^2 C_\delta K^{-4}; \sigma_{\Lambda} \leq \sigma_{\partial \mathsf B_{\mathrm{Large}}}) \geq \exp\{- (\log \delta^{-1})^{1/10}\}\,.
 \end{equation}
Note that the notions of
pre-fast and fast are related,
 but one does not necessarily imply the other. We say that 
$\mathsf B$ is pre-fast if the subset of pre-fast points  with respect to $\mathsf B$ on $\partial \mathsf B$ has 1-dimensional Lebesgue measure at least $(1 - 10^{-5})\mathcal L_1(\partial \mathsf  B)$. By definition, the property of boxes being pre-fast  
has long range correlation, though we expect that the correlation decays quickly. 

 \begin{figure}[h]
\vspace{-0.5cm}\hspace{2cm}  \includegraphics[width=12cm]{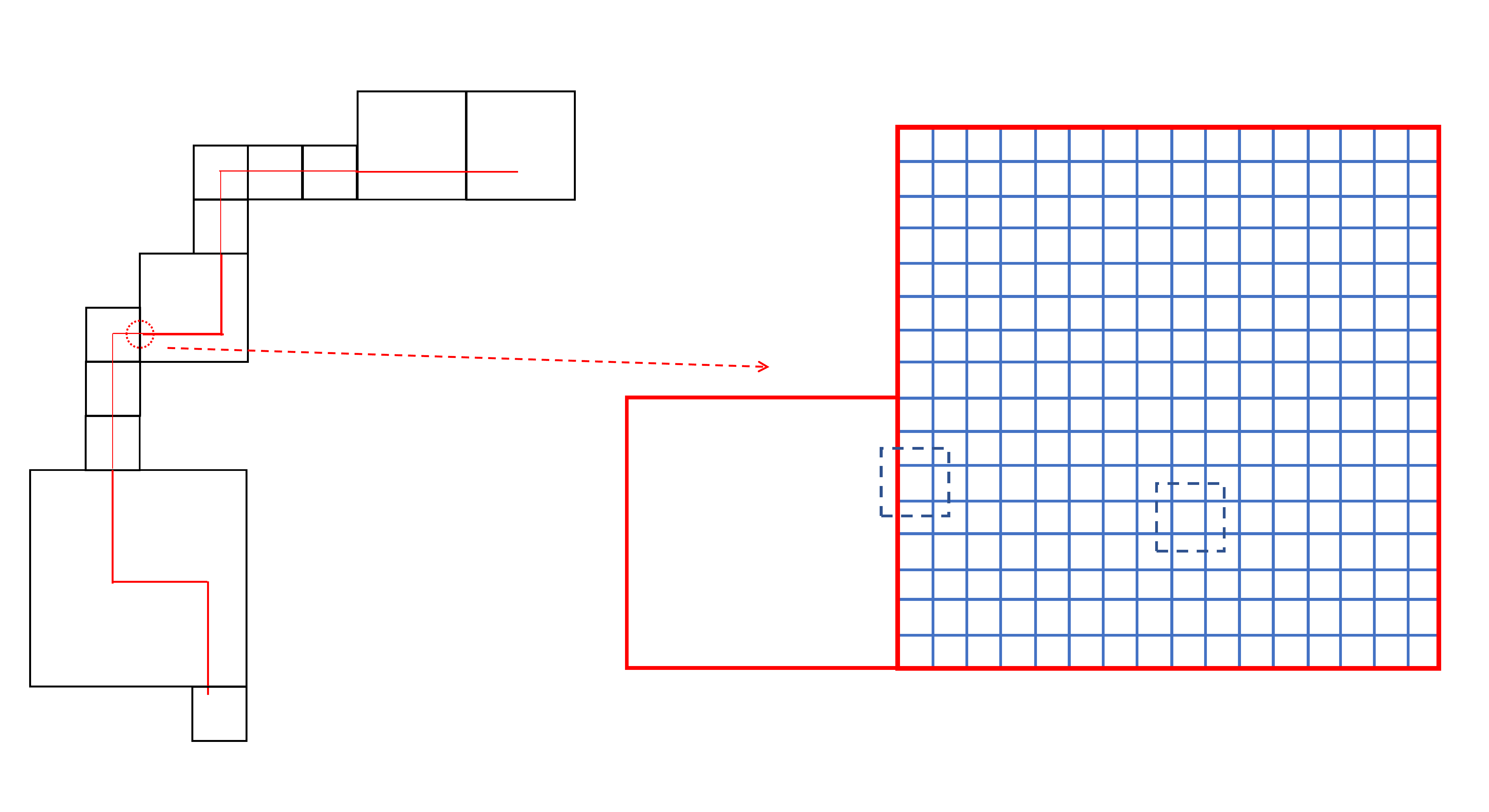}
\\  \vspace{-1cm}  \caption{In the left  picture, the (black) boxes stand for (a piece of) the sequence of good cells joining $u$ and $v$. The (red) line stands for the good sequence of boxes $\{ \mathsf B_i \}$ in Lemma~\ref{lem-partition-independence}, which are denoted by $\{ \mathsf C_i \}$ now. The right picture is a zoom in, where the big 
(red) box is $\mathsf C = \mathsf C_i$, and the small (blue) boxes form $\mathcal B_{\mathsf C}$. }
 \label{fig-Lemma41} 
 \end{figure}

 In order to control the correlation, we define a field $\tilde \eta^{\mathsf B} : = \{\tilde \eta_{\epsilon'}^{\mathsf B, s_{\mathsf C}} (z): \epsilon',  z \}$ by
 \begin{equation}\label{eq-def-tilde-eta}
 \tilde \eta_{\epsilon'}^{\mathsf B, s_{\mathsf C} }(z) : = \left\{ \begin{array}{ll} \sqrt{\pi}\int_{\mathbb V \times ((\epsilon')^2, s_{\mathsf C}^2 )}p_{_{\mathsf B_{ \mathrm{Large}}}}(s/2; z, w)W(dw, ds), & \mbox{if } z \in \mathsf B_{\mathrm{large}} \mbox{ and } \epsilon' < s_{\mathsf C}, \\ 0, & \mbox{otherwise,} \end{array} \right.  
 \end{equation}  
where $p_{_{\mathsf B_{\mathrm{Large} } }} (s/2; z, w)$ is the transition density for SBM truncated upon exiting the box $\mathsf B_{\mathrm{Large} }$. A derivation
similar  to \eqref{eq-tilde-h-eta-assump} yields that with high probability we have
 \begin{equation}\label{eq-assumption-in-lower-heat-kernel}
 \max_{\mathsf C \in \mathcal C} \max_{\mathsf B \in \mathcal B_{\mathsf C}, z \in \mathsf B_{\mathrm{large}} } \ \max_{\epsilon' <  s_{\mathsf C},  \log_2 \epsilon' \in \mathbb Z}  |\tilde \eta_{\epsilon'}^{\mathsf B,  s_{\mathsf C}}   (z)   -\eta_{ \epsilon'}^{s_{\mathsf C}} (z)  | = O(\sqrt{\log \delta^{-1}})\,.
 \end{equation}


With $\mathcal E_2$ as in \eqref{eq-E2}, let $\mathcal E_3$ be defined by 
\begin{equation}
\label{eq-E3}
\mathcal E_3=\mathcal E_2 \cap 
\{ \mbox{the event in \eqref{eq-assumption-in-lower-heat-kernel} holds 
} \} \,.
\end{equation}
Since $\P(\mathcal E_2) \geq 1-e^{-(\log \delta^{-1})^{0.23}}$, we have that
$\P(\mathcal E_3) \geq 1-e^{-(\log \delta^{-1})^{0.22}}$; in the sequel, we
work  on ${\mathcal E}_3$.
For a SBM $X_\cdot$ started at a point $z$ in $\mathsf B$, define 
\begin{equation}\label{eq-def-PCAF-approx}
\tilde F_{\mathsf B} (r) :=  \lim_{n\to \infty}\int_0^r \exp\{\gamma \tilde \eta_{2^{-n}}^{\mathsf B, s_{\mathsf C}}(X_{r'}) - \frac {\gamma^2}2 \Var( \tilde \eta_{2^{-n}}^{ \mathsf B,  s_{\mathsf C}} (X_{r'}))\} d r' ,
\end{equation}
where the existence of the limit follows from the same martingale argument 
yielding the existence of the original PCAF  (see \cite{GRV13}). 
On the event ${\mathcal E}_3$ we work on, for any stopping time 
$\tau$ so that $X_{r} \in \mathsf B_{\mathrm{large}}$ 
for all $0\leq r\leq \tau$, we have
\begin{equation}\label{eq-PCAF-approx}
F(\tau) \leq \tilde F_{\mathsf B}(\tau) \delta^2 s_{\mathsf C}^{-2}  \exp\{(\log \delta^{-1})^{0.91}\}\,.
\end{equation}
We note that $\tilde F_{\mathsf B} (r)$ is measurable with respect to
the SBM $X.$ and the field $\tilde \eta^{\mathsf B}$, for which Lemma~\ref{lem-partition-independence} is also valid (see \eqref{Eq.fine-field-independent} and note that
$\mathsf B_{\mathrm{large}} \subset \mathsf C_{\mathrm{large}}$).
The following lemma is the key to the proof of Lemma~\ref{lem-fast-cell}.
It in particular implies that the events that geometrically 
separated boxes  are pre-fast stochastically dominate a sequence of 
i.i.d.\ Bernoulli indicators.  In what follows, we denote for brevity 
$\mathcal B_i : = \mathcal B_{\mathsf C_i}$.
\begin{lemma}\label{lem-ij-pre-fast}
For $\mathsf B\in \mathcal B_i$, there exists an 
event $\mathcal E_{\mathsf B, \mathrm{prefast}}$ which is measurable with respect to the field $\tilde \eta^{\mathsf B}$, such that
 \begin{equation*}
 \P(\mathcal E_{\mathsf B, \mathrm{prefast}}  \mid \mathcal V_\delta  ) \geq 1- O( K^{-2}) \mbox{ and }  ( \mathcal E_{\mathsf B, \mathrm{prefast}} \cap \mathcal E_3 )   \subseteq \{\mathsf B \mbox{ is pre-fast}\}\,.
 \end{equation*}
\end{lemma}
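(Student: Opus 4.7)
The plan is to define $\mathcal E_{\mathsf B, \mathrm{prefast}}$ via a Markov bound on the quenched LBM exit time from $\mathsf B_{\mathrm{Large}}$ for starting points $z \in \partial \mathsf B$, and then combine standard Brownian hitting estimates, inclusion--exclusion, and the PCAF approximation \eqref{eq-PCAF-approx} to deduce pre-fastness on $\mathcal E_3$. First I would observe that the independence from $\mathcal V_\delta$ is essentially free from Lemma~\ref{lem-partition-independence}: $\tilde \eta^{\mathsf B}$ is a functional of the white noise at scales $\leq s_{\mathsf C}$ and on the spatial set $\mathsf B_{\mathrm{Large}} \subset \mathsf C_{\mathrm{large}}$ --- precisely the fine-scale data left unexplored by the construction of $\mathcal V_\delta$, cf.\ \eqref{Eq.fine-field-independent} --- so it suffices to produce a $\tilde \eta^{\mathsf B}$-measurable event of unconditional probability at least $1 - O(K^{-2})$.

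The core estimate proceeds as follows. For each $z \in \partial \mathsf B$ set $A_z := \E_z[\tilde F_{\mathsf B}(\sigma_{\partial \mathsf B_{\mathrm{Large}}})]$, where $\E_z$ denotes SBM expectation starting from $z$. Using $\E_{\tilde \eta} \exp\{\gamma \tilde \eta^{\mathsf B, s_{\mathsf C}}_{2^{-n}}(y) - (\gamma^2/2)\mathrm{Var}\} = 1$ together with Fubini and the martingale/convergence argument underlying the PCAF definition \eqref{eq-def-PCAF-approx}, Fatou gives $\E A_z \leq \E_z[\sigma_{\partial \mathsf B_{\mathrm{Large}}}] \leq C_0 (s_{\mathsf C}/K)^2$. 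Set $T_1 := 10^7 C_0 s_{\mathsf C}^2$, $S := \{z \in \partial \mathsf B : A_z > T_1\}$, and define
\[\mathcal E_{\mathsf B, \mathrm{prefast}} := \{\mathcal L_1(S) < 10^{-6} \mathcal L_1(\partial \mathsf B)\}.\]
Applying Markov first pointwise to $A_z$ and then to $\mathcal L_1(S)$ via Fubini yields $\P(\mathcal E_{\mathsf B, \mathrm{prefast}}) \geq 1 - O(K^{-2})$.

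To verify pre-fastness on $\mathcal E_{\mathsf B, \mathrm{prefast}} \cap \mathcal E_3$, I would exclude a deterministic set $\mathcal R \subset \partial \mathsf B$ of points within distance $10^{-7} s_{\mathsf C}/K$ of a corner of $\mathsf B$, so that $\mathcal L_1(S \cup \mathcal R) \leq 10^{-5} \mathcal L_1(\partial \mathsf B)$. For any $z \in \partial \mathsf B \setminus (S \cup \mathcal R)$ and any $\Lambda \subset \partial \mathsf B$ with $\mathcal L_1(\Lambda) \geq 10^{-5} s_{\mathsf C}/K$, a standard Brownian hitting/capacity estimate gives $P_z(\sigma_\Lambda \leq \sigma_{\partial \mathsf B_{\mathrm{Large}}}) \geq c_0$ uniformly in the position of $\Lambda$ (the constant $c_0$ depending only on the geometric thresholds $10^{-5}, 10^{-7}$), while SBM-Markov gives $P_z(\tilde F_{\mathsf B}(\sigma_{\partial \mathsf B_{\mathrm{Large}}}) > 2T_1/c_0) \leq c_0/2$. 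Using the monotonicity $\tilde F_{\mathsf B}(\sigma_\Lambda) \leq \tilde F_{\mathsf B}(\sigma_{\partial \mathsf B_{\mathrm{Large}}})$ on the hitting event and inclusion--exclusion,
\[P_z(\sigma_\Lambda \leq \sigma_{\partial \mathsf B_{\mathrm{Large}}}, \, \tilde F_{\mathsf B}(\sigma_\Lambda) \leq 2T_1/c_0) \geq c_0/2 \geq \exp\{-(\log \delta^{-1})^{1/10}\}\]
for $\delta$ small enough. The PCAF comparison \eqref{eq-PCAF-approx}, valid on $\mathcal E_3$, then converts $\tilde F_{\mathsf B}(\sigma_\Lambda) \leq 2T_1/c_0$ into $F(\sigma_\Lambda) \leq \delta^2 C_\delta K^{-4}$, so every such $z$ is pre-fast and hence $\mathsf B$ is pre-fast.

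The main obstacle I anticipate is matching scales across the various approximations. The crude Markov step inflates the typical LBM exit time $(s_{\mathsf C}/K)^2$ to $T_1 \asymp s_{\mathsf C}^2$, a factor of $K^2$ that is needed to purchase the $O(K^{-2})$ probability budget for bad starting points; but after the conversion $\tilde F_{\mathsf B} \mapsto F$ via \eqref{eq-PCAF-approx}, the resulting bound must still fit under the pre-fast target $\delta^2 C_\delta K^{-4}$. The choice $K = 2^{(\log \delta^{-1})^{0.51}}$ is tailored precisely so that $K^4 \cdot \exp\{(\log \delta^{-1})^{0.91}\} \ll C_\delta = \exp\{(\log \delta^{-1})^{0.95}\}$, giving the required slack with room to spare; a secondary technical nuisance is the need to exclude a corner neighborhood $\mathcal R$ so that the Brownian hitting constant $c_0$ can be chosen uniformly in $z$ and $\Lambda$, but $\mathcal L_1(\mathcal R)$ is easily absorbed into the $10^{-5}$ budget.
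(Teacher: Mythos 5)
Your approach is conceptually the same as the paper's (Markov/Fubini on the PCAF expectation, a second Markov bound to promote pointwise bounds to boundary-measure bounds, a uniform Brownian hitting estimate, and the PCAF comparison \eqref{eq-PCAF-approx}, with Lemma~\ref{lem-partition-independence} supplying independence from $\mathcal V_\delta$), but there is a genuine gap in the final step. You run the hitting argument on the event $\{\sigma_\Lambda \le \sigma_{\partial \mathsf B_{\mathrm{Large}}}\}$, i.e.\ you only confine the SBM to the tripled box $\mathsf B_{\mathrm{Large}}$, and then invoke \eqref{eq-PCAF-approx} to convert $\tilde F_{\mathsf B}(\sigma_\Lambda)$ into $F(\sigma_\Lambda)$. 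However \eqref{eq-PCAF-approx} is valid only for stopping times $\tau$ with $X_r \in \mathsf B_{\mathrm{large}}$ (the \emph{doubled} box) for all $r \le \tau$: the auxiliary field $\tilde \eta^{\mathsf B}$ is defined to vanish outside $\mathsf B_{\mathrm{large}}$, and \eqref{eq-assumption-in-lower-heat-kernel} only controls $|\tilde \eta^{\mathsf B} - \eta|$ on $\mathsf B_{\mathrm{large}}$. If the path exits $\mathsf B_{\mathrm{large}}$ but not $\mathsf B_{\mathrm{Large}}$, $\tilde F_{\mathsf B}$ accumulates linearly there while $F$ can be much larger, so the claimed conversion $\tilde F_{\mathsf B}(\sigma_\Lambda) \le 2T_1/c_0 \Rightarrow F(\sigma_\Lambda) \le \delta^2 C_\delta K^{-4}$ does not follow.

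The fix is easy: replace $\sigma_{\partial \mathsf B_{\mathrm{Large}}}$ by $\sigma_{\partial \mathsf B_{\mathrm{large}}}$ throughout (so $A_z := \E_z[\tilde F_{\mathsf B}(\sigma_{\partial \mathsf B_{\mathrm{large}}})]$, the hitting estimate becomes $P_z(\sigma_\Lambda \le \sigma_{\partial \mathsf B_{\mathrm{large}}}) \ge c_0$, etc.). Since $\mathsf B_{\mathrm{large}} \subset \mathsf B_{\mathrm{Large}}$, the resulting event still implies $\sigma_\Lambda \le \sigma_{\partial \mathsf B_{\mathrm{Large}}}$ as required by \eqref{eq-def-pre-fast}, and all your moment and Brownian estimates carry over. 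With that correction your argument works and is essentially parallel to the paper's. For reference, the paper achieves confinement differently — by inserting a forced excursion into the half-sized box $\mathsf B_{\mathrm{small}}$ before exiting $\partial \mathsf B$ at time $\tau$, which keeps the path inside $\mathsf B_{\mathrm{large}}$ by construction and simultaneously gives a uniform lower bound on the exit distribution $P_z(X_\tau \in \Lambda \mid \mathcal E)$ without any corner exclusion — but your direct harmonic-measure route (with $\sigma_{\partial \mathsf B_{\mathrm{large}}}$) is an acceptable alternative.
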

\begin{proof}
 Let $\mathsf B_{\mathrm{small}}$ 
denote the  box concentric with
 $\mathsf B$,  of half the side length.
Let $\sigma_{\partial \mathsf B_{\mathrm{small}}}$ (respectively 
$\sigma_{\partial \mathsf B_{\mathrm{large}}}
 $) be the hitting time of $\partial \mathsf B_{\mathrm{small}}$ (respectively, $\partial \mathsf B_{\mathrm{large}}$) by the SBM.
Let $\tau$ be the first  hitting time of $\partial {\mathsf B}$ after 
$\sigma_{\partial \mathsf B_{\mathrm{small}}}$. Define $\mathcal E = \{\sigma_{\partial \mathsf B_{\mathrm{small}}} \leq \sigma_{\partial \mathsf B_{\mathrm{large}}}, \tau \leq s_i^2 K^{-2}\}$. From standard properties of the
SBM we have that that $P(\mathcal E) \geq 10^{-4}$ and that 
 \begin{equation}\label{eq-exiting-measure}
 P_z(X_\tau \in \Lambda \mid \mathcal E) \geq 10^{-10}\,,
 \end{equation} 
 for any $z \in \partial \mathsf B$ and  $\Lambda \subseteq \partial \mathsf B$ with 1-dimensional Lebesgue measure $\mathcal L_1(\Lambda) \geq 10^{-5} s_i/K$. Write
$\tilde F$ for $\tilde F_{\mathsf B}$. A straightforward computation yields that
 $$
\mathbb E (E_z ( \tilde F (\tau) \mid \mathcal E)  \mid \mathcal V_\delta  ) \leq 10^4 E_z \mathbb E ( \tilde F  (s_i^2 K^{-2})  \mid \mathcal V_\delta ) \leq 10^4 s_i^2 K^{-2} \,,
 $$
where we used Lemma~\ref{lem-partition-independence} for $\tilde \eta^{\mathsf B}$ in the second inequality.
Therefore, by Markov's inequality, we see that 
 $$ 
\P(P_z(\tilde F(\tau)  \geq  s_i^2 \mid \mathcal E) \geq 10^{-11}  \mid \mathcal V_\delta) \leq O(K^{-2})\,. 
 $$ 
Combining the preceding inequality with \eqref{eq-exiting-measure} and using the fact that 
 $$P_z( \tilde F(\tau)\leq {s_i^2}, X_\tau \in \Lambda, \mathcal E) \geq P_z(\mathcal E) (P_z(X_\tau \in \Lambda \mid \mathcal E)  -P_z(\tilde F(\tau)  \geq s_i^2 \mid \mathcal E) )\,,$$
 we get that for any $\Lambda \subseteq \partial \mathsf B$ with $\mathcal L_1(\Lambda) \geq 10^{-5} s_i/K$
 \begin{align*}
 \P ( P_z(\tilde F(\tau)\leq s_i^2 , X_\tau \in \Lambda, \mathcal E) \geq 10^{-15}  \mid \mathcal V_\delta) \geq 1- O(K^{-2})\,.
 \end{align*}
Combined with \eqref{eq-PCAF-approx}, this yields that 
 \begin{equation}\label{eq-z-pre-fast}
 \P(\mathcal E_{z, \mathrm{fast}}  \mid \mathcal V_\delta  ) \geq 1- O(K^{-2})  \mbox{ and }  ( \mathcal E_{z, \mathrm{fast}} \cap \mathcal E_3)   \subseteq \{z \mbox{ is pre-fast}\} \,,
 \end{equation} 
where $\mathcal E_{z, \mathrm{fast}} : = \{ P_z(\tilde F(\tau)\leq s_i^2 , X_\tau \in \Lambda, \mathcal E) \geq 10^{-15} \} $ is measurable with respect to the field  $\tilde \eta^{\mathsf B}$.
Another application of Markov's inequality concludes the proof of the lemma.
  \end{proof}

 \begin{proof}[Proof of Lemma~\ref{lem-fast-cell}]
In what follows, we work conditionally on $\mathcal V_\delta$.
Fix $i$. Recall that $\mathcal B_i$ denotes the partition of $\mathsf C_i$ into $K^2$ boxes of side length $s_{\mathsf C_i} / K$, where $K=2^{\lfloor (\log \delta^{-1})^{0.51} \rfloor}$. Correspondingly, $\partial \mathsf C_i$ is partitioned into $4 K$ segments, whose collection is denoted by $\mathbb {BS}$. For $L \in \mathbb {BS}$, let $\mathsf B^L$
denote the unique box in $\mathcal B_i$ containing $L$. Set $\mathbb {BS}_{i, A} = \{ L \in \mathbb {BS} : L \subset A\}$ for all $A \subset \partial \mathsf C_i$.

For any $\Lambda\subseteq  \Lambda_i$ with $\mathcal L_1(\Lambda) \geq 0.1 \mathcal L_1(\Lambda_i)$, we define
 \begin{equation}\label{eq-def-Lambda-star}
\mathbb L = \mathbb L_\Lambda : = \{ L \in \mathbb {BS}_{i, \Lambda_i} : \mathcal L_1(L \cap \Lambda) \geq 10^{-5} s_i/K \},
 \end{equation}
and set
\begin{align*}
&\mathbb L' = \mathbb L'_\Lambda =\\
& \{ L \in \mathbb {BS}_{i, \Lambda_{i-1}} : \mbox{$L$ is connected to (some segment in) $\mathbb L$ by a path of neighboring pre-fast boxes} \}.
\end{align*}
Let $\Lambda' = \cup_{L \in \mathbb L'} L$, 
and introduce the event
$$
\mathcal A  = \{\mbox{$\mathcal L_1 (\Lambda') \ge 0.2 \mathcal L_1 (\Lambda_{i-1})$ for any $\Lambda\subseteq  \Lambda_i$ with $\mathcal L_1(\Lambda) \geq 0.1 \mathcal L_1(\Lambda_i)$} \}.
 $$
 The event $\mathcal A$ ensures 
 that any not-so-small subset $\Lambda$ of $\Lambda_i$ is connected with a not-so-small subset (i.e. $\Lambda'$) of $\Lambda_{i-1}$ by pre-fast boxes.
The heart of the proof of the lemma consists of showing the following statement:
\begin{equation} \label{eq-par}
\P(\mathcal A |\mathcal V_\delta)\geq 1 - e^{-2^{\sqrt{\log \delta^{-1}}}}.
\end{equation}
We postpone the proof of \eqref{eq-par} and complete the proof of the lemma, assuming its validity.
Take 
 $$
\Lambda'_{\mathrm{prefast}}= \cup_{L \in \mathbb L'} \{z \in L : z \  \mbox{ is pre-fast with respect to $\mathsf B^L$} \}.
 $$ 
Note that on $\mathcal A$,
\begin{eqnarray}
\mathcal L_1(\Lambda'_{\mathrm{prefast}}) 
 & \geq &
\mathcal L_1(\Lambda') - \mathcal L_1 (\cup_{L \in \mathbb L'} \{z \in L : z \mbox{ is not pre-fast with respect to } \mathsf B^L \} ) \nonumber
 \\ & \geq & 
0.2 \mathcal L_1(\Lambda_{i-1}) -  \sum_{L \in \mathbb L'} 10^{-5} \mathcal L_1 (\partial \mathsf B^L) \geq 0.1\mathcal L_1(\Lambda_{i-1})\,, \label{Eq.measure-of-Lambda-prime}
 \end{eqnarray}
where we have used the fact that $\mathsf B^L$ is pre-fast for all $L \in \mathbb L'$.
In addition, for each $L \in \mathbb L'$ we denote by $\mathsf B_1, \ldots, \mathsf B_\ell$ with $\ell\leq K^2$
 the sequence of pre-fast boxes in $\mathcal B_i$ with from $L$ to $\mathbb L$. For all
$1\leq j\leq \ell-1$, we let  $\Lambda_{i,j}$ denote
 the collection of all pre-fast points  with respect to $\mathsf B_{ j+1}$
	lying on the common boundary of $\mathsf B_{j}$ and $\mathsf B_{ j+1}$.
	We also set $\Lambda_{i, \ell} = \mathsf B_{\ell} \cap \Lambda$, which has 1-dimensional Lebesgue measure larger than  $10^{-5} s_i / K$ by \eqref{eq-def-Lambda-star}.
	Note that
$\mathcal L_1(\Lambda_{i,j}) \geq s_i/(2K)$ for each $1\leq j\leq \ell-1$.
 Consequently, $\mathcal L_1 (\Lambda_{i,j}) \ge 10^{-5} s_i / K$ for all $j$, by the definition of pre-fast boxes and the construction of $\Lambda_{i,j}$'s.
 Define  $\sigma_0 = 0$ and recursively for $1\leq j\leq \ell$,
 $$\sigma_j = \min\{r\geq \sigma_{j-1}: X_r \in \Lambda_{i,j}\}\,.$$
 Applying \eqref{eq-def-pre-fast} repeatedly and using the strong Markov property of SBM together with the definition of $K$, 
we obtain that \eqref{eq-def-fast} holds for $z\in \Lambda'_{\mathrm{prefast}}$, that is, $\Lambda'_{\mathrm{prefast}} \subseteq \Lambda'_{i-1, \mathrm{fast}}$. Since $\mathcal L_1(\Lambda'_{\mathrm{prefast}}) \geq 0.1 \mathcal L_1(\Lambda_{i-1})$, this completes the proof of the lemma, except for the proof 
of \eqref{eq-par}, to which we turn next.
Indeed, we will check that $\P (\mathcal A^c | \mathcal V_\delta) \le e^{-2^{\sqrt{\log \delta^{-1}}}}$. 

Suppose that $\mathcal A$ does not occur. Then there exists a $\Lambda$ such that $\mathcal L_1 (\Lambda) \ge 0.1 \mathcal L_1 (\Lambda_i)$ and moreover $\mathcal L_1 ( \cup_{L \in \tilde {\mathbb L}}  L) \ge 0.8 \mathcal L_1 (\Lambda_{i-1})$, where $\tilde {\mathbb L} = \mathbb {BS}_{i,\Lambda_{i-1}}\setminus \mathbb L'$. By the definition of $\mathbb L$, $\mathcal L_1(\Lambda \setminus \cup_{L \in \mathbb L} L) \leq 10^{-5} \mathcal L_1(\Lambda_i)$, thus $\mathcal L_1(\cup_{L \in \mathbb L} L) \ge \mathcal L_1 (\Lambda) - 10^{-5} \mathcal L_1 (\Lambda_i) \geq 0.05 \mathcal L_1 (\Lambda_i)$. Recalling  \eqref{eq-Lambda-i-not-small}, it follows that $| \mathbb L |, | \tilde{ \mathbb L} | \ge 0.05 \epsilon^* s_i \times K / s_i \ge \lfloor \frac 1 {20} K \epsilon^* \rfloor = : \ell$. Note that $\mathbb L$ is not connected with $\tilde {\mathbb L}$ by pre-fast boxes, by the defintion of $\mathbb L'$. It follows that  on
${\mathcal A}^c$,
 \begin{equation} \label{eq-connectivity-percolation}
    \begin{split}
\mbox{there exist  $\mathcal B_{i,1}\subseteq \mathbb {BS}_{i, \Lambda_i}$ and  $\mathcal B_{i,2} \subseteq \mathbb {BS}_{i, \Lambda_{i-1}}$ with $|\mathcal B_{i,1}|, |\mathcal B_{i,2}| = \ell$,  that }
 \\
    \mbox{are not connected by a sequence of neighboring pre-fast boxes in $\mathsf C_i$.}
    \end{split}
    \end{equation}
Provided with Lemma~\ref{lem-ij-pre-fast}, the desired upper bound on $\P (\mathcal A^c | \mathcal V_\delta)$ follows from a 
		Peierls 
		argument concerning very subcritical percolation with local dependencies. For completeness,
		we provide a proof. 		
By planar duality  there exist $(\mathsf B_{i,1}^j, \mathsf B_{i,2}^j) \subseteq \mathcal B_{\partial \mathsf C_i}$ for all $1\leq j  \leq r$ and some $r \leq \ell$ such that (here $\mathcal B_{\partial \mathsf C_i}$ is the collection of boxes in $\mathcal B_i$ which intersects with $\partial \mathsf C_i$)
 \begin{itemize}
 \item For each $j$, there exists a sequence of $*$-connected boxes $\mathcal B_{i, j, \mathrm{separate}} \subseteq \mathcal B_i$  which starts at $\mathsf B_{i,1}^j$ and ends at $\mathsf B_{i,2}^j$ (two boxes are $*$-connected as long as their intersection is non-empty);
 \item The union of $\mathcal B_{i, j, \mathrm{separate}}$'s separates $\mathcal B_{i,1}$ from $\mathcal B_{i,2}$.
 \item Each box in $\mathcal B_{i, j, \mathrm{separate}}$ for $1\leq j\leq r$ is not pre-fast.
 \item Each box in $\mathcal B_{i, j, \mathrm{separate}}$ for $1\leq j\leq r$ is of $\ell_\infty$-distance at most $4|\mathcal B_{i, j, \mathrm{separate}}| s_i /K$ away from some $\mathsf B_{i, j, \mathrm{separate}} \in \mathcal B_{i, \Lambda_i}\cup \mathcal B_{i, \Lambda_{i-1}}$, where $\mathsf B_{i, j, \mathrm{separate}}$'s are distinct from each other --- this is because each $*$-connected path  (together with $\mathcal B_{\mathsf C_i}$) is supposed to separate at least one box in $\mathcal B_{i, \Lambda_i}\cup \mathcal B_{i, \Lambda_{i-1}}$ which are not separated otherwise.
 \item  $L:= \sum_{j=1}^r L_j\geq \ell$, \mbox{\rm where $L_j=|\mathcal B_{i, j, \mathrm{separate}}|$}.
 \end{itemize}
Therefore, when the total number of boxes is $L$, the number of valid choices for $\mathcal B_{i, j, \mathrm{separate}}$'s is at most 
\begin{equation}\label{eq-enumeration}
N_L  = \sum_{r=1}^{ \ell} \sum_{\sum_{j=1}^rL_j= L }  \binom{2 \ell}{r} \prod_{j=1}^r (4L_j)^2 8^{L_j}
\end{equation}
where 
$\binom{2\ell}{r}$ bounds the number of choices for  $\mathsf B_{i, j, \mathrm{separate}}$'s, $(4L_j)^2$ bounds the number of choices for $\mathsf B_{i,1}^j$ and $\mathsf B_{i,2}^j$, and $8^{L_j}$ bounds the number of choices for the rest of $\mathcal B_{i, j, \mathrm{separate}}$. A straightforward computation then gives that $N_L \leq C^L$ for some constant $C>0$. In addition, the number of choices for  $\mathcal B_{i,1}$ and $\mathcal B_{i,2}$ is at most $\binom{K}{ \ell}^2$.
 Furthermore, since we can choose at least $L/25$ many boxes from $\cup_j\mathcal B_{i, j, \mathrm{separate}}$ whose $\ell_\infty$-distance are at least $2 s_i/K$. Note that the construction of $\tilde \eta^{\mathsf B}$ does not explore the white noise outside the spatial box $\mathsf B_{\mathrm{Large}}$. By Lemmas~\ref{lem-partition-independence} and \ref{lem-ij-pre-fast}, we see that for each such choice the probability for all these boxes in $\cup_j\mathcal B_{i, j, \mathrm{separate}}$ to be not prefast is at most $(C'K^{-2})^{L/25}$ for some absolute constant $C'>0$. Summing over $L \geq \ell$, we see that the probability for the existence of such $\mathcal B_{i,1}$ and $\mathcal B_{i,2}$ is bounded by 
 $$\binom{K}{ \ell}^2 \sum_{L \geq \ell}N_L (C'K^{-2})^{L/25} \leq (10^3/\epsilon^*)^{2\ell} \sum_{L \geq \ell} C^L (C'K^{-2})^{L/25}\leq 2^{-\ell}$$
for $\delta<\delta_0$ where $\delta_0>0$ is a small absolute constant (we used the fact that $K^{\frac 1 {25}} \epsilon^* \ge e^{\sqrt{\log \delta^{-1}}}$ in the last inequality).
Thus,  $\P (\mathcal A^c | \mathcal V_\delta)\leq 2^{-\ell}$.
Since $\ell= \lfloor \frac 1 {20} K \epsilon^* \rfloor\gg \sqrt{\log \delta^{-1}}$, this yields \eqref{eq-par} and completes the proof of the lemma. 
 \end{proof}
 
The next lemma  controls the behavior of the
Liouville Brownian motion near $v$ and $u$. Recall the event $\mathcal E_3$ from \eqref{eq-E3}. 
Recall the notation in the paragraph below \eqref{Eq.error-coarse-fine}, and the
definitions of $p_{\mathrm{fast}}$ and $C_\delta$, see \eqref{eq-def-fast}
and \eqref{eq-def-err-3}, and recall that $\delta = t^{1 / (2 - \chi^+_{u,v} ) + \iota}$.
 \begin{lemma}\label{lem-LBM-near-end-points}
Assume $\delta<\delta_0$. For any  $\iota>0$ small enough and $\beta>0$ fixed large enough, there exist events $\mathcal E_{u, \mathrm{fast}}$ (measurable with respect to $\{ \eta_{\delta'}^{s_d} (x): \delta' < s_d, x \in (\mathsf C_d)_{\mathrm{large}} \}$) and $\mathcal E_{d, \mathrm{fast}}$ having
 $\iota$-high probability with respect to $\P (\cdot | \mathcal V_\delta)$, such that the following holds.  
\begin{enumerate}[(i)]
\item On $\mathcal E_{d, \mathrm{fast}} \cap \mathcal E_3$, there exists $\Lambda_{d, \mathrm{fast}} \subseteq \mathsf \Lambda_{d-1}$  with $\mathcal L_1(\Lambda_{d, \mathrm{fast}} ) \geq 0.1 \mathcal L_1 (\Lambda_{d-1})$ such that 
\begin{align}\label{eq-item-1}
P_z (F(\sigma_{v, \beta}) \leq \delta^{2-\iota} C_\delta) \geq  t^{2 \beta + 1
 }
\mbox{ for all z }\in \Lambda_{d, \mathrm{fast}}\,,
\end{align}
where $\sigma_{v, \beta}$ is the hitting time of $B(v, (t/4)^\beta)$.
\item On $\mathcal E_{u, \mathrm{fast}} \cap \mathcal E_3$, for any (possibly random, but measurable with respect to $\{\eta_{\delta'}^{s_i} (x): \delta' < s_i, x \in (\mathsf C_i)_{\mathrm{large}}, i = 1, \ldots, d \}$)
$\Lambda_{1, u}\subseteq \partial \mathsf C_1$ with $\mathcal L_1(\Lambda_{1, u}) \geq 0.1 \mathcal L_1(\Lambda_1)$, we have
\begin{align}\label{eq-item-2}
P_u (F(\sigma_{\Lambda_{1, u}}) \leq \delta^{2-\iota} C_\delta) \geq 
p_{\mathrm{fast}}\,,
\end{align}
where $\sigma_{\Lambda_{1, u}}$ is the hitting time of $\Lambda_{1, u}$.
\end{enumerate}
  \end{lemma}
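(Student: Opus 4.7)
The plan is to prove both parts by adapting the percolation-plus-chaining machinery developed in Lemma~\ref{lem-fast-cell}, with the two endpoint conditions requiring only modest modifications. Throughout, I would work conditionally on $\mathcal{V}_\delta$ on the event $\mathcal{E}_3$, and define $\mathcal{E}_{u,\mathrm{fast}}$ and $\mathcal{E}_{d,\mathrm{fast}}$ as intersections of the pre-fast events supplied by Lemma~\ref{lem-ij-pre-fast}, applied to an appropriate collection of sub-boxes of $\mathsf{C}_1$ (respectively of $\mathsf{C}_d$ together with dyadic sub-boxes surrounding $v$ at scales from $s_d$ down to $(t/4)^\beta$), intersected with a deterministic $\eta$-field control event in a neighbourhood of the relevant endpoint. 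The measurability with respect to the $\tilde\eta^{\mathsf{B}}$-fields of Lemma~\ref{lem-ij-pre-fast}, and hence the conditional near-independence from $\mathcal{V}_\delta$ guaranteed by Lemma~\ref{lem-partition-independence}, is built in by construction.

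For part (ii) I would partition $\mathsf{C}_1$ into the $K^2$ sub-boxes of $\mathcal{B}_1$, locate the sub-box $\mathsf{B}_u \ni u$, and run the Peierls-type argument behind \eqref{eq-par}, now extracting a chain of pre-fast sub-boxes connecting $\mathsf{B}_u$ to a positive fraction of any prescribed $\Lambda_{1,u} \subseteq \partial\mathsf{C}_1$ with $\mathcal{L}_1(\Lambda_{1,u}) \geq 0.1\,\mathcal{L}_1(\Lambda_1)$. Since $u$ is good on $\mathcal{E}_1$, the sub-box $\mathsf{B}_u$ sits comfortably in the interior of its containing cell at the scale $s_1$, so the strong-Markov chaining step used at the end of the proof of Lemma~\ref{lem-fast-cell} goes through verbatim and yields $P_u(F(\sigma_{\Lambda_{1,u}}) \leq \delta^2 C_\delta) \geq p_{\mathrm{fast}}$, which is stronger than \eqref{eq-item-2}. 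The $\iota$ slack in the Liouville-time threshold $\delta^{2-\iota}C_\delta$ is not needed here.

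For part (i), the same percolation argument on $\mathsf{C}_d$ produces the set $\Lambda_{d,\mathrm{fast}}$ and delivers, for each $z \in \Lambda_{d,\mathrm{fast}}$, an LBM trajectory reaching the sub-box $\mathsf{B}_v \ni v$ with probability $\geq p_{\mathrm{fast}}$ and accumulated Liouville time $\leq \delta^2 C_\delta$. The remaining task, which is the main obstacle, is to show that a SBM started on $\partial\mathsf{B}_v$ hits the tiny ball $B(v,(t/4)^\beta)$ within additional Liouville time at most $\delta^{2-\iota}C_\delta$ with conditional probability at least $t^{2\beta+1}/p_{\mathrm{fast}}$. I would handle this by combining the classical estimate that a $2$D SBM started at distance $\rho \leq s_d/K$ from $v$ hits $B(v,(t/4)^\beta)$ before exiting a ball of radius $O(1)$ around $v$ with probability of order $\log(1/\rho)/\log(t^{-\beta})$, with a Liouville-time bound for the hitting excursion. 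On a further high-probability event controlling $\max_{j \leq \beta\log_2(4/t)}|\tilde h_{2^{-j}}(v)|$ via Lemmas~\ref{Lem.concentration}--\ref{lem-ferniquecriterion}, the Liouville mass of each dyadic annulus $B(v,2^{-j})\setminus B(v,2^{-j-1})$ is bounded by $2^{-(2-o(1))j}$, and Markov's inequality then converts the Euclidean-time hitting statement into a Liouville-time one at the cost of a polynomial-in-$t$ factor. The difficulty is that the $2$D SBM hits points only with logarithmically small probability, while the Liouville time along a trajectory loitering near $v$ can have heavy tails from local GFF fluctuations; the exponent $2\beta+1$ must absorb both the $|\log t|$ losses from the hitting estimate and these polynomial factors, which is achievable by choosing $\beta$ large enough depending on $\iota$ and $\gamma$.
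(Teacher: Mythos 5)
Your proposal takes a genuinely different and substantially heavier route than the paper. The paper does \emph{not} redo the percolation/chaining machinery of Lemma~\ref{lem-fast-cell} for the endpoint cells at all. Instead it runs the SBM from $z$ (resp.\ $u$) for the \emph{fixed} Euclidean time $s_d^2$ (resp.\ $s_1^2$), sets $\mathcal E := \{\sigma_{v,\beta}\le s_d^2\le\sigma_{\partial\mathsf C_{d,\mathrm{large}}}\}$ (resp.\ with $\sigma_{\Lambda_{1,u}}$), and calls $z$ ``good'' when $P_z(\tilde F(s_d^2)\le s_d^2\delta^{-\iota}\sqrt{C_\delta}\mid\mathcal E)\ge 1/2$. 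The entire estimate then rests on the observation that, by Lemma~\ref{lem-partition-independence} and the martingale/unbiasedness property of the PCAF, $\E\bigl[\tilde F(s_d^2)\mid X\bigr]=s_d^2$ regardless of the SBM trajectory $X$, so that a Fubini exchange gives $\E\bigl[E_z(\tilde F(s_d^2)\mid\mathcal E)\mid\mathcal V_\delta\bigr]=s_d^2$ \emph{independently of how small $P_z(\mathcal E)$ is}. A single application of Markov over the field then yields the $\iota$-high-probability statement (this is precisely why the $\iota$-slack appears), and the classical planar-SBM estimate $\tfrac12 P_z(\mathcal E)\ge t^{2\beta+1}$ closes part (i); part (ii) is the same with $P_u(\sigma_{\Lambda_{1,u}}\le s_1^2\le\sigma_{\partial\mathsf C_{1,\mathrm{large}}})=\Omega(\epsilon^*)\ge 2p_{\mathrm{fast}}$. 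No multi-scale decomposition near $v$ and no Peierls argument are needed.

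The gap in your argument is in the final step of part (i). You want to bound the Liouville time of the conditioned excursion from $\partial\mathsf B_v$ into $B(v,(t/4)^\beta)$ by controlling $\max_j|\tilde h_{2^{-j}}(v)|$ and hence the GMC mass of dyadic annuli, and then ``convert Euclidean time to Liouville time by Markov.'' Bounding the GMC mass of annuli does not by itself bound the PCAF increment: the PCAF is an occupation-measure integral, and the SBM conditioned on the rare hitting event can spend heavy-tailed occupation time in a thin annulus while the annulus mass is typical. The conversion you gesture at is exactly the step that requires the Fubini/unbiasedness identity $\E[\tilde F(T)\mid X]=T$, which your proposal never invokes; without it, the Markov step has no well-controlled expectation to apply to, and the ``polynomial-in-$t$ factor'' you hope to absorb into $t^{2\beta+1}$ is not justified. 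Once you do invoke that identity, the entire chaining construction becomes superfluous --- you can run the SBM for time $s_d^2$ in one shot, exactly as the paper does. Your treatment of part (ii) via chaining would likely go through after nontrivial bookkeeping (adapting the boundary-to-boundary percolation of \eqref{eq-par} to interior-to-boundary, and handling the initial step from $u$ to $\partial\mathsf B_u$), and it has the minor merit of handling the universality over random $\Lambda_{1,u}$ more transparently; but it proves a stronger statement than needed at the cost of a much longer argument.
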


 \begin{proof}
 Let $\sigma_{\partial { \mathsf C}_{d, \mathrm{large}}}$ be the hitting time of $\partial \mathsf C_{d, \mathrm{large}}$ by SBM, where ${\mathsf C}_{d, {\mathrm{large}}}$ is a box concentric with $\mathsf C_d$ 
but of doubled side length.
Consider the field $\tilde \eta^{\mathsf C_d}$, which equals $\tilde \eta^{s_d}_{\epsilon'} (z)$ for $\epsilon' < s_d$ and $z \in {\mathsf C}_{d, {\mathrm{large}}}$,
 and vanishes for $z \not\in {\mathsf C}_{d, {\mathrm{large}}}$. Note that
$s_{z, \delta} \ge s_{\hat {\mathsf C}_d} / \epsilon^* \ge s_d$ for all $z \in 
{\mathsf C}_{d, {\mathrm{large}}}$ (recall Definition~\ref{def-good-sequence-box} and \eqref{eq-def-E-delta-alpha*-u-v}), where $\hat {\mathsf C}_d$ is the cell containing $\mathsf C_d$.
 Let $\tilde F$ be the PCAF with respect to $\tilde \eta^{\mathsf C_d}$. We call
 $z$  a \textit{good point} if $P_z \left( \tilde F(s_d^2) \le s_d^2 \delta^{-\iota} \sqrt{ C_\delta} \mid \mathcal E \right) \ge \frac 1 2$, where $ \mathcal E : =  \{ \sigma_{v, \beta} \leq s_d^2 \le \sigma_{\partial \mathsf C_{d, \mathrm{large}}} \}$. Let $\mathcal E_{d, \mathrm{fast}}$ be the event that $\mathcal L_1 (\mbox{good points in } \Lambda_{d-1}) \ge 0.1 \mathcal L_1 (\Lambda_{d-1})$, which has $\P ( \cdot | \mathcal V_\delta)$-probability larger than $1 - \delta^{\iota}$ by Markov's inequality. On $\mathcal E_{d, \mathrm{fast}} \cap \mathcal E_3$,  any good point $z \in \Lambda_{d-1}$ satisfies that $P_z (F(\sigma_{v, \beta}) \le \delta^{2 - \iota} C_\delta) \ge \frac 1 2 P_z (\mathcal E) \ge t^{2 \beta+1}$ (compare with \eqref{eq-PCAF-approx}), thereby establishing \eqref{eq-item-1}.

The proof of \eqref{eq-item-2} follows a similar argument, noting 
that $P_u(\sigma_{\Lambda_{1, u}} \leq s_1^2 \le \sigma_{\partial \mathsf C_{1, \mathrm{large}}} ) = \Omega (\epsilon^*) \ge 2 p_{\mathrm{fast}}$.
We omit further details.
 \end{proof}

 \begin{proof}[Proof of \eqref{eq-heat-kernel-lower-bound}]
It is enough to prove the claim for  $\delta<\delta_0$, as this will determined $t_0$ through the relation $\delta=t^{1/(2-\chi_{u,v}^+)+\iota}$. Using Lemma~\ref{lem-fast-cell},
$$\P (\mathsf C_i \mbox{ is not fast for some } i) 
\le \P (\mathcal E_3^c) + \E \P (\cup_i \mathcal E_{\mathsf C_i}^c \mid \mathcal V_\delta).$$ From the lemma, we conclude
that the event that all $\mathsf C_1, \ldots, \mathsf C_d$ are fast occurs with high probability
on the event $\mathcal E_3$. 
Similarly, by Lemma~\ref{lem-LBM-near-end-points}, \eqref{eq-item-1} and \eqref{eq-item-2} hold with $\iota$-high probability on the event $\mathcal E_3$. 
We work on the intersections of these events 
with $\mathcal E_3$, which occurs with probability 
at least $1-e^{-(\log \delta^{-1})^{0.21}}$, see \eqref{eq-E3}. Note that with our choice of parameters we have for sufficiently small $\delta>0$ that
\begin{equation}\label{eq-delta-t-lower-bound}
\delta^2 \cdot C_\delta \cdot  \delta^{-\chi^+_{u, v} - \frac 1 2 (2 - \chi^+_{u,v})^2 \cdot \iota} \leq t/4\,.
\end{equation} 
In addition, note that 
\begin{equation}\label{eq-bound-t-2}
\delta^{2-\iota} C_\delta \leq t/4 \mbox{ for fixed $\iota$ small enough.}
\end{equation}
Define $\sigma_{0} = 0$ and recursively 
$\sigma_i = \min\{r \geq \sigma_{i-1}: X_r\in \Lambda_{r, \mathrm{fast}}\}$ for $i=1, \ldots, d$. By our assumption that the $\mathsf C_i$'s are fast 
(see \eqref{eq-def-fast} and \eqref{eq-def-cell-fast}), \eqref{eq-item-2} and 
the strong Markov property of the 
SBM, we see, recalling \eqref{eq-delta-t-lower-bound} and \eqref{eq-bound-t-2}, that
$$P_u\Big(F(\mbox{$\sum_{i=1}^d$} \sigma_i) \leq t/2 \Big) \geq (p_{\mathrm{fast}})^d\,.$$
Combined with \eqref{eq-item-1}, we obtain that 
$$P_u(|Y_s - v| \leq (t/4)^\beta \mbox{ for some } s\leq 3t/4) \geq 
(p_{\mathrm{fast}})^d \cdot t^{2 \beta + 1} \ge \exp \{ - t^{- \frac {\chi^+_{u,v}} {2 - \chi^+_{u,v}} - 2 \iota } \} \,,$$
where in the last estimate we used that $\chi^+_{u,v}< 1$.
Combined with \cite[Corollary 5.20]{MRVZ14} (with an appropriately chosen large $\beta$ in part (i) of 
Lemma~\ref{lem-LBM-near-end-points}), this completes the proof of \eqref{eq-heat-kernel-lower-bound}.
 \end{proof}
 
\subsection{Upper bound}
\label{subsec-LHKUB}

In this section, we will provide an upper bound on the Liouville heat kernel based on the Liouville graph distance. For $u, v\in \mathbb V$, we denote 
$$\chi^-_{u, v} = \liminf_{\delta \to 0} \frac{\E \log D_{\gamma, \delta}(u, v)}{\log \delta^{-1}}\,.$$
Recalling Lemma~\ref{lem-obvious-bounds}, we see that $0<\chi^-_{u, v} <1$.
We will show that there exists a finite random variable
 $C>0$ (measurable with respect to the GFF) such that for all $t\in (0,1]$,
\begin{equation}\label{eq-heat-kernel-upper-bound-fancy}
\pe_t^{\gamma}(u, v) \leq C \exp\Big\{-t^{-\frac{\chi^-_{u, v}}{2 - \chi^-_{u, v}} +o(1)}\Big\}\,.
\end{equation}
(As we discuss below, the restriction to $t\in (0,1]$ is possible because of   Lemma~\ref{lem-Liouville-hitting-probability}.)
In order to prove \eqref{eq-heat-kernel-upper-bound-fancy}, the key is to show that there exists a small positive constant $c = c_{\gamma, u, v} >0$ such that, for all 
small $\iota>0$, it holds with probability at least $1- t^{c \cdot \iota^2}$
that
\begin{equation}\label{eq-heat-kernel-upper-bound}
P_u(Y_r \in B(v, \delta^{C_{\mathrm{mc}}}) \mbox{ for some } r\leq t )\leq  \exp\Big\{-t^{-\frac{\chi^-_{u, v}}{2 - \chi^-_{u, v}} + \iota}\Big\}\,.
\end{equation}

In analogy with the proof of  \eqref{eq-heat-kernel-lower-bound}, in order to 
show \eqref{eq-heat-kernel-upper-bound} we will  show that for any
cell in $\mathcal V_\delta$, with not too small probability the Liouville Brownian motion will accumulate not too small Liouville time when crossing it
(here, we will choose 
$\delta \approx t^{\frac{1}{2 - \chi^-_{u, v}}}$). Throughout, we
continue to work on $\mathcal E_1$, see
\eqref{eq-E1}, and recall the notation $\epsilon^*$ from
 \eqref{eq-def-epsilon*} and $C_\delta$ from \eqref{eq-def-err-3}.
For  a cell $\mathsf C\in \mathcal V_\delta$ and $z\in \mathsf C$, we say that 
$z$ is a \textit{slow point} if 
\begin{equation}\label{eq-def-slow}
P_z(F(\sigma_{\partial \mathsf C}) \geq \delta^2 / C_\delta) \geq \alpha_{\mathrm{slow}}\,,
\end{equation}
where $\alpha_{\mathrm{slow}}>0$ is a constant depending only on $\gamma$, 
which is determined in Lemma \ref{lem-slow-cell} below. We note that a point can be both fast and slow according to our definition. We say that
a cell $\mathsf C$ is slow if the (two-dimensional) Lebesgue measure of  slow points in $\mathsf C$ is at least $\alpha_{\mathrm{slow}} s_{\mathsf C}^2$.
\begin{lemma}\label{lem-slow-cell}
 There exists  a constant $\alpha_{\mathrm{slow}}>0$ depending only on $\gamma$ such that the following holds. For each $\mathsf C\in \mathcal V_\delta$, we have that $$\P(\mathsf C \ \mbox{\rm is slow } \mid \mathcal V_\delta) \geq 1 - e^{- \alpha_{\mathrm{slow}} 2^{\sqrt{\log \delta^{-1}}}}\,.$$
\end{lemma}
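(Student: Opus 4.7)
The plan is to adapt the multi-scale machinery of Lemma~\ref{lem-fast-cell} in its easier, dual form: I will lower-bound the accumulated PCAF rather than upper-bound it, and since slowness only requires a positive Lebesgue fraction of slow points in $\mathsf C$ (rather than a percolating crossing), a Chernoff bound on approximately independent indicators will replace the Peierls duality argument. Fix $\mathsf C\in \mathcal V_\delta$ of side $s_{\mathsf C}$, set $K=2^{\lfloor(\log\delta^{-1})^{0.51}\rfloor}$, and partition $\mathsf C$ into the collection $\mathcal B_{\mathsf C}$ of $K^2$ dyadic sub-boxes $\mathsf B$ of side $s_{\mathsf C}/K$. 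For each $\mathsf B$ introduce the localized field $\tilde\eta^{\mathsf B}$ from \eqref{eq-def-tilde-eta} and its PCAF $\tilde F_{\mathsf B}$ from \eqref{eq-def-PCAF-approx}. Working on $\mathcal E_3$ (see \eqref{eq-E3}), the two-sided refinement of \eqref{eq-PCAF-approx}, valid because \eqref{Eq.error-coarse-fine} gives two-sided control of the coarse field, yields for any SBM stopping time $\tau$ during which $X$ stays in $\mathsf B_{\mathrm{large}}$ the inequality
\[
F(\tau)\ge \tilde F_{\mathsf B}(\tau)\cdot \delta^2 s_{\mathsf C}^{-2}\,e^{-(\log\delta^{-1})^{0.92}}.
\]
Since $s_{\mathsf B}\log(1/s_{\mathsf B})\ll \epsilon^* s_{\mathsf C}$ under our choice of $K$, the white noise entering $\tilde\eta^{\mathsf B}$ is disjoint from that explored in constructing $\mathcal V_\delta$, and by the same reasoning as \eqref{Eq.fine-field-independent} the conditional law of $\tilde\eta^{\mathsf B}$ given $\mathcal V_\delta$ is its unconditional law.

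I would then declare $\mathsf B$ \emph{pre-slow} if the set $S_{\mathsf B}:=\{z\in\mathsf B:\,P_z(\tilde F_{\mathsf B}(\tau_{\mathsf B})\ge c_0(s_{\mathsf C}/K)^2)\ge 2\alpha_{\mathrm{slow}}\}$ has Lebesgue measure at least $\tfrac12|\mathsf B|$, where $\tau_{\mathsf B}$ is the SBM exit time from $\mathsf B$ and $c_0>0$ is an absolute constant. A Paley--Zygmund argument applied conditionally on $\mathcal V_\delta$ to the $z$-averaged quantity $\int_{\mathsf B}\E_z[\tilde F_{\mathsf B}(\tau_{\mathsf B})]\,dz$, whose first moment is of order $\tilde M^{\mathsf B}_{\gamma,\cdot,\eta}(\mathsf B)$ (up to Brownian Green's-function constants) and whose second moment is controlled via the positive-moment bound \eqref{eq-LQG-positive-moment}, yields a constant $\alpha_{\mathrm{slow}}=\alpha_{\mathrm{slow}}(\gamma)>0$ such that $\P(\mathsf B\text{ is pre-slow}\mid\mathcal V_\delta)\ge\tfrac12$, uniformly over $\mathsf B$ and $\mathsf C$. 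Because $\tilde\eta^{\mathsf B}$ depends only on white noise supported in $\mathsf B_{\mathrm{Large}}$, the pre-slow indicators of sub-boxes at $\ell_\infty$-distance at least $3s_{\mathsf C}/K$ are mutually independent given $\mathcal V_\delta$, so partitioning $\mathcal B_{\mathsf C}$ into $O(1)$ sub-lattices of pairwise independent events and applying a Chernoff bound produces
\[
\P\bigl(\#\{\mathsf B\in\mathcal B_{\mathsf C}:\mathsf B\text{ pre-slow}\}<\tfrac14 K^2\,\bigm|\,\mathcal V_\delta\bigr)\le e^{-cK^2},
\]
which is far smaller than $e^{-\alpha_{\mathrm{slow}}2^{\sqrt{\log\delta^{-1}}}}$ since $K^2\gg 2^{\sqrt{\log\delta^{-1}}}$.

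To close the argument I would check that on the Chernoff event every $z\in S_{\mathsf B}$ with $\mathsf B$ pre-slow is a slow point of $\mathsf C$: combining the two-sided PCAF comparison with $C_\delta=e^{(\log\delta^{-1})^{0.95}}\gg K^2 e^{(\log\delta^{-1})^{0.92}}$,
\[
P_z\bigl(F(\sigma_{\partial\mathsf C})\ge\delta^2/C_\delta\bigr)\ge P_z\bigl(\tilde F_{\mathsf B}(\tau_{\mathsf B})\ge c_0(s_{\mathsf C}/K)^2\bigr)\ge 2\alpha_{\mathrm{slow}},
\]
so summing the Lebesgue contributions of $S_{\mathsf B}$ across pre-slow sub-boxes shows that slow points in $\mathsf C$ occupy at least $\tfrac14 K^2\cdot\tfrac12(s_{\mathsf C}/K)^2=s_{\mathsf C}^2/8$, and $\mathsf C$ is slow after possibly shrinking $\alpha_{\mathrm{slow}}$. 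The main obstacle I anticipate is the uniform Paley--Zygmund step: one must bound the conditional second moment of the $z$-averaged $\tilde F_{\mathsf B}(\tau_{\mathsf B})$ by a universal constant times the square of its mean (using \eqref{eq-LQG-positive-moment} together with a BDG-type bound on $\tilde F_{\mathsf B}$), and then lift this averaged lower bound to the pointwise quantile statement defining $S_{\mathsf B}$ via another Markov inequality in $z$; care is needed because $\mathsf B$ may touch $\partial\mathsf C$, so the Green's-function lower bound $\int_{\mathsf B}\E_z[\tilde F_{\mathsf B}(\tau_{\mathsf B})]\,dz\gtrsim(s_{\mathsf C}/K)^2\tilde M^{\mathsf B}_{\gamma,\cdot,\eta}(\mathsf B_{\mathrm{small}})$ must be set up on a concentric sub-box $\mathsf B_{\mathrm{small}}$ of half the side-length. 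Everything else (localization and the Chernoff tail) is routine.
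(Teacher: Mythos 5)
Your overall plan---partition $\mathsf C$ into $K^2$ sub-boxes, argue that each is ``slow'' with $\P(\cdot\mid\mathcal V_\delta)$-probability bounded below, decorrelate via finite-range dependence, and apply a Chernoff bound---matches the paper's strategy. But there is a genuine gap in the step on which everything rests, namely the independence of the localized field from $\mathcal V_\delta$.

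You work with $\tilde\eta^{\mathsf B}$ from \eqref{eq-def-tilde-eta}, which integrates the white noise over \emph{all} scales up to $s_{\mathsf C}^2$, and you justify the claim that the conditional law of $\tilde\eta^{\mathsf B}$ given $\mathcal V_\delta$ is its unconditional law by the criterion ``$s_{\mathsf B}\log(1/s_{\mathsf B})\ll\epsilon^* s_{\mathsf C}$.'' That inequality is true, but it is the wrong comparison. The criterion in \eqref{Eq.fine-field-independent} applies to the boxes $\mathsf B_i$ produced by Lemma~\ref{lem-partition-independence}, which have side $(\epsilon^*)^2 s_{\hat{\mathsf C}}$, lie in the interior of a good sequence of cells where neighbouring cells have size at least $\epsilon^* s_{\hat{\mathsf C}}$, and use a top scale equal to their \emph{own} side length; that combination is what makes the exploration radii small enough. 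Here $\mathsf C$ is a genuine cell of $\mathcal V_\delta$ and may have arbitrarily small neighbours. Whenever a neighbouring cell $\mathsf C'$ has $s_{\mathsf C'}^2 < s_{\mathsf C}^2$, the construction of $\mathcal V_\delta$ explores $\eta_{s_{\mathsf C'}}(c_{\mathsf C'})$, which involves white noise at scales $s \in [s_{\mathsf C'}^2, s_{\mathsf C}^2)$ and spatial locations within $4^{-1}s^{1/2}|\log s^{-1}|$ of $c_{\mathsf C'}$. Near the top of this range this radius is of order $s_{\mathsf C}\log s_{\mathsf C}^{-1}$, which is \emph{larger} than $s_{\mathsf C}$ and in particular larger than $s_{\mathsf C}/K$ for either of the choices of $K$. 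So white noise at scales just below $s_{\mathsf C}^2$ in $\mathsf B_{\mathrm{Large}}$ \emph{has} been explored in constructing $\mathcal V_\delta$, and the conditional law of $\tilde\eta^{\mathsf B}$ given $\mathcal V_\delta$ is not its unconditional law. Your Paley--Zygmund / Chernoff computations are then performed under an independence assumption that fails. The paper fixes exactly this by (a) lowering the top scale of the localized field to $\epsilon^* s_{\mathsf C}$ (the field $\hat\eta^{\mathsf B}$, cf.\ \eqref{eq-disco}), so that all contaminated scales lie above the top of the field, and (b) discarding boxes within $\ell_\infty$-distance $4s_{\mathsf C}/K$ of $\partial\mathsf C$; only then is \eqref{eq-field-independence-V-delta-LHK} correct. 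The contribution of the scales $[(\epsilon^*s_{\mathsf C})^2, s_{\mathsf C}^2)$ that this removes is absorbed into the error factor $\exp\{-(\log\delta^{-1})^{0.91}\}$ using $\mathcal E_{\delta,\alpha^*}$. You noticed that boundary boxes ``need care,'' but attributed it to a Green's-function issue rather than to the independence, which is the real obstruction.

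A secondary, more easily repaired, issue: you invoke a Paley--Zygmund argument with a second-moment bound on $\tilde F_{\mathsf B}$, quoting \eqref{eq-LQG-positive-moment}. That bound only gives moments of order $p<4/\gamma^2$, which is $<2$ as soon as $\gamma\ge\sqrt2$; the second moment is then infinite. You need a fractional-moment version (or, as the paper does, a H\"older / reverse-Markov argument with $\E E_z[\tilde F(t_{\mathsf C})^p]=O(t_{\mathsf C}^p)$ for some $1<p<4/\gamma^2$). Your choice of $K=2^{\lfloor(\log\delta^{-1})^{0.51}\rfloor}$ versus the paper's $2^{\lfloor\sqrt{\log\delta^{-1}}\rfloor}$ is harmless; the paper explicitly uses a different $K$ here than in the fast-cell lemma, but your larger $K$ only makes the Chernoff tail smaller, so this is not a gap.
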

\begin{proof}
We set $k =  \lfloor\sqrt{\log \delta^{-1}}\rfloor$ and $K = 2^k$. We remark that 
$k,K$ are different from those used in the course of the proof of the lower 
bound.


Partition $\mathsf C$ into $K^2$ many dyadic squares with side length $s_\mathsf C/K$, and  denote  by $\mathcal B_{\mathsf C}$ the collection of these boxes. For $\mathsf B \in 
\mathcal B_{\mathsf C}$ and  $z\in \mathsf B$, we say $z$ is a \textit{very-slow point} (with respect to the box $\mathsf B$) if
\begin{equation}\label{eq-def-pre-slow}
P_z(F(\sigma_{\partial \mathsf B_{\mathrm{large}}}) \geq \delta^2 / C_\delta ) \geq \alpha_{\mathrm{slow}}
\,,
\end{equation}
where we recall that $\mathsf B_{\mathrm{large}}$ is defined to be the box
concentric with $\mathsf B$  with doubled side length.
Note that a point $x$ away from $\partial \mathsf C$ (more precisely, if $\|x - \partial \mathsf C\|_\infty \geq 2 s_{\mathsf C}/K$) is slow if it is very slow.

We will work with the field $\hat \eta^{\mathsf B} : = \{ \tilde \eta^{\mathsf B, \epsilon^* s_{\mathsf C}}_{\epsilon'} (z) : \epsilon', z \}$, defined by replacing $s_{\mathsf C}$ with $\epsilon^* s_{\mathsf C}$ in \eqref{eq-def-tilde-eta}, i.e.
 $$
 \tilde \eta^{\mathsf B, \epsilon^* s_{\mathsf C}}_{\epsilon'} (z) : = \left\{ \begin{array}{ll} \sqrt{\pi} \int_{\mathbb V \times ((\epsilon')^2, (\epsilon^* s_{\mathsf C})^2)} p_{_{\mathsf B_{\mathrm{large}}}} (s/2; z, w) W(dw, ds), & \mbox{if } z \in \mathsf B_{\mathrm{large}} \mbox{ and } \epsilon' < \epsilon^* s_{\mathsf C}, \\ 0, & \mbox{otherwise.} \end{array} \right.
 $$
Analogously to $\mathcal E_1$, we have that with high probability,
\begin{equation}
\label{eq-disco}
\max_{\mathsf C \in \mathcal V_\delta, \mathsf B \in \mathcal B_{\mathsf C}} \max_{z \in \mathsf B_{\mathrm{large}}} \max_{\epsilon' < \epsilon^* s_{\mathsf C}, \log_2 \epsilon' \in \mathbb Z} |\hat \eta^{\mathsf B, \epsilon^* s_{\mathsf C}}_{\epsilon'} (z) - \eta^{\epsilon^* s_{\mathsf C}}_{\epsilon'} (z)| = O(\sqrt{\log \delta^{-1}}).
 \end{equation}
Set now
\begin{equation}
\label{eq-E1p}
\mathcal E_1'=\mathcal E_{\delta, \alpha^*}
\cap \{  \mbox{\rm \eqref{eq-tilde-h-eta-assump} holds}   \} \cap \{ {\mbox{the event in \eqref{eq-disco} holds}}
\},
\end{equation}
and note that $\mathcal E_1'$ occurs with high probability.
Let $\tilde F = \tilde F_{\mathsf B}$ be defined as in \eqref{eq-def-PCAF-approx} with $\tilde \eta^{\mathsf B}$ replaced by $\hat \eta^{\mathsf B}$. We have 
on $\mathcal E_1'$ the following estimate for 
the SBM $X_\cdot$ started at $z\in \mathsf B$ and any stopping time $\tau$ so that $X_{r} \in \mathsf B_{\mathrm{large}}$ for all $0\leq r\leq \tau$ :
\begin{equation}\label{eq-PCAF-approx-lower}
F(\tau) \geq \tilde F (\tau) \delta^2 s_{\mathsf C}^{-2} \exp\{(-\log \delta^{-1})^{0.91}\}\,.
\end{equation}
We will restrict our discussion to $\mathsf B$ at least at distance $4s_{\mathsf C}/K$  away from $\partial \mathsf C$, for the reason that for such $\mathsf B$,
 \begin{equation}\label{eq-field-independence-V-delta-LHK}
\mbox{the white noise that determines $\hat \eta^{\mathsf B}$ has not explored in constructing $\mathcal V_\delta$.}
\end{equation}
For $z \in \mathsf B$, we claim that there is an event $\mathcal E_{z, \mathrm{slow}}$, measurable with respect to the field $\hat \eta^{\mathsf B}$,
such that  \begin{equation}\label{eq-z-pre-slow}
 \P(\mathcal E_{z, \mathrm{slow}} \mid \mathcal V_\delta) \geq \alpha^\star  \mbox{ and }  (\mathcal E_{z, \mathrm{slow}} \cap \mathcal E_1')  \subseteq \{z \mbox{ is very-slow}\}\,,
 \end{equation}
where $\alpha^\star>0$ is a constant depending only on $\gamma$. We will first complete the proof of the lemma assuming  \eqref{eq-z-pre-slow}. We take a sub-collection of boxes $\mathcal B^* \subseteq \mathcal B_{\mathsf C}$ such that 
 \begin{itemize}
 \item All boxes in $\mathcal B^*$ are at least $4s_{\mathsf C}/K$ distance away from $\partial \mathsf C$;
 \item The pairwise distance of two boxes in $\mathcal B^*$ is at least $4s_{\mathsf C}/K$;
 \item $|\mathcal B^*| \geq 10^{-4} K^2$.
 \end{itemize}
 For each $\mathsf B\in \mathcal B^*$, let $\mathcal L_{\mathrm{slow}} (\mathsf B)$ be the Lebesgue measure of very-slow points in $\mathsf B$. Then by \eqref{eq-z-pre-slow} and our assumption on $\mathcal B^*$, we see that, on $\mathcal E_1'$, we have
 $\{\mathcal L_{\mathrm{slow}}(\mathsf B): \mathsf B\in \mathcal B^*\}$ dominates a sequence of i.i.d.\ random variables $\{\mathcal L'_{\mathrm{slow}}(\mathsf B): \mathsf B\in \mathcal B^*\}$ such that $\E  \mathcal L'_{\mathrm{slow}}(\mathsf B)  \geq \alpha^\star s_{\mathsf C}^2 K^{-2}$ and $\mathcal L'_{\mathrm{slow}}(\mathsf B) \leq s_{\mathsf C}^2 K^{-2}$. Therefore, $\P (\mathcal L'_{\mathrm{slow}}(\mathsf B) \geq \alpha^\star s_{\mathsf C}^2 K^{-2}/2 ) \geq \alpha^\star/2$. We
deduce that
 $$
\P(\sum_{\mathsf B\in \mathcal B^*} {\bf 1}_{ \{ \mathcal L'_{\mathrm{slow}}(\mathsf B) \geq s_{\mathsf C}^2 K^{-2} \alpha^\star/2 \} } \geq 10^{-6} \alpha^\star K^2 ) \geq
1 - e^{-10^{-10} \alpha^\star K^2}\,,
 $$
 completing the proof of the lemma, except for the proof of \eqref{eq-z-pre-slow},
to which we turn next.
 
Let $t_{\mathsf C} = s_{\mathsf C}^2 K^{-3}$. We will show below that for all $z \in \mathsf B \in \mathcal B_{\mathsf C}$,
 \begin{equation}\label{eq-tilde-F-lower}
\P(P_z( \tilde F (t_{\mathsf C}) \geq s_{\mathsf C}^2 K^{-4}) \geq \alpha_1 \mid \mathcal V_\delta) \geq \alpha_1, \mbox{ where } \alpha_1 >0 \mbox{ is an absolute constant}.
 \end{equation}
 Since $\sigma_{\partial  \mathsf B_{\mathrm{large}}} \geq t_{\mathsf C}$ occurs with probability tending to 1 as $\delta \to 0$, we can deduce from \eqref{eq-tilde-F-lower} that for sufficiently small $\delta$,
$$\P(P_z(\sigma_{\partial \mathsf B_{\mathrm{large}}} \geq t_{\mathsf C}, \tilde F (t_{\mathsf C}) \geq s_{\mathsf C}^2 K^{-4}) \geq \alpha_1/2  \mid \mathcal V_\delta) \geq \alpha_1\,.$$
Combined with \eqref{eq-PCAF-approx-lower}, this implies \eqref{eq-z-pre-slow} with an appropriate choice of the absolute constant $\alpha_{\mathrm{slow}}>0$.

We finally turn to the proof of \eqref{eq-tilde-F-lower}. Fix $1<p<4/\gamma^2$. We
follow the arguments in \cite[Appendix B]{GRV13} to show that $\E E_z (\tilde F(t_{\mathsf C}))^p \leq O(t_{\mathsf C}^p)$. (The proof in  \cite{GRV13} applies to any
log-correlated Gaussian field, and thus carries over to
the field $\hat \eta^{\mathsf B}$
with no essential change.) With the moment estimate at hand, we can apply H\"older's inequality and get that for any $\kappa>0$
$$\E E_z (\tilde F(t_{\mathsf C})) \leq \kappa t_{\mathsf C} + \E E_z ( \tilde F(t_{\mathsf C}) 1_{\{\tilde F(t_{\mathsf C}) \geq \kappa t_{\mathsf C}\}} ) \leq \kappa t_{\mathsf C} + O\Big( t_{\mathsf C} \left( \E \big( P_z(\tilde F(t_{\mathsf C}) \geq \kappa t_{\mathsf C} ) \big) \right)^{1-1/p} \Big)\,.$$
Combined with the fact that $\E E_z (\tilde F(t_{\mathsf C})) = t_{\mathsf C}$ and an appropriate choice of $\kappa>0$ (a small constant depending only on $\gamma$), we deduce that $\E \big( P_z(\tilde F(t_{\mathsf C}) \geq \kappa t_{\mathsf C} ) \big) $ is lower bounded by a positive constant depending only on $\gamma$. 
Combined with \eqref{eq-field-independence-V-delta-LHK}, this then implies \eqref{eq-tilde-F-lower}, as desired.
 \end{proof}

\begin{proof}[Proof of \eqref{eq-heat-kernel-upper-bound}]
Fix an arbitrarily small $\iota>0$. 
 Let $\delta = t^{\frac{1}{2 - \chi^-_{u, v}} - \iota}$. By Propositions~\ref{prop-approximate-LGD} and \ref{prop-concentration}, we see that with $(c \cdot \iota^2)$-high probability for some $d \geq \delta^{-\chi^-_{u, v} + \beta \cdot \iota / 2}$ every sequence of neighboring cells in $\mathcal V_\delta$ connecting  $u$ to $v$ contains at least $d$ cells, where $\beta = \frac 1 2 (2 - \chi^-_{u,v})^2$. On $\mathcal E'_1$  from \eqref{eq-E1p},  all the cells have side length at least $\delta^{C_{\mathrm{mc}}}$,
and therefore   the number of neighboring cells connecting  $u$ to $B(v, \delta^{C_{\mathrm{mc}}})$ is at least $d-2$.
 Define $\sigma_0 = 0$ and for $i\geq 1$ define
$$\sigma_i = \{r\geq \sigma_{i-1}: X_r \in \partial \mathsf C_{X_{\sigma_{i-1}}, \mathrm{large}} \}\,,$$
where  we recall that $\mathsf C_{z, \mathrm{large}}$ denotes a box concentric 
with $\mathsf C_{z, \delta}$, the cell containing $z$, with  doubled side length. On $\mathcal E'_1$,  the event $\mathcal E_{\delta, \alpha^*}$ from
  \eqref{eq-def-E-delta-alpha} holds, and therefore
 in order to hit $B(v, \delta^{C_{\mathrm{mc}}})$, the Liouville Brownian motion has to go through $d-2$ cells and every time it exits $\mathsf C_{\mathrm{large}}$ from $\mathsf C$ (for some $\mathsf C\in \mathcal V_\delta$) it crosses at most $\delta^{- \beta \cdot \iota / 2}$
many cells. Thus, 
\begin{equation}\label{eq-Y-sigmas}
\{Y_r \in B(v, \delta^{C_{\mathrm{mc}}}) \mbox{ for some } r\leq t\} \subseteq \{\sum_{i=1}^{d \delta^{\beta \cdot \iota}} (F(\sigma_ i) - F(\sigma_{i-1})) \leq t\}\,.
\end{equation}
By Lemma~\ref{lem-slow-cell}, the event that all 
cells are slow has high probability. On this event,
$$P_{X_{\sigma_{i-1}}}(F(\sigma_ i) - F(\sigma_{i-1}) \geq \delta^2 /C_\delta ) \geq P_{X_{\sigma_{i-1}}}(\mbox{$X_\cdot$ hits a slow point in } \mathsf C_{X_{\sigma_{i-1}}} \mbox{ before } \sigma_i) \alpha_{\mathrm{slow}}\,,$$
which is bounded below by 
a constant $\alpha'_{\mathrm{slow}}>0$ depending only on $\gamma$. By the 
strong Markov property of the SBM, we conclude  that 
$(F(\sigma_ i) - F(\sigma_{i-1}))'s$ dominates a sequence of i.i.d.\ non-negative random variables which take value $\delta^2 / C_\delta$ with probability $\alpha'_{\mathrm{slow}}>0$.  At this point, a simple large deviation estimates yields that for sufficiently small $t$,
 $$ 
P_u(\sum_{i=1}^{d \delta^{\beta \cdot \iota} } \big( F(\sigma_ i) - F(\sigma_{i-1}) \big) \leq t) \le e^{- \Omega(1) d \delta^{\beta \cdot \iota}} \leq e^{-d \delta^{2 \cdot \iota}}  \le \exp \{- t^{- \frac {\chi^-_{u,v} } {2 - \chi^-_{u,v}} + 4 \cdot \iota }  \}  \,,
 $$
where the three inequallities hold respectively because  the exponent of $\frac t {\delta^2 / C_\delta} $ (with respect to $1/t$) is strictly less than that of $d \delta^{\beta \cdot \iota}$,
because 
$\beta \le 2$, and because $\chi^-_{u,v} < 1$. Combined with \eqref{eq-Y-sigmas} and the fact that we considered a high probability event, this completes the proof of \eqref{eq-heat-kernel-upper-bound}.\end{proof}

\begin{proof}[Proof of \eqref{eq-heat-kernel-upper-bound-fancy}]
Since the event $\{Y_r \in B(v, \delta^{C_{\mathrm{mc}}}) \mbox{ for some } r\leq t \}$ is increasing in $t$, we can apply a union bound over  all $t$ of the form $t=2^{-j}$, use
 \eqref{eq-heat-kernel-upper-bound} and the Borel-Cantelli lemma to conclude that for any $\iota>0$,
 there exists a random variable $C>0$ 
such that  for all  $t>0$,
$$P_u(Y_r \in B(v, \delta^{C_{\mathrm{mc}}}) \mbox{ for some } r\leq t )\leq C  \exp\Big\{-C^{-1}t^{-\frac{\chi^-_{u, v}}{2 - \chi^-_{u, v}} +  c \cdot \iota}\Big\}\,.$$
Applying Lemma~\ref{lem-Liouville-hitting-probability} with $\alpha_1 = \frac{C_{\mathrm{mc}}}{2 - \chi^-_{u,v}}, \alpha_2 = - \frac{\chi^-_{u, v}}{2 - \chi^-_{u, v}} + \frac 1 2 c \cdot \iota$ and a corresponding choice of $\alpha_3$, this completes the proof of \eqref{eq-heat-kernel-upper-bound-fancy}.
\end{proof}

\section{Existence of the Liouville graph distance exponent}\label{sec:exponent}

In this section, we will show that the exponent for the Liouville graph distance exists, and that the exponent does not depend on the choice of starting or ending points. Recall that $\mathbb V^\xi = \{v\in \mathbb V: |v-\partial \mathbb V| \geq \xi\}$.
\begin{prop}\label{prop-existence-exponent}
For any $\gamma\in (0, 2)$, there exists $\chi = \chi(\gamma)$ such that  for any $u,v \in \mathbb V \setminus \partial \mathbb V$,
$$ \lim_{\delta \to 0} \frac{\E \log D_{\gamma, \delta}(u, v)}{\log \delta^{-1}} = \chi\,.$$
Furthermore, the $\chi(\gamma)$ here is the same as that in Lemma~\ref{lem-existence-exponent}.
\end{prop}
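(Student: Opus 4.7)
The plan is to combine the concentration statement from Proposition~\ref{prop-concentration} with a subadditivity argument, after first passing to an auxiliary, translation- and scale-covariant version of the Liouville graph distance. As a first reduction, Proposition~\ref{prop-approximate-LGD}, Corollary~\ref{cor-approximate-LGD-expectation}, Lemma~\ref{lem-tilde-h-eta}, and Lemma~\ref{lem-LGD-two-fields} show that all of $\E \log D_{\gamma,\delta}(u,v)$, $\E \log D'_{\gamma,\delta}(u,v)$, and $\E \log D_{\gamma,\delta,\eta}(u,v)$ agree up to an additive error $o(\log\delta^{-1})$. Therefore it suffices to establish the limit for, say, $\E \log D_{\gamma,\delta,\eta}$, which is preferable because $\eta$ enjoys the locality property exploited in Lemma~\ref{lem-partition-independence}.

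The second step is to introduce a fully translation- and scale-covariant proxy: define $\hat D_{\gamma,\delta}(u,v)$ in the same way as $D_{\gamma,\delta,\eta}$ but using the stationary full-plane field $\hat h_{\delta'}^{1}$ from \eqref{eq:WND_decomposition-stationary} for all $\delta'\leq 1$. Lemma~\ref{lem-hat-h-eta} couples $\hat h$ with $\eta$ up to Gaussian tails, so Lemma~\ref{lem-LGD-compare} gives $|\E\log \hat D_{\gamma,\delta}(u,v)-\E\log D_{\gamma,\delta,\eta}(u,v)|=o(\log\delta^{-1})$ for any fixed $u,v\in\mathbb V^\xi$. The scaling and translation invariance of $\hat h$, together with Lemma~\ref{lem-scaling-coupling}, then yield
\begin{equation}\label{eq-translation-invariant-plan}
\E \log \hat D_{\gamma,\delta}(u,v)=\E \log \hat D_{\gamma, a\delta}(\theta u,\theta v)+o(\log\delta^{-1})
\end{equation}
for any affine map $\theta x=ax+b$ of scale $a\in (\kappa_2/\kappa_1,1]$ mapping a box into another one, provided the source and target boxes both lie in $\mathbb V^\xi$.

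The third step is subadditivity. Fix a reference pair $u_0,v_0$ near the center of a sub-box of $\mathbb V$. To estimate $\hat D_{\gamma,\delta}(u_0,v_0)$ for small $\delta$, write $\delta=\delta_1\delta_2$ and consider a line (or a finite path of sub-boxes) from $u_0$ to $v_0$ made of $\asymp\delta_1^{-1}$ sub-boxes of side $\asymp\delta_1$; in each such sub-box, a scaled copy of the problem at parameter $\delta_2$ appears (by the invariance \eqref{eq-translation-invariant-plan}). The locality built into $\eta_{\delta'}^{\,s}$ (for $\delta'$ much smaller than $s$) makes the contributions of distant sub-boxes at scales $\leq\delta_2^{1-\epsilon}$ almost independent of $\mathcal V_{\delta_1}$, in the sense of Lemma~\ref{lem-partition-independence}; this controls the coupling error between the concatenated local distances and the true distance. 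A Fekete-type argument for almost-subadditive sequences then produces a limit $\chi=\chi(\gamma)$ such that $\E\log \hat D_{\gamma,\delta}(u_0,v_0)/\log\delta^{-1}\to\chi$; this is the content of the unstated Lemma~\ref{lem-existence-exponent} referenced in the excerpt.

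Finally I would upgrade convergence from a single reference pair to arbitrary $u\neq v\in\mathbb V^o$. For the upper bound I would concatenate three geodesics $u\to u^\star\to v^\star\to v$, where $u^\star,v^\star$ are canonical points close to $u,v$ for which \eqref{eq-translation-invariant-plan} and the reference-pair convergence combine to give the $\chi$ rate on each piece, together with a short ``last mile'' to $u,v$ that is sublinear in $\log\delta^{-1}$ by the rough bounds of Lemma~\ref{lem-obvious-bounds}. The lower bound is more delicate, since a geodesic could in principle spend little time near any given point; here I would use the point-to-boundary estimate of the unstated Lemma~\ref{lem-exponent-point-to-boundary} (obtained by a Peierls-type percolation argument over dyadic annuli around $u$ and $v$, combined with the already-established point-to-point convergence at the reference pair inside each annulus) to conclude that the distance from $u$ to $\partial B(u,\xi)$ is at least $\delta^{-\chi+o(1)}$ with high probability, and similarly for $v$. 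The main obstacle is precisely this last step: turning a pointwise limit at one reference pair into a uniform lower bound at arbitrary interior pairs, without losing control in annuli where cell sizes are heterogeneous — this is where the percolation-based regularity of $\mathcal V_\delta$ encoded in Lemma~\ref{lem-regularity} becomes indispensable.
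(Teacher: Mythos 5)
Your high-level architecture matches the paper's: reduce to the $\eta$-distance, set up a translation/scale covariant proxy, run subadditivity at a reference pair, upper bound by concatenation, and lower bound via a point-to-boundary estimate. However, there are a few specific places where your plan diverges from what the paper does, or where the step as sketched would not go through.

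First, you propose to realize exact translation/scale covariance by switching to a distance $\hat D$ built from the full-plane stationary field $\hat h$. The paper instead observes (see \eqref{eq-translation-invariant}) that $\eta_{\delta'}^{\tilde\delta}$ restricted to the sub-box $\tilde{\mathbb V}$ is \emph{already} exactly translation and scale covariant, because the truncation upon exiting $\mathbb V$ in \eqref{eq:WND_decomposition-approximation} becomes vacuous there; this is why the paper defines $\tilde D_{\gamma,\delta}$ and $\tilde D'_{\gamma,\delta}$ relative to $\tilde{\mathbb V}_{u,v}$. That choice is not cosmetic: $\eta$ carries the finite-range locality that feeds Lemma~\ref{lem-partition-independence}, whereas $\hat h$ does not. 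In your plan you invoke the invariance of $\hat h$ \emph{and} the locality of $\eta$ in the same breath (step 3), but these are different fields and you would have to couple them at every scale of the subadditivity recursion, not just once; that extra bookkeeping is glossed over and is nontrivial. The paper avoids the issue entirely by making a single field do both jobs. Second, your description of how Lemma~\ref{lem-exponent-point-to-boundary} is obtained (``a Peierls-type percolation argument over dyadic annuli'') is not how the paper proves it: the actual argument is a proof by contradiction that glues the putative too-short point-to-boundary geodesics from $u$ and from $v$ with a few annular crossings into a short $u$-to-$v$ path, contradicting the point-to-point estimate plus concentration. The Peierls/percolation machinery lives in Lemmas~\ref{lem-regularity}, \ref{lem-percolation-Phi}, and \ref{lem-fast-cell}, not here. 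Third, your upper bound ``concatenate $u\to u^\star\to v^\star\to v$ with a short last mile sublinear in $\log\delta^{-1}$'' is not right as stated: for fixed nearby points the rough bound of Lemma~\ref{lem-obvious-bounds} still gives $\log D_{\gamma,\delta}(u,u^\star)=\Theta(\log\delta^{-1})$, not $o(\log\delta^{-1})$, so the ``last mile'' is not negligible. The paper instead writes $y_i=u+\tfrac{i}{l}(v-u)$ for a $\delta$-independent $l$ chosen so that every segment fits inside $\mathbb V^\xi$, and applies the reference-pair rate plus Lemma~\ref{lem-scaling-coupling} to \emph{every} piece, not just a central one, so the $l$-fold multiplicative loss is absorbed in $\delta^{-\iota}$.
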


Our proof of Proposition~\ref{prop-existence-exponent} is based on
subadditivity; however, some preparations are needed
before subadditivity can be invoked. We begin by setting a few notations.
Let $\bar {\mathbb V}$ (respectively, $\tilde {\mathbb V}$)
be a box concentric with $\mathbb V$ and 
of side length $1/20$ (respectively, $1/5$). 
For $u, v \in \mathbb V$ and  $\lambda > 0$, let ${\mathbb V}_{u, \lambda}$ 
denote the box centered at $u$ and of side length $\lambda$,
let $\tilde {\mathbb V}_{u, v}$ denote the
translated and rotated box centered at $\frac{u+v}{2}$, of side length $2|u-v|$, and with two sides parallel to the line segment joining $u$ and $v$.
In particular,  for all $u, v\in \bar{\mathbb V}$ we have $\tilde {\mathbb V}_{u, v} \subseteq \tilde {\mathbb V}$. Furthermore,   for all $v\in \tilde {\mathbb V}$ in the definition for $\eta_{\delta}^{\tilde \delta}(v)$ as in \eqref{eq:WND_decomposition-approximation}, the truncation for the transition kernel upon exiting $\mathbb V$ becomes redundant since $B(v, 4^{-1} s^2 |\log s^{-1}| \wedge 10^{-1}) \subseteq \mathbb V$ for all $s>0$. Therefore, for $u, v, u', v'\in \bar{\mathbb V}$ with $|u-v| = |u'-v'|$, denoting by $\theta$ an isometry which  maps  from $\tilde {\mathbb V}_{u, v}$ to $\tilde {\mathbb V}_{u',v'}$, we have that
\begin{equation}\label{eq-translation-invariant}
\{\eta_{\delta}^{\tilde \delta}(x): x\in \tilde {\mathbb V}_{u, v}\} \stackrel{law}{=} \{\eta_{\delta}^{\tilde \delta}(\theta x): x\in \tilde {\mathbb V}_{u, v}\} {\mbox{ for all } 
0<\delta <\tilde \delta \leq \infty.}
\end{equation}
For $u, v \in {\mathbb V}$ and $\delta$, we define $D_{\gamma, \delta}^A (u, v)$ to be the minimal number of Euclidean balls with rational center and radius, \emph{contained in $A$} with LQG measure at most $\delta^2$, whose union contains a path from $u$ to $v$. Denote $\tilde D_{\gamma, \delta}(u, v) = D_{\gamma, \delta}^{\tilde {\mathbb V}_{u, v}} (u, v)$ and $\bar D_{\gamma, \delta}^{x,\lambda} (u, v) = D_{\gamma, \delta}^{ {\mathbb V}_{x, \lambda}} (u, v)$ for brevity. We also define the tilde-approximate Liouville graph distance, similar to the approximate Liouville graph distance. That is, we repeatedly and dyadically partition $\tilde {\mathbb V}_{u, v}$ until all cells have approximate Liouville quantum gravity measure (as defined in \eqref{eq-def-approximate-LQG}) at most $\delta^2$, and we denote by $\mathcal V_{\delta, u, v}$ the resulting partition.
Let $\tilde D'_{\gamma, \delta}(u, v)$ be the graph distance between the two cells containing $u$ and $v$ in $\mathcal V_{\delta, u, v}$ (note that, of course, all cells are contained in $\tilde {\mathbb V}_{u, v}$).  

By \eqref{eq-translation-invariant}, we see that for $u,v \in \bar {\mathbb V}$,
\begin{equation}\label{eq-depend-on-Euclidean}
\mbox{ the law of } \tilde D'_{\gamma, \delta}(u, v) \mbox{ or } \tilde D_{\gamma, \delta, \eta}(u, v) \mbox{ depends on } u, v \mbox{ only through } |u-v|\,. 
\end{equation}
 The translation invariance property in \eqref{eq-depend-on-Euclidean}
will be useful below when setting up the sub-additive argument. 

\begin{remark}
\label{rem-5.2}
One can verify that our proofs for Propositions~\ref{prop-approximate-LGD}, \ref{prop-concentration}, Lemmas~\ref{lem-approximate-LGD-two-deltas}, \ref{lem-LGD-compare}, \ref{lem-LGD-two-fields} and Corollary~\ref{cor-LGD-two-deltas} extend automatically to the tilde-Liouville graph distance and the approximate tilde-Liouville graph distance. As a result, in this section  we often apply these results to the tilde-version of these statements (formally, replacing $D$ by $\tilde D$ and replacing $D'$ by $\tilde D'$).
\end{remark}

The next two lemmas are the key ingredients for the proof of Proposition~\ref{prop-existence-exponent} .
\begin{lemma}\label{lem-existence-exponent}
For any $\gamma\in (0, 2)$, there exists $\chi = \chi(\gamma)$ such that 
for any $u, v\in \bar {\mathbb V}$,
$$\lim_{\delta \to 0} \frac{\E \log \tilde D_{\gamma, \delta, \eta}(u, v)}{\log \delta^{-1}} = \lim_{\delta \to 0} \frac{\E \log \tilde D'_{\gamma, \delta}(u, v)}{\log \delta^{-1}} = \chi\,.$$
\end{lemma}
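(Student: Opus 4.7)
The plan is to establish convergence of $\E\log\tilde D'_{\gamma,\delta}(u,v)/\log\delta^{-1}$ via Fekete-type subadditivity in distance, then transfer the exponent to $\tilde D_{\gamma,\delta,\eta}$ via comparisons already in hand (Proposition~\ref{prop-approximate-LGD}, Lemma~\ref{lem-LGD-two-fields} and Remark~\ref{rem-5.2}). The final assertion that the exponent does not depend on $(u,v)$ will follow from an approximate scaling relation.

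By \eqref{eq-depend-on-Euclidean}, $a_L(\delta):=\E\log\tilde D'_{\gamma,\delta}(u,v)$ is well-defined for $u,v\in\bar{\mathbb V}$ with $|u-v|=L$. Using the exact scale invariance of the stationary field $\hat h$ from \eqref{eq:WND_decomposition-stationary}, the coupling of Lemma~\ref{lem-scaling-coupling} between $\eta$ and $\hat h$, and Lemma~\ref{lem-LGD-compare} to pass between distances built from coupled fields, one obtains the approximate scaling
\[|a_L(\delta)-a_{\lambda L}(\lambda\delta)|=O(\sqrt{\log\delta^{-1}})\]
for $\lambda$ in any fixed compact interval bounded away from zero.

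For the subadditivity, place $k+1$ equispaced points $u=x_0,x_1,\ldots,x_k=v$ on the segment from $u$ to $v$, with $k=2^p$. Naive concatenation of paths from $x_{i-1}$ to $x_i$ on $\tilde{\mathbb V}_{x_{i-1},x_i}$ faces two obstructions: (a) cells of the partition of $\tilde{\mathbb V}_{x_{i-1},x_i}$ are not automatically cells of $\tilde{\mathbb V}_{u,v}$; (b) the $\eta$-fields on different sub-boxes are coupled through coarse-scale correlations. These are handled by introducing an auxiliary distance $\widetilde D^\star_{\gamma,\delta}$ in which the partition criterion at each scale uses only white noise in a bounded multiplicative range of the box side length, so that the partitions of well-separated sub-boxes are exactly independent. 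By the locality encoded in the truncation in \eqref{eq:WND_decomposition-approximation} and a coupling of the same type as Lemma~\ref{lem-tilde-h-eta}, one shows $|\log\widetilde D^\star_{\gamma,\delta}-\log\tilde D'_{\gamma,\delta}|=O((\log\delta^{-1})^{0.9})$ with high probability. Combined with the concentration estimate (Proposition~\ref{prop-concentration}, tilde-version per Remark~\ref{rem-5.2}) and the clean concatenation available for $\widetilde D^\star$, this yields
\[a_L(\delta)\leq\log k+a_{L/k}(\delta)+O((\log\delta^{-1})^{0.95}).\]
Invoking the scaling relation above with $\lambda=k$ converts this into $a_L(\delta)-a_L(k\delta)\leq\log k+O(\cdot)$. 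Writing $\delta=2^{-n}$, $k=2^p$, and $b_n:=a_L(2^{-n})/\log 2$, this becomes $b_n-b_{n-p}\leq p+O(n^{0.95})$ for $1\leq p\leq n$. A Fekete-type argument, together with a matching lower bound obtained by controlling point-to-boundary distances in sub-boxes via an additional percolation argument, then gives the existence of $\chi:=\lim_{n\to\infty}b_n/n$. Independence of $\chi$ from $L$ is immediate from the scaling relation, and the limit for $\tilde D_{\gamma,\delta,\eta}$ coincides with that for $\tilde D'_{\gamma,\delta}$ via the tilde-analogue of Lemma~\ref{lem-LGD-two-fields}.

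The main obstacle is the construction of $\widetilde D^\star_{\gamma,\delta}$: it must simultaneously enjoy exact independence across well-separated sub-boxes (so that the subadditivity becomes a clean inequality of expectations rather than of minima) and remain within sub-leading distance of $\tilde D'_{\gamma,\delta}$ (so that the resulting exponent transfers back). Balancing these two properties requires a careful scale-by-scale decomposition of the $\eta$-field, combined with a percolation-style argument of the same flavor as Lemmas~\ref{lem-regularity} and~\ref{lem-partition-independence} to dispose of irregular cells near sub-box boundaries.
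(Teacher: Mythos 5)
Your proposed subadditivity is set up in the wrong variable and, as written, does not imply convergence. Let us track what the concatenation-plus-scaling argument actually yields. Splitting the Euclidean segment into $k$ equal pieces gives
$a_L(\delta)\leq\log k+a_{L/k}(\delta)+O((\log\delta^{-1})^{0.95})$, and your scaling relation identifies $a_{L/k}(\delta)$ with $a_L(k\delta)$ (up to a sub-leading error), so the net content is
$a_L(\delta)-a_L(k\delta)\leq\log k+O((\log\delta^{-1})^{0.95})$. Writing $b_n=a_L(2^{-n})/\log 2$, this is $b_n-b_{n-p}\leq p+o(n)$. That inequality is essentially automatic — Corollary~\ref{cor-LGD-two-deltas} already gives, deterministically up to sub-leading errors, that halving $\delta$ multiplies the ball count by a constant — and it only encodes the trivial fact $\chi\leq 1$. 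It is \emph{not} of the Fekete form $b_{n}\leq b_{p}+b_{n-p}+o(n)$, because the ``$p$'' on the right-hand side is not $b_p\approx\chi p$. Consequently no Fekete-type theorem applies and the existence of $\lim b_n/n$ does not follow. The purported matching lower bound cannot rescue this: a two-sided estimate $b_n-b_{n-p}=p+o(n)$ would force $\chi=1$, which is false (Lemma~\ref{lem-obvious-bounds}).

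The underlying error is that you are cutting the segment at equispaced Euclidean points, which wastes the random geometry. To obtain a genuinely multiplicative bound one must cut at the random boundaries produced by $\mathcal V_\delta$. This is what the paper does: along a good path of $d\approx\tilde D'_{\gamma,\delta}(u,v)$ cells/boxes (extracted via Lemmas~\ref{lem-regularity} and~\ref{lem-partition-independence}, so that the fine field inside each box is conditionally independent of $\mathcal V_\delta$), each box has approximate LQG mass $\asymp\delta^2$, so covering it by balls of mass $(\delta\tilde\delta)^2$ in the original coordinates is, after the rescaling coupling \eqref{eq-scaling-invariance-approximate}, equivalent to covering a unit-size box by balls of mass $\tilde\delta^2$ — i.e.\ costs $\approx\tilde D_{\gamma,\tilde\delta,\eta}(u,v)$ balls. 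Summing over the $d$ boxes via \eqref{eq-triangle-inequality}, a percolation-style choice of the boundary points $x_i$ (analogous to but distinct from the fast-box argument), and concentration, one arrives at
$\E\log\tilde D_{\gamma,\delta\tilde\delta,\eta}\leq\E\log\tilde D_{\gamma,\delta,\eta}+\E\log\tilde D_{\gamma,\tilde\delta,\eta}+o(\log(\delta\tilde\delta)^{-1})$, which is the correct Fekete form $b_{n+m}\leq b_n+b_m+o(n+m)$, and Hammersley's theorem then applies. Your auxiliary distance $\widetilde D^\star$ and your appeal to scaling couplings are in the right spirit and do mirror ingredients in the paper's proof; but the way you combine them — splitting in Euclidean distance rather than refining across scales within the random partition — leaves a genuine gap that the rest of the outline cannot close.
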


\begin{lemma}\label{lem-exponent-point-to-boundary}
Let $\chi$ be as in Lemma~\ref{lem-existence-exponent}. For any $u\in \bar {\mathbb V}$, $\lambda = \frac 1 {20}$, 
 $$ 
\lim_{\delta\to 0} \frac{\E \log (\min_{x\in \partial {\mathbb V}_{u, \lambda}} \bar D_{\gamma, \delta, \eta}^{u, 2 \lambda} (u, x))}{\log \delta^{-1}} = \chi\,.
 $$
\end{lemma}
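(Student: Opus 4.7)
The plan is to prove $\limsup_{\delta \to 0} \le \chi$ and $\liminf_{\delta \to 0} \ge \chi$ separately.

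For the upper bound I take a fixed $v^\ast$ at the midpoint of one edge of $\partial \mathbb V_{u, \lambda}$, so that $|u - v^\ast| = \lambda/2$. A direct geometric check gives $\tilde{\mathbb V}_{u, v^\ast}\subseteq \mathbb V_{u, 2\lambda}$ and hence $\bar D^{u, 2\lambda}_{\gamma,\delta,\eta}(u, v^\ast) \le \tilde D_{\gamma,\delta,\eta}(u, v^\ast)$, since enlarging the admissible ball region only decreases the distance. Because $\min_{x \in \partial \mathbb V_{u, \lambda}} \bar D^{u, 2\lambda}_{\gamma,\delta,\eta}(u, x) \le \bar D^{u, 2\lambda}_{\gamma,\delta,\eta}(u, v^\ast)$, Lemma~\ref{lem-existence-exponent} yields $\limsup_{\delta \to 0} \E \log \min_x \bar D / \log \delta^{-1} \le \chi$.

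For the lower bound I argue by contradiction. Suppose $\chi' := \liminf \E \log \min_x \bar D^{u, 2\lambda}_{\gamma,\delta,\eta}(u, x)/\log \delta^{-1} < \chi$ and fix $\iota \in (0, (\chi - \chi')/10)$. An extension of Proposition~\ref{prop-concentration} to the restricted distance $\bar D^{u, 2\lambda}_{\gamma,\delta,\eta}$ (by the same proof method, cf.\ Remark~\ref{rem-5.2}) gives, along a subsequence $\delta_k \to 0$ realising the liminf, that
\[
\P\bigl(\min_x \bar D^{u, 2\lambda}_{\gamma, \delta_k, \eta}(u, x) \le \delta_k^{-\chi' - \iota}\bigr) \ge 1 - e^{-(\log \delta_k^{-1})^{0.7}}.
\]
I would then use this, via a pigeonhole percolation combined with the 4-fold rotational symmetry of the law of $\eta$ around $u$ afforded by \eqref{eq-translation-invariant}, to show that for every arc $I_i$ in a fine partition $I_1, \ldots, I_N$ of $\partial \mathbb V_{u, \lambda}$ into $N$ sub-exponential-in-$\log\delta^{-1}$ arcs, the min-to-arc distance $M_i := \min_{x \in I_i} \bar D^{u, 2\lambda}(u, x)$ is bounded by $\delta^{-\chi' - 2\iota}$ with high probability. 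Indeed, the union bound gives $\max_i \P(M_i \le \delta^{-\chi' - \iota}) \ge (1-o(1))/N$, and the polynomial (in $\delta^{-1}$) lower bound $(1-o(1))/N$ vastly exceeds the concentration defect $e^{-(\log \delta^{-1})^{0.7}}$, so a concentration estimate for each individual $M_i$ (once more extending Proposition~\ref{prop-concentration}) boosts this to $M_i \le \delta^{-\chi' - 2\iota}$ with high probability for all $i$ simultaneously.

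The final step is to translate this uniform smallness of the $M_i$'s into an upper bound of the form $\tilde D_{\gamma, \delta_k, \eta}(u, v^\ast) \le \delta_k^{-\chi + \iota}$ with high probability for a deterministic $v^\ast \in \partial \mathbb V_{u, \lambda}$ chosen near an edge-midpoint (so that $\tilde{\mathbb V}_{u, v^\ast} \subseteq \mathbb V_{u, 2\lambda}$), which would contradict the lower bound $\tilde D(u, v^\ast) \ge \delta_k^{-\chi + \iota/2}$ with high probability coming from Lemma~\ref{lem-existence-exponent} and Proposition~\ref{prop-concentration}. This is the main obstacle, because the natural inclusion $\tilde{\mathbb V}_{u, v^\ast} \subsetneq \mathbb V_{u, 2\lambda}$ means that $\tilde D$ has \emph{fewer} admissible balls than $\bar D^{u, 2\lambda}$, so $\bar D^{u, 2\lambda}(u, v^\ast) \le \tilde D(u, v^\ast)$ and a small $\bar D^{u, 2\lambda}$ does not immediately yield a small $\tilde D$. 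To resolve this, one must construct a $\tilde D$-admissible path by rerouting any portion of the $\bar D^{u, 2\lambda}$-geodesic lying in $\mathbb V_{u, 2\lambda} \setminus \tilde{\mathbb V}_{u, v^\ast}$ back into $\tilde{\mathbb V}_{u, v^\ast}$ along the boundary $\partial \tilde{\mathbb V}_{u, v^\ast} \cap \mathbb V_{u, 2\lambda}$, using the regularity of the partition $\mathcal V_\delta$ (Lemma~\ref{lem-regularity}) together with the percolation-type arguments of Section~3.2 to control the rerouted path's length by the original path's length times a factor of $e^{O((\log \delta^{-1})^{0.9})}$, which is absorbable in the $\iota$-slack.
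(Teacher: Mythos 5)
Your upper bound is fine and matches what the paper leaves implicit: once $\tilde{\mathbb V}_{u,v^*}\subseteq\mathbb V_{u,2\lambda}$ is verified for an edge-midpoint $v^*$, the point-to-set minimum is bounded above by $\tilde D_{\gamma,\delta,\eta}(u,v^*)$, and Lemma~\ref{lem-existence-exponent} finishes.

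The lower bound is where the proposal diverges from the paper and where a genuine gap appears. The paper argues for each small segment $L_\delta\subseteq\partial\mathbb V_{u,\lambda}$ \emph{directly} that $\E\log\min_{x\in L_\delta}\bar D(u,x)\ge(\chi-2\iota)\log\delta^{-1}$: if this failed for some $L_\delta$, one glues the $u$-to-$L_\delta$ geodesic, a $v$-to-$L_\delta$ geodesic from a second corner $v$ of $\bar{\mathbb V}$ at $\Theta(1)$-distance, and a cheap four-rectangle contour enclosing $L_\delta$, to produce a $\tilde D(u,v)$-admissible path, contradicting the two-point estimate of Lemma~\ref{lem-existence-exponent}. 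That contour trick is precisely what lets one avoid the issue you correctly identify — that a small $\bar D^{u,2\lambda}$ does \emph{not} upper-bound $\tilde D$. Your plan instead tries to build a short $\tilde D(u,v^*)$ geodesic by rerouting the $\bar D^{u,2\lambda}$-geodesic back into $\tilde{\mathbb V}_{u,v^*}$; but there is no reason a priori that a geodesic which finds it cheap to step outside $\tilde{\mathbb V}_{u,v^*}$ can be re-confined at subpolynomial extra cost. Controlling $\tilde D(u,v^*)$ by $\bar D^{u,2\lambda}(u,v^*)$ up to $e^{O((\log\delta^{-1})^{0.9})}$ is in fact equivalent in strength to the statement you are trying to prove, so Lemma~\ref{lem-regularity} and the Section~3.2 percolation lemmas do not close this; they compare \emph{approximate} to exact LGD on a common region, not LGD across nested regions. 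The paper sidesteps this entirely by using a second far-away endpoint and a contour, never rerouting a constrained geodesic.

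A secondary issue: step~3 (pigeonhole plus ``$4$-fold rotational symmetry of $\eta$ around $u$'') is shaky. The law of $\eta$ is tied to $\mathbb V$ and is not rotationally or reflection-symmetric about a generic $u\in\bar{\mathbb V}$; \eqref{eq-translation-invariant} gives invariance only after the scaling coupling of Lemma~\ref{lem-scaling-coupling}, which introduces $e^{O(\sqrt{\log\delta^{-1}})}$ errors. Even granting approximate stationarity along each edge, pigeonhole gives the bound for one arc per symmetry class; you would still need to combine this with the concentration defect being dominated by $1/N$, which forces $N\le\delta^{-c\iota^2}$ rather than merely subpolynomial, so the arc length parameter must be tuned with care. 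All of this is superfluous in the paper's route, which never needs to show that all $M_i$ are small — it proves directly that each one is \emph{large} in expectation and then combines with concentration.

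In short: the proposal has the right scaffolding (reduce to a two-point estimate via concentration and gluing) but the concrete gluing mechanism is wrong. Replacing the single-point rerouting by the paper's two-endpoint-plus-contour gluing to the common small segment $L_\delta$ would repair the argument.
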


\begin{proof}[Proof of Proposition~\ref{prop-existence-exponent} (assuming
Lemmas \ref{lem-existence-exponent} and \ref{lem-exponent-point-to-boundary})] 
 We first prove that for an arbitrarily small $\iota>0$
\begin{equation}\label{eq-exponent-upper-bound}
\E \log D_{\gamma, \delta, \eta}(u, v) \leq (\chi + \iota) \log \delta^{-1}\mbox{ as } \delta \to 0.
\end{equation}

To this end, let $y_i = u + \frac{i}{l} (v-u)$, $i=0, \ldots, l$ with $l = \min \{ \ell \in \mathbb Z :  \frac {|u-v|}{\ell} \le \min \{ \frac 2 {\sqrt 5} \xi, \frac 1 {20} \} \}$, where $\xi = \frac 1 2 \min \{ |u - \partial \mathbb V |_\infty, |v - \partial \mathbb V |_\infty \} $. Pick $\bar u, \bar v \in \bar{\mathbb V}$ with $|\bar u-\bar v|_\infty = 1/20$.
 Applying Lemma~\ref{lem-scaling-coupling} to each pair $(\tilde {\mathbb V}_{\bar u, \bar v}, \tilde {\mathbb V}_{y_i, y_{i+1}} )$ so that $\zeta^{(1)}$ has the same law as the $\eta$-process on $\tilde {\mathbb V}_{\bar u, \bar v}$ and 
$\zeta^{(2)}$ has the same law as the
$\eta$-process on $\tilde {\mathbb V}_{y_i, y_{i+1}}$,
 as well as using Lemma~\ref{lem-LGD-compare} (note that we can choose some constant $b_1 = b_1(u,v)$ as in the assumption of Lemma~\ref{lem-LGD-compare}), we see that with high probability 
\begin{equation}\label{eq-coupling-comparison}
\tilde D_{\gamma, \delta, \zeta^{(2)}}(y_i, y_{i+1}) \leq \tilde D_{\gamma, \delta e^{-\sqrt{\log \delta^{-1}}},  \zeta^{(1)}}  (\bar u, \bar v) \,.
\end{equation}
Combined with Lemmas~\ref{lem-existence-exponent}, \ref{lem-LGD-two-fields}, Corollary~\ref{cor-LGD-two-deltas} and Proposition~\ref{prop-concentration},
we see that with high probability,
 \begin{equation} \label{Eq.boundfortildeD}
 \tilde D_{\gamma, \delta, \eta}(y_i, y_{i+1}) \leq \delta^{-\chi-\iota} \mbox{ for } i=1, \ldots, l\,,
  \end{equation}
implying that  $D_{\gamma, \delta, \eta} (u,v) \le l \times \delta^{- \chi - \iota}$ by triangle inequality.  This yields \eqref{eq-exponent-upper-bound} (recall Proposition~\ref{prop-concentration}).

Next, we prove the lower bound, i.e., we prove that for arbitrarily small $\iota>0$,
\begin{equation}\label{eq-exponent-lower-bound}
\E \log D_{\gamma, \delta, \eta}(u, v) \geq (\chi - \iota) \log \delta^{-1}\mbox{ as } \delta \to 0.
\end{equation}
To this end, let $\lambda = \min\{ \frac 1 {\sqrt 2} \xi, \frac 1 {\sqrt 2} |u-v|, \frac 1 {20} \}$,
 and we see that $v \notin {\mathbb V}_{u, \lambda} \subseteq \mathbb V_{\xi}$. Similarly to the derivation of \eqref{eq-coupling-comparison}, we apply  Lemma~\ref{lem-scaling-coupling}
to the pair $( {\mathbb V}_{\bar u, \frac 1 {20}}, {\mathbb V}_{u, \lambda})$, combine with Lemma~\ref{lem-exponent-point-to-boundary}, and get that with high probability,
 $$
\min_{x \in \partial {\mathbb V}_{u, \lambda}} \bar D_{\gamma, \delta, \zeta^{(2)}}^{u, 2 \lambda}(u, x) \geq \min_{x\in \partial  {\mathbb V}_{\bar u, \frac 1 {20}}} \bar D_{\gamma, \delta e^{\sqrt{\log \delta^{-1}}}, \zeta^{(1)}}^{\bar u, \frac 1 {10}} (\bar u, x) \geq   (\chi - \iota) \log \delta^{-1} \,,
 $$
where $\zeta^{(1)}$ has the same law as the $\eta$-process on 
${\mathbb V}_{\bar u, \frac 1 {20}}$, and $\zeta^{(2)}$ has the same law as the
$\eta$-process on ${\mathbb V}_{u, \lambda}$. With high probability, balls intersecting both $\partial  {\mathbb V}_{u, \lambda}$ and $\partial {\mathbb V}_{u, 2 \lambda}$ have LQG measure larger than $2 \delta^2$, implying 
\begin{equation}\label{eq-geodesic-range}
\min_{x\in \partial {\mathbb V}_{u, \lambda}}  D_{\gamma, \delta, \eta}(u, x) = \min_{x\in \partial {\mathbb V}_{u, \lambda}} \bar D_{\gamma, \delta, \eta}^{u, 2 \lambda} (u, x) \,.
\end{equation} It follows that
 \begin{equation}\label{eq-distance-point-to-boundary-bound}
\E  \min_{x\in \partial \bar {\mathbb V}_{u, \lambda}} D_{\gamma, \delta, \eta}(u, x) \geq  (\chi - \iota) \log \delta^{-1} .
 \end{equation}
Since $D_{\gamma, \delta, \eta}(u, v) \geq \min_{x\in \partial {\mathbb V}_{u, \lambda}} D_{\gamma, \delta, \eta}(u, x)$ for $v \notin {\mathbb V}_{u, \lambda}$, we get \eqref{eq-exponent-lower-bound} as required.

Combining \eqref{eq-exponent-upper-bound}, \eqref{eq-exponent-lower-bound} and Lemma~\ref{lem-LGD-two-fields} we complete the proof of the proposition.
\end{proof}

Next, we prove Lemma~\ref{lem-existence-exponent}, employing 
a sub-additive argument.  As in the proof of  \eqref{eq-heat-kernel-lower-bound}, Lemma~\ref{lem-partition-independence} plays a crucial role.

\begin{proof}[Proof of Lemma~\ref{lem-existence-exponent}]
For $u, v\in \bar {\mathbb V}$, let $w_i = u + \frac i 9 |u-v|$ so that $\tilde {\mathbb V}_{x,y} \subseteq \tilde {\mathbb V}_{u,v}$ for all $x,y \in \tilde {\mathbb V}_{w_{i-1},w_i}$, $i = 1, \ldots, 9$ (we made such choices so that later the paths we build to join $w_i$ and $w_{i+1}$ will be all contained in $\tilde {\mathbb V}_{u,v}$). Fix $\delta>0$.
\begin{defn}[$\mathcal E^\star_{\delta, \alpha^*, u, v}$]
\label{def-Estar}  Let  $\mathcal E^\star_{\delta, \alpha^*, u, v}$ denote  the following event: there exists a good sequence as in  Definition~\ref{def-good-sequence-box}
of  neighboring dyadic boxes $\mathcal C = \mathsf C_1, \ldots, \mathsf C_d$, contained in  $\cup_{i=1}^9  \tilde {\mathbb V}_{w_{i-1}, w_i}$ and  measurable with respect to $\mathcal F^* = \sigma(\cup_{i=1}^9  \mathcal V_{\delta, w_{i-1}, w_i})$,  joining $u$ to $ v$, such that 
\begin{itemize}
\item $d\leq  e^{(\log \delta^{-1})^{0.7}} \sum_{i=1}^9 d_i$ with $d_i =\tilde D'_{\gamma, \delta} (w_{i-1}, w_i)$; 
\item Each $\mathsf C_i$ satisfies $M_{\gamma, s_{\mathsf C_i}} (\mathsf C_i) \leq \delta^2 e^{O( (\log \delta^{-1})^{0.8})}$;
\item  The 
 law of  $\{\eta_{\delta'}^{s_{\mathsf C_i}}(x): \delta' < s_{\mathsf C_i}, x \in (\mathsf C_i)_{\mathrm{large}}, 
 \mathsf C_i \in \mathcal C  \}$
conditioned on $\mathcal F^*$ coincides with
its unconditional version. 
\end{itemize}
\end{defn}
Note that here as in Section~\ref{subsec-LHKLB} we have abused the notation by denoting by $\mathsf C_i$ a dyadic box which is not necessarily a cell. The abuse of notation is justified by the fact that $M_{\gamma, s_{\mathsf C_i}} (\mathsf C_i) \leq \delta^2 e^{O( (\log \delta^{-1})^{0.8})}$ and thus the  $\mathsf C_i$'s will essentially play the role of cells.

 Following the discussions after \eqref{eq-E2}
(with a crucial application of Lemma~\ref{lem-partition-independence}), we see that $\P(\mathcal E^\star_{\delta, \alpha^*, u, v}) \geq  1-e^{-(\log \delta^{-1})^{0.23}}$.
By Proposition~\ref{prop-approximate-LGD}, Lemmas~\ref{lem-scaling-coupling}, \ref{lem-LGD-compare}, \ref{lem-LGD-two-fields}, Corollary~\ref{cor-LGD-two-deltas} and \eqref{eq-coupling-comparison}, with high probability,
 $$
d_i \le e^{(\log \delta^{-1})^{0.92}} \tilde D^{(i)}_{\gamma, \delta, \eta} (u,v) \le e^{(\log \delta^{-1})^{0.94}}   \exp \{\E \log \tilde D_{\gamma, \delta, \eta} (u,v)  \}, 
  $$
where $\tilde D^{(i)}_{\gamma, \delta, \eta} (u,v)$ is a copy of $\tilde D_{\gamma, \delta, \eta} (u,v)$ and is coupled with $d_i$. Thus, 
 \begin{equation} \label{Eq.calD1}
\P (\mathcal D_1) \geq  1-  e^{- (\log \delta^{-1})^{0.22}}, \ \ \ \mbox{where } \mathcal D_1 : = \{  \log d \le (\log \delta^{-1})^{0.95} + \E \log \tilde D_{\gamma, \delta, \eta} (u,v)  \} \,.  
 \end{equation}

In order to set a sub-additivity argument, we need
to further relate $d$ to $\tilde D_{\gamma, \delta\tilde \delta, \eta}(u, v)$ for $\tilde \delta>\delta$. To this end, we let $x_i\in \Lambda_i = \partial \mathsf C_i \cap \partial \mathsf C_{i+1}$ for each $i=1, \ldots, d-1$, to be chosen later depending on the GFF (for convenience we write $x_0 = u$ and $x_d= v$). By the 
triangle inequality, we see that
\begin{equation}\label{eq-triangle-inequality}
\tilde D_{\gamma, \delta\tilde \delta, \eta}(u, v) \leq \sum_{i=0}^{d-1} D_{\gamma, \delta \tilde \delta, \eta}^{ \tilde {\mathbb V}_{u,v}} (x_i, x_{i+1})\,.
\end{equation}
We claim that with probability at least $1-e^{-c(\log 1/\delta)^{0.51}}$,
there exists a choice of $x_1, \ldots, x_{d-1}$ such that for all $0\leq i\leq d-1$
\begin{equation}\label{eq-union-bound-distance}
\log D_{\gamma, \delta \tilde \delta, \eta}^{\tilde {\mathbb V}_{u,v}} (x_i, x_{i+1}) \leq \E \log \tilde D_{\gamma, \tilde \delta, \eta}(u, v) + 4(\log \delta^{-1})^{0.98}\,.
\end{equation}
Assuming \eqref{eq-union-bound-distance}, we can complete the proof of the lemma, as follows. 
Denote the event in \eqref{eq-union-bound-distance} by $\mathcal D_2$ and let $\mathcal D = \mathcal D_1 \cap \mathcal D_2$. We obtain from \eqref{Eq.calD1}, \eqref{eq-triangle-inequality} and \eqref{eq-union-bound-distance} that
\begin{equation}
\label{eq-040418}
\E({\bf 1}_{\mathcal D} \log \tilde D_{\gamma, \delta\tilde \delta, \eta}(u, v))
\leq \E \log \tilde D_{\gamma, \delta, \eta} (u,v) +\E \log \tilde D_{\gamma, \tilde \delta, \eta}(u, v) + 5 \times (\log \delta^{-1})^{0.98}.
\end{equation}
On the other hand, using an analogue of \eqref{eq-very-crude}, we have by an application of Jensen's inequality that 
\[\E({\bf 1}_{{\mathcal D}^c} \log \tilde D_{\gamma, \delta\tilde \delta, \eta}(u, v))
\leq { (\log \delta^{-1})} e^{-(\log \delta^{-1})^{{ 0.1}}}.\]
Setting $\chi_\delta = \frac{\E \log 
\tilde D_{\gamma, \delta, \eta}(u, v)}{\log \delta^{-1}}$ and
combining the last display with
\eqref{eq-040418}, we obtain
$$\chi_{\delta\tilde \delta} \leq \frac{\log \delta^{-1}}{\log \delta^{-1} + \log \tilde \delta^{-1}} \chi_{\delta} +  \frac{\log \tilde \delta^{-1}}{\log \delta^{-1} + \log \tilde \delta^{-1}} \chi_{\tilde \delta} + 
(\log \delta^{-1})^{-0.01}\,.$$
Applying  \cite{Hammersley62} (see also \cite[Lemma 6.4.10]{DZ10}), this yields that $\chi_\delta$ converges to some constant $\chi$ as $\delta\to 0$ over a sequence $\delta_k=2^{-k}$, and then by continuity the convergence extends to arbitrary $\delta\to 0$.
By Proposition~\ref{prop-approximate-LGD}, Lemmas~\ref{lem-scaling-coupling}, \ref{lem-LGD-compare}, \ref{lem-LGD-two-fields} and Corollary~\ref{cor-LGD-two-deltas}, $\chi$ does not depend on $u,v$.
Combined with 
Corollary~\ref{cor-approximate-LGD-expectation} and  Lemma~\ref{lem-LGD-two-fields}, this yields Lemma~\ref{lem-existence-exponent}.

It remains to prove \eqref{eq-union-bound-distance}. The proof follows the proof strategy for \eqref{eq-heat-kernel-lower-bound}.
Set $k = \lfloor (\log \delta^{-1})^{0.51}\rfloor$ and $K = 2^k$. Partition $\mathsf C_i$ into $K^2$ many dyadic squares with side length $s_i/K$, and we denote the collection of such squares as $\mathcal B_i$, where $s_i = s_{\mathsf C_i}$. For each $\mathsf B\in \mathcal B_i$, we say  $\mathsf B$ is open if for any $\Lambda \subseteq \partial  \mathsf B$ with $\mathcal L_1(\Lambda) \geq 10^{-5}s_i/K$ there exists $\Lambda'\subseteq \partial \mathsf B$ with $\mathcal L_1(\Lambda') \geq (1 - 10^{-5})s_i/K$ such that
$$\min_{z \in \Lambda} \log \tilde D_{\gamma, \delta \tilde \delta, \eta}(z,z') \leq \E \log \tilde D_{\gamma, \tilde \delta, \eta}(u, v)   + (\log \delta^{-1})^{0.98}\,, \mbox{ for each }z'\in \Lambda'\,.$$
Recall the definitions of $\epsilon^*$  from \eqref{eq-def-epsilon*}.
Let $\check \eta^{\mathsf B}$  be defined as in \eqref{eq-def-tilde-eta} with $\mathsf B_\mathrm{large}$ and $\mathsf B_{\mathrm{Large}}$ respectively replaced by
 $\mathsf B^* = \{ x : \| x - \partial B \|_\infty \le 2 s_i / K \}$ and $\mathsf B^{**} = \{ x : \| x - \partial B \|_\infty \le 3 s_i / K \}$, i.e.
 $$
\check \eta^{\mathsf B, s_i}_{\epsilon'} (z) : = \left\{ \begin{array}{ll} \sqrt {\pi} \int_{\mathbb V \times ( (\epsilon')^2, s_i^2 )} p_{_{\mathsf B^{**}}} (s/2, z,w) W(dw, ds), & \mbox{if } z \in \mathsf B^* \mbox{ and } \epsilon' < s_i, \\ 0, & \mbox{otherwise.} \end{array} \right. 
 $$
Similarly to \eqref{eq-assumption-in-lower-heat-kernel}, we have
 $$
\max_{\mathsf C_i \in \mathcal C} \max_{\mathsf B \in \mathsf B_i, z \in \mathsf B^*} \max_{\epsilon' < s_{\mathsf C}, \log_2 \epsilon' \in \mathbb Z} |\check \eta^{\mathsf B, s_i}_{\epsilon'} (z) - \eta^{s_i}_{\epsilon'} (z)| = O(\sqrt{\log \delta^{-1}}), \quad \mbox{\rm  with high probability}. 
 $$  
Let
 $$
\mathcal E_4 : =  \mathcal E_{\delta, \alpha^*} \cap \mathcal E^\star_{\delta, \alpha^*, u, v} \cap \{ {\mbox{the above event holds}} \} \cap \{  \mbox{\rm \eqref{eq-tilde-h-eta-assump} holds}   \} \,,
 $$ 
and we work on $\mathcal E_4$.
For each $\mathsf B\in \mathcal B_i$, we claim that there is an event $\mathcal E_{\mathsf B, \mathrm{open}}$ which is measurable with respect to the field $\check \eta^{\mathsf B}$
so that 
 \begin{equation}\label{eq-z-open}
 \P(\mathcal E_{\mathsf B, \mathrm{open}} \mid {\mathcal F^*}) \geq 1- O(K^{-2})  \mbox{ and } (\mathcal E_{\mathsf B, \mathrm{open} } \cap \mathcal E_4) \subseteq \{\mathsf B \mbox{ is open}\}\,.
 \end{equation}
 (We remark that this is very  similar to \eqref{eq-z-pre-fast}.) We now verify \eqref{eq-z-open}.
 On the event we work on, we have that for any Borel set $A \subseteq \mathsf B \in \mathcal B_i$, 
 \begin{equation}\label{eq-M-A-upper-bound-bis}
M_{\gamma}(A) \leq \delta^2 s_{i}^{-2} M_\gamma^{\check \eta^{\mathsf B}} (A) 
\exp\{(\log \delta^{-1})^{0.91}\}
 \end{equation}
(this is similar to \eqref{eq-PCAF-approx}) where $M_\gamma^{\check \eta^{\mathsf B}} (A)$
is defined as in \eqref{eq-def-M-eta} with $\zeta_\cdot$ replaced by $\check \eta^{\hat B}$.
Consider $z, z' \in \partial \mathsf B$.  It would be simple to proceed if
the process $\{ \check \eta_{\epsilon'}^{\mathsf B, s_i } (x): \epsilon' <  s_i,  x\in \tilde {\mathbb V}_{z, z'}\}$ had the same law as 
$\{\eta_{a\epsilon'}(\theta x): \epsilon' < s_i, x\in \tilde {\mathbb V}_{z, z'}\}$, where $\theta (x) = a\theta'(x)$ for an appropriate $a>0$ and an isometry $\theta'$ such that $\theta$ maps $\tilde {\mathbb V}_{z, z'}$  to $\tilde {\mathbb V}_{u, v}$ (we see that $a$ is of the same order as $s_{i}^{-1} K$ and so $a^{-1} \leq s_i$).  While such desired identity in law does not hold precisely, we 
claim that
there exists a coupling of $\{ \check \eta_{\epsilon'}^{\mathsf B, s_i } (x): 
\epsilon' < s_i, x\in \tilde {\mathbb V}_{z, z'}\}$ and 
$\{\eta_{a\epsilon'}(\theta x): \epsilon' < s_i, 
x\in \tilde {\mathbb V}_{z, z'}\}$ such that with high probability with respect to $\P (\cdot \mid {\mathcal F^*})$
 \begin{equation}\label{eq-scaling-invariance-approximate}
\max_{n\geq 1, 2^{-n}\leq  a^{-1}}\max_{x\in \tilde {\mathbb V}_{z, z'}} |\check \eta_{2^{-n}}^{\mathsf B, s_i }  
(x) - \eta_{a2^{-n}}(\theta x)| \leq (\log \delta^{-1})^{0.92}\,.
 \end{equation}
We postpone the proof of \eqref{eq-scaling-invariance-approximate} and proceed 
with the proof of \eqref{eq-z-open}.
Since (by a straightforward computation) $|\Var ( \check \eta_{2^{-n}}^{\mathsf B, s_i }
(x)) - \Var(\eta_{a2^{-n}}(\theta x))| = O(1)(\log \delta^{-1})^{0.6}$ for all $x\in \tilde {\mathbb V}_{z, z'}$ and $2^{-n}\leq  a^{-1}$, we see that on the event that
 \eqref{eq-M-A-upper-bound-bis} and \eqref{eq-scaling-invariance-approximate} hold we 
have that 
 $$
M_\gamma^{\check \eta^{\mathsf B}} (A) \le \exp\{ ( \log \delta^{-1} )^{-0.93} \} a^{-2} M_\gamma^{\eta} (\theta A) \le \exp\{ ( \log \delta^{-1} )^{-0.94} \} s_i^2 M_\gamma^{\eta} (\theta A) ,
 $$
recalling $a^{-1} \le s_i$. Combined with \eqref{eq-M-A-upper-bound-bis}, it follows that
 $$
\tilde D_{\gamma, \delta \tilde \delta, \eta}(z, z') \leq \tilde D_{\gamma, \tilde \delta \exp\{ -(\log \delta^{-1})^{0.95}\}, \eta}(u, v)\,.
 $$
We now combine the preceding inequality with Corollary~\ref{cor-LGD-two-deltas} and Proposition \ref{prop-concentration}, and deduce that 
$$\P( {\log \tilde D_{\gamma, \delta \tilde \delta, \eta}(z,z') \geq \E \log \tilde D_{\gamma, \tilde \delta, \eta}(u, v)  + (\log \delta^{-1}})^{0.97} \mid { \mathcal F^*}) \leq O(K^{-4})\,.$$
Write $\Lambda_{z, \mathrm{far}} = \{z'\in \partial \mathsf B:
\log \tilde D_{\gamma, \delta \tilde \delta, \eta}(z,z') \geq \E \log \tilde D_{\gamma, \tilde \delta, \eta}(u, v)  + (\log \delta^{-1})^{0.97} \}$. The preceding inequality implies that 
$$\P \left( \mathcal L_1(\Lambda_{z, \mathrm{far}} ) \geq K^{-1} {\mathcal L_1 (\partial \mathsf B) } \mid {\mathcal F^*} \right) = O(K^{-3}) \mbox{ for each } z \in \partial \mathsf B\,.$$
Therefore, we get that
$$\P \left(  \left.\mathcal L_1(\{z\in \partial \mathsf B : \mathcal L_1(\Lambda_{z, \mathrm{far}} ) \geq K^{-1}{\mathcal L_1 (\partial \mathsf B)} \}) \geq K^{-1}{\mathcal L_1 (\partial \mathsf B)}
\, \right\rvert {\mathcal F^*} \right) = O(K^{-2})\,.$$
This implies that \eqref{eq-z-open} holds (up to the proof of \eqref{eq-scaling-invariance-approximate}, which is still postponed).

Having established \eqref{eq-z-open}, we proceed with the percolation argument. We say  $\mathsf C_i$ is \textit{desirable} if for any $\Lambda_{i, \mathrm{end}} \subseteq  \Lambda_i$ with $\mathcal L_1(\Lambda_{i, \mathrm{end}}) \geq 0.1 \mathcal L_1(\Lambda_i) $ (here it is useful to recall \eqref{eq-Lambda-i-not-small}), there exists 
\begin{equation}\label{eq-def-B-i-2}
\Lambda_{i, \mathrm{start}} = \Lambda_{i, \mathrm{start}}(\Lambda_{i, \mathrm{end}})\subseteq \Lambda_{i-1}\mbox{  with } \mathcal L_1(\Lambda_{i, \mathrm{start}}) \geq 0.1\mathcal L_1(\Lambda_{i-1}) 
\end{equation}
such that the following holds for each $x\in  \Lambda_{i, \mathrm{start}}$:
\begin{equation}\label{eq-start-end}
\min_{x'\in  \Lambda_{i, \mathrm{end}}} \log \tilde D^{\tilde {\mathbb V}_{u,v}}_{\gamma, \delta\tilde \delta, \eta} (x, x') \leq \E \log \tilde D_{\gamma, \tilde \delta, \eta}(u, v)  + 2 (\log \delta^{-1})^{0.98}\,.
\end{equation}
In words, $\mathsf C_i$ is desirable if any not-so-small subset of $\Lambda_i$ is connected with a not-so-small subset of $\Lambda_{i-1}$ by open boxes. Similar to \eqref{eq-par},
we obtain  that each cell $\mathsf C_i$ is desirable with probability  $1-e^{-\Omega(2^{\sqrt{\log \delta^{-1}}})}$ and thus a union bound verifies that all cells $\mathsf C_2, \ldots, \mathsf C_{d-1}$ are desirable with high probability. 

We also need to consider the cells containing $u$ and $v$. Consider $\mathsf C_1 = \mathsf C_{\delta, u}$. Using a similar but simpler argument, we can show that with probability tending to 1 there exists $\Lambda_{u} \subseteq \Lambda_1$ with  $\mathcal L_1(\Lambda_u) \geq 0.99\mathcal L_1(\Lambda_1)$  such that for $x\in  \Lambda_u$ we have $\log \tilde D^{\tilde {\mathbb V}_{u,v}}_{\gamma, \delta\tilde \delta, \eta} (u, x) \leq \log \E \tilde D_{\gamma, \tilde \delta, \eta}(u, v)  + (\log \delta^{-1})^{0.98}$. 
When this occurs, we say that \textit{$u$ is desirable}. 
As before, 
with high probability,
we have  that
$v$ is desirable, i.e.,  there exists $\Lambda_{v} \subseteq \Lambda_{d-1}$ with  $\mathcal L_1(\Lambda_v) \geq  0.99\mathcal L_1(\Lambda_{d-1})$  such that for each $x\in \Lambda_v$ we have  $
\log \tilde D^{\tilde {\mathbb V}_{u,v}}_{\gamma, \delta\tilde \delta, \eta} (v, x) \leq \log \E \tilde D_{\gamma, \tilde \delta, \eta}(u, v) +  (\log \delta^{-1})^{0.98}$.

 We now work on the event that
$u, v$ are desirable and that $\mathsf C_2, \ldots, \mathsf C_{d-1}$ are desirable, and we describe in what follows how to choose $x_i\in \Lambda_i$ so that \eqref{eq-union-bound-distance} holds. We let $\Lambda_{d-1}^* = \Lambda_v$ and for $i=d-2, \ldots 1$ we recursively let $\Lambda_{i}^* = \Lambda_{i+1, \mathrm{start}} (\Lambda_{i+1}^*)$  (where the set $\Lambda_{i+1, \mathrm{start}} (\cdot)$ is defined as in \eqref{eq-def-B-i-2}). Therefore, we see that $\Lambda_i^*\subseteq \Lambda_{i}$ and $\mathcal L_1(\Lambda_i^*) \geq 0.1\mathcal L_1(\Lambda_i)$.
  Next, we set $x_0 = u$ and sequentially set for $i=1, \ldots, d-1$
 $$x_{i} = \arg\min_{x'\in \Lambda_{i}^*} \tilde D^{\tilde {\mathbb V}_{u,v}}_{\gamma, \delta\tilde \delta, \eta} (x_{i-1}, x')\,.$$
  It remains to verify \eqref{eq-union-bound-distance} for our choices of $x_i$'s. Since $\Lambda_u \cap \Lambda_1 \neq \emptyset$ (this comes from the lower bounds on their Lebesgue measures), we see \eqref{eq-union-bound-distance} holds for $i=0$. By \eqref{eq-start-end}, \eqref{eq-union-bound-distance} holds for $1\leq i\leq d-2$. Finally, \eqref{eq-union-bound-distance} holds for $i=d-1$ by our choice of $\Lambda_{d-1}^* = \Lambda_v$.
			
We finally return to the proof of \eqref{eq-scaling-invariance-approximate}, which is similar to that of Lemma~\ref{lem-scaling-coupling}. Recall that $\theta (x) = a\theta'(x)$ for appropriate $a>0$ and an isometry $\theta'$  is a bijective mapping from $\tilde {\mathbb V}_{z, z'}$  to $\tilde {\mathbb V}_{u, v}$. Thus, $a$ is of the same order as $s_{i}^{-1} K^{1}$ and so $a^{-1} \leq s_i$. Recall the definition of $\hat h$-process as in \eqref{eq:WND_decomposition-stationary}. 
 By an  argument similar to that in the proof of \eqref{eq-coupling-hat-h-eta-1} and \eqref{eq-coupling-hat-h-eta-2},
we have  that with high probability, 
\begin{equation}\label{eq-coupling-hat-h-eta-tilde}
 \max_{x\in \tilde {\mathbb V}_{z, z'}} \max_{n\geq 0, 2^{-n} \leq \epsilon^* s_i^*} |\hat h_{2^{-n}}^{s_i} (x) - \check \eta_{ 2^{-n}}^{\mathsf B, s_i }( x) |+ \max_{x\in \tilde {\mathbb V}_{z, z'}} \max_{n\geq 0, a2^{-n}\leq 1} |\hat h_{ a 2^{-n}}^1(\theta x) - \eta_{ a2^{-n}}(\theta x)| \leq (\log \delta^{-1})^{0.90}\,.
 \end{equation}
Next we need to control $ \hat h_{2^{-n}}^{s_i}(x) - \hat h_{2^{-n}}^{a^{-1}}(x) = \hat h^{s_i}_{a^{-1}}(x)$.  Let $\mathfrak C$ be a maximal collection of points in $\tilde {\mathbb V}_{z, z'}$ such that the pairwise distance is at least $a^{-1}$. Then,  $|\mathfrak C| \leq O(a^2)$. By \eqref{eq-hat-h-continuity} and Lemma~\ref{lem-ferniquecriterion}, we have that $\E \max_{y: |y-x| \leq 4a^{-1}} |\hat h^{s_i}_{a^{-1}}(x) - \hat h^{s_i}_{a^{-1}}(y)| = O(1)$, for all $x\in \mathfrak C$. Thus, by Lemma~\ref{Lem.concentration}, we have that with high probability, 
 $$ \max_{x\in \mathfrak C}\max_{y: |y-x| \leq 4a^{-1}} |\hat h^{s_i}_{a^{-1}}(x) - \hat h^{s_i}_{a^{-1}}(y)| \leq (\log \delta^{-1})^{0.90}\,.$$
In addition, since $ \Var(\hat h^{s_i}_{a^{-1}}(x)) \leq O(1) \log \delta^{-1}$ for all $x\in \mathfrak C$, a  union bound gives that with high probability $\max_{x\in \mathfrak C} |\hat h^{s_i}_{a^{-1}}(x) |\leq (\log \delta^{-1})^{0.90}$. Altogether, this gives that with high probability $\max_{x\in \tilde {\mathbb V}_{z, z'}} |\hat h^{s_i}_{a^{-1}}(x)| \leq 2(\log \delta^{-1})^{0.90}$. Combined with \eqref{eq-coupling-hat-h-eta-tilde}, we have
 that with high probability 
$$\max_{x\in \tilde {\mathbb V}_{z, z'}} \max_{n\geq 0, 2^{-n} \leq a^{-1}} |\hat h_{2^{-n}}^{a^{-1}}(x) - \check \eta_{ 2^{-n}}^{\mathsf B, s_i}( x) |+ \max_{x\in \tilde {\mathbb V}_{z, z'}} \max_{n\geq 0, a2^{-n}\leq 1} |\hat h_{ a 2^{-n}}^1(\theta x) - \eta_{ a2^{-n}}(\theta x)| \leq (\log \delta^{-1})^{0.91}\,.
$$
Combined with  the translation invariance and scaling invariance property of $\hat h$-process, we finally conclude the proof of \eqref{eq-scaling-invariance-approximate}.
\end{proof}

Finally, we prove Lemma \ref{lem-exponent-point-to-boundary}, where we will crucially used Proposition~\ref{prop-concentration} and Lemma~\ref{lem-existence-exponent}.
 \begin{figure}[ht]
 \vspace{-2cm}
  \includegraphics[width=17cm]{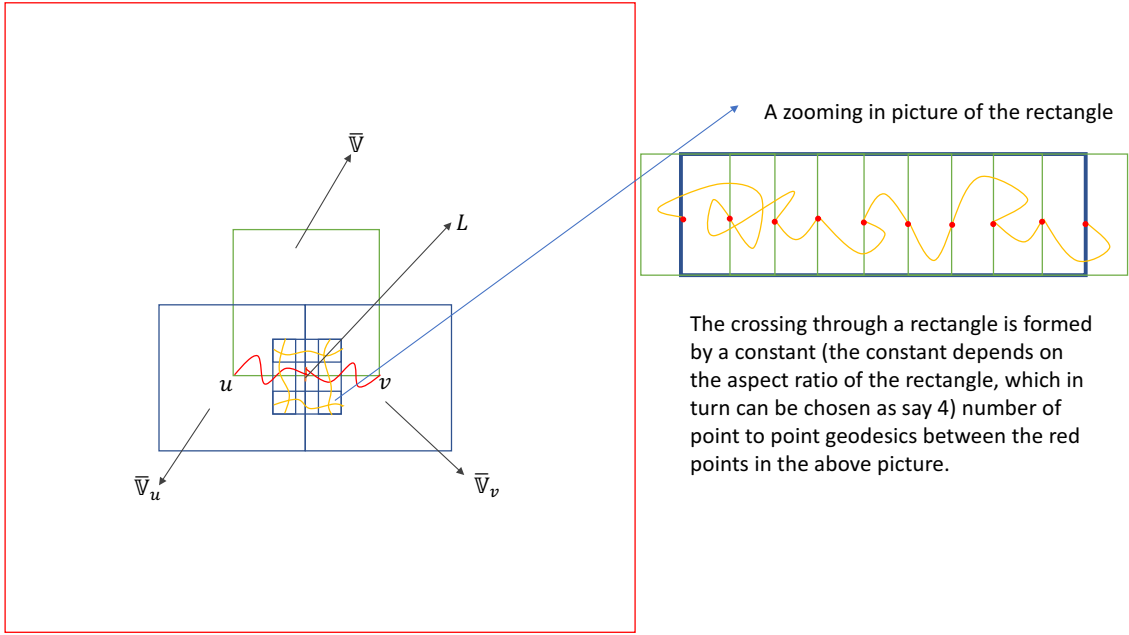}
\\ \vspace{-3cm} \caption{On the left, the big  box is $\mathbb V$ and the inside is an illustration of how we join $u$ and $v$ using geodesics from $u$, $v$ to $L$ as well as an annulus enclosing $L$. On the right is an illustration for the 
crossing in the small rectangle.}
\label{fig-glue} \end{figure}

\begin{proof}[Proof of Lemma~\ref{lem-exponent-point-to-boundary}]
Fix an arbitrarily small $0<\iota<C_{\mathrm{Mc}}/6$.  Let $u, v$ be the left bottom and right bottom corners of  $\bar {\mathbb V}$, respectively (such choice of $u, v$ is somewhat arbitrary). By Lemma~\ref{lem-existence-exponent} there exists $\delta_0$ depending on $(\gamma, \iota)$ such that for all $\delta \leq \delta_0$
\begin{equation}\label{eq-delta_0} 
(\chi - \iota/10)\log \delta^{-1}\leq  \E \log \tilde D_{\gamma, \delta, \eta}(u, v) \leq (\chi + \iota/10)\log \delta^{-1}\,.
\end{equation}
Recall $\lambda = \frac 1 {20}$. We denote $\bar {\mathbb V}_u = \mathbb V_{u, \lambda}$ and $\bar D^{u, 2\lambda}_{\gamma, \delta, \eta}$ by $\bar D_{\gamma, \delta, \eta}$ for brevity. We claim that for any line segment  $ L_\delta \subseteq  \partial \bar {\mathbb V}_u$ with length in $[\delta^{2\iota}/2, \delta^{2\iota}]$, we have
\begin{equation}\label{eq-point-to-segment}
\E \log \min_{x\in  L_\delta} \bar D_{\gamma, \delta, \eta} (u, x) \geq (\chi - 2\iota)\log \delta^{-1}\,.
\end{equation}
Suppose \eqref{eq-point-to-segment} does not hold. We assume without loss (by symmetry) that there exists an $ L_\delta$ on the right vertical boundary of $\bar {\mathbb V}_u$ so that \eqref{eq-point-to-segment} fails. Then, we  give an upper bound on the distance between $u$ and $v$ by gluing the geodesics from $u$ to $ L_\delta$, $v$ to $ L_\delta$ as well as four short crossings through four rectangles (with dimension $10| L_\delta| \times 40 | L_\delta|$)  which altogether form a contour enclosing $ L_\delta$ (see Figure~\ref{fig-glue} for an geometric illustration) --- we remark that each of the four 
rectangle crossings can be formed by a constant number of point to point geodesics thanks to the restriction to $\tilde {\mathbb V}_{x, y}$ in the definition of $\tilde D_{\gamma, \delta, \eta}(x, y)$. With high probability, the balls intersecting both $\partial \mathbb V_{u, \lambda}$ and $\partial \mathbb V_{u, 2 \lambda}$ (respectively,  $\partial \mathbb V_{v, \lambda}$ and $\partial \mathbb V_{v, 2 \lambda}$) have LQG measure larger than $2 \delta^2$ (and thus similar equalities to \eqref{eq-geodesic-range} hold). On this event, one has
 $$
 \tilde D_{\gamma, \delta, \eta}(u, v) \leq  \min_{x\in  L_\delta} \bar D_{\gamma, \delta, \eta} (u, x) +  \min_{x\in  L_\delta} \bar D^{v, 2 \lambda}_{\gamma, \delta, \eta} (v, x) + \sum_{(x, y)} \tilde D_{\gamma, \delta, \eta}(x, y) ,
  $$
where in the third term on the right hand side, the sum is over all pairs of neighboring red points on the right hand side of Figure~\ref{fig-glue} (for each such pair $(x,y)$ we have $|x - y| \leq  10 | L_\delta|$). Thus by \eqref{eq-delta_0}  and a similar scaling argument as in the proof of \eqref{eq-z-open} we have that with probability tending to 1 as $\delta \to 0$
$$\tilde D_{\gamma, \delta, \eta}(x, y) \leq \delta^{-\chi + \iota} \mbox{ for all such } (x, y)\,.$$
Combined with our assumption that \eqref{eq-point-to-segment} fails for $ L_\delta$, we then deduce that $\tilde D_{\gamma, \delta, \eta} (u,v) \leq \delta^{-\chi + \iota/2}$ with probability tending to 1 as $\delta \to 0$, contradicting with \eqref{eq-delta_0} and Proposition~\ref{prop-concentration}. Thus, we have shown that \eqref{eq-point-to-segment} holds.

Next, note that 
$$\min_{x\in \partial \bar {\mathbb V}_u} \bar D_{\gamma, \delta, \eta} (u, x) = \min_{ L_\delta} \min_{x\in  L_\delta} \bar D_{\gamma, \delta, \eta} (u, x)\,,$$
where the minimization is over $4 \delta^{-2\iota}$ many disjoint segments $ L_\delta$ of length $\delta^{2\iota}$. 
Combined with Proposition~\ref{prop-concentration} ( note that $\{(u, L_\delta)\}$ forms a sequence of admissible pairs as required for applying Proposition~\ref{prop-concentration}), this implies that
$$\E \log (\min_{x\in \partial \bar {\mathbb V}_u} \bar D_{\gamma, \delta, \eta} (u, x)) \geq (\chi - 2\iota - C\iota^{1/2}) \log \delta^{-1}$$
for some constant $C>0$. Since we can choose $\iota>0$ arbitrarily small, this
completes the proof of the lemma.
\end{proof}

\section{Appendix}

In this appendix, we record, for use in subsequent work, a few lemmas that can be readily deduced from the techniques employed in this paper; these lemmas are not used in the paper.
Let $\lambda = \frac 1 {20}$ as in Lemma~\ref{lem-exponent-point-to-boundary}. Denote $ \bar {\mathbb V}_u = {\mathbb V}_{u,  \lambda}$ and $ \bar {\mathbb V}_{u, \alpha} = {\mathbb V}_{u, \alpha \lambda}$  for $\alpha \in (0,1)$.
\begin{lemma}\label{lem-exponent-boundary-to-boundary}
Fix  $\alpha \in (0, 1)$.
 Let $\chi$ be as in Lemma~\ref{lem-existence-exponent}. Then, for any $u\in \bar {\mathbb V}$,
\begin{equation}\label{eq-boundary-to-boundary}
\lim_{\delta\to 0} \frac{\E \log (\min_{x\in  \partial \bar {\mathbb V}_{u, \alpha},  y\in \partial \bar {\mathbb V}_u}D_{\gamma, \delta}(x, y))}{\log \delta^{-1}} = \lim_{\delta\to 0} \frac{\E \log (\min_{x\in  \partial \bar {\mathbb V}_{u, \alpha},  y\in \partial \bar {\mathbb V}_u}D_{\gamma, \delta, \eta}(x, y))}{\log \delta^{-1}} = \chi\,.
\end{equation}
\end{lemma}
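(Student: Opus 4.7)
The plan is to establish matching asymptotic upper and lower bounds on $\E \log \min_{x,y} D_{\gamma,\delta,\eta}(x,y)/\log\delta^{-1}$, where the minimum is over $(x,y) \in \partial \bar{\mathbb V}_{u,\alpha} \times \partial \bar{\mathbb V}_u$, and then to transfer the result to $D_{\gamma,\delta}$ via Lemma~\ref{lem-LGD-two-fields}. The upper bound is immediate: fix any pair of interior points $x_0 \in \partial \bar{\mathbb V}_{u,\alpha}$ and $y_0 \in \partial \bar{\mathbb V}_u$; since $\min_{x,y} D_{\gamma,\delta,\eta}(x,y) \le D_{\gamma,\delta,\eta}(x_0,y_0)$ pointwise, Proposition~\ref{prop-existence-exponent} combined with Lemma~\ref{lem-LGD-two-fields} gives $\E \log D_{\gamma,\delta,\eta}(x_0,y_0) = (\chi + o(1))\log\delta^{-1}$, so $\limsup_{\delta \to 0} \E \log\min / \log\delta^{-1} \le \chi$.

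For the lower bound, I would mirror the strategy of Lemma~\ref{lem-exponent-point-to-boundary}. Fix a small $\iota > 0$ and cover $\partial \bar{\mathbb V}_{u,\alpha}$ and $\partial \bar{\mathbb V}_u$ by $O(\delta^{-2\iota})$ disjoint arcs each, of Euclidean length in $[\delta^{2\iota}/2, \delta^{2\iota}]$, yielding families $\{L^{(i)}\}$ and $\{L^{\prime(j)}\}$. Since
\begin{equation*}
\min_{x,y} D_{\gamma,\delta,\eta}(x,y) \;=\; \min_{i,j}\, \min_{x \in L^{(i)},\, y \in L^{\prime(j)}} D_{\gamma,\delta,\eta}(x,y),
\end{equation*}
and each pair $(L^{(i)}, L^{\prime(j)})$ is $\xi$-admissible for some $\xi \in (2\iota, C_{\mathrm{Mc}}/3)$, Proposition~\ref{prop-concentration} applied with deviation $\iota' = C'\iota^{1/2}$ together with a union bound over the $O(\delta^{-4\iota})$ pairs, combined with uniform integrability of $\log D_{\gamma,\delta,\eta}/\log\delta^{-1}$ (from Lemma~\ref{lem-partition-minimal-cell} and \eqref{eq-very-crude}), reduces the problem to establishing, for each fixed pair, the per-pair bound
\begin{equation}
\label{eq:plan-arc}
\E \log \min_{x \in L^{(i)},\, y \in L^{\prime(j)}} D_{\gamma,\delta,\eta}(x,y) \;\geq\; (\chi - C\iota)\log\delta^{-1}
\end{equation}
for some $C = C(\gamma, \alpha) > 0$; letting $\iota \downarrow 0$ then yields the matching $\liminf$ bound.

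To prove \eqref{eq:plan-arc} I would argue by contradiction, in the spirit of the derivation of \eqref{eq-point-to-segment} in the proof of Lemma~\ref{lem-exponent-point-to-boundary}. Suppose \eqref{eq:plan-arc} fails; then by concentration there exist, on an event of probability bounded below, random points $x^* \in L^{(i)}$ and $y^* \in L^{\prime(j)}$ satisfying $D_{\gamma,\delta,\eta}(x^*, y^*) \le \delta^{-\chi + C\iota/2}$. Fix two reference points $u_1, v_1 \in \bar{\mathbb V}$ with $|u_1 - v_1|$ bounded below, and construct an upper bound on $D_{\gamma,\delta,\eta}(u_1, v_1)$ by gluing: (i) a point-to-segment geodesic from $u_1$ to $L^{(i)}$, whose length is $\le \delta^{-\chi + o(1)}$ by Proposition~\ref{prop-existence-exponent} applied to $(u_1, x_i)$ for a fixed $x_i \in L^{(i)}$, combined with concentration; (ii) a contour of $O(1)$ rectangles of side $\Theta(\delta^{2\iota})$ enclosing $L^{(i)}$, whose crossings are $\le \delta^{-\chi(1-2\iota) + o(1)}$ by Lemma~\ref{lem-scaling-coupling} and Lemma~\ref{lem-existence-exponent}; (iii) the short $D_{\gamma,\delta,\eta}(x^*,y^*) \le \delta^{-\chi+C\iota/2}$; and symmetrically a contour around $L^{\prime(j)}$ and a point-to-segment geodesic from $L^{\prime(j)}$ to $v_1$. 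Summing the $O(1)$ contributions, each of which is strictly of smaller order than $\delta^{-\chi}$ (after choosing the auxiliary constants appropriately), yields $D_{\gamma,\delta,\eta}(u_1, v_1) \le \delta^{-\chi + c\iota}$ for some $c = c(\gamma,\alpha) > 0$, contradicting the lower bound $D_{\gamma,\delta,\eta}(u_1, v_1) \ge \delta^{-\chi - c\iota/10}$ that follows from Lemma~\ref{lem-existence-exponent} (via Lemma~\ref{lem-LGD-two-fields}) and Proposition~\ref{prop-concentration}. The main obstacle is the geometric bookkeeping for the two rectangle rings on both boundaries: unlike in the proof of Lemma~\ref{lem-exponent-point-to-boundary}, where only a single short arc needs to be encircled, here two random endpoints $x^*, y^*$ lying on distinct boundaries must be simultaneously connected to the fixed reference geodesics, and the rings must be placed in subregions on which the scaling arguments of Lemma~\ref{lem-scaling-coupling} remain applicable.
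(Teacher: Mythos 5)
Your upper bound and the general reduction scheme (covering boundaries by arcs of length $\Theta(\delta^{2\iota})$, applying Proposition~\ref{prop-concentration} with deviation $\Theta(\iota^{1/2})$ to absorb the $\delta^{-O(\iota)}$ union bound, and reducing to a per-pair lower bound proved by contradiction) match the paper's plan. However, the per-pair contradiction step has a genuine gap, and it is not the geometric bookkeeping you flag. The problem is the magnitude of the two reference geodesics. For fixed $u_1, v_1 \in \bar{\mathbb V}$ at bounded distance, Proposition~\ref{prop-existence-exponent} and concentration give $D_{\gamma, \delta, \eta}(u_1, x_i) \le \delta^{-\chi - \iota'}$ for a small deviation $\iota' > 0$, a quantity of order \emph{at least} $\delta^{-\chi}$, not smaller. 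Hence your gluing only yields $D_{\gamma,\delta,\eta}(u_1,v_1) = O(\delta^{-\chi-\iota'})$, which is perfectly compatible with the point-to-point lower bound $D_{\gamma,\delta,\eta}(u_1,v_1) \ge \delta^{-\chi+\iota''}$ coming from Lemma~\ref{lem-existence-exponent} and Proposition~\ref{prop-concentration}: both sides have exponent $\chi+o(1)$, so no contradiction is extracted. (Incidentally, the lower bound you wrote, $D_{\gamma,\delta,\eta}(u_1,v_1) \ge \delta^{-\chi - c\iota/10}$, has the sign of the deviation reversed; concentration gives $\ge \delta^{-\chi + c'\iota}$, which is the \emph{smaller} quantity.)

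The paper's proof avoids this by never inserting a piece of order $\delta^{-\chi}$ into the contradiction path. It first derives from Lemma~\ref{lem-exponent-point-to-boundary} the point-to-boundary estimate \eqref{eq-point-to-boundary-kappa},
$$\E\log\bigl(\min_{y\in\partial\mathbb V_{v,\kappa}} D_{\gamma,\delta,\eta}(v,y)\bigr) = (\chi+o(1))\log\delta^{-1}\,,$$
valid for every fixed center $v$ and radius $\kappa>0$; this already gives the upper bound in \eqref{eq-boundary-to-boundary}. For the lower bound, it covers only the inner boundary $\partial\bar{\mathbb V}_{u,\alpha}$ by segments $L_\delta$, keeps the other endpoint ranging over all of $\partial\bar{\mathbb V}_u$, and derives the per-segment estimate by contradiction. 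If $\min_{x\in L_\delta, y\in\partial\bar{\mathbb V}_u} D_{\gamma,\delta,\eta}(x,y)$ were small, one fixes a point $v_{L_\delta}$ on $L_\delta$ and glues (a) a single ring of $O(1)$ rectangle crossings at scale $|L_\delta|=\Theta(\delta^{2\iota})$, each of cost at most $\delta^{-\chi+\iota}$ by scaling (Lemma~\ref{lem-scaling-coupling} together with Lemma~\ref{lem-existence-exponent}), and (b) the assumed-short geodesic from $x^*\in L_\delta$ to $y^*\in\partial\bar{\mathbb V}_u$, of cost at most $\delta^{-\chi+2\iota-o(1)}$. Every piece is $o(\delta^{-\chi})$, hence $\min_{y\in\partial\bar{\mathbb V}_u} D_{\gamma,\delta,\eta}(v_{L_\delta}, y) \le \delta^{-\chi+\iota/2}$, which genuinely contradicts \eqref{eq-point-to-boundary-kappa}. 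To repair your plan, either place the anchor points on the arcs themselves, taking $u_1 = x_i\in L^{(i)}$ and $v_1 = y_j\in L^{\prime(j)}$ so that the pieces joining an anchor to the ring around its arc are short crossings at scale $\delta^{2\iota}$ rather than $\Theta(\delta^{-\chi})$ geodesics, and then contradict the point-to-point exponent for $D_{\gamma,\delta,\eta}(x_i,y_j)$; or follow the paper and cover only one boundary by arcs, contradicting the point-to-boundary exponent \eqref{eq-point-to-boundary-kappa}, which requires only one ring.
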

\begin{proof}
The first equality holds due to Lemma~\ref{lem-LGD-two-fields}  and the main task is to prove the second equality.
By Lemma~\ref{lem-exponent-point-to-boundary} and a similar derivation to \eqref{eq-distance-point-to-boundary-bound}, we get that
for any $\kappa>0$, $v\in \mathbb V$ 
\begin{equation}\label{eq-point-to-boundary-kappa}
\E \log (\min_{ y\in \partial {\mathbb V}_{v, \kappa}}  D_{\gamma, \delta, \eta}(v, y)) = (\chi +o(1)) \log \delta^{-1}\,.
\end{equation}
Thus it suffices to prove a lower bound in \eqref{eq-boundary-to-boundary}. The proof is  similar to that of Lemma~\ref{lem-exponent-point-to-boundary}. 

By Proposition~\ref{prop-concentration}, it suffices to show that for any fixed
 $\iota>0$ and any segment $ L_\delta\subseteq \partial \bar {\mathbb V}_{u, \alpha}$ with length in $[\delta^{2\iota}/2, \delta^{2\iota}]$ we have
 $$
\E \log (\min_{x\in  L_\delta,  y\in \partial \bar {\mathbb V}_u} D_{\gamma, \delta, \eta}(x, y)) \geq  (\chi - 2\iota) \log \delta^{-1}\,.
$$
Suppose the preceding statement fails for some $ L_\delta$. Let $v_{ L_\delta}$ be an arbitrary point on $ L_\delta$. As shown in Figure~\ref{fig-glue} employed in the proof of Lemma~\ref{lem-exponent-point-to-boundary}, we can construct four short crossings through four rectangles (with dimension $10| L_\delta| \times 40 | L_\delta|$)  which altogether form a contour enclosing $ L_\delta$. Consequently,  the union of these short crossings, the geodesic between $ L_\delta$ and $\partial \bar {\mathbb V}_u$, as well as the geodesic between $v_{ L_\delta}$ and $\partial \bar {\mathbb V}_u$ contains a path between $v_{ L_\delta}$ and $\partial \bar {\mathbb V}_u$.
Therefore, by the same argument as in Lemma~\ref{lem-exponent-point-to-boundary}, we get that 
$$\E \log ( \min_{y\in \partial \bar {\mathbb V}_u}D_{\gamma, \delta, \eta}(v_{ L_\delta}, y)) \leq  (\chi - \iota) \log \delta^{-1}\,.$$
This contradicts with \eqref{eq-point-to-boundary-kappa}. Thus, we complete the proof of the lemma by contradiction.
\end{proof}

Fix $\xi>0$ through out the appendix. 
For any Euclidean ball $B$, we denote by $2B$ a Euclidean ball concentric with $B$, whose radius is double that of $B$.
For $\delta>0$ and any two distinct points $u, v\in {\mathbb V}^\xi$, we define a variation of Liouville graph distance $D^{(\mathsf 2)}_{\gamma, \delta, \xi}(u, v)$ to be the minimal $d$ such that there exist Euclidean balls $B_1, \ldots, B_d \subseteq \mathbb V^\xi$ with rational centers
and $M_\gamma(2B_i)\leq \delta^2$ for $1\leq i\leq d$, whose union contains a path from $u$ to $v$.

For an Euclidean ball $B$ with radius $r$ centered at $z$, we define its circle-average-approximate-LQG measure by $M^\circ_{\gamma}(B) = r^{2+\gamma^2/2} e^{\gamma h_r(z)}$,
compare with \eqref{eq-limit-LQG}. For $\delta>0$ and any two distinct points $u, v\in {\mathbb V}^\xi$, we define another variation of Liouville graph distance $D^{\circ}_{\gamma, \delta, \xi}(u, v)$ to be the minimal $d$ such that there exist Euclidean balls $B_1, \ldots, B_d \subseteq \mathbb V^\xi$ with rational centers
and $M^\circ_\gamma(B_i)\leq \delta^2$ for $1\leq i\leq d$, whose union contains a path from $u$ to $v$.

We define $D'_{\gamma, \delta, \xi}(x, y)$ to be a version of the approximate Liouville graph distance where we restrict to cells in $\mathbb V^\xi$. One can verify that our proofs for  Lemmas~\ref{lem-approximate-LGD-two-deltas}, \ref{lem-LGD-compare}, \ref{lem-LGD-two-fields} and Corollary~\ref{cor-LGD-two-deltas} as well as Proposition~\ref{prop-concentration} extend automatically to $D'_{\gamma, \delta, \xi}$. Recall $C_{\mathrm{Mc}}$ as specified in Lemma~\ref{lem-partition-minimal-cell}.

\begin{prop}\label{prop-approximate-LGD-Ewain}
For any fixed $0<\xi< C_{\mathrm{Mc}}/3$ there exists a constant $c=c(\gamma, \xi)$ so that for any fixed $\iota>0$ and any sequence of $\xi$-admissible pairs  $(A_\delta, B_\delta )$, 
$$ \min_{x\in A_\delta, y \in B_\delta}D_{\gamma, \delta}(x, y) \cdot \delta^{\iota} \leq \min_{x\in A_\delta, y\in B_\delta}D^{(\mathsf 2)}_{\gamma, \delta, \xi}(x, y) \leq   \min_{x\in A_\delta, y\in B_\delta} D_{\gamma, \delta}(x, y) \cdot \delta^{-\iota}\,,
$$
with $(c\cdot \iota^2)$-high probability. The preceding statement remains true if we replace $D^{(\mathsf 2)}_{\gamma, \delta, \xi}$ by $D^{\circ}_{\gamma, \delta, \xi}$.
\end{prop}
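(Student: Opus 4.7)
The proof of Proposition~\ref{prop-approximate-LGD-Ewain} will follow closely the strategy of Proposition~\ref{prop-approximate-LGD}, with the additional twist that we must handle the doubling requirement $M_\gamma(2B_i)\le \delta^2$ (for $D^{(\mathsf 2)}$) or the switch from the actual LQG measure to the circle-average proxy $M^\circ_\gamma$ (for $D^\circ$). The easy direction $\min D_{\gamma,\delta}\le \min D^{(\mathsf 2)}_{\gamma,\delta,\xi}$ (resp.\ a corresponding inequality for $D^\circ$ up to subpolynomial factors) is immediate: $M_\gamma(B_i)\le M_\gamma(2B_i)\le\delta^2$, so any $D^{(\mathsf 2)}_{\gamma,\delta,\xi}$-cover is a valid $D_{\gamma,\delta}$-cover. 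The nontrivial content is therefore the reverse comparison $\min D^{(\mathsf 2)}_{\gamma,\delta,\xi}\le \min D_{\gamma,\delta}\cdot \delta^{-\iota}$.

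The plan is to work at a slightly smaller scale. Set $\delta'=\delta^{1+\iota/10}$. By Proposition~\ref{prop-approximate-LGD} and Lemma~\ref{lem-approximate-LGD-two-deltas}, with $(c\cdot \iota^2)$-high probability
$$\min_{x\in A_\delta, y\in B_\delta}D'_{\gamma,\delta'}(x,y)\ \le\ \min_{x\in A_\delta, y\in B_\delta}D_{\gamma,\delta}(x,y)\cdot \delta^{-\iota/3},$$
so it suffices to build a $D^{(\mathsf 2)}_{\gamma,\delta,\xi}$-path whose length is at most $\delta^{-\iota/2}$ times a $D'_{\gamma,\delta'}$-geodesic. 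Given cells $\mathsf C_1,\ldots,\mathsf C_d\in \mathcal V_{\delta'}$, I will run a percolation-type argument parallel to Lemma~\ref{lem-percolation-Phi} and the upper-bound half of the proof of Proposition~\ref{prop-approximate-LGD}: inside each $\mathsf C_i$ (whose approximate LQG measure is $\le (\delta')^2$), I construct a covering of a crossing of $\mathsf C_i$ by $O((\epsilon^*)^{-2})=\delta^{o(1)}$ many Euclidean balls $B$ of radius $\asymp \epsilon^* s_{\mathsf C_i}$ such that, with high probability, $M_\gamma(2B)\le \delta^2$. Control of $M_\gamma(2B)$ (as opposed to $M_\gamma(B)$) is achieved exactly by the Peierls enclosure argument of Lemma~\ref{lem-percolation-Phi}, with the open boxes chosen so that their spatial concentric doublings $2B$ are still contained in the annulus of controlled coarse-field behavior (this uses the $(\log \delta^{-1})^{0.8}$-type bounds on $\eta_\cdot$ from Lemma~\ref{lem-continuity-h-eta}, followed by Markov's inequality applied to $\tilde M_{\gamma,\cdot,\eta}(2B)$ via \eqref{eq-LQG-positive-moment}). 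The restriction $B\subset \mathbb V^\xi$ is addressed at the start by observing that, for $\xi$-admissible pairs, the near-geodesic can be chosen in $\mathbb V^{\xi/2}$ with high probability (any ball intersecting the strip $\mathbb V\setminus \mathbb V^{\xi/2}$ which is large carries nontrivial LQG mass, so a geodesic that strays there incurs too many balls, and this can be ruled out using Lemma~\ref{lem-exponent-boundary-to-boundary} or a direct enumeration).

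For the variant $D^\circ_{\gamma,\delta,\xi}$, the analogous statement is proved by the same scheme, once we establish the dictionary between the circle-average-approximate measure $M^\circ_\gamma(B)=r^{2+\gamma^2/2}e^{\gamma h_r(z)}$ and the actual measure $M_\gamma(B)$. The point is that uniformly over dyadic balls of radius $r\in [\delta^{C_{\mathrm{mc}}+1},1]$ one has, with high probability,
$$e^{-(\log \delta^{-1})^{0.9}}M^\circ_\gamma(B)\ \le\ M_\gamma(B)\ \le\ e^{(\log \delta^{-1})^{0.9}}M^\circ_\gamma(B),$$
obtained by combining Lemma~\ref{lem-tilde-h-eta} (coupling of $h_r$ with $\tilde h_r$ and $\eta_r$), the Gaussian concentration Lemmas~\ref{Lem.concentration}--\ref{lem-ferniquecriterion}, and the positive/negative moment bounds \eqref{eq-LQG-positive-moment}--\eqref{eq-LQG-negative-moment} for $\tilde M_{\gamma,\delta,\eta}(B)$. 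Substituting this equivalence into the ball cover passes the comparison through at the cost of an extra $\delta^{-\iota/100}$ factor, which is absorbed.

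The main obstacle is Step 3: upgrading the LQG-content bound on a cell $\mathsf C_i$ to a pointwise LQG-content bound on the \emph{doubled} balls $2B$. A priori, the measure of $2B\setminus B$ could dwarf that of $B$ itself, since a cell may abut a region whose scale in $\mathcal V_{\delta'}$ is much finer and whose fine-field fluctuations are atypical. This is precisely the irregularity phenomenon handled in the paper by the percolation enclosure (open/closed boxes as in \eqref{eq-B-percolation-Psi}), and the adaptation needed here is to enforce the stronger event "both $B$ and its doubling $2B$ lie in the open enclosure"; the quantitative input is Markov's inequality against $\E\tilde M_{\gamma, \epsilon^2s',\eta}(2\tilde B)=O((ts)^2)$, which still produces a small enough probability that the Peierls sum as in \eqref{Eq.percolation-argument} converges with the choices of $p,\kappa$ used in the original proof. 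Once this is in place, summing over the $d$ cells and invoking Proposition~\ref{prop-concentration} for concentration of logarithms of the distances completes the proof.
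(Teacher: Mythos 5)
Your overall strategy for the $D^{(\mathsf 2)}$ comparison matches the paper's: recognize the trivial direction $D_{\gamma,\delta}\leq D^{(\mathsf 2)}_{\gamma,\delta,\xi}$, pass through the approximate graph distance $D'$ at a slightly adjusted scale, and rerun the percolation enclosure of Lemma~\ref{lem-percolation-Phi}/Proposition~\ref{prop-approximate-LGD} with the definition of $\Phi_{B,\delta}$ modified to demand $M_\gamma(2\mathsf B)\leq\delta^2$ on the covering balls. The Markov-bound-against-$\E\tilde M_{\gamma,\cdot,\eta}(2\tilde B)$ idea is exactly what is needed, and it does only cost absorbed constants. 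So far so good.

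There are, however, two points where your argument is either incomplete or takes a route that would need significant repair. First, the restriction of the balls to $\mathbb V^\xi$ is not settled by the parenthetical heuristic that a geodesic straying into $\mathbb V\setminus\mathbb V^{\xi/2}$ ``incurs too many balls.'' That claim is not obviously true: a geodesic can dip briefly into the strip using small balls, and nothing a priori forces a penalty. The paper does not argue geodesically at all; it instead compares $D'_{\gamma,\delta,\xi}$ with $D'_{\gamma,\delta}$ by showing, via Lemma~\ref{lem-exponent-boundary-to-boundary} and Proposition~\ref{prop-concentration}, that both have the same logarithmic exponent $\chi$ and both concentrate, so they agree to $\delta^{\pm\iota}$. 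You do gesture at Lemma~\ref{lem-exponent-boundary-to-boundary}, but as written your reduction never cleanly isolates the two-sided comparison between the $\xi$-restricted and unrestricted approximate distances, which is what the boundary lemma is designed to give.

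Second, and more substantively, for $D^{\circ}_{\gamma,\delta,\xi}$ you cite Lemma~\ref{lem-tilde-h-eta} as providing a ``coupling of $h_r$ with $\tilde h_r$ and $\eta_r$.'' It does not: Lemma~\ref{lem-tilde-h-eta} only controls $\tilde h - \eta$, and Lemma~\ref{lem-hat-h-eta} only controls $\hat h - \eta$. Neither touches the circle-average process $h_r$ itself. Bridging $h_r$ to the white-noise decomposition is a genuine additional step; the paper imports it as \cite[Proposition~3.2]{DG16} (a coupling of the circle-average field with the $\hat h$-process), which is then chained with Lemma~\ref{lem-hat-h-eta} to reach $\eta$. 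Without that ingredient (or an independent construction of it) your $D^\circ$ argument is missing the key comparison, so the claimed uniform two-sided bound $e^{\mp(\log\delta^{-1})^{0.9}}M^\circ_\gamma(B)\lessgtr M_\gamma(B)$ over all dyadic balls is not established.
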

\begin{proof}
 By  Lemma~\ref{lem-exponent-boundary-to-boundary} and Proposition~\ref{prop-concentration}, we have that with $(c\cdot \iota^2)$-high probability 
 $$ \min_{x\in A_\delta, y\in B_\delta}D'_{\gamma, \delta}(x, y) \cdot \delta^{\iota} \leq \min_{x\in A_\delta, y\in B_\delta}D'_{\gamma, \delta, \xi}(x, y) \leq   \min_{x\in A_\delta,\
  y\in B_\delta} D'_{\gamma, \delta}(x, y) \cdot \delta^{-\iota}\,.$$
Combined with Proposition~\ref{prop-approximate-LGD}, it implies that Proposition~\ref{prop-approximate-LGD-Ewain} follows provided that with $(c\cdot \iota)$-high probability 
 \begin{equation}\label{eq-approximate-Ewain}
 \begin{split}
 & \min_{x\in A_\delta, y\in B_\delta}D'_{\gamma, \delta, \xi}(x, y) \cdot \delta^{\iota} \leq \min_{x\in A_\delta, y\in  B_\delta}D^{(\mathsf 2)}_{\gamma, \delta, \xi}(x, y) \leq   \min_{x\in A_\delta, y\in  B_\delta} D'_{\gamma, \delta, \xi}(x, y) \cdot \delta^{-\iota}\,,\\
   & \min_{x\in A_\delta, y\in  B_\delta}D'_{\gamma, \delta, \xi}(x, y) \cdot \delta^{\iota} \leq \min_{x\in A_\delta, y\in  B_\delta}D^{\circ}_{\gamma, \delta, \xi}(x, y) \leq   \min_{x\in 
   A_\delta, y\in  B_\delta} D'_{\gamma, \delta, \xi}(x, y) \cdot \delta^{-\iota}.
  \end{split}
 \end{equation}
The proof of \eqref{eq-approximate-Ewain} is  similar to that of Proposition~\ref{prop-approximate-LGD}. Thus, we only briefly discuss how to adapt the proof of Proposition~\ref{prop-approximate-LGD}. 

For $D^{(\mathsf 2)}_{\gamma, \delta, \xi}$, since $D^{(\mathsf 2)}_{\gamma, \delta, \xi} \geq D_{\gamma, \delta, \xi}$, it remains to bound $D^{(\mathsf 2)}_{\gamma, \delta, \xi}$ by $D'_{\gamma, \delta, \xi}$ from above. We repeat the proof of Proposition~\ref{prop-approximate-LGD}, but with the following change:  we will now define a new version of $\Phi_{B, \delta}$ (similar to that in Definition~\ref{def-E-delta-B-prime}) to be the minimal number of Euclidean balls 
$\mathsf B$ with  $M_\gamma(2\mathsf B) \leq \delta^2$ that covers $\partial B$.
(The only difference is that we used $M_{\gamma}(2\mathsf B)$ in the preceding definition as opposed to $M_{\gamma}(\mathsf B)$  as in Definition~\ref{def-E-delta-B-prime}.) One can then just repeat the arguments  with this version of $\Phi_{B, \delta}$ to conclude the proof on the upper bound --- the only place that needs to be changed
is in the proof of \eqref{eq-B-percolation-Psi} and \eqref{eq-B-good-Psi}, where the required change is noting but enlarging a few constants which have been absorbed by much larger terms in the earlier proof.

Next, we consider $D^{\circ}_{\gamma, \delta, \xi}$. By \cite[Proposition 3.2]{DG16} (which states that the circle average process and our $\hat h$-process are close to each other) and Lemma \ref{lem-hat-h-eta}, we get that with high probability 
$$\max_{j: 2^{-j} \geq \delta^{C_{\mathrm{mc}}+10}}\max_{x\in \mathbb V^\xi} |\eta_{2^{-j}}(x) - h_{2^{-j}}(x)| = O(\sqrt{\log \delta^{-1}})\,.$$
This, together with Lemma~\ref{lem-neighboring-cell}, implies that with high probability
$$ \min_{x\in A_\delta, y\in B_\delta}D'_{\gamma, \delta e^{(\log \delta^{-1})^{0.6}}, \xi}(x, y)  \leq \min_{x\in A_\delta, y\in B_\delta}D^{\circ}_{\gamma, \delta, \xi}(x, y) \leq   \min_{x\in A_\delta, y\in B_\delta} D'_{\gamma, \delta e^{-(\log \delta^{-1})^{0.6}}, \xi}(x, y)\,.$$
Combining Lemma~\ref{lem-approximate-LGD-two-deltas}, we complete the proof of \eqref{eq-approximate-Ewain}, and thus the proof of the proposition.
\end{proof}

\def\cprime{$'$}

\end{document}